\documentclass[11pt]{amsart}
\usepackage{amsmath}
\usepackage{amsthm}
\usepackage{bm}

\usepackage{color}
\definecolor{c1}{rgb}{0.0,0.3,0.4}
\definecolor{c2}{rgb}{0.0,0.0,0.5}
\definecolor{c3}{rgb}{0.1,0.3,0.5}
\definecolor{dgrn}{rgb}{0.0,0.3,0.0}
\definecolor{dpur}{rgb}{0.5,0.0,0.5}
\usepackage[colorlinks=true, linkcolor=c2]{hyperref}

\usepackage[margin=1.0in]{geometry}

\numberwithin{equation}{section}

\newtheorem{prop}{Proposition}
\newtheorem{lemma}[prop]{Lemma}

\newtheorem{thm}[prop]{Theorem}
\newtheorem{cor}[prop]{Corollary}

\numberwithin{prop}{section}

\theoremstyle{definition}
\newtheorem{defn}[prop]{Definition}

\newtheorem{rmk}[prop]{Remark}

\newcommand{\del}{\partial}

\newcommand{\sNG}{{\Gamma}_{\varsigma^*\N}}
\newcommand{\brs}[1]{\left| #1 \right|}
\newcommand{\brk}[1]{\left[ #1 \right]}
\newcommand{\prs}[1]{\left( #1 \right)}
\newcommand{\ip}[1]{\left\langle #1 \right\rangle}
\newcommand{\gG}{\Gamma}

\newcommand{\gd}{\delta}

\newcommand{\gt}{\theta}
\newcommand{\gw}{\omega}
\newcommand{\gz}{\zeta}
\newcommand{\ga}{\alpha}
\newcommand{\gb}{\beta}

\newcommand{\N}{\nabla}
\newcommand{\tN}{\widetilde{\nabla}}
\newcommand{\tgG}{\widetilde{\Gamma}}

\renewcommand{\bar}[1]{\overline{#1}}

\newcommand{\lb}{\left[}
\newcommand{\rb}{\right]}

\newcommand{\hsp}{\hspace{0.5cm}}
\newcommand{\lap}{\Delta}
\newcommand{\la}{\lambda}
\newcommand{\vp}{\varphi}
\newcommand{\vs}{\varsigma}
\newcommand{\vr}{\varrho}

\newcommand{\ten}{\otimes}
\definecolor{grn}{rgb}{0,0.4,0}


\newenvironment{customthm}[1]
  {\innercustomthm}
  {\endinnercustomthm}

\newcommand{\slap}{\lap_{\varsigma}}
\newcommand{\w}{\wedge}
\newcommand{\gY}{\Upsilon}

\newcommand{\YM}{\mathcal {YM}}

\DeclareMathOperator{\Rc}{Rc}
\DeclareMathOperator{\Rm}{Rm}

\DeclareMathOperator{\Id}{Id}

\DeclareMathOperator{\Vol}{Vol}

\DeclareMathOperator{\supp}{supp}

\DeclareMathOperator{\End}{End}
\DeclareMathOperator{\rank}{rank}

\DeclareMathOperator{\Aut}{Aut}
\DeclareMathOperator{\Grad}{Grad}
\DeclareMathOperator{\Ad}{Ad}
\DeclareMathOperator{\Alt}{Alt}

\begin{document}
\title[Higher order Yang-Mills flow]{Higher order Yang-Mills flow}
\date{\today}
\author{Casey Kelleher}
\email{\href{mailto:clkelleh@uci.edu}{clkelleh@uci.edu}}

\address{Rowland Hall\\
         University of California\\
         Irvine, CA 92617}

\thanks{The author gratefully acknowledges her support from an NSF Graduate Research Fellowship (DGE-1321846).}

\maketitle

\begin{abstract}
 We define a family of functionals generalizing the Yang-Mills functional. We study the corresponding gradient flows and prove long-time existence and convergence results for subcritical dimensions as well as a bubbling criterion for the critical dimensions. Consequently, we have an alternate proof of the convergence of Yang-Mills flow in dimensions $2$ and $3$ given by R\aa de \cite{Rade} and the bubbling criterion in dimension $4$ of Struwe \cite{Struwe} in the case where the initial flow data is smooth.
\end{abstract}

\section{Introduction}
\subsection{Background}\label{ss:background}
Let $(E,h) \to (M,g)$ be a vector bundle over a smooth finite-dimensional manifold. The \emph{Yang-Mills energy} of a connection $\N$ on $E$ is given by
\begin{equation}\label{eq:YMenergy}
\mathcal{YM}(\N) := \tfrac{1}{2}\int_{M} | F_{\N} |^2 dV_g,\tag{\textsf{YME}}
\end{equation}
the $L^2$ norm of the curvature tensor $F_{\N}$. The study of the corresponding gradient flow to \eqref{eq:YMenergy} was initially studied by Atiyah and Bott \cite{AB} for $M$ a closed Riemann surface and proposed studying it as a means of understanding the topology of the space of connections using infinite dimensional Morse theory. Since then, one remarkable application of the flow was Donaldson's characterization of the correspondence between the algebraic and differential geometry on K\"{a}hler manifolds in \cite{Donaldson}. Here he demonstrated that the stability of a bundle is equivalent to it admitting irreducible Hermitian-Einstein connection with respect to the K\"{a}hler metric. 

In the case of $\dim M \in \{ 2 ,3 \}$, R\"ade (\cite{Rade} Theorems 1, 1', 2) demonstrated the long time existence and uniqueness of the flow. For dimension $4$, Struwe (\cite{Struwe}, Theorem 2.3) established existence, convergence and uniqueness of a solution to the gradient flow up to some time $T>0$ and characterized the `bubbling' behavior of energy of the curvature at singularities at such time. Schlatter \cite{Schlatter} later proved Struwe's conjecture of long-time existence and uniqueness of the flow modulo applications of blowups and changes of bundle topology at a finite number of times. Similar results were later demonstrated by Kozono, Maeda, and Naito \cite{KMN} as well as work by Chen and Shen (\cite{CS1}, \cite{CS2}, \cite{CS3}, et. Zhou \cite{CS4}). Work characterizing the behavior of the flow in supercritical dimensions has been established by Tao and Tian \cite{TT}, and more recent developments have been performed by Petrache and Rivi\'{e}re in the case of fixed boundary connections \cite{PR}. In addition, Naito explicitly constructed flows in \cite{Naito} which blow up in finite time for the supercritical dimensions. 

The question of long time existence and convergence of Yang-Mills gradient flow over four dimensional manifolds has yet to be determined and is an area of particular interest. For many flows, the critical dimension offers interesting results but requires nonstandard approaches to study. One of the advantages to our proposed study of the functionals below is that the corresponding family of flows have increasing critical dimension, so one may be able to provide some insight on the Yang-Mills flow and the space of connections in higher dimensions.

We now introduce our main functional of study. For each $k \in \mathbb{N}$, the \emph{Yang-Mills $k$-energy} is
\begin{equation}\label{eq:YMkenergy}
\mathcal{YM}_{k}(\N) := \tfrac{1}{2}\int_{M}{\left|\N^{(k)} F_{\N} \right|^2 dV_g}.\tag{\textsf{YMkE}}
\end{equation}
Critical points of this functional satisfy the corresponding Euler-Lagrange equation, called the \emph{Yang-Mills $k$-equation},
\begin{equation*}
0 = (-1)^k D_{\N}^* \lap^{(k)} F_{\N} + P^{(2k+1)}_1\lb F_{\N}\rb+ P^{(2k-1)}_2\lb F_{\N}\rb,
\end{equation*}
where the $P$ notation is defined in \eqref{sss:Pdefn}, and $D^*$ defined in \eqref{eq:Dstardefn}. With an eye towards future applications we construct a generalized negative gradient flow, called the \emph{generalized Yang-Mills $k$-flow} given by, for a one-parameter family $\N_t$ of connections,
\begin{equation*}
\frac{\del \N_t}{\del t} = (-1)^{k+1} D^*_{\N_t} \lap_t^{(k)} F_{\N_t} + \mho_k(\N_t),
\end{equation*}
where $\lap^{(k)}$ means $k$ iterations of the rough Laplacian, and $\mho_k(\N)$ is a lower order tensor featuring terms of the background manifold $M$ to be more precisely defined within the paper.

This higher order generalization of the Yang-Mills functional is in analogy with the work performed by Mantegazza in \cite{Mantegazza} where he studies higher order variants of mean curvature flow. These are inspired by and similar to functionals proposed by De Giorgi as a means to solving his open conjecture (cf. pp. 32 of \cite{Mantegazza} for a statement).

We now introduce a modified version of the Yang-Mills $k$-flow inspired by work performed by Hong, Tian and Yin \cite{HTY} in their study of the Yang-Mills $\ga$-flow. Consider the \emph{Yang-Mills $(\rho,k)$-energy} given by, for $\rho \in [0,\infty)$,
\begin{align}\label{eq:YMrhokenergydefn}
\begin{split}
\mathcal{YM}_{k}^{\rho} (\N) 
&:= \int_M \left( \rho \left| \N^{(k)}F_{\N} \right|^2 + \left| F \right|^2 \right) dV \\
&= \rho \mathcal{YM}_{k}(\N) + \mathcal{YM}(\N).
\end{split}\tag{\textsf{YM$\rho$kE}}
\end{align}
By studying the corresponding flow and sending $\rho \searrow 0$ one expects to, as in the case of \cite{HTY}, identify solutions to the Yang-Mills flow. The advantage of approaching with this quantity is that one is not restricted to any particular dimension; while \cite{HTY} focuses on $\dim M = 4$, in our case by choosing appropriate choices of $k$ one can regularize in any dimension.

\subsection{Main results and outline}\label{ss:mainres}

Here we state the main results of this paper and provide an outline of the layout.

\begin{customthm}{A}\label{eight}\label{thm:YMksubcritdim} Let $(E,h) \to (M,g)$ be a vector bundle over a smooth compact finite-dimensional boundariless Riemannian manifold. Let $\N$ be a smooth, metric compatible connection on $E$ and $F_{\N}$ its curvature tensor.
\begin{enumerate}
\item[(S)] (Subcritical) If $\dim M \in [2,2(k+2)) \cap \mathbb{N}$, there is a unique solution $\N_t$ to Yang-Mills $k$-flow with $\N_0 = \N$ existing for $t\in \mathbb{R}_{\geq 0}$.
\item[(C)] (Critical) If $\dim M = 2(k+2)$, there is a unique solution $\N_t$ to the Yang-Mills $k$-flow, with $\N_0 = \N$, existing on $[0,T)$ for some maximal $T \in \mathbb{R}_{\geq 0} \cup \{ \infty\}$. If $T < \infty$, then $\limsup_{t \nearrow T}{\brs{\brs{F_{\N_t}}}_{L^{\infty}(M)}} = \infty$, and there exists $\epsilon > 0$ and some $x \in M$ such that for all $r>0$, $\lim_{t \nearrow T}{\int_{B_{r}(x)}\brs{F_{\N_t}}^{k+2}} dVg \geq \epsilon$, where $B_x(r)$ denotes the geodesic ball of radius $r$ centered about $x$. Furthermore, there are only a finite number of points $x \in M$ where such behavior can occur at time $T$.
\end{enumerate}
\end{customthm}
\begin{rmk} Although in the case of our proofs we assume smoothness of the initial connection $\N$, if one assumes instead that $\N$ lies in certain Sobolev spaces depending on $k$ we can conclude a wider generalization of results. In particular for $k=0$ we may consequently confirm Theorem 1 of R\aa de \cite{Rade} and Theorem 2.3 of Struwe \cite{Struwe} in the case of Yang-Mills flow. Additionally, the properties of the Yang-Mills $1$-energy compare to those demonstrated by the analysis of Ichiyama, Inoguchi and Urakawa \cite{IIU} on the bi-Yang Mills energy, which is given by
\begin{equation}\label{eq:BYM}
\mathcal{BYM}(\N) := \tfrac{1}{2} \int_M \brs{D^*_{\N} F_{\N}}^2 dV_g.\tag{\textsf{BYME}}
\end{equation}
We will reflect on these two energies and their relationship in \S \ref{sss:BYMYM1}.
\end{rmk}

The second key result is a useful consequence of the analysis done to prove Theorem \ref{thm:YMksubcritdim}.

\begin{customthm}{B}\label{thm:regularizedflow}Let $(E,h) \to (M,g)$ be a vector bundle over a smooth compact finite-dimensional boundariless Riemannian manifold. Let $\N$ be a smooth, metric compatible connection on $E$ and $F_{\N}$ its curvature tensor. For all $\rho > 0$, if $\dim M \in [2, 2(k+2)) \cap \mathbb{N}$ there is a solution $\N_t$ to the Yang-Mills $(\rho,k)$-flow existing for $t \in \mathbb{R}_{\geq 0}$ and $\bm{\N} := \lim_{t \to \infty} \N_t$ is a solution to the Yang-Mills $(\rho,k)$-equation, given by
\begin{equation*}
0 = \rho \prs{(-1)^{k} D_{\N}^* \lap^{(k)} F_{\N} + P^{(2k+1)}_1\lb F_{\N}\rb+ P^{(2k-1)}_2\lb F_{\N}\rb} + D_{\N}^*F_{\N}. 
\end{equation*}
\end{customthm}

\begin{rmk} For Theorem \ref{thm:regularizedflow} the uniqueness of the solution at $t = \infty$ can be demonstrated by proving complete convergence rather than sequential. The proof hinges on verifying that the Yang-Mills $k$-functional satisfies the \L{}ojasiewicz-Simon gradient inequality, as in the proof of R\aa de in \cite{Rade}.
However, this verification is nontrivially technical and geometrically uninformative, so we leave this as an exercise for the not-so-faint of heart. We refer the reader to \S 7 of \cite{Feehan} for more information regarding the inequality including a discussion of its use by R\aa de.
\end{rmk}

Here is an outline of the paper. After laying out the groundwork and necessary identities in \S \ref{s:prelimcomps} we prove Theorem \ref{thm:YMksubcritdim} in \S \ref{s:flowsubcrit} (Existence and regularity results) and \S \ref{s:LTexist} (Long time existence results). 
We break down \S \ref{s:flowsubcrit} as follows.
\begin{itemize}
\item \textbf{\S \ref{ss:shorttimeexist} Short time existence.} We verify the short time existence of the generalized flow by performing gauge fixing to equate the gradient flow (being degenerate parabolic) with an equivalent parabolic system.
\item \textbf{\S \ref{ss:smoothingest} Smoothing estimates.} Using interpolation techniques we demonstrate estimates for quantities $|\N^{(\ell)}_t F_{\N_t}|$ for $\ell \in \mathbb{N}$ dependent on the temporal parameter and the supremum of the norm of curvature $|F_{\N_t}|$.
\item \textbf{\S \ref{ss:longtimeexist} Long time existence obstruction.} We demonstrate that the only impediment to long-time existence of the flow is a lack of supremal bound on the flow curvature.
\item \textbf{\S \ref{ss:blowupanalysis} Blowup analysis.} Using a blowup argument, we demonstrate that in the subcritical dimensions the supremum of the norm of the bundle curvature is bounded.
\end{itemize}

In the following section, \S \ref{s:LTexist}, we initially prove Theorem \ref{thm:YMksubcritdim}, then in \S \ref{ss:applications} we prove Theorem \ref{thm:regularizedflow} and finally conclude with a discussion applications and potential extensions of the Yang-Mills $k$-flow.

\subsubsection*{Acknowledgments}

The author would like to thank Jeffrey Streets for his unwavering support and encouragement, thoughtful advising and patient instruction. Additionally the author gratefully acknowledges support from the Simons Center for Geometry and Physics, Stony Brook University at which some of the research for this paper was performed.

This material is based upon work supported by the National Science Foundation Graduate Research Fellowship under Grant No. DGE-1321846. Any opinion, findings, and conclusions or recommendations expressed in this material are those of the author and do not necessarily reflect the views of the National Science Foundation.

\subsection{Notation and Conventions}\label{ss:notation}

Let $(E,h) \to (M,g)$ be a vector bundle over a smooth, compact, finite dimensional Riemannian manifold without boundary. Let $S(E)$ denote the smooth sections of $E$. For each point $x \in M$ choose a local orthonormal basis of $TM$ given by $\{ \del_i \}$ with dual basis $\{ d^i \}$ and a local basis for $E$ given by $\{ \mu_{\ga} \}$ with dual basis $\{ \mu_{\ga}^* \}$. Let $\Lambda^p(M)$ denote the set of smooth $p$-forms over $M$ and set $\Lambda^p(E) := \Lambda^p(M) \otimes S(E)$. Next set $\End E := E \ten E^*$, where $E^*$ denotes the dual space of $E$ and take
\begin{equation*}
\Lambda^p( \Ad E) := \{ \gw \in \Lambda^p(\End E) : \gw_{\ga}^{\gb} = - \gw_{\gb}^{\ga} \}.
\end{equation*}

The set of all smooth, bundle metric compatible connections on $E$ will be denoted by $\mathcal{A}_{E}$. The action of given connection $\N$ are extended to other tensor bundles by coupling it with the corresponding Levi-Civita connection on $(M,g)$. Given a chart containing $p \in M$ the action of a connection $\N$ on $E$ and on $M$ is captured by the \textit{(associated) connection coefficient matrices} $\Gamma  = ( \gG_{i \ga}^{\gb} d^i \otimes \mu_{\gb} \otimes \mu_{\ga}^* )$ and $G  = ( G_{i j}^{k} d^i \otimes d^j \otimes \del_k )$ respectively, where
\begin{align*}
\N \mu_{\gb} &= \gG_{i \gb}^{\gd} d^i \otimes \mu_{\gd} \\
\N \del_j &= G _{ij}^{k} d^i \otimes \del_k.
\end{align*}
The actions of $\N$ may be further extended to tensorial combinations of $TM$ and $E$ as well as their dual spaces.

Let $D$ be the \textit{exterior derivative}, or skew symmetrization of $\N$ over the tensor products of $TM$ where
\begin{align*}
D^{p}&: \Lambda^p(E) \to \Lambda^{p+1}(E) \\
&: \gw \mapsto \Alt(\N \gw),
\end{align*}
where $\Alt$ is the unscaled alternating map on tensors. The $p$ superscript will be typically supressed. The \emph{curvature tensor on $E$} and its coordinate formulation are given by
\begin{align*}
F_{\N} := D^{1} \circ D^{0} &: \Lambda^0(E) \rightarrow \Lambda^2(E) \\
&: (F_{\N})_{ij \ga}^{\gb} = \del_i \gG_{j \ga}^{\gb}  - \del_j \gG_{i \ga}^{\gb}  - \gG_{i \ga}^{\gd} \gG_{j \gd}^{\gb} + \gG_{j \ga}^{\gd} \gG_{i \gd}^{\gb}. \label{eq: Fcoords}
\end{align*}
The pointwise and global ($L^2$) inner products are given respectively by
\begin{align*}
\langle \cdot, \cdot \rangle, \left( \cdot, \cdot \right) : & \ \Lambda^p(\Ad E) \times \Lambda^p(\Ad E) \to \mathbb{R} \\
& \langle \gw , \gz \rangle := - \left( \prod_{v=1}^p g^{i_v j_v}\right) \gw_{i_1 \dots i_p \ga}^{\gb} \gz_{j_1 \dots j_p \gb}^{\ga}\\
&  \hsp \brs{\gz} := \sqrt{\ip{\gz,\gz}} \\
& (\gw,\gz ):= \int_M{\langle \gw , \gz \rangle dV_g}\\
&  \hsp  \brs{\brs{\gz}}_{L^2(M)} := \sqrt{\prs{\gz,\gz}},
\end{align*}
where $dV_g$ denotes the canonical volume form with respect to the metric $g$. Note that the definition of $\langle \cdot, \cdot \rangle$, and thus $\prs{\cdot,\cdot}$, can be extended, when necessary, as follows. Let $p , q \in \mathbb{N}$ with $p < q$. Let $K =(k_i)_{i=1}^p $ and $L =( l_i)_{i=1}^q$ multiindices, and let $\gw \in \Lambda^{p}(\Ad E)$ and $\xi \in \Lambda^{q}(\Ad E)$ respectively. Then
\begin{equation*}
\langle \gw , \xi \rangle := - \left( \prod_{i=1}^p{ g^{ k_i l_i}} \right) \gw_{K \gd}^{\gb} \xi_{L \gb}^{\gd}.
\end{equation*}
Considering the nonextended inner product, we set $\N^*$ to be the formal $L^2$ adjoint of $\N$. For computational purposes, $D^*_{\N}$, will denote a rescaled version of the formal $L^2$ adjoint of $D_{\N}$, satisfying for $\gw \in \Lambda^{p}(\Ad E)$ and $\psi \in \Lambda^{p-1}(\Ad E)$,
\begin{equation}\label{eq:Dstardefn}
\int_M \ip{D_{\N}^*\gw , \psi} dV_g = \frac{1}{p} \int_M \ip{ \gw, D_{\N}\psi }dV_g.
\end{equation}
In coordinates, this is given simply by $(D^*_{\N} \gw) _{i_1 \dots i_{p} \ga}^{\gb} = g^{j i_1}\N_{j} \gw_{i_1 \dots i_{p} \ga}^{\gb}$. The \emph{rough} and \emph{Hodge Laplacian} are given respectively by
\begin{align*}
\lap &: \Lambda^p(E) \rightarrow \Lambda^p(E) \\
&: \gw \mapsto - \N^* \N \gw,\\
\lap_{D_{\N}} &: \Lambda^p(E) \rightarrow \Lambda^p(E) \\
&: \omega \mapsto (D_{\N}^* D_{\N} + D_{\N} D_{\N}^*)\omega.
\end{align*}
\subsubsection*{Generalized operations.} Let $\gw, \gz \in \Lambda^p(E)$. Let $\gw \ast \gz$ express any normal-valued, multilinear form depending on $\gw$ and $\gz$ in a universal bilinear way. Furthermore by the Cauchy-Schwartz inequality there exists some $C > 0 $ such that
\[ \brs{ \gw \ast \gz } \leq C \brs{\gw} \brs{\gz}, \]
where here $\ast$ simply denotes some algebraic action.
\subsubsection*{Unconventional operators.} 
Let $J:= (j_i)_{i=1}^{\brs{J}}$ and $K := (k_i)_{i=1}^{\brs{K}}$ be multiindices and let $j$, $k$ be a distinct indices from those in $J$ and $K$ respectively. The operation \textit{pound} is given by
\begin{align*}
\# &: (T^*M)^{\ten \brs{J} + 1} \ten (\End E) \times (T^*M)^{\ten \brs{K} + 1} \ten (\End E)
\rightarrow (T^*M)^{\brs{J} + \brs{K}} \ten ( \End E), \\
 & (\gw \# \gz )\prs{\del_{j_1}, ..., \del_{j_{\brs{J}}}, \del_{k_1}, ... , \del_{k_{\brs{K}}} } = \sum_{i=1}^n \gw \prs{\del_{i}, \del_{j_1}, ..., \del_{j_{\brs{J}}}} \gz\prs{
\del_i,\del_{k_1}, ... , \del_{k_{\brs{K}}}}.
\end{align*}
In coordinates this is written in the form $ (\gw \# \gz)_{JK \ga}^{\gb} = g^{jk}\gw_{j J \gd}^{\gb}\gz_{k K \ga}^{\gd}$.
Roughly speaking, $\#$ is matrix multiplication combined with contraction of the first two forms. We define the corresponding \emph{pound bracket} by
\begin{align}
\begin{split}\label{eq:pndbrackdefn}
[\gw,\gz]^{\#}\prs{ \del_{j_1}, ... , \del_{j_{\brs{J}}}, \del_{k_1}, ..., \del_{k_{\brs{K}}} } &:= (\gw \# \gz)\prs{\del_{j_1}, ..., \del_{j_{\brs{J}}}, \del_{k_1}, ... , \del_{k_{\brs{K}}}} \\
& \hsp - (\gz \# \gw)\prs{\del_{j_1}, ..., \del_{j_{\brs{J}}}, \del_{k_1}, ... , \del_{k_{\brs{K}}}}.
\end{split}
\end{align}
\subsubsection*{Derivatives.}\label{sss:Pdefn} When not performing coordinate computations, for a connection $\N$, we will reserve upper indices without parentheses for indexing sequences and with parentheses for iterations of differentiation. That is,
\begin{align*}
\{ \N^i \}_{i \in \mathbb{N}} &= \{ \N^1, \N^2, \cdots \},\\
\N^{\prs{i}} &= \underset{\text{$i$ times}}{\underbrace{\N \cdots \N}}.
\end{align*}
Note the lower index is reserved for the temporal parameter.

We utilize the $P$ notation introduced in \cite{KS}. Let $v \in \mathbb{N}$ and $W := (w_i)_{i=1}^v$, and set $s = \sum_{i=1}^v w_i$. Let $R$ denote some background tensor dependent only on $g$, so that
\begin{equation*}
P^{\prs{s}}_{v} \brk{ \gw } := (\N^{(w_1)} \gw) \ast \cdots \ast (\N^{(w_v)} \gw) \ast R. 
\end{equation*}
\subsubsection*{Notation.} Many quantities are one-parameter families, and we will often call this parameter the `temporal' parameter. This parametrization will be denoted with $t$ parameter, however when understood the $t$ subscript will be dropped. Differentiation with respect to $t$ will sometimes be indicated with `$\cdot$' for notational convenience. A geodesic ball centered at a point $x \in M$ with radius $r$ will be denoted by $B_x(r)$.

\section{Preliminary computations}\label{s:prelimcomps}
\subsection{General variations}

The variations of general one-parameter families will be computed in preparation for the work of \S \ref{s:flowsubcrit}. For the remainder of this section, we will let $\mathcal{I}$ denote some simply connected subset of $\mathbb{R}$ over which $\N_t$ is parameterized.

\begin{lemma}\label{lem:varNellA}
Suppose $\N_t \in \mathcal{A}_E \times \mathcal{I}$ and $\gw_t \in \Lambda^{p}(\End E) \times \mathcal{I}$. Then for $\ell \in \mathbb{N}$,
\begin{equation}\label{eq:varNellA}
\frac{\del}{\del t} \lb \N_t^{(\ell)} \gw_t \rb = \sum_{i = 0}^{\ell-1}{ \left( \N_t^{(i)} \dot{\gG}_t \right) \ast \left( \N_t^{(\ell-i-1)} \gw_t \right)} + \left( \N_t^{(\ell)} \dot{\gw_t} \right).
\end{equation}

\begin{proof}
The proof will proceed by induction on $\ell \in \mathbb{N}$ satisfying \eqref{eq:varNellA}. Let $J:=(J_w)_{w=1}^{|J|}$ be a multiindex and set 
\begin{equation*}
J(w,s):=
\begin{cases}
p_r &\text{ if } r \neq w,\\
s &\text{ if } r = w.
\end{cases}
\end{equation*}
Roughly speaking $J(w,s)$ substitutes the $w$th element of the $J$ multiindex with an $s$. For $\ell = 1$ applying normal coordinates yields
\begin{align}\label{eq:varN1Acoord}
\begin{split}
\del_t \lb \N_i  \gw_{J \ga}^{\gb} \rb
  &= \del_i  \dot{\gw}_{L \ga}^{\gb} - \sum_{w =1}^{|P|}{\left( \dot{G}_{i j_w}^{s}  \gw_{J(w,s) \ga}^{\gb} + G_{i j_w}^{s} \dot{\gw}_{J(w,s) \ga}^{\gb} \right)} \\
& \hsp - \gw_{J \gd}^{\gb} \dot{\gw}_{i \ga}^{\gd}  -  \dot{\gw}_{J \gd}^{\gb} \gG_{i \ga}^{\gd} + \dot{\gG}_{i \gd}^{\gb} \gw_{J \ga}^{\gd} + \gG_{i \gd}^{\gb} \dot{\gw}_{J \ga}^{\gd}.
\end{split}
\end{align}
Hence the base case holds, giving
\begin{equation}\label{eq:varN1A}
\tfrac{\del}{\del t} \lb \N \gw \rb = \N \dot{\gw} + \dot{\gG} \ast \gw.
\end{equation}
Now assume the induction hypothesis \eqref{eq:varNellA} is satisfied for $\ell \in \mathbb{N}$ and let $L$ be a multiindex with $|L|=\ell$. We compute
\begin{align*}
\left( \tfrac{\del}{\del t} \lb \N^{(\ell+1)} \gw \rb \right)_{jLP \ga}^{\gb}
&= \del_j \lb \tfrac{\del}{\del t} \N^{(\ell)}\gw \rb_{LP \ga}^{\gb} + \tfrac{\del}{\del t}\lb  \gG_{p \gd}^{\gb}(\N^{(\ell)}\gw_{LP \ga}^{\gd}) - (\N^{(\ell)} \gw_{LP \gd}^{\gb}) \gG_{j \ga}^{\gd} \rb \\
&= \del_j \lb \N^{(i)} \dot{\gG} \ast \N^{(\ell-i-1)} \gw + \N^{(\ell)}\dot{\gw}\rb_{LP \ga}^{\gb} \\
& \hsp-\dot{\gG}_{j \ga}^{\gd} (\N^{(\ell)} \gw_{LP \gd}^{\gb}) + \dot{\gG}_{j \gd}^{\gb} (\N^{(\ell)}\gw_{LP \ga}^{\gd}).
\end{align*}
Or, written in coordinate invariantly,
\begin{align*}
\tfrac{\del}{\del t} \lb \N^{(\ell+1)} \gw \rb
&= \N^{(i+1)} \dot{\gG} \ast \N^{(\ell-i-1)} \gw + \N^{(i)}\dot{\gG} \ast \N^{(\ell-i)} \gw + \N^{(\ell+1)} \dot{\gw} + \dot{\gG}\ast \N^{(\ell)}\gw \\
&= \N^{(i)} \dot{\gG} \ast \N^{(\ell-i-1)} \gw + \N^{(\ell+1)} \dot{\gw}.
\end{align*}
Hence $\ell+1$ satisfies the induction hypothesis, so inductively the result follows.
\end{proof}
\end{lemma}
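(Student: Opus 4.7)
The plan is to induct on the order of differentiation $\ell$. For the base case $\ell = 1$, I would compute $\partial_t[\N_t\gw_t]$ directly in normal coordinates centered at a fixed point. The partial derivatives $\del_i$ commute with $\del_t$, but the Christoffel terms do not: differentiating the coordinate formula for $\N \gw$ in $t$ produces, aside from $\N\dot{\gw}$, terms of the form $\dot{G}\ast\gw$ and $\dot{\gG}\ast\gw$ coming respectively from the $TM$-connection (coupled to all $p$ form indices) and the $E$-connection (coupled to the two endomorphism indices). All of these terms are bilinear in the variation of connection data and in $\gw$, so they collapse into a single schematic term $\dot{\gG}\ast\gw$ under the universal $\ast$ notation, yielding the base case identity $\del_t[\N\gw] = \N\dot{\gw} + \dot{\gG}\ast\gw$.

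For the inductive step, the plan is to write $\N^{(\ell+1)}\gw = \N(\N^{(\ell)}\gw)$, apply $\del_t$, and invoke the base case on the outer $\N$, viewing $\N^{(\ell)}\gw$ as the argument:
\begin{equation*}
\tfrac{\del}{\del t}\bigl[\N^{(\ell+1)}\gw\bigr] = \N\Bigl(\tfrac{\del}{\del t}[\N^{(\ell)}\gw]\Bigr) + \dot{\gG}\ast (\N^{(\ell)}\gw).
\end{equation*}
Then I would substitute the inductive hypothesis into $\del_t[\N^{(\ell)}\gw]$ and distribute the outer $\N$ using the Leibniz rule, which turns each summand $(\N^{(i)}\dot\gG)\ast(\N^{(\ell-i-1)}\gw)$ into the pair
\begin{equation*}
(\N^{(i+1)}\dot\gG)\ast(\N^{(\ell-i-1)}\gw) + (\N^{(i)}\dot\gG)\ast(\N^{(\ell-i)}\gw),
\end{equation*}
while the $\N\N^{(\ell)}\dot\gw$ piece becomes $\N^{(\ell+1)}\dot\gw$. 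The extra boundary term $\dot\gG\ast\N^{(\ell)}\gw$ from the base case application accounts for the missing $i=0$ contribution, and telescoping in the index $i$ over the summation range produces exactly the claimed sum from $i=0$ to $\ell$.

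The only real obstacle is bookkeeping: one must verify that after distributing $\N$ over the $\ast$-products and shifting summation indices, the two families of terms combine cleanly into the single sum $\sum_{i=0}^{\ell}(\N^{(i)}\dot\gG)\ast(\N^{(\ell-i)}\gw)$. This is routine provided one is careful that the $\ast$ symbol absorbs the algebraic rearrangement of indices produced by Leibniz on a bilinear pairing, and that the coefficient matrix variations $\dot G$ (for $TM$) are schematically indistinguishable from $\dot\gG$ (for $E$) at the level of $\ast$. Once this reindexing is performed, the inductive hypothesis closes at step $\ell+1$ and the lemma follows.
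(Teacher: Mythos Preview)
Your proposal is correct and follows essentially the same approach as the paper: induction on $\ell$, with the base case computed in normal coordinates and the inductive step obtained by applying the $\ell=1$ identity to the outermost $\N$ acting on $\N^{(\ell)}\gw$, then invoking the hypothesis and reindexing. Your presentation of the inductive step is in fact slightly cleaner than the paper's, since you explicitly frame it as an application of the base case to the tensor $\N^{(\ell)}\gw$ rather than redoing the coordinate computation.
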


\begin{cor}\label{cor:varNellF}
Suppose $\N_t \in \mathcal{A}_E \times \mathcal{I} $. Then for $\ell \in \mathbb{N}$
\begin{equation}\label{eq:varNellF}
\frac{\del}{\del t} \lb \N^{(\ell)} F_{\N_t} \rb= \sum_{i = 0}^{\ell-1}{\N_t^{(i)}  \dot{\gG}_t \ast \N_t^{(\ell-i-1)} F_{\N_t}} + \N^{(\ell)}_t D_{\N_t} \dot{\gG}_t.
\end{equation}
Furthermore, $ \tfrac{\del F_{\N_t}}{\del t} = D^*_{\N_t}\dot{\gG}_t$. 
\end{cor}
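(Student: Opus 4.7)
The plan is to apply Lemma \ref{lem:varNellA} with the specific choice $\gw_t = F_{\N_t}$. Doing so immediately yields
\begin{equation*}
\frac{\del}{\del t}\lb \N_t^{(\ell)} F_{\N_t}\rb = \sum_{i=0}^{\ell-1} \prs{\N_t^{(i)} \dot{\gG}_t}\ast\prs{\N_t^{(\ell-i-1)} F_{\N_t}} + \N_t^{(\ell)} \dot{F}_{\N_t},
\end{equation*}
which reduces the first assertion to the ``furthermore'' identity. I read that identity as $\dot{F}_{\N_t} = D_{\N_t} \dot{\gG}_t$, since on degree grounds $F$ is an $\End E$-valued $2$-form while $\dot{\gG}_t$ is an $\End E$-valued $1$-form and $D_{\N_t}$ is the operator raising form-degree by one; once this is in hand, direct substitution into the display above produces the stated formula.

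To establish $\dot{F}_{\N_t} = D_{\N_t} \dot{\gG}_t$, I would start from the coordinate formula
\begin{equation*}
(F_{\N_t})_{ij\ga}^{\gb} = \del_i \gG_{j\ga}^{\gb} - \del_j \gG_{i\ga}^{\gb} - \gG_{i\ga}^{\gd}\gG_{j\gd}^{\gb} + \gG_{j\ga}^{\gd}\gG_{i\gd}^{\gb}
\end{equation*}
recorded before the corollary, differentiate in $t$, and group the resulting eight terms by antisymmetry in $(i,j)$. Working in normal coordinates for $(M,g)$ at the base point silences the manifold Christoffel contributions (which in any case cancel under antisymmetrization), while the mixed $\gG \cdot \dot{\gG}$ products recombine with the bare $\del\dot{\gG}$ terms to reassemble $(\N_t)_i \dot{\gG}_{t,\,j\ga}^{\gb} - (\N_t)_j\dot{\gG}_{t,\,i\ga}^{\gb}$. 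By the definition of $D_{\N_t}$ as the unscaled alternation of $\N_t$ over manifold slots, this equals $(D_{\N_t}\dot{\gG}_t)_{ij\ga}^{\gb}$, closing the auxiliary identity and hence the corollary.

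The main conceptual subtlety — though not a genuine obstacle — is the tensorial character of $\dot{\gG}_t$: although $\gG_t$ itself is not a tensor, its $t$-derivative records the infinitesimal difference of two metric-compatible connections on $E$ and therefore is tensorial, which makes $D_{\N_t}\dot{\gG}_t$ invariantly defined and the coordinate computation above coordinate-independent. Beyond this observation the proof amounts to a chain-rule computation layered on top of Lemma \ref{lem:varNellA}, and no additional machinery is required.
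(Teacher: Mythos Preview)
Your proposal is correct and matches the paper's implicit approach: the corollary is stated without proof precisely because it follows by specializing Lemma \ref{lem:varNellA} to $\gw_t = F_{\N_t}$ together with the standard coordinate computation $\dot F_{\N_t} = D_{\N_t}\dot\gG_t$, exactly as you outline. You are also right to read the ``furthermore'' clause as $D_{\N_t}$ rather than $D^*_{\N_t}$ on degree grounds; the paper itself uses $\tfrac{\del F}{\del t} = D\dot\gG$ in the proof of Lemma \ref{lem:YMkgradellF}, confirming that the $D^*$ in the corollary statement is a typo.
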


\subsection{Flow specific variations}

We first compute the Euler Lagrange equation of the Yang-Mills $k$-energy to determine the corresponding Yang-Mills $k$-flow, and introduce a generalized version of the flow to expand the scope of our results. We then demonstrate that the Yang-Mills flow is a weakly parabolic system.

\begin{prop}\label{prop:ELYMk}
For $\N_t \in \mathcal{A}_E \times \mathcal{I}$ the variation of the Yang-Mills $k$-energy is given by
\begin{equation}
 \tfrac{d}{d t} \left[ \mathcal{YM}_{k} (\N_t) \right] = \int_M{ \left\langle \Grad \mathcal{YM}_{k} (\N_t), \dot{\gG}_t \right\rangle dV_g},
\end{equation}
where
\begin{equation}
\Grad \mathcal{YM}_{k} (\N) := (-1)^k D_{\N}^* \lap^{(k)} F_{\N} + \sum_{v = 0}^{2k-1} P^{(v)}_1 \lb F \rb  + P^{(2k-1)}_2\lb F\rb.
\end{equation}

\begin{proof}
Differentiating the Yang-Mills $k$-energy with respect to $t$ yields
\begin{align*}
\frac{d}{d t}\left[ \tfrac{1}{2} \int_M{|\N^{(k)}F|^2 dV_g}\right]  &= \int_{M}{ \left\langle \tfrac{\del}{\del t} \left[ \N^{(k)}F \right], \N^{(k)}F \right\rangle dV_g}.
\end{align*}
Appealing to Corollary \ref{cor:varNellF} for the variation of $\N^{(k)}F$ yields,
\begin{align}
\begin{split}\label{eq:ELYMk}
\frac{d}{d t}\left[ \tfrac{1}{2} \int_M{|\N^{(k)}F|^2 dV_g}\right]  &= \int_M{\left\langle \left( \sum_{i = 0}^{k-1}{\N^{(i)} \dot{\gG} \ast \N^{(k-i-1)} F} + \N^{(k)} D \dot{\gG} \right), \N^{(k)} F \right\rangle dV_g}\\
&= \int_M{\left\langle \left( \sum_{i = 0}^{k-1}{\N^{(i)} \dot{\gG} \ast \N^{(k-i-1)} F }\right), \N^{(k)} F \right\rangle dV_g} +  \int_M{ \left\langle \N^{(k)} D \dot{\gG} , \N^{(k)} F \right\rangle dV_g}.
\end{split}
\end{align}
For the first integral of \eqref{eq:ELYMk} integration by parts gives,
\begin{align*}
\int_M{\left\langle \left( \sum_{i = 0}^{k-1}{\N^{(i)} \dot{\gG} \ast \N^{(k-i-1)} F }\right), \N^{(k)} F \right\rangle dV_g}  &= \int_{M} \left\langle \dot{\gG} , P_2^{(2k-1)}[F] \right\rangle dVg.
\end{align*}
The second integral of \eqref{eq:ELYMk} is addressed with Lemma \ref{lem:shiftkNs} to recursively integrate by parts,
\begin{align*}
 \int_M{ \left\langle \N^{(k)} D \dot{\gG} , \N^{(k)} F \right\rangle dV_g} &= \int_M{\left\langle D\dot{\gG}, \sum_{v = 1}^{2k-2} \sum_{w = 0}^{v} \N^{(w)} \Rm \ast \N^{(2k-2-w)} F  + P^{(2k-2)}_2\lb F\rb  \right\rangle dV_g} \\
& \hsp  + \int_M{ \left\langle D \dot{\gG} , (-1)^k \lap^{(k)} F \right\rangle dV_g}\\
 &=\int_M{\left\langle D\dot{\gG}, \sum_{v = 0}^{2k-2} P^{(v)}_1 \lb F \rb  + P^{(2k-2)}_2\lb F\rb  \right\rangle dV_g}  +   \int_M{ \left\langle D \dot{\gG} , (-1)^k \lap^{(k)} F \right\rangle dV_g}\\
  &=  \int_M{\left\langle \dot{\gG}, \sum_{v = 0}^{2k-1} P^{(v)}_1 \lb F \rb  + P^{(2k-1)}_2\lb F\rb  \right\rangle dV_g} + 2 \int_M{ \left\langle \dot{\gG} , (-1)^k D^* \lap^{(k)} F \right\rangle dV_g} .
\end{align*}
Combining the integrands and comparing we therefore conclude
\begin{align*}
\Grad \mathcal{YM}_{k} (\N) = (-1)^k \lap^{(k)} F +   \sum_{v = 0}^{2k-1} P^{(v)}_1 \lb F \rb  + P^{(2k-1)}_2\lb F\rb.
\end{align*}
The result follows.
\end{proof}
\end{prop}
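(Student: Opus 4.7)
The plan is to differentiate the Yang-Mills $k$-energy under the integral sign, apply Corollary \ref{cor:varNellF} to unpack $\tfrac{\del}{\del t}[\N_t^{(k)} F_{\N_t}]$, and then integrate by parts repeatedly to transfer every derivative off of $\dot{\gG}_t$. Writing
\begin{equation*}
\tfrac{d}{dt} \mathcal{YM}_k(\N_t) = \int_M \langle \tfrac{\del}{\del t}[\N^{(k)} F], \N^{(k)} F\rangle\, dV_g,
\end{equation*}
Corollary \ref{cor:varNellF} splits the first factor into a \emph{lower order} piece $\sum_{i=0}^{k-1} \N^{(i)}\dot{\gG}_t \ast \N^{(k-i-1)} F$ and a \emph{top order} piece $\N^{(k)} D \dot{\gG}_t$. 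Accordingly the variation decomposes as $I_{\mathrm{low}} + I_{\mathrm{top}}$, and the proof reduces to recognizing each piece schematically as a pairing against $\dot{\gG}_t$.

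For $I_{\mathrm{low}}$, the typical summand carries $i \leq k-1$ derivatives on $\dot{\gG}_t$ paired against $\N^{(k)}F$; integrating by parts $i$ times redistributes these derivatives onto the two curvature factors, producing $2k-1$ total derivatives spread across two copies of $F$ together with a background tensor. This is precisely the schematic form of $\langle \dot{\gG}_t, P_2^{(2k-1)}[F]\rangle$. For $I_{\mathrm{top}}$, I will peel $\N$'s off one at a time and commute with $D$, using the referenced Lemma \ref{lem:shiftkNs} to absorb every $[\N,D]$-commutator as a curvature-times-derivative-of-$F$ term. The principal part combines into $\int_M \langle D\dot{\gG}_t, (-1)^k \lap^{(k)}F\rangle\,dV_g$ via $\lap = -\N^*\N$ iterated $k$ times, while the commutator remainders are of the form $\sum_{v=0}^{2k-2} P_1^{(v)}[F] + P_2^{(2k-2)}[F]$. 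One final integration by parts on the $D$ using the normalization in \eqref{eq:Dstardefn} with form degree $p=2$ produces the factor $2$ in front of the leading term $(-1)^k D^* \lap^{(k)} F$, and bumps each remainder $P$ term up by one derivative to the advertised range.

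The main obstacle is the bookkeeping of the commutator shuffle in $I_{\mathrm{top}}$: each commutation of $\N^{(w)}$ past $D$ or past itself spawns a curvature contraction that must be absorbed cleanly into the universal $P^{(s)}_v[F]$ schema, and the signs and derivative counts must align so that the output is exactly $\sum_{v=0}^{2k-1} P_1^{(v)}[F] + P_2^{(2k-1)}[F]$ after the last integration by parts. The numerical factor $2$ in front of $D^* \lap^{(k)} F$ is delicate — it is forced by the choice of normalization for $D^*$ and by the form degree of $D\dot{\gG}_t$, so care is required not to lose or double it. Once these items are handled, combining $I_{\mathrm{low}}+I_{\mathrm{top}}$ and extracting $\dot{\gG}_t$ by nondegeneracy of the pairing delivers the claimed expression for $\Grad \mathcal{YM}_k(\N)$.
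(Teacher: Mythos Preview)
Your proposal is correct and mirrors the paper's proof essentially step for step: the same split via Corollary~\ref{cor:varNellF} into lower-order and top-order integrals, the same integration-by-parts treatment of each, the same invocation of Lemma~\ref{lem:shiftkNs} to extract $(-1)^k\lap^{(k)}F$ from $I_{\mathrm{top}}$, and the same final $D\mapsto D^*$ step that produces the factor of $2$. One small clarification: the commutator terms absorbed by Lemma~\ref{lem:shiftkNs} are $[\N_i,\N_j]$ commutators arising from rearranging $\N^{(2k)}$ into $\lap^{(k)}$, not $[\N,D]$ commutators, but this does not affect your argument since either kind lands in the $P$-schema as you describe.
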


Given $\N_t \in \mathcal{A}_E \times \mathcal{I}$ define the \emph{Yang-Mills $k$-flow} by
\begin{equation}\label{eq:YMkflow}
\frac{\del \N_t}{\del t}  = - \Grad \mathcal{YM}_{k} (\N_t)=  (-1)^{k+1} D_{\N_t}^* \lap_t^{(k)} F +  P^{(2k-1)}_2\lb F_{\N_t}\rb.\tag{\textsf{YMkF}}
\end{equation}
Observe that setting $k = 0$ in \eqref{eq:YMkflow} and omitting the lower order terms immediately yields Yang-Mills flow,
\begin{equation*}
\frac{\del \N_t}{\del t} = - D^*_{\N_t} F_{\N_t}.
\end{equation*}
For future work it is advantageous to perform a more general analysis. To do so set, for $k \in \mathbb{N}$,
\begin{equation}\label{eq:Omegak}
\mho_k(\N) := \sum_{i=1}^{k} \sum_{j=2}^{i + 1}{P_j^{(2
i + 3 -2j)} \lb F_{\N} \rb}.
\end{equation}
We additionally set $\mho_0 \equiv 0$. The \emph{generalized Yang-Mills $k$-flow} is given by
\begin{equation*}
\frac{\del \N_t}{\del t} = (-1)^{k+1} D^*_{\N_t} \lap_t^{(k)}F_{\N_t} + \mho_k(\N_t).
\end{equation*}
Given a connection $\N \in \mathcal{A}_E$, a \emph{generalized Yang-Mills $k$-system} will be the generalized Yang-Mills $k$-flow coupled with $\N_0 := \N$ as the initial condition.

Next we demonstrate the weak ellipticity of the generalized Yang-Mills $k$-flow, which is a result of the gauge invariance of the function (cf. Corollary \ref{lem:YMkgaugeinv}).

\begin{prop}\label{prop:Phikwkellptc}
Set 
\begin{align*}
\Phi_k &: \mathcal{A}_E \to \Lambda^1(\End E) \\
&: \N \mapsto (-1)^{k+1} D_{\N}^* \lap^{(k)} F_{\N}.
\end{align*}
Then $\Phi_k$ is a weakly elliptic operator.

\begin{rmk}
Note that $\Phi_k$ is the highest order term of the generalized Yang-Mills $k$-flow and thus its symbol is completely determined by this.
\end{rmk}

\begin{proof}
Let $I = (i_v)_{v=1}^{k}$ be some multiindex. Appealing to Corollary \ref{cor:varNellF} for the variation of $\N^{(2k+1)} F_{\N_t}$ yields,
\begin{align*}
\frac{\del}{\del t} \left[ \left( \Phi_k(\N_t) \right)_{r \ga}^{\gb} \right] 
&= (-1)^{k} \frac{\del}{\del t} \left[ g^{pq} \left( \prod_{\ell = 1}^k{g^{i_{\ell} j_{\ell}}} \right) \N_p \N_{i_1 j_1 \cdots i_k j_k} F_{q r \ga}^{\gb}\right] \\
&=  (-1)^{k} g^{pq} \left( \prod_{\ell = 1}^k{g^{i_{\ell} j_{\ell}}} \right) \frac{\del}{\del t} \left[ \N_p \N_{i_1 j_1 \cdots i_k j_k} F_{q r \ga}^{\gb}\right] \\
&=(-1)^{k} g^{pq} \left( \prod_{\ell = 1}^k{g^{i_{\ell} j_{\ell}}} \right)\left( \N_p \N_{i_1 j_1 \cdots i_k j_k} D_{q}\dot{\gG}_{r \ga}^{\gb}  + \left(\sum_{i=0}^{k-1} \N^{(i)}\dot{\gG} \ast \N^{(k-i-1)} F\right)_{p i_1 j_1 \cdots i_k j_k q r \ga}^{\gb} \right).
\end{align*}
The left hand term is of dominating order and thus determines the symbol, which is given by, for $B \in \Lambda^1(E)$,
\begin{equation*}
\left( \sigma \left[ \Phi_k \right](B) \right)_{r \ga}^{\gb} = (-1)^k  g^{pq} \left( \prod_{\ell = 1}^k{g^{i_{\ell} j_{\ell}}} \right) \del_p \del_{i_1 j_1 \cdots i_k j_k} \left( \del_{q}B_{r \ga}^{\gb} - \del_{r}B_{q \ga}^{\gb} \right),
\end{equation*}
and thus
\begin{align}
\begin{split}\label{eq:LPhik}
\left( L_{\Phi_k}^{\xi} (B) \right)_{r \ga}^{\gb}&:= (-1)^k   g^{pq} \left( \prod_{\ell = 1}^k{g^{i_{\ell} j_{\ell}}} \right) \xi_p \left( \prod_{s = 1}^k{\xi_{i_s} \xi_{j_s}} \right) \left( \xi_{q}B_{r \ga}^{\gb} - \xi_{r}B_{q \ga}^{\gb} \right)\\
&=(-1)^k |\xi|^{2k} \left( |\xi|^2 B_{r \ga}^{\gb} - \xi_r \langle B, \xi \rangle_{\ga}^{\gb}  \right).
\end{split}
\end{align}
Therefore it follows that
\begin{equation*}
\left\langle L_{\Phi_k}^{\xi}(B) , B \right\rangle = (-1)^{k+1} |\xi|^{2k} \left( |\xi|^2 |B|^2 - \left| \langle B, \xi \rangle \right|^2 \right).
\end{equation*}
This term is nonnegative by the Cauchy Schwartz Inequality. This completes the proof, but let us proceed and identify the kernel of $L_{\Phi_k}^{\xi}$. After changing bases (via rotation and dilation of the space of $\xi$) one can take $\xi: = (\delta_{1}^{\ell})_{\ell = 1}^{n}$. Evaluating at such $\xi$ gives that
\[
\left\langle L_{\Phi_k}^{\xi}(B) , B \right\rangle = (-1)^{k+1}  \prs{|B|^2 - B_{1 \ga}^{\gd} B_{1 \gd}^{\ga}}.
\]
Therefore the kernel of the operator
\begin{align*}
\mathfrak{L}_{\Phi_k}^{\xi} &: \Lambda^1(\End E) \to \mathbb{R} \\
&: B \mapsto \langle L_{\Phi_k}^{\xi} (B), B \rangle.
\end{align*}
is the set of sections $B \in S(T^*M \ten \End E)$ of the form $B = (\gd_{1}^{k} B_{k \ga}^{\gb})$. Thus $\dim{( \ker{ \mathfrak{L}_{\xi} })} = \dim{(\End E)} $.
\end{proof}
\end{prop}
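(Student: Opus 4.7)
The claim is that the principal symbol of $\Phi_k$ is non-negative, i.e.\ the operator is weakly elliptic. Since $\Phi_k$ involves $D_{\N}^*\lap^{(k)} F_{\N}$ and $F_{\N}$ is quadratic in $\N$, I expect the linearization to be a $(2k+2)$-order operator in the connection variation $\dot\gG_t$, and the principal part should come exclusively from the $\N^{(k)}D\dot\gG$ piece of the variation of $\N^{(k)} F_{\N}$.

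\textbf{Step 1: Linearize.} Write $\Phi_k(\N)$ in coordinates as a metric contraction of $2k+1$ iterated covariant derivatives of $F_{\N}$, namely
\begin{equation*}
(\Phi_k(\N))_{r\ga}^{\gb} = (-1)^{k+1} g^{pq} \Big( \prod_{\ell=1}^{k} g^{i_\ell j_\ell} \Big) \N_p \N_{i_1 j_1 \cdots i_k j_k} F_{qr\ga}^{\gb}.
\end{equation*}
Differentiating in $t$ and invoking Corollary \ref{cor:varNellF} to replace $\frac{\del}{\del t}[\N^{(2k+1)} F_{\N_t}]$ by $\N^{(2k+1)} D_{\N}\dot\gG_t$ plus lower-order $\N^{(i)}\dot\gG * \N^{(2k-i)} F$ terms, everything of order less than $2k+2$ in $\dot\gG$ is irrelevant to the symbol.

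\textbf{Step 2: Extract the symbol.} The top-order term is $(-1)^{k+1} g^{pq}(\prod g^{i_\ell j_\ell}) \N_p \N_{i_1 j_1\cdots i_k j_k} D_q \dot\gG_{r\ga}^{\gb}$. Replacing each $\N$ with a frequency $\xi$ and using the coordinate formula for $D\dot\gG$ (skew-symmetrization in two indices), I obtain for test one-form $B$,
\begin{equation*}
(L_{\Phi_k}^{\xi}(B))_{r\ga}^{\gb} = (-1)^{k+1}|\xi|^{2k}\bigl( |\xi|^2 B_{r\ga}^{\gb} - \xi_r \langle B,\xi\rangle_{\ga}^{\gb} \bigr),
\end{equation*}
as displayed in \eqref{eq:LPhik}. (This is precisely the Hodge-type symbol one gets for $D^*D$, iterated with $\lap^{(k)}$.)

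\textbf{Step 3: Pair against $B$ and apply Cauchy--Schwarz.} Using the sign convention of the paper's inner product, the pairing is
\begin{equation*}
\ip{L_{\Phi_k}^{\xi}(B), B} = (-1)^{k+1}\cdot(-1)\cdot|\xi|^{2k}\bigl(|\xi|^2 |B|^2 - |\ip{B,\xi}|^2\bigr),
\end{equation*}
but since the outer minus sign from the $-g^{ij}$ convention is absorbed with the $(-1)^{k+1}$ in the standard definition so that the result is $\geq 0$ by Cauchy--Schwarz. This gives weak (but not strict) ellipticity.

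\textbf{Step 4: Identify the kernel.} Rotating coordinates so that $\xi = (\delta^\ell_1)_\ell$, the quadratic form $\mathfrak L_{\Phi_k}^{\xi}(B) := \ip{L_{\Phi_k}^{\xi}(B), B}$ reduces to a multiple of $|B|^2 - B_{1\ga}^{\gd} B_{1\gd}^{\ga}$, which vanishes iff the only nonzero components of $B$ are the $B_{1\ga}^{\gb}$. These span a subspace isomorphic to $\End E$, so $\dim \ker \mathfrak L_{\Phi_k}^{\xi} = \dim \End E$, recovering the gauge degeneracy exactly as expected (this is dual to the infinitesimal action of the gauge group and is forced by gauge invariance of $\mathcal{YM}_k$).

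\textbf{Main obstacle.} Nothing is deep here; the entire argument is formal once the symbol is extracted. The one bookkeeping pitfall is tracking the signs carefully: the factor $(-1)^{k+1}$ in $\Phi_k$, the $(-1)$ from $\lap = -\N^*\N$ being contained implicitly in the chosen contraction, and the minus sign in the paper's inner product convention all must cancel correctly to yield the non-negative pairing. Once this is verified, the Cauchy--Schwarz conclusion is immediate and the kernel identification is a one-line coordinate computation.
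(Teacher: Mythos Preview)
Your proposal is correct and follows essentially the same route as the paper: linearize via Corollary~\ref{cor:varNellF}, discard lower-order terms, read off the symbol \eqref{eq:LPhik}, pair with $B$, and conclude nonnegativity by Cauchy--Schwarz, then identify the kernel by rotating $\xi$ to $e_1$. The only discrepancy is cosmetic: in your Step~1 coordinate expression you carry $(-1)^{k+1}$ where the paper carries $(-1)^{k}$ (the extra sign comes from the paper's convention for $D^*$), but you explicitly flag sign-tracking as the sole bookkeeping hazard, and the final pairing and kernel computation agree with the paper's.
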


\section{Existence and regularity results}\label{s:flowsubcrit}
\subsection{Short time existence}\label{ss:shorttimeexist}

We next demonstrate the short time existence of generalized Yang-Mills $k$-flow. Despite the weak ellipticity of the operator $\Phi_k$ demonstrated in Proposition \ref{prop:Phikwkellptc}, one may construct a `moving gauge' which actively shifts the flow to an equivalent parabolic system, thus ensuring short time existence.

The active shift of gauge transformations within the gauge group $\mathcal{G}_E$ is achieved by solving the one-parameter family of gauge transformations which satisfy the following ordinary differential equation. Given a family of connections $\N_t \in \mathcal{A}_E \times \mathcal{I}$ and a one-parameter family $\varsigma_t \in S(\Aut E) \times \mathcal{I}$ satisfying the flow
\begin{equation}\label{eq:YMsflow}
\frac{\del \varsigma_t}{\del t} = (-1)^{k+1} \left( \lap^{(k)}_t D_{\N_t}^* (\gG_t - \gG_0) \right)\varsigma_t,
\end{equation}
with the additional imposed initial condition $\vs_0 := \Id$. The corresponding parabolic system will be written with respect to the following operator.
\begin{align}
\begin{split}\label{eq:Psik}
\Psi_k(\N,\tN) &: \mathcal{A}_E \times \mathcal{A}_E  \to \Lambda^1(\End E) \\
&: (\N,\tN) \mapsto (-1)^{k+1} D_{\N}^* \lap^{(k)} F_{\N}+ \mho_k(\N) + (-1)^{k}  D_{\N} \lap^{(k)} D_{\N}^* \prs{\gG - \tgG }.
\end{split}\tag{\textsf{$\Psi$kF}}
\end{align}

\begin{defn}[$(\Psi,k)$-flow]
We will call a one-parameter family $\N_t \in \mathcal{A}_E \times \mathcal{I}$ with initial condition $\N_0$ a solution to the \emph{$(\Psi,k)$-flow} if
\begin{equation*}
\tfrac{\del \N_t}{\del t} = \Psi_k(\N_t,\N_0).
\end{equation*}
\end{defn}

\begin{lemma}\label{lem:Psikellptc} For a fixed connection $\N_0 \in \mathcal{A}_E$, $\Psi_k(\cdot,\N_0)$ is an elliptic operator.

\begin{proof} The symbol of $\Psi_k$ will be computed as follows: since the variation of the first term, which is $\Phi_k$, was computed in Proposition \ref{prop:Phikwkellptc}, it is sufficient to first consider the variation of the latter quantities. Set
\begin{align*}
\prs{\Theta_k \prs{\N,\tN}}_{p \ga}^{\gb}
&:=(-1)^{k} \left( D_{\N} \lap^{(k)} D_{\N}^* \prs{\gG - \tgG} \right)_{p \ga}^{\gb}\\
 &= (-1)^{k+1} g^{qr} \left( \prod_{\ell = 1}^k{g^{i_{\ell} j_{\ell}}} \right) \left( \N_p \N_{i_1 j_1 \cdots i_k j_k} \N_{q} \left( \gG - \tgG \right)_{r \ga}^{\gb} \right).
\end{align*}
Then take $\tN = \N_0$, and consider a one-parameter family $\N_t \in \mathcal{A}_E \times \mathcal{I}$ with $\N_0$ as the initial condition. We differentiate temporally and appeal to Lemma \ref{lem:varNellA} to observe that there is only one term of highest order (specifically order $2k+3$).
\begin{align*}
\frac{\del}{\del t} \lb (\Theta_k \prs{\N_t, \N_0 })_{p \ga}^{\gb} \rb &= (-1)^{k+1} g^{qr} \left( \prod_{\ell = 1}^k{g^{i_{\ell} j_{\ell}}} \right) \left( \N_p \N_{i_1 j_1 \cdots i_k j_k} \N_{q} \dot{\gG}_{r \ga}^{\gb} \right).
\end{align*}
Therefore for $B \in \Lambda^1(\End E)$,
\begin{align*}
(\sigma[\Theta_k](B) )_{p \ga}^{\gb}  &=  (-1)^{k}  g^{qr} \left( \prod_{\ell = 1}^k{g^{i_{\ell} j_{\ell}}} \right) \del_p( \del_{i_1} \del_{j_1} \cdots \del_{i_k} \del_{j_k} \del_{q} B_{r \ga}^{\gb} ), 
\end{align*}
and so
\begin{equation}\label{eq:LThetak}
\left(L_{\Theta_k}^{\xi}(B) \right)_{p \ga}^{\gb} = (-1)^{k}\xi_{p} |\xi|^{2k} \langle \xi, B \rangle_{\ga}^{\gb}.
\end{equation}
Then by combining \eqref{eq:LPhik} and \eqref{eq:LThetak} and noting they have the same orders,
\begin{equation*}
\left( L_{\Psi_k}^{\xi}(B) \right)_{p \ga}^{\gb} = \left( L_{\Phi_k}^{\xi}(B) + L_{\Theta_k}^{\xi}(B) \right)_{p \ga}^{\gb} = (-1)^{k+1} |\xi|^{2k+2} B_{p \ga}^{\gb}.
\end{equation*}
Now we observe that
\begin{equation*}
\left\langle L_{\Psi_k}^{\xi}(B), B \right\rangle = (-1)^k |\xi|^{2k+2} |B|^2.
\end{equation*}
Thus $\langle L_{\Psi_k}^{\xi}(\cdot), \cdot \rangle$ is either strictly positive definite or negative definite depending on the parity of $k$. We conclude that $\Psi_k$ is an elliptic operator and the result follows.
\end{proof}
\end{lemma}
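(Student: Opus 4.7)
The plan is to compute the principal symbol of $\Psi_k(\cdot,\N_0)$ and verify that the associated quadratic form is definite. The operator decomposes as $\Psi_k = \Phi_k + \mho_k + \Theta_k$, where $\Phi_k(\N) = (-1)^{k+1} D_{\N}^* \lap^{(k)} F_{\N}$ was analyzed in Proposition \ref{prop:Phikwkellptc}, and $\Theta_k(\N, \tN) := (-1)^{k} D_{\N} \lap^{(k)} D_{\N}^* (\gG - \tgG)$ is the DeTurck-type gauge-fixing term, engineered to supply precisely the missing direction in the kernel of $\sigma[\Phi_k]$.

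First I would verify that $\mho_k$ contributes nothing to the principal symbol. From the definition \eqref{eq:Omegak}, every summand has the form $P^{(2i+3-2j)}_j[F_{\N}]$ with $1 \leq i \leq k$ and $2 \leq j \leq i+1$, so the maximum number of covariant derivatives landing on a single factor of $F_{\N}$ is $2k - 1$. By Corollary \ref{cor:varNellF}, linearization contributes at most one additional derivative through $F_{\N} \mapsto D_{\N} \dot{\gG}$, yielding a top-order contribution of order $2k$. Since $\Phi_k$ and $\Theta_k$ are of order $2k+2$, the symbol of $\Psi_k$ is determined entirely by these two pieces.

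Next I would compute the symbol of $\Theta_k(\cdot, \N_0)$. Expanding in coordinates,
$$\prs{\Theta_k(\N, \N_0)}_{p \ga}^{\gb} = (-1)^{k+1} g^{qr} \prs{\prod_{\ell=1}^{k} g^{i_\ell j_\ell}} \N_p \N_{i_1 j_1 \cdots i_k j_k} \N_q (\gG - \gG_0)_{r \ga}^{\gb}.$$
For a family $\N_t$ with $\N_0$ fixed, any variation produced via Lemma \ref{lem:varNellA} acting on one of the $\N$ factors reduces the number of derivatives on $\dot{\gG}$; the only contribution of full order $2k+2$ arises from differentiating the $(\gG - \gG_0)_r$ factor, producing $\dot{\gG}_r$. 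Reading off the symbol yields $(L_{\Theta_k}^{\xi}(B))_{p \ga}^\gb = (-1)^{k}\, \xi_p |\xi|^{2k} \ip{\xi, B}_\ga^\gb$, with a sign precisely complementary to the cross-term in $L_{\Phi_k}^\xi$.

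Finally, the crucial observation is that the cross-term $\xi_r \ip{B, \xi}$ in $L_{\Phi_k}^\xi$ — the source of the kernel identified in Proposition \ref{prop:Phikwkellptc} — is cancelled exactly by $L_{\Theta_k}^\xi$. Thus $L_{\Psi_k}^\xi(B) = (-1)^{k+1} |\xi|^{2k+2} B$, and the quadratic form $\ip{L_{\Psi_k}^\xi(B), B} = (-1)^{k} |\xi|^{2k+2} |B|^2$ is strictly definite for $\xi \neq 0$, with sign governed by the parity of $k$. The main obstacle is the careful bookkeeping of multiindex contractions and sign conventions that establish the exact cancellation; the conceptual point is simply that $\Theta_k$ is constructed so that its symbol fills precisely the degenerate subspace identified in the weak ellipticity of $\Phi_k$.
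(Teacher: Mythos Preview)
Your proposal is correct and follows essentially the same approach as the paper: compute the symbol of the gauge-fixing term $\Theta_k$ in coordinates, observe that it exactly cancels the degenerate cross-term $\xi_r\ip{B,\xi}$ in $L_{\Phi_k}^\xi$, and conclude definiteness of $L_{\Psi_k}^\xi$. The only difference is that you explicitly verify $\mho_k$ is of order at most $2k$ and hence invisible at the principal level, whereas the paper leaves this implicit (relying on the remark following Proposition~\ref{prop:Phikwkellptc}); your order count via \eqref{eq:Omegak} and Corollary~\ref{cor:varNellF} is correct and a worthwhile addition.
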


We now develop some necessary identities regarding the action of gauge transformations on various quantities. The majority are included within the appendix in the gauge transformations section (\S \ref{ss:gaugetransformations}), though the most relevant will be stated here.

\begin{rmk}
For $\varsigma \in S( \Aut E)$ and $\N \in \mathcal{A}_E$, set
\begin{equation*}
\lap_\varsigma := g^{ij}(\varsigma^* \N)_i (\varsigma^* \N)_j. 
\end{equation*}
With this notation, the action of $\varsigma$ on $\lap$ we have the identities
\begin{align*}
\varsigma^* \lap (\cdot) := \varsigma^{-1} \lap (\varsigma \cdot) = \lap_{\varsigma} (\varsigma \cdot).
\end{align*}
Additionally an analogous statement of Lemma \ref{lem:gaugeactid1} applies where $\lap$ replaces $\N$:
\begin{equation*}
\varsigma^* \left[ \lap (\gw) \right] = (\varsigma^* \lap) \left[ \varsigma^* \gw \right] = \lap_{\varsigma} \left[ \varsigma^* \gw \right].
\end{equation*}
\end{rmk}

To understand the intuition behind the following proof, we recall its primary inspiration and most basic case (Yang-Mills flow, $k=0$) given in (\cite{DonB}, pp. 233-235), though we have simplified the strategy. We then address the general setting for $k \in \mathbb{N}$ after. The short time existence is not immediately clear since Yang-Mills flow itself fails to be parabolic due to the infinite-dimensional gauge symmetry group. One correctly expects, given a solution $\N_t$, that its the geometric content should be preserved in its projection $\brk{ \N_t } \in \mathcal{A}_E / \mathcal{G}_E \times \mathcal{I}$ in the space of connections modulo gauge transformation. One chooses another family within $\brk{ \N_t }$ which is not gauge invariant but moves smoothly and transversely to the action of the gauge group. This ensures that the degeneracy is removed and thus the family is parabolic and so exists for short time. This new flow can be represented uniquely by a family of gauge transformations $\varsigma_t \in \mathcal{G}_E \times \mathcal{I}$ applied to the initial flow.

Before continuing we define a notational convention which will condense more complicated terms produced from the lower order terms in differentation.

\begin{defn}[Partition strings]\label{defn:partitionstring}
Let $L := (l_i)_{i=1}^{|L|}$ denote some multiindex, $\mathcal{P}_r(L)$ denote the \emph{partition strings of $L$}, that is, the collection of multiindices of length $r \leq |L|$ which contains entries ordered with respect to $L$,
\begin{equation*}
 \mathcal{P}_{r}(L) := \left\{ (l_{s_v})_{v = 1}^r : s_{v} \in [1,m], s_{v} < s_{v+ 1} \right\}.
 \end{equation*}
For example,
\begin{align*}
\mathcal{P}_1(L) &:= \left\{ (l_s): s \in [1,m] \cap \mathbb{N} \right\}, \\ \mathcal{P}_2(L) &:= \left\{ (l_{s_1}, l_{s_2}): s_1,s_2 \in [1,m] \cap \mathbb{N}, s_1 < s_2 \right\}.
\end{align*}
 Given $\mathsf{P} \in \mathcal{P}_r(L)$ of the form $\mathsf{P} := (l_{s_v})_{v = 1}^r$ we let $\mathsf{P}^c$ denote the complimentary string, where $\mathsf{P}^c \in \mathcal{P}_{m-r}(L)$ and, roughly speaking as sets, $(\mathsf{P} \cup \mathsf{P}^c)= L$.
\end{defn}

We next define the following operator which is formulated for notational convenience. Its construction is motivated by the following Lemma \ref{lem:YMkslaplap}, and is purely a technical quantity in terms of the lower order objects. This is utilized in in demonstrating uniqueness of the flow.
\begin{align}
\begin{split}\label{eq:akdefn}
\mathfrak{a}_k &: S(\Aut E) \times \mathcal{A}_E^{\times 2} \to S(\End E), \\
& : (\varsigma,\N,\widetilde{\N}) \mapsto \varsigma_{\tau}^{\gb} \N^{(k)} \left( D_{\N}^* \lb \gG - \widetilde{\gG} \rb \right)_{\ga} ^{\tau} + g^{ij} \varsigma^{\gb}_{\tau} \lap^{(k)} \N_j \lb (\varsigma^{-1})_{\gz}^{\tau} \rb (D_{\N} \varsigma)_{i \ga}^{\gz}  \\
& \hsp \hsp \hsp \hsp \hsp + g^{ij} \varsigma_{\tau}^{\gb} \lap^{(k)} \lb \varsigma^{-1} \rb_{\gz}^{\tau} \lap \lb \varsigma \rb_{\ga}^{\gz}  \\
& \hsp \hsp \hsp \hsp \hsp + g^{ij}\varsigma_{\tau}^{\gb} \left( \prod_{\ell = 0}^{k} g^{i_{\ell} j_{\ell}}\right) \sum_{r = 1}^{k-1}{ \sum_{\mathsf{P} \in \mathcal{P}_r(L)}{\left( \N_{\mathsf{P}} (\varsigma^{-1})_{\gz}^{\tau} \right)\left( \N_{\mathsf{P}^c}(\lap)\left[ \varsigma \right]_{\ga}^{\gz} \right) }}.
\end{split}
\end{align}

\begin{lemma}\label{lem:YMkslaplap} Let $\N, \tN \in \mathcal{A}_E$ and $\varsigma \in S(\Aut E)$. The following equality holds
\begin{equation}\label{eq:YMkslaplap}
\prs{\lap^{(k)} \left[ D^*_{\N} \prs{\gG - \tgG } \right]} \varsigma = - \slap^{(k+1)} \varsigma + \mathfrak{a}_k \prs{\varsigma, \varsigma^*\N , \tN}.
\end{equation}

\begin{rmk} The equality given in \eqref{eq:YMkslaplap} is the key to establishing uniqueness of the generalized Yang-Mills $k$-flow by establishing the correspondence between this flow and the $(\Psi,k)$-flow. This relationship lies in this miraculous `dictionary' equality between the flows. On the left side of \eqref{eq:YMkslaplap} is an algebraic interaction of tensors on the gauge transformation. On the right is an act of differentiation of the gauge transformation plus lower order terms.
\end{rmk}

\begin{proof} 
First an expression for the difference of the connection coefficient matrices $\gG$ and $\tgG$ with respect to $\sNG$ will be attained through first forming `$\varsigma$-conjugations' of $\gG$ and the coordinate expansion of $\sNG$ (Lemma \ref{lem:coordsN}).
\begin{align*} 
\gG_{i \ga}^{\gb} - \tgG_{i \ga}^{\gb} &= \varsigma^{\gb}_{\gd} (\varsigma^{-1})^{\gd}_{\rho} \gG_{i \tau}^{\rho} \varsigma^{\tau}_{\gt} (\vs^{-1})^{\gt}_{\ga} - \tgG_{i \ga}^{\gb} \\
&= \varsigma^{\gb}_{\gd} (\sNG)_{\gamma}^{\gd} (\varsigma^{-1})_{\ga}^{\gamma}- \varsigma^{\gb}_{\gd}(\varsigma^{-1})^{\gd}_{\gt}(\del_i \varsigma^{\gt}_{\gamma}) (\varsigma^{-1})_{\ga}^{\gamma} - \tgG_{i \ga}^{\gb} \\
&=\varsigma^{\gb}_{\gd} \left( (\sNG)_{\gamma}^{\gd} - (\varsigma^{-1})^{\gd}_{\gt}(\del_i \varsigma^{\gt}_{\gamma})\right)(\varsigma^{-1})_{\ga}^{\gamma} - \tgG_{i \ga}^{\gb}.
\end{align*}
Using one more identity,
\begin{align*}
(D_{\varsigma^* \N}\varsigma)_{i \ga}^{\gb} &= \del_i \varsigma_{\ga}^{\theta} + (\sNG)_{i \gd}^{\gb} \varsigma_{\ga}^{\gd} - (\sNG)_{i \ga}^{\gd} \varsigma_{\gd}^{\gb},
\end{align*}
one sees that
\begin{align*}
\gG_{i \ga}^{\gb} - \tgG_{i \ga}^{\gb}
&= \varsigma^{\gb}_{\gd} \left( (\varsigma^* \N)_{i \gamma}^{\gd} - (\varsigma^{-1})^{\gd}_{\gt} \left( (D_{\varsigma^* \N}\varsigma)_{i \gamma}^{\gt}  - (\sNG)_{i \tau}^{\gt}\varsigma_{\gamma}^{\tau} + \varsigma^{\gt}_{\tau} (\sNG)^{\tau}_{ i \gamma} \right)\right)(\varsigma^{-1})_{\ga}^{\gamma} - \tgG_{i \ga}^{\gb} \\
&= \varsigma^{\gb}_{\gd} \left( - (\varsigma^{-1})^{\gd}_{\gt} \left( (D_{\varsigma^* \N}\varsigma)_{i \gamma}^{\gt}  - (\sNG)_{i \tau}^{\gt}\varsigma_{\gamma}^{\tau} \right) - (\varsigma^{-1})^{\gd}_{\gz} \tgG_{i \gt}^{\gz} \varsigma^{\gt}_{\gamma} \right)(\varsigma^{-1})_{\ga}^{\gamma} \\
&= - (D_{\varsigma^* \N} \varsigma)_{i \gamma }^{\gb} (\varsigma^{-1})_{\ga}^{\gamma} + (\sNG)_{i \ga}^{\gb} - \tgG_{i \ga}^{\gb}.
\end{align*}
We apply this to the following computation.
\begin{align*}
 \lap^{(k)} D_{\N}^*(\gG - \tgG)_{\gd}^{\gb} \varsigma_{\ga}^{\gd}
&= (\varsigma^{-1} \varsigma)^* \left( \lap^{(k)} D_{\N}^*(\gG - \tgG)_{\gd}^{\gb} \right) \varsigma_{\ga}^{\gd}\\
&=- (\varsigma^{-1} \varsigma)^* \lap^{(k)} D^*_{\N} \left[  (D_{\varsigma^* \N} \varsigma)_{i \gamma }^{\gb} (\varsigma^{-1})_{\gd}^{\gamma} - (\sNG)_{i \gd}^{\gb} + \tgG_{i \gd}^{\gb} \right] \varsigma_{\ga}^{\gd} \\
&= - (\varsigma^{-1})^* (\slap)^{(k)} D_{\varsigma^* \N}^* \left[  (\varsigma^{-1})^{\gb}_{\gz} (D_{\varsigma^* \N} \varsigma)_{i \gd }^{\gz} \right]  \varsigma_{\ga}^{\gd} \\ & \hsp - (\varsigma^{-1})^* (\slap)^{(k)} D_{\varsigma^* \N}^* \left[ - (\varsigma^{-1})^{\gb}_{\gz} (\sNG)_{i \rho}^{\gz} \varsigma^{\rho}_{\gd}   + (\varsigma^{-1})^{\gb}_{\gz} \tgG_{i \rho}^{\gz} \varsigma^{\rho}_{\gd} \right] \varsigma_{\ga}^{\gd} \\
&= - \varsigma^{\gb}_{\tau} (\slap)^{(k)} D_{\varsigma^* \N}^* \left[  (\varsigma^{-1})^{\tau}_{\gz} (D_{\varsigma^* \N} \varsigma)_{\ga}^{\gz}  - \varsigma^*\left( (\sNG)_{i \ga}^{\tau}  - \tgG_{i \ga}^{\tau} \right)  \right] \\
&=  -\varsigma^{\gb}_{\tau} (\slap)^{(k)} D_{\varsigma^* \N}^* \left[  (\varsigma^{-1})^{\tau}_{\gz} (D_{\varsigma^* \N} \varsigma)_{\ga}^{\gz} \right]  + \varsigma^{\gb}_{\tau}\left[  (\slap)^{(k)} D_{\varsigma^* \N}^*  \left[  \varsigma^*\left( (\sNG)_{\ga}^{\tau}  - \tgG_{\ga}^{\tau} \right)  \right] \right]_{T_1}.
\end{align*}
Expanding the left side term
\begin{align*}
(\slap)^{(k)}& \left[D^*_{\varsigma^* \N} \left( (\varsigma^{-1})_{\zeta}^{\tau}(D_{\varsigma^* \N} \varsigma)\right) \right]_{\ga}^{\gz} \\ &=  - g^{ji} (\slap)^{(k)} (\varsigma^* \N)_j \left[(\varsigma^{-1})_{\zeta}^{\tau}(D_{\varsigma^* \N} \varsigma)_{i \ga}^{\gz} \right] \\
&= - g^{ij} (\slap)^{(k)} \left[ (\varsigma^* \N)_j \left[ \left(\varsigma^{-1} \right)_{\zeta}^{\tau} \right] (D_{\varsigma^*\N} \varsigma)_{i \ga}^{\zeta} + \left(\varsigma^{-1} \right)_{\zeta}^{\tau} (\varsigma^* \N)_j \left[ (\varsigma^*\N)(\varsigma)\right]_{i \ga}^{\zeta} \right]\\
&= - \left[g^{ij} (\slap)^{(k)} \left[ (\varsigma^* \N)_j \left[ \left(\varsigma^{-1} \right)_{\zeta}^{\tau} \right] (D_{\varsigma^*\N} \varsigma)_{i \ga}^{\zeta} \right] \right]_{T_2} -  g^{ij} (\slap)^{(k)} \left(\varsigma^{-1} \right)_{\zeta}^{\tau} (\varsigma^* \N)_j \left[ (\varsigma^*\N)(\varsigma)\right]_{i \ga}^{\zeta}. 
\end{align*}
We further expand the term on the right, with the intent of drawing out the $\vs^{-1}$ term. Using Lemma \ref{lem:slapswap} applied to $\varsigma^{-1} \in S(\Aut E)$ we have
\begin{align*}
(\slap)^{(k)} \left[ (\varsigma^{-1})_{\gz}^{\gb}(\slap) \left[\varsigma \right]_{\ga}^{\gz} \right] 
&= (\varsigma^{-1})_{\zeta}^{\gb}(\slap)^{(k +1)} [\varsigma]_{\ga}^{\zeta} + \left[ (\slap)^{(k)}[\varsigma^{-1}]_{\zeta}^{\gb} (\slap)[\varsigma]_{\ga}^{\zeta} \right]_{T_3} \\
& \hsp +  \left[\left( \prod_{\ell = 0}^{k} g^{i_{\ell} j_{\ell}}\right) \sum_{r = 1}^{k-1}{ \sum_{\mathsf{P} \in \mathcal{P}_r(L)}{\left( (\varsigma^*\N)_{\mathsf{P}} (\varsigma^{-1})_{\gz}^{\gb} \right)\left( (\varsigma^*\N)_{\mathsf{P}^c}(\slap)\left[ \varsigma \right]_{\ga}^{\gz} \right) }} \right]_{T_4}.
\end{align*}
Therefore
\begin{align*}
(\slap)^{(k)} \brk{D_{\N}^*(\gG - \tgG)_{\gd}^{\gb}} \varsigma_{\ga}^{\gd} &= - (\slap)^{(k+1)} \varsigma_{\ga}^{\gb} + ( \mathfrak{a}_k(\varsigma,\varsigma^*\N, \tN) )_{\ga}^{\gb},
\end{align*}
where
\begin{equation*}
\mathfrak{a}_k \prs{\varsigma, \varsigma^*\N , \tN}_{\ga}^{\gb} = \varsigma_{\tau}^{\gb} \left(T_1 + T_2  +  T_3 +  T_4 \right)_{\ga}^{\tau}.
\end{equation*}
Note that $\mathfrak{a}_k \prs{\varsigma, \varsigma^*\N, \tN }$ is lower order than $(\slap)^{(k+1)} \varsigma$. The result follows.
\end{proof}
\end{lemma}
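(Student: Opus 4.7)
The plan is to convert the algebraic expression $D^*_\N(\gG - \tgG)\cdot \varsigma$ on the left side into a purely differential expression in $\varsigma$ (with respect to $\varsigma^*\N$), peel off the top-order piece $-\slap^{(k+1)}\varsigma$, and collect everything else into $\mathfrak{a}_k$. The key observation that opens the door is the standard gauge-transformation formula relating $\gG$ and $\sNG$.

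First I would derive a pointwise formula for $\gG - \tgG$. Writing $\gG = \varsigma \,(\varsigma^{-1} \gG\, \varsigma)\, \varsigma^{-1}$ and expanding $\sNG$ using its coordinate form (Lemma \ref{lem:coordsN}), together with the identity $(D_{\varsigma^*\N}\varsigma)_{i\ga}^{\gb} = \del_i \varsigma_\ga^{\gb} + (\sNG)_{i\gd}^\gb \varsigma_\ga^\gd - (\sNG)_{i\ga}^\gd\varsigma_\gd^\gb$, one obtains an expression of the schematic form
\[
\gG_{i\ga}^\gb - \tgG_{i\ga}^\gb \;=\; -(D_{\varsigma^*\N}\varsigma)_{i\gamma}^\gb (\varsigma^{-1})_\ga^\gamma + (\sNG)_{i\ga}^\gb - \tgG_{i\ga}^\gb.
\]
Substituting into $\lap^{(k)} D^*_\N(\gG - \tgG)$ and multiplying on the right by $\varsigma$, I use the conjugation rule $\varsigma^*[\lap(\gw)] = \slap[\varsigma^*\gw]$ (and its analogue for $D^*_\N$ from \S\ref{ss:gaugetransformations}) to rewrite everything in the $\varsigma^*\N$ frame. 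This produces a main piece coming from $(\slap)^{(k)} D^*_{\varsigma^*\N}\bigl[(\varsigma^{-1})\cdot D_{\varsigma^*\N}\varsigma\bigr]$ plus a tail containing the $(\sNG - \tgG)$ correction, which is exactly the $T_1$ term of $\mathfrak{a}_k$.

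Second, I would simplify the main piece. Since $D^*_{\varsigma^*\N}$ acts as a metric contraction of $(\varsigma^*\N)$, Leibniz gives
\[
D^*_{\varsigma^*\N}\bigl[(\varsigma^{-1})(D_{\varsigma^*\N}\varsigma)\bigr] = \bigl((\varsigma^*\N)\varsigma^{-1}\bigr)\#(D_{\varsigma^*\N}\varsigma) + (\varsigma^{-1})\,\slap(\varsigma).
\]
The first summand, once $(\slap)^{(k)}$ is applied, is precisely the $T_2$ piece of $\mathfrak{a}_k$. For the second summand I apply $(\slap)^{(k)}$ to $(\varsigma^{-1})\slap(\varsigma)$. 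Here the technical input is the commutation identity (referenced as Lemma \ref{lem:slapswap}) that allows pulling $\varsigma^{-1}$ out through the iterated Laplacian at the cost of intermediate Leibniz terms. This produces the leading term $(\varsigma^{-1})(\slap)^{(k+1)}\varsigma$, a term $((\slap)^{(k)}\varsigma^{-1})(\slap \varsigma)$ which is $T_3$, and a sum over proper partitions of indices yielding $T_4$. Multiplying through by $\varsigma$ collapses $\varsigma\cdot\varsigma^{-1}$ on the leading term to give exactly $-\slap^{(k+1)}\varsigma$, while the remaining three terms get a factor of $\varsigma$ in front, matching the definition \eqref{eq:akdefn}.

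The main obstacle is purely bookkeeping: ensuring every sign, every conjugation by $\varsigma$, and every index partition from the Leibniz expansion lines up with the precise form of $\mathfrak{a}_k$ in \eqref{eq:akdefn}. There is no conceptual subtlety, since all top-order cancellations are forced by the gauge-covariance identities $\varsigma^*\lap = \slap \varsigma^*$ and its $D^*$-analogue; the danger lies in miscounting a combinatorial coefficient or dropping a term in the partition sums. To keep the calculation manageable I would introduce the labels $T_1,\dots,T_4$ as placeholders as the expansion proceeds, and only at the end compare each $T_i$ against the matching summand of \eqref{eq:akdefn}. Once the identification is complete, observing that each $T_i$ involves at most $2k+1$ derivatives of $\varsigma$ or $\varsigma^{-1}$ (whereas the leading term uses $2k+2$) confirms the order comparison $\mathfrak{a}_k = o(\slap^{(k+1)}\varsigma)$ claimed in the remark.
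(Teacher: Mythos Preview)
Your proposal is correct and follows essentially the same route as the paper: derive the identity $\gG - \tgG = -(D_{\varsigma^*\N}\varsigma)\varsigma^{-1} + (\sNG - \tgG)$, conjugate $\lap^{(k)}D^*_\N$ into the $\varsigma^*\N$ frame to isolate $T_1$, apply Leibniz to extract $T_2$ and the term $(\varsigma^{-1})\slap\varsigma$, and then use Lemma~\ref{lem:slapswap} to pull $\varsigma^{-1}$ through $(\slap)^{(k)}$, producing the leading $-\slap^{(k+1)}\varsigma$ together with $T_3$ and $T_4$. Your assessment that the difficulty is purely bookkeeping is accurate, and your labeling $T_1,\dots,T_4$ even matches the paper's.
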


We now demonstrate the short time existence and uniqueness of the flow.

\begin{prop}\label{prop:YMkstexist}
Let $(E,h) \to (M^n,g)$ be a vector bundle over a compact manifold. Given some metric compatible connection $\N_0$ on $E$, there exists some $\varepsilon > 0$ such that the generalized Yang-Mills $k$-system with initial condition $\N_0$ has a unique solution $\N_t$ for $t \in [0,\varepsilon)$.

\begin{proof} We first prove existence and then uniqueness. Let $\N_0 \in \mathcal{A}_E$ and consider the following two systems. First, a system of connections $\N_t \in \mathcal{A}_E \times \mathcal{I}$ given by
\begin{equation}\label{eq:Psiksystem}
\begin{cases}
\frac{\del \N_t}{\del t} &= \Psi_k(\N_t, \N_0) \\
\left. \N_t \right|_{t=0} &= \N_0.
\end{cases}
\end{equation}
Next, a system of gauge transformations $\vs_t \in S(End E) \times \mathcal{I}$ satisfying
\begin{equation}\label{eq:YMkssystem}
\begin{cases}
\frac{\del \varsigma_t}{\del t} &=(-1)^{k+1} \left( \lap^{(k)} D^*_t (\gG_t - \gG_0) \right) \varsigma_t\\
\varsigma_0 &= \Id.
\end{cases}
\end{equation}
\noindent \textbf{Existence.} Consider the flow $\N_t$ with initial condition $\N_0$. Since $\Psi_k$ is an elliptic operator by Lemma \ref{lem:Psikellptc}, a solution $\N_{t}$ to the parabolic system \eqref{eq:Psiksystem}, the $(\Psi,k)$-flow, exists on some $t \in [0,\epsilon)$ for $\epsilon > 0$. Choose the unique solution $\vs_t$ to the system \eqref{eq:YMkssystem} and consider $\vs_t^* \N_{t}$. This is a solution to the generalized Yang-Mills $k$-system with initial condition $\N_{0}$ as seen through the following computation  which utilizes \eqref{eq:vargauge} and \eqref{gaugebeamid},
\begin{align*}
\left( \frac{\del \vs^* \N_t}{\del t} \right)_{r \ga}^{\gb}&= (D_{\vs^*\N})_r (\vs^{-1} \dot{\vs})_{r \ga}^{\gb} + (\vs^{-1})_{\gd}^{\gb} (\dot{\Gamma}_{r \gt}^{\gd}) \vs_{\ga}^{\gt} \\
&= (-1)^{k+1}(D_{\vs^* \N})_r \left[ (\vs^{-1})_{\gd}^{\gb} (\slap)^{(k)}\left[ D^*_{\N} (\gG - \gG_0)_{\gz}^{\gd} \right] \vs_{\ga}^{\gz} \right]  + (-1)^{k+1}(\vs^{-1})_{\gd}^{\gb}\left( (D^*_{\N} (\slap)^{(k)} F_{\N})_{r \gz}^{\gd} \right)\\
&\hsp + (-1)^{k+1} \left( \mho_k(\N) \right)_{r \gz}^{\gd} - (D_{\N})_r (\slap)^{(k)}\left[ D^*_{\N} \Gamma - \Gamma_0)_{\gz}^{\gd}  \right] \vs_{\ga}^{\gz}\\
&= (-1)^{k+1} \left( D^*_{\vs^* \N} (\slap)^{(k)} F_{\vs^* \N}  \right)_{r \ga}^{\gb} +  \left(\mho_k(\vs^*\N)\right)_{r \ga}^{\gb}.
\end{align*}
Therefore $\vs_t^* \N_{t}$ is a solution to the generalized Yang-Mills $k$-system. The first result follows.

\noindent \textbf{Uniqueness.} Suppose that $\N_{0}$ is some connection with two solutions $\N_{t}$ and $\widetilde{\N}_{t}$ to the generalized Yang-Mills $k$-system. Let $\vr_t \in S(\Aut E) \times \mathcal{I}$ be the solution to the following system of gauge transformations:
\begin{align*}
\begin{cases}
 \frac{\del \vr_t}{\del t} &= (-1)^{k} (\lap_{\vr_t })^{(k+1)} \vr_t + (-1)^{k}  \mathfrak{a}_k(\vr,\N_t,\N_0)\\
\vr_0 &= \Id.
\end{cases}
\end{align*}
Similarly let $\widetilde{\vr}_t \in S(\Aut E) \times \mathcal{I}$ be the solution to the following:
\begin{align*}
\begin{cases}
\frac{\del \widetilde{\vr}_t}{\del t} &=  (-1)^{k} (\lap_{\widetilde{\vr}_t })^{(k+1)}\widetilde{\vr}_t + (-1)^{k}  \mathfrak{a}_k \prs{ \widetilde{\vr},\widetilde{\N}_t,\widetilde{\N}_0 } \\
\widetilde{\vr}_0 &= \Id.
\end{cases}
\end{align*}
These are strictly parabolic and lower order hence the solutions exists for all time. The next task is to verify that with the initial condition $\N_0$, the one-parameter family $(\vr_t^{-1})^* \N_{t}$ is a solution to \eqref{eq:Psiksystem}. Observe that by the equivalence demonstrated by Lemma \ref{lem:YMkslaplap},
\[ \left( \frac{\del \vr}{\del t} \right)_{\ga}^{\gb}=(-1)^{k+1} (\lap_{\vr^{-1} })^{(k)} \left[ D_{(\vr^{-1})^*\N}^* \left[ (\vr^{-1})^* \left[ \gG -\gG_0 \right] \right] \right]_{\gd}^{\gb} \vr_{\ga}^{\gd}. \]
With this in mind and utilizing the expression for the derivative of a gauge acting on a connection \eqref{eq:vargauge},
\begin{align*}
\left( \frac{\del \left((\vr^{-1})^* \N \right)}{\del t} \right)_{i \ga}^{\gb}
&= (D_{(\vr^{-1})^* \N})_i \left[ \vr_{\gd}^{\gb} \del_t(\vr^{-1})_{\ga}^{\gd} \right] + \vr_{\gd}^{\gb} \dot{\gG}_{i \gz}^{\gd} (\vr^{-1})^{\gz}_{\ga}\\
&=- (D_{(\vr^{-1})^* \N})_i \left[ \dot{\vr}_{\gd}^{\gb} (\vr^{-1})^{\gd}_{\ga} \right] + \vr_{\gd}^{\gb} \left( (-1)^{k+1} D^*_{\N} (\lap)^{(k)} F_{\N} + 
\mho_k(\N)
\right)_{i \gz}^{\gd} (\vr^{-1})^{\gz}_{\ga}\\
& =(-1)^{k} (D_{(\vr^{-1})^* \N})_i \left( (\lap_{\vr^{-1}})^{(k)} \left[ D_{(\vr^{-1})^*\N}^* \left[ (\vr^{-1})^* \left[ \gG -\gG_0 \right]\right] \right]^{\gb}_{\ga} \right) \\
& \hsp + (-1)^{k+1} D_{(\vr^{-1})^*\N}^* (\lap_{\vr^{-1}})^{(k)} \left[ F_{(\vr^{-1})^*\N} \right]_{i \ga}^{\gb} + \left( \mho_k ((\vr^{-1})^*\N) \right)_{i \ga}^{\gb}.
\end{align*}
This is precisely $\Psi_k((\vr^{-1}_t)^* \N_t, \N_0)$. The computation could be done identically with $(\widetilde{\vr}_t^{-1})^* \widetilde{\N}_t$ instead, giving that $ (\widetilde{\vr}_t^{-1})^* \widetilde{\N}_t$ and $ (\vr^{-1}_t)^* \N_t$ are both solutions to system \eqref{eq:Psiksystem} with the same initial condition. Since solutions to system \eqref{eq:Psiksystem} are unique, $(\vr^{-1}_t)^* \N_t = (\widetilde{\vr}^{-1}_t)^* \widetilde{\N}_t$. Hence $\vr_t$ and $\widetilde{\vr}_t$ must satisfy system \eqref{eq:YMkssystem}, but since this is a linear ordinary differential equation \eqref{eq:YMkssystem} on a compact manifold with no boundary, $\vr_t \equiv \widetilde{\vr}_t$, which implies that $\N_t \equiv \widetilde{\N}_t$. Therefore uniqueness follows, and the proof is complete.
\end{proof}
\end{prop}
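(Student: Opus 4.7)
The plan is a DeTurck-style gauge-fixing argument. The generalized Yang-Mills $k$-flow is only weakly parabolic because of its infinite-dimensional gauge symmetry (Proposition \ref{prop:Phikwkellptc}), but the modified operator $\Psi_k(\cdot,\N_0)$ from \eqref{eq:Psik} is strictly elliptic by Lemma \ref{lem:Psikellptc}, so standard parabolic theory produces a smooth short-time solution to the $(\Psi,k)$-flow with initial data $\N_0$. The remainder of the argument centers on the dictionary identity Lemma \ref{lem:YMkslaplap}, which trades the algebraic operator $\lap^{(k)} D^*_{\N}(\gG - \tgG)$ acting on a gauge transformation $\vs$ for the differential operator $-(\slap)^{(k+1)}\vs$ modulo the lower-order remainder $\mathfrak{a}_k$.

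For existence, I would first obtain a smooth short-time solution $\N_t$ on some $[0,\varepsilon)$ to the parabolic $(\Psi,k)$-flow, then solve the driven linear ODE \eqref{eq:YMsflow} for $\vs_t \in S(\Aut E)$ with $\vs_0 = \Id$; this admits a smooth solution on the same interval since $M$ is compact. The key calculation is to differentiate $\vs_t^*\N_t$ in $t$ using the gauge-action identities of \S\ref{ss:gaugetransformations}. The gauge-fixing correction $(-1)^k D_{\N}\lap^{(k)} D_{\N}^*(\gG - \gG_0)$ inside $\Psi_k$ is engineered to cancel exactly against the contribution $D_{\vs^*\N}(\vs^{-1}\dot\vs)$ coming from \eqref{eq:YMsflow}, and the remaining terms reassemble as $(-1)^{k+1} D^*_{\vs^*\N} \lap^{(k)} F_{\vs^*\N} + \mho_k(\vs^*\N)$, so that $\vs_t^*\N_t$ solves the generalized Yang-Mills $k$-system.

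For uniqueness, given two solutions $\N_t$ and $\widetilde{\N}_t$ with common initial data, I would solve the strictly parabolic systems
\begin{equation*}
\tfrac{\del \vr_t}{\del t} = (-1)^k (\lap_{\vr_t})^{(k+1)}\vr_t + (-1)^k\mathfrak{a}_k(\vr_t, \N_t, \N_0), \qquad \vr_0 = \Id,
\end{equation*}
together with its analog $\widetilde{\vr}_t$ driven by $\widetilde{\N}_t$; short-time existence holds since the leading $(\slap)^{(k+1)}$ term is strictly $(2k+2)$-order elliptic and $\mathfrak{a}_k$ is lower order by construction \eqref{eq:akdefn}. Applying Lemma \ref{lem:YMkslaplap} in reverse recasts each equation into the gauge ODE \eqref{eq:YMsflow}, and computing $\tfrac{\del}{\del t}((\vr_t^{-1})^*\N_t)$ shows that both $(\vr_t^{-1})^*\N_t$ and $(\widetilde{\vr}_t^{-1})^*\widetilde{\N}_t$ satisfy the $(\Psi,k)$-flow with initial data $\N_0$. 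Uniqueness for the strictly parabolic $(\Psi,k)$-system identifies these pullbacks, and then linear ODE uniqueness of \eqref{eq:YMsflow} on the compact manifold $M$ forces $\vr_t \equiv \widetilde{\vr}_t$, giving $\N_t \equiv \widetilde{\N}_t$.

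The substantive difficulty is orchestrating these cancellations in both directions, which is why Lemma \ref{lem:YMkslaplap} is the technical heart: in the existence step the gauge-fixing term in $\Psi_k$ was chosen so that the $\vs^{-1}\dot\vs$ contribution from \eqref{eq:YMsflow} kills it exactly, while in the uniqueness step the lower-order remainder $\mathfrak{a}_k$ in the $\vr_t$-equation is introduced precisely so that $(\slap)^{(k+1)}\vr$ translates into the algebraic action of $\lap^{(k)}D^*_\N(\gG-\gG_0)$ needed to land inside $\Psi_k$. Once these two translations are in hand the remaining ingredients are standard parabolic and ODE theory on a compact manifold, but carrying the cancellations cleanly through iterated Laplacians and $D^*$ is where the real bookkeeping lives.
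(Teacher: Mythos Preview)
Your proposal is correct and follows essentially the same DeTurck-style argument as the paper: solve the strictly parabolic $(\Psi,k)$-flow, pull back by the gauge ODE $\vs_t$ to obtain existence, and for uniqueness run the reverse construction through the parabolic $\vr_t$-systems, invoking Lemma \ref{lem:YMkslaplap} as the dictionary in both directions before finishing with linear ODE uniqueness on the compact manifold. The only minor point you gloss over is that the paper treats the $\vr_t$-equations as parabolic (not ODEs) with $(\slap)^{(k+1)}$ as the principal part, which you also identify correctly.
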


\subsection{Smoothing estimates}\label{ss:smoothingest}

In this section our goal is to compute, assuming a supremal bound on $|F_{\N_t}|$, the associated local bounds on the $L^2$ norms of covariant derivatives of $F_{\N_t}$. To accomplish this we first compute necessary variational identities.

\begin{lemma}\label{lem:YMkgradellF}
Suppose $\N_t \in \mathcal{A}_E \times \mathcal{I}$ is solution to generalized Yang-Mills $k$-flow. For $\ell \in \mathbb{N}$ the following evolution equation holds,
\begin{align}\label{eq:YMkgradell}
\begin{split}
\frac{\del }{\del t} \lb \N^{(\ell)}_t F_{\N_t} \rb
&= 
(-1)^k \lap^{(k+1)}_t \lb \N^{(\ell)}_tF_{\N_t} \rb+ P_2^{(\ell + 2k)}\lb F_{\N_t} \rb +  \sum_{s=0}^{\ell + 2k}P^{(s)}_1[F_{\N_t}] \\
&\hsp +   \sum_{i=1}^{k} \sum_{j=2}^{i + 1}{P_j^{(\ell + 2
i + 4 -2j)} \lb F_{\N_t} \rb } +  \sum_{i=1}^{k} \sum_{j=2}^{i + 1}{ P_{j+1}^{(\ell + 2
i + 2 -2j )} \lb F_{\N_t} \rb },
\end{split}
\end{align}
and for $\ell=0$,
\begin{equation}\label{eq:YMkgrad0}
\frac{\del F_{\N_t}}{\del t}  =  (-1)^{k+1}\lap^{(k+1)}_t F_{\N_t} +  P^{(2k)}_1 \lb F_{\N_t} \rb + P_{2}^{(2k)} \lb F_{\N_t} \rb + \sum_{i=1}^{k} \sum_{j=2}^{i + 1}{P_j^{(2
i + 4 -2j)} \lb F_{\N_t} \rb }.
\end{equation}

\begin{proof}
To vary $F_{\N_t}$, we differentiate and then apply the Bochner formula (Proposition \ref{prop:bochner}) to obtain
\begin{align*}
\frac{\del F}{\del t}
&= D \dot{\gG}\\
&= D((-1)^{k+1} D^* \lap^{(k)} F + \mho_k(\N))\\
&= (-1)^{k+1} \lap_D \lap^{(k)} F + D(\mho_k(\N))\\
&= (-1)^{k+1} \lap^{(k+1)} F + (-1)^{k+1} (\Rm + F)\ast (\lap^{(k)} F)+  D(\mho_k(\N))\\
&= (-1)^{k+1}\lap^{(k+1)} F +  P^{(2k)}_1 \lb F \rb + P_{2}^{(2k)} \lb F \rb + \left(  \sum_{i=1}^{k} \sum_{j=2}^{i + 1}{P_j^{2i + 4 -2j} \lb F \rb}  \right).
\end{align*}
To vary $\N^{(\ell)}_t F_{\N_t}$ we apply \eqref{cor:varNellF} and then insert the equation of generalized Yang-Mills $k$-flow,
\begin{align*}
\frac{\del \N^{(\ell)}  F}{\del t}
&=\N^{(\ell)}D\dot{\gG} +  \sum_{i=0}^{\ell-1}{\left( \N^{(i)} \dot{\gG} \ast \N^{(\ell-i-1)}F \right)}\\
&=\left[ \N^{(\ell)}D \left( (-1)^{k+1} D^* \lap^{(k)} F + \mho_k(\N) \right) \right]_{T_1}   \\
& \hsp +  \left[ \sum_{i=0}^{\ell-1}{\left( \N^{(i)} \left( (-1)^{k+1} D^* \lap^{(k)} F + \mho_k(\N) \right)  \ast \N^{(\ell-i-1)}F \right)} \right]_{T_2}.
\end{align*}
We manipulate $T_1$ first. Using the Bochner formula (Proposition \ref{prop:bochner}) to decompose the first quantity yields
\begin{align*}
T_1 &= (-1)^{k+1}\N^{(\ell)} \lap_D \lap^{(k)} F + \N^{(\ell)} D \mho_k(\N) \\
&=  \N^{(\ell)}\lb \lap^{(k+1)} F \rb + \N^{(\ell)}\lb (\Rm+F) \ast \N^{(2k)} F \rb +  \N^{\ell} D \lb \mho_k(\N) \rb \\
& =  \N^{(\ell)}\lb \lap^{(k+1)} F \rb + \sum_{q =0}^{\ell} P^{(2k+q)}_1 \lb F \rb + P_2^{(2k+\ell)} \lb F \rb +   \sum_{i=1}^{k} \sum_{j=2}^{i + 1}{P_j^{(\ell + 2i + 4 -2j)} \lb F \rb}.
\end{align*}
Using Corollary \ref{cor:Nllapkcomut} yields
\begin{align*}
\N^{(\ell)} \lb \lap^{(k+1)} F \rb &= (-1)^k\lap^{(k+1)}  \N^{(\ell)} F +  \sum_{v=0}^{\ell - 1} \sum_{j=0}^{2k+1}{ \left(\N^{(v+j)}(\Rm + F) \ast (\N^{(\ell - v +2k -j)} F) \right)}\\
&= (-1)^k \lap^{(k+1)}F \lb \N^{(\ell)}F \rb+ \sum_{s=0}^{\ell + 2k} P^{(s)}_1 \lb F \rb + P_2^{(\ell + 2k)}\lb F \rb.
\end{align*}
Which gives that
\begin{align*}
T_1 & =  (-1)^k \lap^{(k+1)} \lb \N^{(\ell)}F \rb+  \sum_{s=0}^{\ell + 2k}P^{(s)}_1[F] + P_2^{(\ell + 2k)}\lb F \rb + \sum_{q =0}^{\ell} P_1^{(2k+q)}[F]\\
&\hsp +   \sum_{i=1}^{k} \sum_{j=2}^{i + 1}{P_j^{(
\ell + 2i + 4 -2j)} \lb F \rb } .
\end{align*}
Next we manipulate $T_2$,
\begin{align*}
T_2 &= \sum_{i=0}^{\ell-1}{\left( \N^{(i)} \left( (-1)^{k+1} D^* \lap^{(k)} F + \mho_k(\N) \right)  \ast \N^{(\ell-i-1)}F \right)} \\
&= \sum_{i=0}^{\ell - 1}\left( (\N^{(i + 2k + 1)} F) \ast (\N^{(\ell - i -1)} F) + \sum_{v=1}^{k} \sum_{j=2}^{v + 1}{P_j^{(2
v + 3 -2j + i)} \lb F \rb}  \ast (\N^{(\ell - i -1)} F)  \right)\\
&= P_{2}^{(2k + \ell)}\lb F \rb +  \sum_{v=1}^{k} \sum_{j=2}^{v + 1}{ \left( P_{j+1}^{(\ell + 2
v + 2 -2j )} \lb F \rb \right)}.
\end{align*}
Combining $T_1$ and $T_2$ yields
\begin{align*}  
\frac{\del \N^{(\ell)} F }{\del t} &= 
(-1)^k \lap^{(k+1)} \lb \N^{(\ell)} F \rb+ P_2^{(\ell + 2k)}\lb F \rb + \sum_{s=0}^{\ell + 2k}P^{(s)}_1[F] \\
&\hsp + \sum_{i=1}^{k} \sum_{j=2}^{i + 1}{P_j^{(\ell + 2i + 4 -2j)} \lb F \rb} +  \sum_{i=1}^{k} \sum_{j=2}^{i + 1}{\left( P_{j+1}^{(\ell + 2i + 2 -2j )} \lb F \rb \right)} .
\end{align*}
The result follows.
\end{proof}
\end{lemma}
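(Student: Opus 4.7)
The plan is to apply the general variation formula of Corollary \ref{cor:varNellF}, substitute the generalized Yang-Mills $k$-flow equation for $\dot{\gG}_t$, and then use the Bochner formula together with the commutator identity from Corollary \ref{cor:Nllapkcomut} to reorganize the resulting expression so that $\lap^{(k+1)}$ acts directly on $\N^{(\ell)} F$, with every other term absorbed into the $P$-notation of \S\ref{sss:Pdefn}.

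First I would treat the $\ell=0$ case \eqref{eq:YMkgrad0}. From Lemma \ref{lem:varNellA} specialized to $\gw = F$, one gets $\tfrac{\del F}{\del t} = D\dot{\gG}$. Substituting the flow equation $\dot{\gG} = (-1)^{k+1} D^* \lap^{(k)} F + \mho_k(\N)$ and using that $DD^* = \lap_D - D^*D$, the Bochner formula (Proposition \ref{prop:bochner}) converts the Hodge Laplacian acting on $\lap^{(k)} F$ into $-\lap^{(k+1)}F$ plus commutator terms of the form $(\Rm + F) \ast \lap^{(k)} F$. Combined with $D$ applied to $\mho_k(\N)$, expanded according to definition \eqref{eq:Omegak}, these all fit into $P_1^{(2k)}[F]$, $P_2^{(2k)}[F]$, and the double sum $\sum_{i=1}^k \sum_{j=2}^{i+1} P_j^{(2i+4-2j)}[F]$.

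For the general case $\ell \geq 1$, I would apply Corollary \ref{cor:varNellF} to split $\tfrac{\del}{\del t}[\N^{(\ell)}F]$ as $T_1 + T_2$ with $T_1 = \N^{(\ell)} D\dot{\gG}$ and $T_2 = \sum_{i=0}^{\ell-1} \N^{(i)}\dot{\gG} \ast \N^{(\ell-i-1)}F$. For $T_1$, substitute the flow, apply the Bochner formula as in the $\ell=0$ analysis, and then use Corollary \ref{cor:Nllapkcomut} to exchange $\N^{(\ell)}\lap^{(k+1)}F$ for $\lap^{(k+1)}\N^{(\ell)}F$ modulo a sum of terms $\N^{(v+j)}(\Rm+F)\ast \N^{(\ell-v+2k-j)}F$; these collect into $\sum_s P_1^{(s)}[F]$ and $P_2^{(\ell+2k)}[F]$. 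The derivative $\N^{(\ell)}D\mho_k(\N)$ gets distributed by Leibniz across the terms of $\mho_k$ to produce the $P_j^{(\ell+2i+4-2j)}[F]$ sum. For $T_2$, substituting the flow into $\N^{(i)}\dot{\gG}$ yields either $\N^{(i+2k+1)}F$ (contributing to $P_2^{(\ell+2k)}[F]$ after pairing with $\N^{(\ell-i-1)}F$) or products of lower order $P_j$-factors that combine to the $P_{j+1}^{(\ell+2i+2-2j)}[F]$ sum.

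The main obstacle is purely combinatorial bookkeeping: the commutator identity, the Bochner formula, and the double-sum definition of $\mho_k$ each generate many terms, and the work is in verifying that every resulting contraction of covariant derivatives of $\Rm$, $g$, and $F$ has the right total differentiation order $s$ and the right number of $F$-factors $j$ to land in one of the listed $P_j^{(s)}[F]$ buckets. No new analytic input is required beyond Corollary \ref{cor:varNellF}, the Bochner formula, and Corollary \ref{cor:Nllapkcomut}; the sign $(-1)^k$ in the principal term comes from combining the $(-1)^{k+1}$ in the flow equation with the $-1$ appearing when $DD^*$ is converted to $-\lap$ via Bochner.
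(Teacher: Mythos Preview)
Your proposal is correct and follows essentially the same approach as the paper's own proof: the same split into $T_1 = \N^{(\ell)} D\dot{\gG}$ and $T_2 = \sum_i \N^{(i)}\dot{\gG}\ast\N^{(\ell-i-1)}F$ via Corollary~\ref{cor:varNellF}, the same use of the Bochner formula on $T_1$ followed by Corollary~\ref{cor:Nllapkcomut} to commute $\N^{(\ell)}$ past $\lap^{(k+1)}$, and the same direct substitution of the flow into $T_2$. Your remark that the work is purely combinatorial bookkeeping of the $P_j^{(s)}[F]$ terms is exactly right, and your sign accounting for the principal term matches the paper.
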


We next begin the discussion of our local smoothing estimates. While the inclusion of a bump function forces the computations to be significantly more involved, they are highly necessary. During the blowup analysis in Proposition \ref{prop:YMkblowup}, while working within a coordinate chart, we will require these local estimates to address that the domains of the connections under consideration are restricted to open subsets of $\mathbb{R}^n$.

\begin{defn}[Bump function set and bounding quantity]\label{defn:bumpset}
Let $\mathcal{B} := \left\{ \eta \in C^{\infty}_c(M): 0 \leq \eta \leq 1 \right\}$, that is, the family of `bump' functions. Let $\ell \in \mathbb{N}$ and set, for a given $\N \in \mathcal{A}_E$,
\begin{equation*}
\jmath_{\eta}^{(\ell)} :=\sum_{q =0}^{\ell} \brs{\brs{ \N^{(q)} \eta }}_{L^{\infty}(M)} .
\end{equation*}
\end{defn}
We first prove the following lemma, which will be essential in the manipulations of Lemma \ref{lem:YMketavpest}. This is a technical result demonstrating how to shift derivatives within the integrands terms which will be commonly featured. This relies primarily on integration by parts while taking the interaction of the bump function into account.

\begin{lemma}\label{lem:Pbalance} Let $p,q, r, s \in \mathbb{N}$, $\N \in \mathcal{A}_E$ and $\eta \in \mathcal{B}$. Then if $s \in \mathbb{N}\backslash \{ 1 \}$,
\begin{align}
\begin{split}\label{eq:Pbalance}
\int_M \left( P_1^{(p)} \lb F_{\N} \rb \ast P^{(q + r)}_1 \lb F_{\N} \rb \right) \eta^s dV_g 
&\leq  \int_M \left( P_1^{(p+r)} \lb F \rb \ast P_1^{(q)} \lb F \rb \right) \eta^s dV_g \\
& \hsp + \sum_{j=0}^{r-1} \jmath_{\eta}^{(1)} \int_M \left( P_1^{(p + j)} \lb F \rb \ast P_1^{(q + (r -1 - j) )} \lb F \rb \right)  \eta^{s-1} dV_g.
\end{split}
\end{align}

\begin{proof}
The proof follows by induction on $r$. For the base case, observe that by integration by parts,
\begin{align*}
\int_M{\left( P_1^{(p)} \lb F \rb \ast P^{(q + 1)}_1 \lb F \rb \right) \eta^s dV_g}
&\leq \int_M \left( P_1^{(p+1)} \lb F \rb \ast P_1^{(q)} \lb F \rb \right) \eta^s dV_g\\
& \hsp  + \jmath_{\eta}^{(1)} \int_M \left( P_1^{(p)} \lb F \rb \ast P_1^{(q)} \lb F \rb \right)  \eta^{s-1} dV_g.
\end{align*}
The base case follows, now we assume the induction hypothesis \eqref{eq:Pbalance} holds for $r$. Then by instead applying the identity with $p$ replaced by $p+1$ and $q$,
\begin{align*}
\int_M \left( P_1^{(p)} \lb F \rb \ast P_1^{(q+r+1)} \lb F \rb \right) \eta^s dV_g
&\leq \int_M \left( P_1^{(p+1)} \lb F \rb \ast P_1^{(q + r)} \lb F \rb \right) \eta^s dV_g\\
& \hsp + \jmath_{\eta}^{(1)} \int_M \left( P_1^{(p)} \lb F \rb \ast P_1^{(q + r)} \lb F \rb \right)  \eta^{s-1} dV_g \\
&= \int_M \left( P_1^{(p+(r+1))} \lb F \rb \ast P_1^{(q)} \lb F \rb \right) \eta^s dV_g \\
& \hsp +  \sum_{j=0}^{r} \jmath_{\eta}^{(1)} \int_M \left( P_1^{(p + j)} \lb F \rb \ast P_1^{(q + (r - j) )} \lb F \rb \right)  \eta^{s-1} dV_g.
\end{align*}
The result follows.
\end{proof}
\end{lemma}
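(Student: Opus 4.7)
The plan is to induct on $r \in \mathbb{N}$, using integration by parts against the weight $\eta^s$ to transfer a derivative between the two $P_1$-factors, paying an error term whenever the derivative falls on $\eta$. Since the $P$-notation already hides signs, combinatorial constants, and the Cauchy--Schwartz constants produced by $\ast$, the computation is algebraic bookkeeping: the inequality in the conclusion (rather than equality) simply reflects this absorption.

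For the base case $r = 1$, integration by parts on $\int_M P_1^{(p)}\lb F \rb \ast P_1^{(q+1)}\lb F \rb \eta^s \, dV_g$ yields two contributions. The first moves a $\N$ from the $(q+1)$-factor onto $P_1^{(p)}\lb F \rb$, producing $\int_M P_1^{(p+1)}\lb F \rb \ast P_1^{(q)}\lb F \rb \eta^s \, dV_g$; the second places the derivative on $\eta^s$, yielding a term with $s\eta^{s-1}\N\eta$. Bounding $|\N \eta| \leq \jmath_\eta^{(1)}$ and absorbing the factor of $s$ into $\ast$ gives exactly the $r = 1$ case of \eqref{eq:Pbalance}.

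For the inductive step, assume \eqref{eq:Pbalance} holds for $r$, and consider $\int_M P_1^{(p)}\lb F \rb \ast P_1^{(q+r+1)}\lb F \rb \eta^s \, dV_g$. Applying the base case (with $q$ replaced by $q+r$) bounds this by
\begin{align*}
\int_M P_1^{(p+1)}\lb F \rb \ast P_1^{(q+r)}\lb F \rb \eta^s \, dV_g + \jmath_\eta^{(1)} \int_M P_1^{(p)}\lb F \rb \ast P_1^{(q+r)}\lb F \rb \eta^{s-1} \, dV_g.
\end{align*}
To the first integral I apply the inductive hypothesis with $p$ replaced by $p+1$, producing the leading term $\int_M P_1^{(p+r+1)}\lb F \rb \ast P_1^{(q)}\lb F \rb \eta^s \, dV_g$ plus the error sum $\sum_{j=0}^{r-1} \jmath_\eta^{(1)} \int_M P_1^{(p+1+j)}\lb F \rb \ast P_1^{(q+(r-1-j))}\lb F \rb \eta^{s-1} \, dV_g$. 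Reindexing $j \mapsto j + 1$ and combining with the isolated error term from the base case (which occupies the $j=0$ slot) reassembles the sum $\sum_{j=0}^{r} \jmath_\eta^{(1)} \int_M P_1^{(p + j)}\lb F \rb \ast P_1^{(q+(r-j))}\lb F \rb \eta^{s-1} \, dV_g$, which is precisely the $r+1$ version of \eqref{eq:Pbalance}.

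The main obstacle, as noted, is the bookkeeping of the error sum: ensuring that the isolated $j=0$ error from the single integration by parts matches up with the re-indexed tail from the inductive hypothesis. Everything else is formal; no estimates on the integrands are needed beyond the elementary $|\N \eta| \leq \jmath_\eta^{(1)}$, and no delicate cancellations occur because all signs are swallowed by the $\ast$-notation.
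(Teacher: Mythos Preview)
Your proposal is correct and follows essentially the same route as the paper: induction on $r$, with the base case given by a single integration by parts and the inductive step obtained by applying the base case (with $q \mapsto q+r$) followed by the inductive hypothesis (with $p \mapsto p+1$). Your explicit reindexing $j \mapsto j+1$ to merge the lone error term with the tail sum is exactly the bookkeeping the paper leaves implicit in its final ``$=$'' step.
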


\begin{lemma}\label{lem:YMketavpest}
Let $\ell \in \mathbb{N}$, $\eta \in \mathcal{B}$, and suppose $\N_t \in \mathcal{A}_E \times \mathcal{I}$ a solution to generalized Yang-Mills $k$-flow with $\sup_{M \times \mathcal{I}} \brs{F_{\N_t}} < \infty$. Set $\vp_t := \N^{(\ell)}_t F_{\N_t} $ and choose $K > \max\left\{ \sup_{M \times \mathcal{I}} \brs{F_{\N_t}}, 1 \right\}$. Then for $s \geq 2( k + \ell + 1)$ there exists $C: = C \prs{ \dim M, \rank E, k, s,\ell, g, \jmath_{\eta}^{(s)}}$ such that
\begin{equation*}
\tfrac{d}{d t} || \eta^{s/2} \vp_t ||_{L^2(M)}^2 \leq -   || \eta^{s/2} (\N^{(k+1)}_t \vp_t) ||_{L^2(M)}^2  + CK^{2k+2} || F_{\N_t} ||_{L^2(M), \eta>0}^2.
\end{equation*}

\begin{proof}
We differentiate $|| \eta^{s/2} \vp_t ||_{L^2}^2$ using the variation computation in Lemma \ref{lem:YMkgradellF},
\begin{align*}
\frac{d}{d t} \lb  \int_M{\eta^s | \vp |^2 dV_g} \rb
&= \int_M{2 \left\langle \tfrac{\del}{\del t} \lb \vp \rb, \eta^s \vp \right\rangle dV_g} \\
& =  \left[ (-1)^k 2 \int_M \left\langle \lap^{(k+1)} \vp, \eta^s \vp \right\rangle dV_g \right]_{T_1}+\left[ \int_M \left\langle  P_2^{(\ell + 2k)} \lb F \rb , \eta^s \vp \right\rangle dV_g \right]_{T_2} \\
& \hsp  + \left[  \sum_{q=0}^{\ell + 2k} \int_M \langle P^{(q)}_1[F], \eta^s \vp \rangle dV_g \right]_{T_3}\\
& \hsp + \left[ \sum_{i=1}^{k} \sum_{j=2}^{i+ 1}{ \int_M  \left\langle P_j^{(\ell + 2
i + 4 -2j)} \lb F \rb  , \eta^s \vp \right\rangle dV_g } \right]_{T_4}\\
& \hsp + \left[ \sum_{i=1}^{k} \sum_{j=2}^{i + 1}{\left\langle  \int_M P_{j+1}^{(\ell + 2
i + 2 -2j )} \lb F \rb  , \eta^s \vp \right\rangle dV_g} \right]_{T_5}.
\end{align*}
We address each labelled term separately. Note that the analysis of the constraint on $s$ contributed by each term requires two main considerations. Let $\ga, \gb , \gz, r \in \mathbb{N}$ and $(i_j)_{j=1}^r$ be some multiindex. First, an application of Corollary \ref{cor:KS5.3interp} requires that, if applied to $\brs{\brs{ \eta^{\ga/2} \N^{(\gb)} F }}$, we must have $\ga \geq 2 \gb$. The application of Lemma \ref{lem:shiftkNs} requires that to estimate $\int_M \eta^{\ga} \N^{(i_1)} F \cdots \N^{(i_r)} F dV_g$ with $\sum_{j=1}^r i_j = 2\gz$, then $\ga \geq 2\gz$. The constant $C \in \mathbb{R}_{>0}$ to appear in the following manipulations will be updated, increasing through computations. \newline

\noindent \fbox{\textbf{$T_1$ estimate.}} We manipulate $T_1$ using Lemma \ref{lem:shiftkNs} to shift $\N$ across the inner product.
\begin{align*}
T_1
&= (-1)^k 2 \int_M{ \left\langle \lap^{(k+1)} \vp, \eta^s \vp \right\rangle dV_g}\\
&= - 2 \int_M{\left\langle \N^{(k+1)} \vp , \N^{(k+1)} \lb \eta^s \vp \rb \right\rangle dV_g} + \left\langle  \sum_{q = 1}^{2k-2} \sum_{w = 0}^{q} \left(\N^{(w)} (\Rm + F) \ast \N^{(2k-2-w)} \vp  \right), \vp \right\rangle \\
&=  \left[ -2 \int_M{\left\langle \N^{(k+1)} \vp , \N^{(k+1)} \lb \eta^s \vp \rb \right\rangle dV_g} \right]_{T_{11}} + \left[ \int_M \left( P^{(2k - 2 + 2 \ell)}_3 \lb F \rb \right) \eta^s dV_g \right]_{T_{12}}\\
& \hsp + \left[ \sum_{q=0}^{2 \ell -2}  \int_M \left(  P_2^{(2k+q)} \lb F \rb \right) \eta^s dV_g \right]_{T_{13}}.
\end{align*}
We address each term above separately. For $T_{11}$, we differentiate, resulting in a summation, draw out one term to `absorb' the others and address the rest of the indices.
\begin{align*}
T_{11} &= -2 \sum_{j=0}^{k+1} \int_M { \N^{(j)} \brk{\eta^{s}}\ast \left\langle \N^{(k+1-j)} \vp, \N^{(k+1)}\vp \right\rangle dV_g} \\
& \leq -2 \| \eta^{s/2} \N^{(k+1)} \vp \|_{L^2}^2 + \sum_{j=1}^{k+1} \prs{ \jmath^{(k+1)}_{\eta} \int_M{ \eta^{s-j}\left\langle \N^{(k+1-j)} \vp, \N^{(k+1)}\vp \right\rangle dV_g} }.
\end{align*}
We manipulate each term in the summation on the right, first by a weighted H\"{o}lder's inequality and then applying Corollary \ref{cor:KS5.3interp},
\begin{align*}
\jmath^{(k+1)}_{\eta} \int_M & { \eta^{s-j}\left\langle \N^{(k+1-j)} \vp, \N^{(k+1)}\vp \right\rangle dV_g} \\
& \leq \epsilon \brs{ \brs{ \eta^{s/2} \N^{(k+1) }\vp }}_{L^2}^2+ C\brs{ \brs{ \eta^{(s-2j)/2} \N^{(k+1 - j)}\vp }}_{L^2}^2\\
& = \epsilon \brs{ \brs{ \eta^{s/2} \N^{(k+1) }\vp }}_{L^2}^2+ C\brs{ \brs{ \eta^{(s-2j)/2} \N^{(k+1+ \ell - j)}F }}_{L^2}^2\\
&\leq \epsilon \brs{ \brs{ \eta^{s/2} \N^{(k+1) }\vp }}_{L^2}^2+ \epsilon \brs{ \brs{ \eta^{(s-2j + 2j)/2} \N^{(k+1+ \ell)}F }}_{L^2}^2 
+ C \brs{ \brs{F}}_{L^2, \eta>0}^2 \\
& \leq 2\epsilon\brs{ \brs{ \eta^{s/2} \N^{(k+1)} \vp  }}_{L^2}^2 + C \brs{ \brs{ F }}_{L^2, \eta>0}^2.
\end{align*}
Therefore we conclude, by summing over all terms,
\begin{align*}
T_{11}
&\leq -2 \brs{ \brs{ \eta^{s/2} \N^{(k+1)} \vp}}_{L^2}^2 + (k+1) \prs{ 2 \epsilon \brs{\brs{\eta^{s/2} \N^{(k+1)} \vp}} + C \brs{\brs{ F } }_{L^2, \eta > 0}^2 } \\
&\leq \prs{-2 + 2 \epsilon (k+1) } \brs{ \brs{ \eta^{s/2} \N^{(k+1)} \vp}}_{L^2}^2 + C \brs{\brs{ F } }_{L^2, \eta > 0}^2.
\end{align*}

\noindent \textbf{$\mathbf{T_{11}}$ bump function constraints.} Now we analyze the maximum power of the bump  function in this setting. Corollary \ref{cor:KS5.3interp} requires that $s - 2j > 2 (k+1+ \ell-j) $, namely $s > 2(k+1 + \ell)$. There are no other constraints on the bump function. \newline

Next we estimate $T_{12}$ by applying Lemma \ref{lem:kuwert5.5} and then Corollary \ref{cor:KS5.3interp}.
\begin{align*}
T_{12} & \leq \int_M \left( P^{(2k - 2 + 2 \ell)}_3 \lb F \rb \right) \eta^s dV_g \\
& \leq Q_{(3,k-1+\ell)} K \left( || \eta^{s/2} \N^{(k-1)} \vp ||_{L^2}^2  + || F ||_{L^2,\eta>0}^2 \right) \\
& = C K \left( || \eta^{s/2} \N^{(k + \ell + 1 -2)} F ||_{L^2}^2  + || F ||_{L^2,\eta>0}^2 \right) \\
& \leq \epsilon  || \eta^{(s + 2)/2} \N^{(k+1)} \vp ||_{L^2}^2  + C K^2 || F ||_{L^2,\eta>0}^2 \\
& \leq \epsilon  || \eta^{s/2} \N^{(k+1)} \vp ||_{L^2}^2  + C K^2 || F ||_{L^2,\eta>0}^2.
\end{align*}
\noindent \textbf{$\mathbf{T_{12}}$ bump function constraints.} The application of Lemma \ref{lem:kuwert5.5} and  Corollary \ref{cor:KS5.3interp} require that $s \geq 2(k-1+\ell)$, giving the restraint here. \newline

Next we estimate $T_{13}$. We divide up the summation into cases when the index $q$ is either odd or even and apply Lemma \ref{lem:Pbalance} to `balance out' the order of the connection application across terms,
\begin{align*}
T_{13}
 &=  \sum_{q: q \in 2 \mathbb{N} \cup \{ 0 \} }^{2 \ell -2}  \int_M \left( P_2^{(2k+q)} \lb F \rb  \right) \eta^s dV_g + \sum_{q: q \in 2 \mathbb{N} -1 }^{2 \ell -3}  \int_M \left( P_2^{(2k+q)} \lb F \rb \right) \eta^s dV_g \\
 &= \brk{ \sum_{q: q \in 2 \mathbb{N} \cup \{ 0 \} }^{2 \ell -2}  \int_M \left( P_2^{(2k+q)} \lb F \rb  \right) \jmath_{\eta}^{(1)} \eta^{s-1} dV_g }_{T_{13,E}} \\
 & \hsp + \brk{\sum_{q: q \in 2 \mathbb{N} -1 }^{2 \ell -3}  \int_M \left( P_1^{\lceil \frac{2k+q}{2} \rceil} \lb F \rb \ast P_1^{\lfloor \frac{2k+q}{2} \rfloor} \lb F \rb  \right) \eta^s dV_g}_{T_{13,O}}.
\end{align*}
For each index $q$ of $T_{13,E}$ we apply Lemma \ref{lem:kuwert5.5} and then Corollary \ref{cor:KS5.3interp}, noting that we maximize $q$ at $2 \ell - 2$ to obtain the final line,
\begin{align*}
 \int_{M} \left( P_2^{(2k+q)} \lb F \rb \right) \jmath_{\eta}^{(1)} \eta^{s-1} dV_g
 &\leq Q_{(2, k + \frac{q}{2})}\left( || \eta^{(s-1)/2} \N^{\left( k + \frac{q}{2} \right)} F ||^2_{L^2} + || F ||_{L^2, \eta > 0}^2\right) \\
&= C \left( || \eta^{(s-1)/2} \N^{\left( k + \ell + 1 - \prs{\ell - \tfrac{q}{2} } \right)} \vp ||^2_{L^2} + || F ||_{L^2, \eta > 0}^2\right) \\
& \leq  \epsilon \brs{\brs{ \eta^{\frac{s -1 + 2 \ell - q}{2}} \N^{(k+1)} \vp }}_{L^2}^2+ C || F ||_{L^2, \eta > 0}^2 \\
& \leq  \epsilon \brs{\brs{ \eta^{\frac{s -1 + 2 \ell - (2 \ell - 2)}{2}} \N^{(k+1)} \vp }}_{L^2}^2+ C || F ||_{L^2, \eta > 0}^2 \\
& =  \epsilon \brs{\brs{ \eta^{\frac{s +1 }{2}} \N^{(k+1)} \vp }}_{L^2}^2+ C || F ||_{L^2, \eta > 0}^2 \\
& \leq  \epsilon \brs{\brs{ \eta^{\frac{s }{2}} \N^{(k+1)} \vp }}^2_{L^2}+ C || F ||_{L^2, \eta > 0}^2.
\end{align*}
Therefore we conclude that
\begin{align*}
T_{13,E} & \leq  \ell  \epsilon\prs{ \brs{\brs{ \eta^{s/2} \N^{(k+1)} \vp}}_{L^2}^2 } + C \brs{\brs{F}}_{L^2,\eta >0}^2.
\end{align*}
\noindent \textbf{$\mathbf{T_{13,E}}$ bump function constraints.} The application of Lemma \ref{lem:kuwert5.5} and Corollary \ref{cor:KS5.3interp} require that $s-1 \geq 2k + q$, which is at worst when $q$ is maximized ($q = 2 \ell - 2$). Thus $s \geq 2(k + \ell) - 1$. \newline

Next we address $T_{13,O}$. For each term in the summation we apply H\"{o}lder's inequality followed by an application of Lemma \ref{lem:kuwert5.5} to each term, then lastly and application of Corollary \ref{cor:KS5.3interp}.
\begin{align*}
\int_M & \left( P_1^{\lceil \frac{2k+q}{2} \rceil} \lb F \rb \ast P_1^{\lfloor \frac{2k+q}{2} \rfloor} \lb F \rb  \right) \eta^s dV_g \\
&\leq \tfrac{1}{2} \int_M P_2^{\prs{2 \lceil \frac{2k+q}{2} \rceil}} \lb F \rb \eta^s dV_g + \tfrac{1}{2} \int_M P_2^{\prs{2 \lfloor \frac{2k+q}{2} \rfloor}} \lb F \rb \eta^s dV_g \\
&\leq \tfrac{1}{2} Q_{(2, \lfloor \frac{2k+q}{2} \rfloor)} \prs{ \brs{\brs{ \eta^{s/2} \N^{\prs{\lfloor \frac{2k+q}{2} \rfloor}} F}}_{L^2}^2 + \brs{\brs{F}}_{L^2,\eta>0}^2} \\
& \hsp + \tfrac{1}{2} Q_{\prs{2, \lceil \frac{2k+q}{2} \rceil }}\prs{ \brs{\brs{ \eta^{s/2} \N^{\prs{ \lceil \frac{2k+q}{2} \rceil}} F}}_{L^2}^2 + \brs{\brs{F}}_{L^2,\eta>0}^2}\\
&= C \prs{ \brs{\brs{ \eta^{s/2} \N^{\prs{  k + 1 + \ell - \prs{ \ell - \lfloor \frac{q}{2} \rfloor } }} F}}_{L^2}^2 + \brs{\brs{ \eta^{s/2} \N^{\prs{ k + 1 + \ell  - \prs{ \ell - \lceil\frac{q}{2} \rceil}}} F}}_{L^2}^2}\\
& \hsp + C \brs{\brs{F}}_{L^2,\eta>0}^2 \\
& \leq \epsilon \brs{ \brs{ \eta^{\frac{s + \prs{ \ell - \lfloor \frac{q}{2} \rfloor } }{2}} \N^{(k+1)} \vp }}_{L^2}^2 + \epsilon \brs{ \brs{ \eta^{\frac{s + \prs{ \ell - \lceil\frac{q}{2} \rceil} }{2}} \N^{(k+1)} \vp }}_{L^2}^2 + C \brs{ \brs{F}}_{L^2, \eta > 0}^2 \\
& \leq \epsilon \brs{ \brs{ \eta^{\frac{s +\ell - \prs{\ell -  \lceil \frac{3}{2} \rceil} }{2}} \N^{(k+1)} \vp }}_{L^2}^2 + \epsilon \brs{ \brs{ \eta^{\frac{s + \ell - \prs{\ell -  \lfloor \frac{3}{2} \rfloor} }{2}} \N^{(k+1)} \vp }}_{L^2}^2 + C \brs{ \brs{F}}_{L^2, \eta > 0}^2 \\
& \leq  2 \epsilon \brs{\brs{ \eta^{s/2} \N^{(k+1)} F }}_{L^2}^2 + C \brs{ \brs{F}}_{L^2, \eta > 0}^2.
\end{align*}
\noindent \textbf{$\mathbf{T_{13,O}}$ bump function constraints.} The applications of both Lemma \ref{lem:kuwert5.5} and Corollary \ref{cor:KS5.3interp} require that $s \geq 2 \prs{ k + \lceil \tfrac{q}{2} \rceil }$. We note that $q \leq 2 \ell - 3$, so we conclude that $s \geq 2 \prs{ k + \ell - \lfloor  \tfrac{3}{2} \rfloor } =  2 \prs{ k + \ell - 1 }$. 

Therefore we conclude that
\begin{equation*}
T_1 \leq \prs{ -2 + \epsilon \prs{5 + \ell}} \brs{ \brs{ \eta^{s/2} \N^{(k+1)} \vp}}_{L^2}^2 + C K^2 \brs{ \brs{ F }}_{L^2, \eta > 0}^2.
\end{equation*}
\noindent \textbf{$\mathbf{T_1}$ bump function constraints.} Based on the above computations we conclude the cumulative constraint across all subterms $T_{1i}$ that $s \geq 2 \prs{ k + \ell + 1}$. \newline

\noindent \fbox{\textbf{$\mathbf{T_2}$ estimate.}} For $T_2$ we apply Lemma \ref{lem:kuwert5.5} and then Corollary  \ref{cor:KS5.3interp},
\begin{align*}
T_2
&= \int_{M}\left\langle P_2^{(\ell + 2k)} \lb F \rb, \eta^s \vp \right\rangle dV_g\\
&= \int_{M} \left(P_3^{(2 \ell + 2k)} \lb F \rb  \right)  \eta^s  dV_g\\
& \leq Q_{(3, \ell + k)} K \left( \left\| \eta^{s/2} \left( \N^{(k + \ell)} F \right) \right\|_{L^2}^2 + \| F \|_{L^2, \eta>0}^2 \right) \\
& = C K \left( \left\| \eta^{s/2} \left( \N^{(k + 1+ \ell - 1)} F \right) \right\|_{L^2}^2 + \| F \|_{L^2, \eta>0}^2 \right) \\
& \leq \epsilon \left\| \eta^{(s+1)/2} \N^{(k+1)} \vp \right\| + C K^2 \| F \|_{L^2, \eta>0}^2 \\
& \leq \epsilon \left\| \eta^{s/2} \N^{(k+1)} \vp \right\| + C K^2 \| F \|_{L^2, \eta>0}^2.
\end{align*}

\noindent \textbf{$\mathbf{T_2}$ bump function constraints.} For the application of Lemma \ref{lem:kuwert5.5} and Corollary  \ref{cor:KS5.3interp} we required $s \geq 2(\ell + k)$. \newline

\noindent \fbox{\textbf{$\mathbf{T_3}$ estimate.}} For $T_3$ we divide up terms between an even and odd number of derivatives and have, via integration by parts and collecting up derivatives of $\eta$ accordingly, noting that $s>k+\ell \geq \left\lfloor \frac{q+\ell}{2} \right\rfloor$,
\begin{align*}
T_{3} &= \sum_{q=0}^{\ell + 2k} \int_M \left\langle P^{(q)}_1[F], \eta^s \vp \right\rangle dV_g \\
 &=   \sum_{q=0}^{\ell + 2k} \int_M  \left( P^{(q+\ell)}_2[F] \right) \eta^s dV_g \\
 &= \sum_{q:q+\ell \in 2 \mathbb{N}}^{2k+ \ell}  \int_M \left( P^{(q+\ell)}_2[F]  \right)  \eta^s dV_g   + \sum_{q:q+\ell \in 2 \mathbb{N} -1 }^{2k+ \ell - 1}  \int_M \left( P^{(q+\ell)}_2[F] \right) \eta^s dV_g \\
 &\leq \brk{\sum_{q:q+\ell \in 2 \mathbb{N}}^{2k+ \ell}  \jmath_{\eta}^{(1)} \int_M  \left( P^{(q+\ell)}_2[F] \right) \eta^{s-1} dV_g}_{T_{3,E}}  \\
 & \hsp  + \brk{ \sum_{q:q+\ell \in 2 \mathbb{N} -1 }^{2k+ \ell - 1}  \int_M \left( P^{\left( \lceil \frac{q+\ell}{2} \rceil \right)}_1[F] \ast P^{\left( \lfloor \frac{q+\ell}{2} \rfloor \right)}_1[F]\right)  \eta^s dV_g }_{T_{3,O}}.
\end{align*}
For $T_{3,E}$ we have that by Lemma \ref{lem:kuwert5.5} and then Corollary \ref{cor:KS5.3interp}, noting that since $q \leq 2k+\ell$, then we have that $ k+ \tfrac{\ell - q}{2} \geq 0$, each term of the summation becomes
\begin{align*}
\jmath_{\eta}^{(1)} \int_M{ \left( P_2^{(q+\ell)}[F] \right) \eta^{s-1} dV_g}
 &\leq Q_{\left(2,\frac{q+\ell}{2} \right)} \left( || \eta^{(s-1)/2}\N^{\left(\frac{q+\ell}{2} \right)} F ||_{L^2}^2 + || F ||_{L^2, \eta > 0} \right) \\
  &= C \left( || \eta^{(s-1)/2}\N^{\left(k + \ell + 1 - \prs{ k + 1 + \frac{\ell -q}{2}} \right)} F ||_{L^2}^2 + || F ||_{L^2, \eta > 0}^2 \right) \\
    &\leq  \epsilon || \eta^{(s +  k + \frac{\ell -q}{2} )/2}\N^{\left(k  + 1\right)} \vp ||_{L^2}^2 + C || F ||_{L^2, \eta > 0}^2  \\
 & \leq \epsilon || \eta^{s/2} \N^{\left(k+1 \right)} \vp ||_{L^2}^2 + C || F ||_{L^2, \eta > 0}^2.
\end{align*}
\noindent \textbf{$\mathbf{T_{3,E}}$ bump function constraints.} The applications of Lemma \ref{lem:kuwert5.5} and Corollary \ref{cor:KS5.3interp} required $s \geq q + \ell + 1$. Maximizing the right side of the inequality with respect to $q$ we conclude that $s \geq 2\prs{k + \ell} + 1$.
For the second term we manipulate with H\"{o}lder's inequality, apply Lemma \ref{lem:kuwert5.5} and then Corollary \ref{cor:KS5.3interp}, noting that since $q \leq 2k+\ell-1$, then we have that $\lceil \frac{q+\ell}{2} \rceil \leq k+\ell$, so
\begin{align*}
 \int_M  \left( P^{\left( \lceil \frac{q+\ell}{2} \rceil \right)}_1[F] \ast P^{\left( \lfloor \frac{q+\ell}{2} \rfloor \right)}_1[F] \right)  \eta^s dV_g
& \leq   \int_{M}\left( P^{\left(2 \lceil \frac{q+\ell}{2} \rceil \right)}_2[F] \right) \eta^s   dV_g +  \int_{M}\left(  P^{\left(2 \lfloor \frac{q+\ell}{2} \rfloor \right)}_2[F]  \right) \eta^s dV_g\\
& \leq Q_{(2, \lfloor \frac{q+\ell}{2} \rfloor)}\left( || \eta^{s/2 } \N^{(\lfloor \frac{q+\ell}{2} \rfloor )} F ||_{L^2}^2 + || F ||^2_{L^2,\eta>0}\right) \\
& \hsp +  Q_{(2,\lceil \frac{q+\ell}{2} \rceil)}  \left( ||  \eta^{s/2 }\N^{(\lceil \frac{q+\ell}{2} \rceil)} F ||_{L^2}^2 + || F ||^2_{L^2,\eta>0} \right) \\
& = C || \eta^{s/2 } \N^{( k + \ell + 1 - (k + \ell + 1 - \lfloor \frac{q+\ell}{2} \rfloor ))} F ||_{L^2}^2 + C|| F ||^2_{L^2,\eta>0} \\
& \hsp + C||  \eta^{s/2 }\N^{( k + \ell + 1 - (k + \ell + 1 - \lceil \frac{q+\ell}{2} \rceil) )} F ||_{L^2}^2    \\
& \leq \epsilon || \eta^{s + (k + \ell + 1 - \lceil \frac{q+\ell}{2} \rceil)/2 }\N^{(k+1)} \vp ||_{L^2}^2+ C || F ||_{L^2, \eta > 0}^2 \\
& \hsp + \epsilon || \eta^{s + (k + \ell + 1 - \lfloor \frac{q+\ell}{2} \rfloor)/2 }\N^{(k+1)} \vp ||_{L^2}^2 \\
& \leq 2 \epsilon || \eta^{s/2 }\N^{(k+1)} \vp ||_{L^2}^2 + C || F ||_{L^2, \eta > 0}^2. 
\end{align*}
We explain the manipulation of the bump function power from the second to last line. Note here that since the maximum value of $q$ is $2k+\ell-1$, then we have $k + \ell + 1 - \lceil \tfrac{q + \ell}{2} \rceil \geq  1 $, which implies that the powers of $\eta$ are always larger than $s$.

\noindent \textbf{$\mathbf{T_{3,O}}$ bump function constraint.} The applications of Lemma \ref{lem:kuwert5.5} and then Corollary \ref{cor:KS5.3interp} require that $s \geq \lceil \tfrac{q+\ell}{2} \rceil$. Maximizing $q$ at the value $2k + \ell -1$, we have $s \geq 2(k+\ell)$. \newline
We thus conclude that
\begin{align*}
T_3 & \leq 3 \epsilon || \eta^{s/2 }\N^{(k+1)} \vp ||_{L^2}^2 + C || F ||_{L^2, \eta > 0}^2.
\end{align*}
\textbf{$\mathbf{T_3}$ bump function constraint.} We combine the cumulative lower bounds of $T_{3,E}$ and $T_{3,O}$ with the constraint that $s \geq 2(k+\ell)$. \newline

\noindent \fbox{\textbf{$\mathbf{T_4}$ estimate.}} For $T_4$ we apply Lemma \ref{lem:kuwert5.5} and then Corollary \ref{cor:KS5.3interp}.
\begin{align*}
T_4 &= \sum_{i=1}^k \sum_{j=2}^{i+1} \int_M \left( P_{j+1}^{(2 \ell + 2i + 2 - 2j)}[F] \right) \eta^s  dV_g  \\
&\leq \sum_{i=1}^k \sum_{j=2}^{i+1} Q_{(j+1,\ell + 1 - j)} K^{j-1} \left( || \eta^{s/2} \N^{(\ell +i+ 1 - j)} F ||_{L^2}^2 + || F ||_{L^{2}, \eta > 0}^2 \right) \\
&=\sum_{i=1}^k \sum_{j=2}^{i+1} C K^{j-1} \left( || \eta^{s/2} \N^{(k + \ell + 1-(k+ j  - i))} F ||_{L^2}^2 + || F ||_{L^{2}, \eta > 0}^2 \right) \\
& \leq \sum_{i=1}^k \sum_{j=2}^{i+1}  \epsilon || \eta^{(s+(k+ j  - i))/2} \N^{(k + 1)} \vp ||_{L^2}^2 + C K^{2(j-1)} || F ||_{L^{2}, \eta > 0}^2  \\
& \leq  \tfrac{k(k+1)}{2} \epsilon ||\eta^{s/2} \N^{(k+1)} \vp ||_{L^2}^2 +  C K^{2k} || F ||_{L^{2}, \eta > 0}^2.
\end{align*}
Note that the second to last line results from the fact that we must consider the minimizing bump function power by implementing the bounds on $i$ and $j$:  with $k - i + j \geq k -k + 2 = 2$, and thus the lowest power of $\eta$ is $s+2$, strictly greater than $s$. \\
\noindent \textbf{$\mathbf{T_4}$ bump function constraint.} For the application of Lemma \ref{lem:kuwert5.5} and Corollary \ref{cor:KS5.3interp} we require $s \geq 2 \prs{\ell + i + 1 - j}$. The right hand side of this inequality is maximized when $i = k$ and $j=2$, that is when $s \geq 2 \prs{ k+ \ell -1}$. \newline

\noindent \fbox{\textbf{$\mathbf{T_5}$ estimate.}} For $T_5$ we apply Lemma \ref{lem:kuwert5.5} and then Corollary \ref{cor:KS5.3interp}, 
\begin{align*}
T_5
&\leq  \sum_{i=1}^k \sum_{j=2}^{i+1} \int_M  \left( P^{(2 \ell + 2i + 2 - 2j)}_{j+2}[F] \right) \eta^s dV_g \\
& \leq \sum_{i=1}^k  \sum_{j=2}^{i+1}  Q_{(j+2, \ell + i + 1 -j)} K^{j} \left(||\eta^{s/2} \N^{(  \ell + i + 1 -j)} F ||_{L^2}^2 + || F ||_{L^2, \eta > 0}^2\right) \\
& = \sum_{i=1}^k  \sum_{j=2}^{i+1} C K^j  \left(||\eta^{s/2} \N^{( k + \ell + 1 -( k - i + j))} F ||_{L^2}^2 + || F ||_{L^2, \eta > 0}^2\right) \\
& \leq \sum_{i=1}^k  \sum_{j=2}^{i+1} \prs{\epsilon ||\eta^{(s+( k - i + j))/2} \N^{( k + 1)} \vp ||_{L^2}^2 + C K^{2j} || F ||_{L^2, \eta > 0}^2} \\
& \leq \sum_{i=1}^k  \sum_{j=2}^{i+1} \prs{\epsilon ||\eta^{s/2} \N^{( k + 1)} \vp ||_{L^2}^2 + C K^{2j} || F ||_{L^2, \eta > 0}^2} \\
&\leq \tfrac{k(k+1)}{2}\epsilon ||\eta^{s/2} \N^{(k+1)} \vp ||_{L^2}^2 + C K^{2k + 2} || F ||_{L^2, \eta > 0}^2 .
\end{align*}
Note that the second to last line results from the fact that we must consider the minimizing bump function power by implementing the bounds on $i$ and $j$, as done for $T_4$ previously.

\noindent \textbf{$\mathbf{T_5}$ bump function constraint.} The application of Lemma \ref{lem:kuwert5.5} and Corollary \ref{cor:KS5.3interp} require $s \geq 2 \prs{\ell + i + 1 - j}$. Using the bounds on the $i$ and $j$ the right side is maximized when $i = k$ and $j =2$, that is, when $s \geq 2 \prs{k + \ell -1}$. \newline

\noindent \fbox{\textbf{Final estimate.}}
Now we combine everything. Summing up the $T_i$ estimates and combining the constraints on $s$ we conclude that
\begin{align*}
\tfrac{d}{dt} \brk{\int_M \brs{\vp}^2 dV_g} & \leq \sum_{i=1}^5 T_{i} \\
& \leq -2  \brs{\brs{ \N^{(k+1)} \vp}}_{L^2}^2 + \prs{k(k+1) + \ell + 9} \epsilon \brs{\brs{ \N^{(k+1)} \vp}}_{L^2}^2 + CK^{2k+2} \brs{\brs{F}}_{L^2,\eta > 0}^2,
\end{align*}
where $ s \geq 2(k+\ell+1)$. Taking $\epsilon = \prs{k(k+1) + \ell + 9}^{-1}$ yields the desired result.
\end{proof}
\end{lemma}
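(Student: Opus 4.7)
The plan is to differentiate $\|\eta^{s/2}\vp_t\|_{L^2}^2$ directly under the integral, substitute the evolution equation from Lemma \ref{lem:YMkgradellF}, and then estimate each of the resulting terms separately. After substitution the right-hand side splits into five natural pieces: the leading parabolic term $(-1)^k 2\int_M \langle \lap^{(k+1)}\vp, \eta^s \vp\rangle \, dV_g$, and four families of $P$-type integrals corresponding to the four lower-order correction terms in \eqref{eq:YMkgradell}. For each piece the goal will be to obtain an estimate of the form $\epsilon\|\eta^{s/2}\N^{(k+1)}\vp\|_{L^2}^2 + CK^{2k+2}\|F\|_{L^2,\eta>0}^2$, after which a final choice of $\epsilon$ (on the order of $(k(k+1)+\ell+9)^{-1}$) will absorb all error contributions into the leading negative gradient term.

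For the parabolic piece, I would integrate by parts $k+1$ times across the inner product, splitting off a clean $-2\|\eta^{s/2}\N^{(k+1)}\vp\|_{L^2}^2$ plus commutator corrections. The commutators between $\lap$ and $\N$ are handled via Lemma \ref{lem:shiftkNs}, producing additional $P_3$ and $P_2$ integrands that are then controlled exactly like the other $P$-terms. The derivative-on-$\eta$ errors take the form $\jmath_\eta^{(k+1)}\int \eta^{s-j}\langle \N^{(k+1-j)}\vp, \N^{(k+1)}\vp\rangle \, dV_g$; a weighted H\"older inequality converts each into an $\epsilon$-small multiple of $\|\eta^{s/2}\N^{(k+1)}\vp\|_{L^2}^2$ plus $\|\eta^{(s-2j)/2}\N^{(k+1-j)}\vp\|_{L^2}^2$, and the latter is then bounded by Corollary \ref{cor:KS5.3interp} interpolating between $\N^{(k+1)}\vp$ and $F$.

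For each $P_j^{(*)}[F]$ family the recipe is to invoke Lemma \ref{lem:kuwert5.5} (whose output carries a factor $K^{j-1}$ from the pointwise bound on $F$) followed by Corollary \ref{cor:KS5.3interp} to re-express derivatives of $F$ in terms of derivatives of $\vp = \N^{(\ell)}F$. Summands with an odd total number of derivatives (where Lemma \ref{lem:kuwert5.5} does not directly apply) are first symmetrized by Lemma \ref{lem:Pbalance} or a simple H\"older step, splitting a $P_1 \ast P_1$ pairing into two comparable halves. Each factor $K^{j-1}$ or $K^{j}$ is bounded by $K^{2k+2}$ via the hypothesis $K \geq 1$, accounting for the final exponent in the statement.

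The main technical obstacle, which is the reason the proof becomes lengthy despite being routine, is the careful tracking of the minimum power of $\eta$ forced on us by each application of Lemma \ref{lem:kuwert5.5} and Corollary \ref{cor:KS5.3interp}, both of which demand that the exponent of $\eta$ be at least twice the order of the relevant derivative. Since derivatives ranging up to order $k+\ell+1$ appear across the five families, I would work through each family $T_1,\ldots,T_5$ in turn, record the lower bound on $s$ it imposes, and verify that $s \geq 2(k+\ell+1)$ dominates all of them. Once every term has been bounded in the common form above, summing the five estimates and choosing $\epsilon$ small enough so that the total coefficient on $\|\eta^{s/2}\N^{(k+1)}\vp\|_{L^2}^2$ drops from $-2$ to $-1$ produces the desired inequality.
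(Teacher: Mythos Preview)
Your proposal is correct and follows essentially the same approach as the paper: the same five-term decomposition from Lemma \ref{lem:YMkgradellF}, the same handling of the parabolic piece via Lemma \ref{lem:shiftkNs} plus weighted H\"older and Corollary \ref{cor:KS5.3interp} for the $\eta$-derivative errors, the same Lemma \ref{lem:kuwert5.5}/Corollary \ref{cor:KS5.3interp} recipe for the $P$-terms with Lemma \ref{lem:Pbalance} and H\"older for the odd-order cases, and even the same final choice $\epsilon = (k(k+1)+\ell+9)^{-1}$.
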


\begin{rmk}
The bounds on $| \N^{(i)} \Rm |$ for $i \in [0,i_{k,\ell}] \cap \mathbb{N}$ (where $i_{k,\ell}$  is an index dependent on its subscripts) contribute to the coefficient $C$ in the above estimates, though these are undisplayed and incorporated into the $P_v^{(w)}$ notation. In the later blowup analysis (Proposition \ref{prop:YMkblowup}), although the manifold $M$ is changing, the bounds on $\Rm$ are actually decreasing since the base manifold is tending toward $\mathbb{R}^n$ in the blowup sequence.
\end{rmk}

\begin{thm}\label{thm:YMkNmbdbump} 
Let $q \in \mathbb{N}$, $\eta \in \mathcal{B}$, and suppose $\N_t \in \mathcal{A}_E \times \mathcal{I}$ is a solution to generalized Yang-Mills $k$-flow with $\sup_{M \times \mathcal{I}} \brs{F_{\N_t}} < \infty$.  Choose $s > 2(k + q + 1)$ and $K > \max\{ \sup_{M \times \mathcal{I}} \brs{F_{\N_t}}, 1\}$. Then for $t \in [0 , T) \subset \mathcal{I}$ with $T < K^{-2(k+1)}$ there exists $C_q := C_q\prs{ \dim M, \rank E, k, s, q, g, \jmath_{\eta}^{(s)}} \in \mathbb{R}_{>0}$, such that the following estimates hold.
\begin{equation}\label{eq: YMkgradkbdbumpeq}
\brs{\brs{ \eta^{s} \N^{(q)}_t F_{\N_t} }}_{L^2(M)}^2 \leq \frac{C_q \prs{\sup_{[0,T)}\brs{
\brs{ F_{\N_t}} }_{L^2(M), \eta>0}^2}}{t^{\frac{q}{k+1}}}.
\end{equation}

\begin{proof}
Set $\ga_q :=1$, and let $\{ \ga_{\ell} \}_{\ell=0}^{q-1} \subset \mathbb{R}$ be coefficients to be determined. Then set
\begin{equation*} 
\Phi(t) := \sum_{\ell=0}^q{\ga_\ell t^{\ell} \left\| \eta^{s}  \N_t^{(( k +1)\ell )} F_t \right\|_{L^2}^2}.
\end{equation*}
Then differentiating, reindexing and applying Lemma \ref{lem:YMketavpest} yields
\begin{align*}
\frac{d \Phi}{d t}&=  \sum_{\ell=1}^q{\ell \ga_{\ell} t^{\ell-1} ||\eta^{s}  \N^{((k+1) \ell)} F_t ||_{L^2}^2} +\sum_{\ell=0}^q{\ga_{\ell} t^{\ell} \frac{d}{d t} \lb || \eta^{s} \N^{((k+1) \ell)} F_t ||_{L^2}^2 \rb}\\
& \leq \sum_{\ell =0}^{q-1}{\left( (\ell+1) \ga_{\ell + 1} t^{\ell} || \eta^{s} \N^{((k+1) (\ell+1))} F_t ||_{L^2}^2 \right)} \\
& \hsp + \sum_{\ell =0}^q{ \left( \ga_{\ell} t^{\ell} \left(  -  \left\| \eta^{s} \left(\N^{(k+1)(\ell +1)} F_t \right) \right\|_{L^2}^2  + CK^{2(k+1)} \left\| F_t \right\|_{L^2, \eta>0}^2 \right)  \right)}\\
&= - t^q  \left|\left| \eta^s \left( \N^{((k+1)(q+1))} F_t \right) \right|\right|_{L^2}^2 +  \sum_{\ell = 0}^{q-1} \left( \ga_{\ell +1}(\ell + 1) - \ga_{\ell} \right)t^{\ell} \left| \left| \eta^s \left( \N^{((k+1)(\ell+1))} F_t \right) \right| \right|_{L^2}^2 \\ 
& \hsp + CK^{2(k+1)} \sum_{\ell = 0}^q \ga_{\ell} t^{\ell} ||F_t||_{L^2,\eta>0}^2.
\end{align*}
Using the initial condition $\ga_q = 1$, we choose constants satisfying the recursion relation
\[ \ga_{\ell +1}(\ell + 1) - \ga_{\ell} \leq 0,\]
so in particular, we choose constants which satisfy $\ga_{\ell} \geq \tfrac{q!}{\ell !}$. Then incorporating the fact that $t$ is bounded above by $K^{-2(k+1)}$ and choosing $C_{(k+1)q} \geq C (\sum_{\ell = 0}^q \ga_{\ell})$,
\[
\frac{d \Phi}{d t} =CK^{2(k+1)} \sum_{\ell = 0}^q \ga_{\ell} t^{\ell} || F_t ||_{L^2, \eta>0}^2 \leq C_{(k+1)q} K^{2(k+1)} || F_t ||_{L^2, \eta>0}^2
\]
integrating both sides with respect to the temporal variable yields that
\[
\Phi(t) - \Phi(0) \leq C_{(k+1)q}  K^{2(k+1)} \int_0^t{ || F_{\tau} ||_{L^2, \eta>0}^2 d \tau}, \]
therefore
\begin{align*}
t^q || \eta^{s} \N^{((k+1)q)} F_t ||_{L^2}^2
&\leq C_{(k+1)q} t K^{-2(k+1)} \left( \sup_{t \in [0,T]} || F_t ||_{L^2, \eta>0}^2 \right) + \Phi(0)\\
&\leq C_{(k+1)q} \left( \sup_{t \in [0,T]} || F_t ||_{L^2, \eta>0}^2 \right) + q! \brs{\brs{F_0}}_{L^2, \eta>0}.
\end{align*}
We conclude
\[
|| \eta^{s} \N^{((k+1)q)} F ||_{L^2}^2 \leq \frac{ C_{(k+1)q} }{t^q} \left( \sup_{t \in [0,T]} || F_t ||_{L^2, \eta>0}^2 \right).
\]
To complete the proof we consider remaining derivative types (that is, $|| \eta^{s}  \N^{((k+1)\ell+w)} F_t ||_{L^2}^2$ where $m \in \mathbb{N}\cup \{ 0 \}$ and $w \in [1,k] \cap \mathbb{N}$).
We observe that by Corollary \ref{cor:KS5.3interp} combined with the fact that $T < 1$, then
\begin{align*}
\left\| \N^{((k+1)\ell + w)} F_t \right\|_{L^2}^2 & \leq \epsilon \left\| \N^{((k+1)(\ell + 1))} F_t \right\|_{L^2}^2 +  C_{\epsilon} \left\| F_t \right\|_{L^2, \eta>0}^2 \\
&\leq  \frac{C_{(k+1)(\ell + 1)} \left( \sup_{[0,T)} \| F_t \|_{L^2, \eta>0}^2 \right)}{t^{\ell + 1}} +   \frac{C_{\epsilon} \left\| F_t \right\|_{L^2, \eta > 0}^2}{t^{\ell + 1}}\\
& \leq   \frac{C \sup_{[0,T)} \| F_t \|_{L^2, \eta>0}^2 }{t^{\frac{(k+1)(\ell + 1) +w}{(k+1)}}}.
\end{align*}
We have established the inequality for all $q \in \mathbb{N}$, and the result follows.
\end{proof}
\end{thm}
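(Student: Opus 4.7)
The plan is to carry out a standard Bernstein-type interpolation argument driven by the smoothing inequality of Lemma \ref{lem:YMketavpest}. I will first prove the estimate at the ``regular'' derivative orders $(k{+}1)q$ and then use interpolation to fill in the intermediate orders $(k{+}1)\ell + w$ with $w\in\{1,\dots,k\}$.

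The key device is a monotonicity quantity of the form
\[
\Phi(t) \;:=\; \sum_{\ell=0}^{q} \alpha_{\ell}\, t^{\ell}\,\bigl\| \eta^{s}\,\N_t^{((k+1)\ell)} F_{\N_t}\bigr\|_{L^2}^2,
\]
where $\alpha_q=1$ and the remaining $\alpha_\ell$ are to be chosen. Differentiating in $t$ and applying Lemma \ref{lem:YMketavpest} with $\vp_t=\N_t^{((k+1)\ell)}F_{\N_t}$ to each summand produces, for each index $\ell\le q-1$, a positive term of the form $\ell\alpha_\ell t^{\ell-1}\|\eta^s\N^{((k+1)\ell)}F\|^2$ coming from the $t^\ell$ factor and a negative term $-\alpha_\ell t^\ell\|\eta^s\N^{((k+1)(\ell+1))}F\|^2$ coming from the smoothing inequality, together with a forcing term of size $CK^{2(k+1)}\alpha_\ell t^\ell \|F\|_{L^2,\eta>0}^2$. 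Collecting by order of derivative gives the telescoping coefficient $(\ell{+}1)\alpha_{\ell+1}-\alpha_\ell$ in front of $t^\ell\|\eta^s\N^{((k+1)(\ell+1))}F\|^2$ for $\ell\le q-1$, plus a genuinely negative top term $-t^q\|\eta^s\N^{((k+1)(q+1))}F\|^2$.

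Choosing $\alpha_\ell := q!/\ell!$ forces $(\ell{+}1)\alpha_{\ell+1}-\alpha_\ell\le 0$, so every intermediate derivative term is absorbed and one is left with
\[
\tfrac{d\Phi}{dt}\;\le\; C_{(k+1)q}\,K^{2(k+1)}\,\|F_t\|_{L^2,\eta>0}^2.
\]
Integrating from $0$ to $t$, and using the hypothesis $T<K^{-2(k+1)}$ to turn the $tK^{2(k+1)}$ factor into an absolute constant, yields
\[
t^{q}\,\bigl\|\eta^s\N^{((k+1)q)}F_t\bigr\|_{L^2}^2\;\le\;\Phi(t)\;\le\; C_{(k+1)q}\sup_{[0,T)}\|F_\tau\|_{L^2,\eta>0}^2+q!\,\|F_0\|_{L^2,\eta>0}^2,
\]
which gives the desired bound in the case $q$ is replaced by $(k{+}1)q$. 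To handle a general derivative order $(k{+}1)\ell+w$ with $0\le w\le k$, I will apply the interpolation inequality of Corollary \ref{cor:KS5.3interp} to compare $\|\eta^s\N^{((k+1)\ell+w)}F_t\|_{L^2}^2$ with the corresponding bound at level $\ell{+}1$, absorb the low-order residue into $\|F\|_{L^2,\eta>0}^2$, and read off the exponent $(k{+}1)\ell+w$ divided by $(k{+}1)$ in the denominator.

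The main obstacle I expect is purely bookkeeping: making sure the bump-function power $s$ chosen in the theorem satisfies the cumulative requirement $s>2(k+q+1)$ imposed by each invocation of Lemma \ref{lem:YMketavpest} (for each $\vp=\N^{((k+1)\ell)}F$ with $\ell\le q$) and by the subsequent interpolation, and that the coefficients $\alpha_\ell=q!/\ell!$ genuinely make the recursion non-positive in every term. The smoothing inequality itself has already done all of the genuine analytic work, so once the monotonicity quantity is assembled and the recursion is solved, the rest is assembling constants and dividing by $t^{q/(k+1)}$.
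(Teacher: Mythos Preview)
Your proposal is correct and follows essentially the same approach as the paper: the same Bernstein-type monotonicity quantity $\Phi(t)=\sum_{\ell=0}^q \alpha_\ell t^\ell\|\eta^s\N^{((k+1)\ell)}F\|_{L^2}^2$, the same recursion $\alpha_\ell=q!/\ell!$ derived from the telescoping condition $(\ell{+}1)\alpha_{\ell+1}-\alpha_\ell\le 0$, the same use of Lemma~\ref{lem:YMketavpest} to drive the differential inequality, and the same final interpolation via Corollary~\ref{cor:KS5.3interp} to handle the intermediate orders $(k{+}1)\ell+w$. Your identification of the bump-function bookkeeping as the only delicate point is also exactly what the paper tracks.
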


\begin{cor}\label{cor:Nindest}
Suppose $\N_t \in \mathcal{A}_E \times [0,\tau]$ is a solution to generalized Yang-Mills $k$-flow and $\eta \in \mathcal{B}$. Set $\bar{\tau} := \min \{ \tau, 1 \}$ and $K \geq \sup_{M \times [0,\bar{\tau}]} |F_{\N_t}|$. Then for $s, \ell \in \mathbb{N}$ with $s > (k +\ell + 1)$ there exists $Q_{\ell} := Q_{\ell}\prs{K, s, \ell, \jmath_{\eta}^{(s)}, \rank E, g, \dim M, \bar{\tau}} \in \mathbb{R}_{>0}$ such that
\begin{equation}
\sup_M \left| \eta^s \N^{(\ell)}F_{\N_{\bar{\tau}}} \right|^2 \leq Q_{\ell} \prs{\sup_{M \times [0,\bar{\tau})} \left\| F_{\N_t} \right\|_{L^2(M), \eta>0}^2}.
\end{equation}

\begin{rmk}
Note that the corollary results have no dependency on the initial connection $\N_0$.
\end{rmk}

\begin{proof}
By the smoothing estimates of Theorem \ref{thm:YMkNmbdbump} we have that
\begin{equation*}
\sup_{M}|| \eta^{s} \N^{(\ell)} F_{\bar{\tau}} ||_{L^2}^2 \leq Q_{\ell} \sup_{M \times [0,\bar{\tau})}||F_t||^2_{L^2, \eta>0}.
\end{equation*}
Since $| \eta^{s} \N^{(\ell)}F |^2$ is a real valued function on $M \times [0, \infty)$, and any $j> \frac{n}{2}$, we use Kato's Inequality (Lemma \ref{lem:kato}) combined with the second Sobolev Imbedding Theorem (Theorem \ref{thm:sobolevimbedding}), which gives that $H_{j}^2 \subset C^0_B $, yielding the Sobolev constant $\mathsf{S}_{\ell}$ so that
\begin{equation*}
\sup_{M} \left| \eta^{s} \N^{(\ell)}F_{\bar{\tau}} \right|^2 \leq \mathsf{S}_{\ell} \sum_{w=0}^{j}{|| \N^{(w)} \left(| \eta^{s}\N^{({\ell})} F_{\bar{\tau}}| \right) ||_{L^2}}  \leq \mathsf{S}_{\ell} \sum_{w=0}^{j}{|| \eta^{s} \N^{(k+w)} F_{\bar{\tau}}||_{L^2}} . 
\end{equation*}
Therefore, combining the two above inequalities,
\begin{align*}
\sup_M \left| \eta^{s} \N^{(\ell)}F_{\bar{\tau}} \right|^2 &\leq \mathsf{S}_{\ell} \sum_{w=0}^j{ \left( C_{\ell+w} \sup_{[0,\bar{\tau})}||F_t||^2_{L^2, \eta>0} \right)^{1/2}} \\
& \leq \mathsf{S}_{\ell} \left( \max_{w \in [0,j] \cap \mathbb{N}}{C_{\ell+w}} \right) \left( \sup_{M \times [0,\bar{\tau})}||F_t||_{L^2, \eta>0} \right).
\end{align*}
Setting $Q_{\ell} :=\mathsf{S}_{\ell} \left( \max_{w \in [0,j] \cap \mathbb{N}}{C_{\ell+w}} \right)$, the result follows.
\end{proof}
\end{cor}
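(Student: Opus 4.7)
The plan is to bootstrap from the weighted $L^2$ smoothing estimates of Theorem \ref{thm:YMkNmbdbump} to the desired pointwise bound via Sobolev embedding, with Kato's inequality serving as the bridge between bundle-valued sections and their scalar norms.

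First, I would fix $j \in \mathbb{N}$ with $j > \tfrac{n}{2}$, so that the second Sobolev Imbedding Theorem (Theorem \ref{thm:sobolevimbedding}) provides a continuous embedding $H_j^2(M) \hookrightarrow C^0_B(M)$ with some constant $\mathsf{S}_\ell > 0$. Applying this embedding to the real-valued function $\brs{\eta^s \N^{(\ell)} F_{\N_{\bar{\tau}}}}$ gives
\begin{equation*}
\sup_M \brs{\eta^s \N^{(\ell)} F_{\N_{\bar{\tau}}}}^2 \leq \mathsf{S}_\ell \sum_{w=0}^{j} \brs{\brs{ \N^{(w)} \brs{\eta^s \N^{(\ell)} F_{\N_{\bar{\tau}}}}}}_{L^2}.
\end{equation*}
Iterated application of Kato's inequality (Lemma \ref{lem:kato}) exchanges each scalar derivative for the corresponding bundle-valued derivative, while Leibniz expansion on $\eta^s$ collects the $\N^{(i)}\eta$ factors into constants depending on $\jmath_\eta^{(s)}$. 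Each summand is thus controlled by $\brs{\brs{\eta^s \N^{(\ell+w)} F_{\N_{\bar{\tau}}}}}_{L^2}$ plus lower-order corrections of the same form.

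Second, I would apply Theorem \ref{thm:YMkNmbdbump} at time $\bar{\tau}$ with derivative order $q = \ell + w$ to each summand. Since $\bar{\tau} \leq 1$, the temporal factor $\bar{\tau}^{-q/(k+1)}$ contributes only a finite constant to be absorbed into the final $Q_\ell$. Setting
\begin{equation*}
Q_\ell := \mathsf{S}_\ell \max_{w \in [0,j] \cap \mathbb{N}} C_{\ell+w},
\end{equation*}
with the $C_{\ell+w}$ drawn from Theorem \ref{thm:YMkNmbdbump}, yields the stated inequality with the claimed dependencies on $K$, $s$, $\ell$, $\jmath_\eta^{(s)}$, $\rank E$, $g$, $\dim M$, and $\bar{\tau}$.

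The main technical point is matching the $\eta$-weight exponents: Theorem \ref{thm:YMkNmbdbump} at derivative order $q = \ell + w$ nominally requires the weight exponent to exceed $2(k + \ell + w + 1)$, so for $w$ as large as $j$ the naive constraint is considerably beyond the stated hypothesis $s > k + \ell + 1$. My resolution is to let $Q_\ell$ absorb all $\jmath_\eta^{(s)}$ data up to the order required by the Sobolev degree $j$, which is consistent with the declared dependence on $\jmath_\eta^{(s)}$; effectively, $s$ is interpreted as a sufficient-size lower bound, and the stated smoothing estimates provide everything else. Beyond this bookkeeping check the argument is purely mechanical combination of the two ingredients.
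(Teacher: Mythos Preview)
Your proposal is correct and follows essentially the same route as the paper: Sobolev embedding $H_j^2 \hookrightarrow C^0_B$ for $j > n/2$, Kato's inequality to pass from $\brs{\N^{(w)}\brs{\cdot}}$ to $\brs{\N^{(w)}(\cdot)}$, and then the $L^2$ smoothing estimates of Theorem~\ref{thm:YMkNmbdbump} applied at each derivative order $\ell + w$. The technical concern you raise about the weight exponent $s$ versus the constraint in Theorem~\ref{thm:YMkNmbdbump} is legitimate and is glossed over in the paper as well; your resolution of absorbing the extra $\jmath_\eta$ data into $Q_\ell$ is the only sensible reading.
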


\begin{cor}\label{cor:YMkFunifbd0toT}
Suppose $\N_t \in \mathcal{A}_E \times [0,T)$  is a solution to generalized Yang-Mills flow for $T \in [0,\infty)$ and that $\eta \in \mathcal{B}$. Furthermore suppose that
\begin{equation*}
\max \left\{\sup_{[0,T)} \brs{ \brs{ F_{\N_t} }}_{L^2(M)}, \sup_{[0,T)} \brs{ \brs{ F_{\N_t} }}_{L^{\infty}(M)} \right\} \leq K \in [1, \infty).
\end{equation*}
Then for $t \in [0,T)$, $s,\ell \in \mathbb{N}$ with $s > 2(k + \ell + 1)$ there, exists some
\begin{equation*}
Q_{\ell} := Q_{\ell}(\N_0, T, K, \ell, s, g, \rank(E), \dim(M) , \jmath_{\eta}^{(s)}) \in \mathbb{R}_{>0}
\end{equation*}
such that
\begin{equation*}
\sup_{M \times [0, T)}\left| \eta^{s} \N_t^{(\ell)}F_t \right|^2 \leq Q_{\ell}.
\end{equation*}

\begin{proof}
Since $\N_t$ exists and is smooth on the compact interval $\left[ 0,\frac{T}{2} \right]$ over $M$, which is also compact, then for each $\ell \in \mathbb{N}$ there exists a constant $B_{\ell} >0$ dependent on $\N_0$ such that
\[
\sup_{\left[0, \frac{T}{2}\right]} \left|\left| \eta^{s} \N_t^{(\ell)}F \right| \right|_{L^2}^2 \leq B_{\ell}.
\]
Let $\tau := \min{\{ \frac{1}{K^{2(  \ell +k )}}, \frac{T}{2} \}}$. Then if we consider any time $t > \tfrac{T}{2}$ we can consider the estimate given on the interval $[ t- \tau, t]$ by applying Corollary \ref{cor:Nindest} to obtain a local pointwise bound. However, since this bound is independent of each $t$ (only relying on the time $\tfrac{T}{2}$), then we in fact have a uniform bound over $\left[\tfrac{T}{2},T \right)$. Taking the maximum of this and $B_{\ell}$ we achieve the desired result.
\end{proof}
\end{cor}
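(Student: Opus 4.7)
The plan is to split the interval $[0,T)$ into the compact piece $[0,T/2]$, where smoothness of the flow gives a trivial bound, and the remaining piece $[T/2, T)$, where the needed uniform bound is extracted from the local smoothing estimate of Corollary \ref{cor:Nindest} applied on a short fixed-length subinterval ending at each $t$.

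First I would observe that since $\N_t$ is a smooth one-parameter family on the compact set $M \times [0, T/2]$, all derivatives $\eta^{s} \N_t^{(\ell)} F_t$ are continuous and hence bounded; call this bound $B_{\ell}$. Thus
\begin{equation*}
\sup_{M \times [0, T/2]} \brs{\eta^{s} \N_t^{(\ell)} F_t}^2 \leq B_\ell.
\end{equation*}
The substantive work is showing that $\sup_{M \times [T/2, T)} \brs{\eta^{s} \N_t^{(\ell)} F_t}^2$ is bounded by a quantity independent of the particular time $t \in [T/2, T)$.

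For the second piece, fix $t \in [T/2, T)$ and set $\tau := \min\{ 1/K^{2(k+1)}, T/2\}$, a quantity that depends only on $K$ and $T$ but not on $t$. Consider the flow on the time interval $[t - \tau, t]$, which lies inside $[0, T)$ by the choice of $\tau$. Applying Corollary \ref{cor:Nindest} to this shifted interval (with $\bar{\tau} = \tau$ replacing the role of the interval length there, and the connection at time $t - \tau$ as initial data), I obtain
\begin{equation*}
\sup_M \brs{\eta^{s} \N^{(\ell)} F_{\N_t}}^2 \leq Q_\ell \prs{ \sup_{M \times [t - \tau, t)} \brs{\brs{F_{\N_s}}}_{L^2(M), \eta > 0}^2 } \leq Q_\ell \cdot K^2 \Vol(M \cap \supp \eta),
\end{equation*}
where the constant $Q_\ell$ produced by Corollary \ref{cor:Nindest} depends on $K$, $s$, $\ell$, $\jmath_\eta^{(s)}$, $\rank E$, $g$, $\dim M$, and $\bar{\tau} = \tau$, all of which are independent of the specific $t$. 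This is the crux: the smoothing estimate is time-translation invariant and free of dependence on initial data, so the bound obtained at time $t$ only feels the fixed window length $\tau$ and the uniform $L^\infty$ and $L^2$ curvature bounds given by the hypothesis.

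Taking the maximum of $B_\ell$ and the bound from the smoothing step yields the desired constant $Q_\ell$ depending on $\N_0, T, K, \ell, s, g, \rank E, \dim M, \jmath_\eta^{(s)}$. The only delicate point is checking that Corollary \ref{cor:Nindest} really is applicable with a universal constant across all $t \in [T/2, T)$; this follows because its hypothesis requires only $\bar{\tau} \leq 1$ and a uniform supremum bound $K \geq \sup |F_{\N_s}|$ on the subinterval, both of which we have arranged. No further analytic obstacle arises: the argument is essentially a covering argument by short time intervals of fixed length.
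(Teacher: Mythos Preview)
Your proof is correct and follows essentially the same approach as the paper: split $[0,T)$ at $T/2$, use smoothness on the compact initial piece, and on $[T/2,T)$ apply the smoothing estimate of Corollary~\ref{cor:Nindest} on a fixed-length window $[t-\tau,t]$, exploiting that the resulting constant depends only on $\tau$, $K$, and geometric data, not on the particular $t$. Your version is arguably cleaner in two minor respects: you bound the pointwise quantity directly on $[0,T/2]$ (which is what the conclusion requires), and your choice $\tau = \min\{K^{-2(k+1)}, T/2\}$ matches the time restriction $T < K^{-2(k+1)}$ from Theorem~\ref{thm:YMkNmbdbump} more precisely than the paper's $K^{-2(\ell+k)}$.
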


\subsection{Long time existence obstruction}\label{ss:longtimeexist}

We first prove two general lemmas which are in fact independent of the flow. The first is a completely general manipulation, and the second only relies on the bounds on $| \N^{(\ell)}_t F_{\N_t} |$ for $\ell \in \mathbb{N}$ for a given family $\N_t \in \mathcal{A}_E \times \mathcal{I}$.

\begin{lemma}\label{lem:difofNs} Let $\N,\widetilde{\N} \in \mathcal{A}_E$ and set $\Upsilon := \widetilde{\N} - \N$. Then for all $\zeta$ in some tensor product of $TM$, $E$, and their corresponding duals,
\begin{align*}
\widetilde{\N}^{(\ell)} \lb \zeta \rb &= \N^{(\ell)} \lb \zeta \rb + \sum_{j=0}^{\ell-1} \sum_{i=0}^j \left( P^{(i)}_{\ell-1-i} \lb \gY \rb \ast P_1^{(j-i)} \lb \zeta \rb \right) \\
&=\N^{(\ell)} \lb \zeta \rb  +  \sum_{j=0}^{\ell-1} \sum_{i=0}^j \left( \widetilde{P}^{(i)}_{\ell-1-i} \lb \gY \rb \ast \widetilde{P}_1^{(j-i)} \lb \zeta \rb \right).
\end{align*}

\begin{proof}
The first summation follows simply from direct computation, so we address the second. Note that any quantity in $P_w^{(v)} \lb \gY \rb$ has the form, for some multiindex $(r_j)_{j=1}^{w}$ where $\sum_{j=1}^w r_j = v$,
\begin{equation*}
P_w^{(v)} = \N^{(r_1)}\lb \gY \rb \ast \cdots \ast \N^{(r_w)}\lb \gY \rb.
\end{equation*}
Then replacing $\N = \widetilde{\N} + \Upsilon$ yields
\begin{align*}
P_w^{(v)} & = \N^{(r_1)}\lb \gY \rb \ast \cdots \ast \N^{(r_w)}\lb \gY \rb \\
&= (\widetilde{\N} + \gY \ast)^{(r_1)}\lb  \gY \rb \ast \cdots \ast (\widetilde{\N} + \gY \ast)^{(r_w)} \lb \gY \rb \\
&=  \left( \sum_{q=0}^{r_1} \sum_{p=0}^{q} \left( \widetilde{P}^{(q)}_{r_1+1-p} \lb \gY \rb  \right) \right) \ast \cdots \ast \left( \sum_{q=0}^{r_w} \sum_{p=0}^{q} \left( \widetilde{P}^{(q)}_{r_w + 1-p} \lb \gY \rb  \right) \right) \\
& = \sum_{q=0}^{v} \sum_{p=0}^q \left( \widetilde{P}^{(q)}_{v+w-p} \lb \gY \rb  \right).
\end{align*}
Thus we have that 
\begin{align*}
\left( P^{(i)}_{\ell-1-i} \lb \gY \rb \ast P_1^{(j-i)} \lb \zeta \rb \right) &= \left( \sum_{q=0}^{i} \sum_{p=0}^q \left( \widetilde{P}^{(q)}_{\ell - 1-p} \lb \gY \rb  \right) \right) \ast P_1^{(j-i)} \lb \zeta \rb ,
\end{align*}
and thus
\begin{align*}
 \sum_{j=0}^{\ell-1} \sum_{i=0}^j  \left( \sum_{q=0}^{i} \sum_{p=0}^q \left( \widetilde{P}^{(q)}_{\ell - 1-p} \lb \gY \rb  \right) \right) \ast P_1^{(j-i)} \lb \zeta \rb &= \sum_{j=0}^{\ell-1} \sum_{i=0}^j \left( \widetilde{P}^{(i)}_{\ell-1-i} \lb \gY \rb \ast \widetilde{P}_1^{(j-i)} \lb \zeta \rb \right).
\end{align*}
The result follows.
\end{proof}
\end{lemma}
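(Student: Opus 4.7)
The plan is to establish the two equalities separately, using induction on $\ell$ for the first and direct expansion of the $P$-notation for the second.

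For the first equality I would proceed by induction on $\ell$. The base case $\ell = 1$ follows from the fact that the difference of two connections on $E$ is a tensor, so $\tilde{\N}\zeta = \N\zeta + \gY \ast \zeta$. For the inductive step I would write
\begin{align*}
\tilde{\N}^{(\ell+1)}\zeta = (\N + \gY \ast)\bigl(\tilde{\N}^{(\ell)}\zeta\bigr),
\end{align*}
substitute in the induction hypothesis, and apply the Leibniz rule. The action of $\N$ on a summand $P^{(i)}_{\ell-1-i}[\gY] \ast P^{(j-i)}_1[\zeta]$ increments either the derivative count on one of the $\gY$-factors (yielding a schematically equivalent $P^{(i+1)}_{\ell-1-i}[\gY]$ factor) or on $\zeta$ (yielding $P^{(j-i+1)}_1[\zeta]$), while the action of $\gY \ast$ increases the multiplicity of $\gY$-factors by one. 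Reindexing the resulting contributions produces the formula at level $\ell + 1$.

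For the second equality the strategy is to substitute $\N = \tilde{\N} - \gY \ast$ throughout each $P$-term on the right-hand side of the first identity. A generic summand $P^{(i)}_{\ell-1-i}[\gY]$ is schematically a product $\N^{(r_1)}[\gY] \ast \cdots \ast \N^{(r_{\ell-1-i})}[\gY]$ with $\sum_j r_j = i$, so it suffices to expand each factor $\N^{(r_j)}[\gY]$ by iteratively replacing $\N$ with $\tilde{\N} - \gY \ast$. Each application of $-\gY \ast$ contributes an extra $\gY$-factor (raising the subscript by one) but consumes one derivative slot (lowering the superscript by one), so the overall result is a sum of $\widetilde{P}^{(q)}_{v+w-p}[\gY]$ terms with $0 \le p \le q \le v$. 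Substituting back and combining (absorbing all permutations and constants into the schematic $\ast$) gives the claimed $\widetilde{P}$-side formula, and an entirely symmetric argument, swapping the roles of $\N$ and $\tilde{\N}$, confirms the correspondence.

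The main obstacle will be the combinatorial bookkeeping: verifying that after the substitution the index ranges on the $\widetilde{P}$-side coincide exactly with $0 \le i \le j \le \ell - 1$, rather than overshooting or missing admissible terms. Fortunately, the $P$- and $\widetilde{P}$-notation absorbs all permutations, signs, and background tensor data, so one only needs to track the ranges of the derivative and multiplicity indices, not explicit combinatorial coefficients. This reduces the entire verification to matching two finite index sets, which can be done in a single inductive pass.
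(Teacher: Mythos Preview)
Your proposal is correct and follows essentially the same route as the paper: the first identity is handled by induction on $\ell$ (what the paper calls ``direct computation''), and the second by substituting $\N = \widetilde{\N} - \gY \ast$ into each factor of the schematic $P$-expressions and regrouping. The only difference is that you spell out the inductive Leibniz-rule argument for the first equality more carefully than the paper does.
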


For the following proof let $\N_t \in \mathcal{A}_{E} \times [0,T)$ for some $T < \infty$, and set

\begin{equation*}
\Upsilon_s  := \int_{0}^{s}{\left( \frac{\del \N}{\del t} \right) dt}.
\end{equation*}
Note that for all $s < T$, we have $\Upsilon_s := \N_s - \N_0$.

\begin{prop}\label{prop:Ncmpt} Let $\N_t \in \mathcal{A}_E \times [0,T)$ for some $T < \infty$. Suppose further that for all $\ell \in \mathbb{N}$ there exists $C_{\ell} \in \mathbb{R}_{>0}$ such that 
\[
\sup_{M \times [0,T)}{\left| \N_{t}^{(\ell)} \lb \tfrac{\del \N}{\del t} \rb \right|} \leq C_{\ell}.
\]
Then $\lim_{t \to T}{\N_t} := \N_T$ exists and is smooth. 
\end{prop}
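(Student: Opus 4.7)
The plan is to show that the family $\{\N_t\}_{t \in [0,T)}$ is Cauchy in $C^\ell$ (with covariant derivatives taken with respect to the fixed reference connection $\N_0$) for every $\ell \in \mathbb{N}$, and then invoke completeness of $C^\ell$ to obtain the smooth limit $\N_T$. Since $\N_t - \N_s = \int_s^t \dot{\N}_\tau \, d\tau$ is a tensor identity, we have the pointwise estimate $|\N_0^{(\ell)} (\N_t - \N_s)| \leq (t-s) \sup_{\tau \in [s,t]} |\N_0^{(\ell)} \dot{\N}_\tau|$. Hence it suffices to produce, for every $\ell$, a finite constant $B_\ell$ with $\sup_{\tau \in [0,T)} |\N_0^{(\ell)} \dot{\N}_\tau| \leq B_\ell$.

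The hypothesis supplies exactly such a bound, but with the \emph{moving} derivative $\N_\tau^{(\ell)} \dot{\N}_\tau$ rather than the fixed $\N_0^{(\ell)} \dot{\N}_\tau$. To translate between the two I would invoke Lemma \ref{lem:difofNs} with $\widetilde{\N} := \N_0$, $\N := \N_\tau$, and $\widetilde{\N} - \N = -\Upsilon_\tau$. This yields
\begin{equation*}
\N_0^{(\ell)} \lb \dot{\N}_\tau \rb = \N_\tau^{(\ell)} \lb \dot{\N}_\tau \rb + \sum_{j=0}^{\ell-1} \sum_{i=0}^{j} \widetilde{P}^{(i)}_{\ell-1-i} \lb \Upsilon_\tau \rb \ast \widetilde{P}^{(j-i)}_1 \lb \dot{\N}_\tau \rb,
\end{equation*}
where $\widetilde{P}$ denotes the $P$ notation built from the fixed connection $\N_0$. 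The first term on the right is bounded by $C_\ell$ by hypothesis, and every other term is a universal multilinear expression in $\N_0$-derivatives of $\Upsilon_\tau$ and $\dot{\N}_\tau$ of order strictly less than $\ell$.

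Next I would run an induction on $\ell$ to produce $B_\ell$. The base case $\ell = 0$ is immediate from $|\dot{\N}_\tau| \leq C_0$. For the inductive step, assume $B_0, \dots, B_{\ell-1}$ have been constructed. Then for $i \leq \ell - 1$,
\begin{equation*}
|\N_0^{(i)} \Upsilon_\tau| = \left| \int_0^\tau \N_0^{(i)} \dot{\N}_s \, ds \right| \leq T \, B_i,
\end{equation*}
so each factor appearing in the lower-order $\widetilde{P}$-expressions is controlled uniformly in $\tau$. Combining with the hypothesized bound $C_\ell$ on the moving-connection term gives the desired $B_\ell$ depending only on $T$, $C_\ell$, and $B_0,\dots,B_{\ell-1}$.

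The main obstacle is the bookkeeping within the $P$ formalism: one must verify that every factor produced on the right-hand side of the Lemma \ref{lem:difofNs} expansion truly has $\N_0$-derivative order at most $\ell - 1$, so that the inductive hypothesis closes. Once the uniform $B_\ell$ bounds are in hand, the integral estimate at the start of this sketch shows $\{\N_t\}$ is Cauchy in every $C^\ell$, and the limit $\N_T := \lim_{t \to T} \N_t$ inherits smoothness from simultaneous convergence in all $C^\ell$.
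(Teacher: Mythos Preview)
Your proposal is correct and follows essentially the same strategy as the paper: both arguments translate the hypothesized bounds on $\N_\tau^{(\ell)}\dot{\N}_\tau$ into bounds on $\N_0^{(\ell)}\dot{\N}_\tau$ via Lemma~\ref{lem:difofNs}, close an induction on $\ell$ using the integral identity $\N_0^{(i)}\Upsilon_\tau = \int_0^\tau \N_0^{(i)}\dot{\N}_s\,ds$ to control the lower-order $\Upsilon$ factors, and conclude smoothness of the limit. The only cosmetic difference is that you phrase the endgame as a Cauchy-in-$C^\ell$ argument, whereas the paper directly bounds $|\N_0^{(\ell)}\Upsilon_T|$; your use of the $\widetilde{P}$ (i.e.\ $\N_0$-based) form of Lemma~\ref{lem:difofNs} makes the induction close slightly more transparently than the paper's $\N_t$-based version.
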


\begin{proof}
We first demonstrate that $\N_T := \lim_{t \to T}\N_t$ exists in $C^0(M)$. For all $s \leq T$ we have
\begin{equation}\label{eq:abd}
\left| \gY_s \right| = \left| \int_{0}^s{\left(\tfrac{\del \N}{\del t} \right) dt}\right| \leq TC_0.
\end{equation}
This implies, since $\N_0$ is continuous, that $\N_T$ is continuous.

Next we demonstrate smoothness of $\N_T$. The proof proceeds by induction on the $\ell \in \mathbb{N}$ satisfying $\left|\N_0^{(\ell)} \lb \gY_T \rb \right| < \infty$. Let $s < T$. For the base case,
\begin{align*}
\left| \N_0 \lb \gY_s \rb \right|&= \left|\int_{0}^{s}{\N_{0} \lb \tfrac{\del \N}{\del t} \rb dt} \right|\\
&= \left| \int_{0}^{s}{\left(\N_t \lb \tfrac{\del \N}{\del t} \rb - \gY_t \ast \left(\tfrac{\del \N}{\del t} \right) \right) dt} \right| \\
& \leq \int_{0}^s{\left(\left| \N_t \lb \tfrac{\del \N}{\del t} \rb \right| + C |\gY_t| \left| \left(\tfrac{\del \N}{\del t} \right) \right| \right)dt},
\end{align*}
where $C= C \prs{\dim M, \rank E} \in \mathbb{R}_{\geq 0}$.
Applying this to the above computations,
\[
| \N_0 \lb \gY_s \rb | \leq \int_{0}^s{\left(\left| \N_t \lb \tfrac{\del \N}{\del t} \rb \right| + C |\gY_t | \left| \tfrac{\del \N}{\del t} \right| \right)dt}\leq TC_1+CTC_0^2 < \infty.
\]
Since $\gY_s$ is continuous and the bound above is uniform across $s \in [0, T)$ then $|\N_0 \lb \gY_s \rb | < \infty$, completing the proof of the base case.

Now let $\ell \in \mathbb{N}$ and suppose the induction hypothesis is satisfied for $\{ 1, \dots, \ell-1 \}$. Expanding $\N_0^{(\ell)} \lb \gY_s \rb$ and applying Lemma \ref{lem:difofNs}, 
\begin{align*}
\N_0^{(\ell)} \lb  \gY_s \rb &= \int_0^s{\N_0^{(\ell)} \lb \tfrac{\del \N}{\del t} \rb dt} \\
&= \int_0^s{ \left( \N^{(\ell)}_t\lb \tfrac{\del \N}{\del t} \rb +  \sum_{j=0}^{\ell-1} \sum_{i=0}^j \left( P^{(i)}_{\ell-1-i} \lb \gY_t \rb \ast P_1^{(j-i)} \lb \tfrac{\del \N}{\del t} \rb \right) \right) dt.}
\end{align*}
Where here $P$ is taken with respect to $\N_t$. Taking the norm of the quantities gives, for $C > 0$,
\begin{align*}
\left| \N_0^{(\ell)} \lb  \gY_s  \rb \right| 
&= \int_0^s{ \left( \left| \N^{(\ell)}_t \lb   \tfrac{\del \N}{\del t} \rb \right|  +  \sum_{j=0}^{\ell-1} \sum_{i=0}^j \left(  P^{(i)}_{\ell-1-i} \lb \gY_t \rb \ast P_1^{(j-i)} \lb \tfrac{\del \N}{\del t} \rb \right) \right) dt.}
\end{align*}
Each term is bounded by assumption, and in particular, all terms on the right are bounded by the induction hypothesis, $\left|\N_0^{(\ell)} \lb \gY_s  \rb \right| < \infty$. Since our choice of bounds are uniform for all $t \in [0,T)$ and $\gY_s$ is continuous, it follows that $\left|\N_0^{(\ell)} \lb \gY_T \rb \right| < \infty$, so the induction hypothesis is satisfied by $\ell$. Thus $\gY_T$ is smooth, so since
\[ \gY_T = \lim_{t \to T} \gY_t = \lim_{t \to T} \left( \gG_0 - \gG_t \right) = \gG_0 - \lim_{t \to T}\gG_t, \]
then  $\N_t$ may be extended to $\N_T : = \lim_{t \to T}{\N_t}$, which is smooth. The result follows. 
\end{proof}

Using the previous results we demonstrate that the only obstruction to long time existence of the flow is a lack of supremal bound on the curvature tensor.

\begin{thm}\label{thm:YMktexistobs}
Suppose $\N_t$ is a solution to generalized Yang-Mills $k$-flow for some maximal $T< \infty$. Then 
\begin{equation*}
\sup_{M \times [0, T)} | F_{\N_t} | = \infty.
\end{equation*}

\begin{proof} Suppose to the contrary that $\sup_{M \times [0,T)}|F_{\N_t}| \leq K < \infty$. Then by Corollary \ref{cor:YMkFunifbd0toT} for all $t \in [0,T)$ and $\ell \in \mathbb{N}$, we have $\sup_M |\N^{(\ell)}F_{\N_t}|$ is uniformly bounded and so by Proposition \ref{prop:Ncmpt}, $\N_T := \lim_{t \to T}{\N_t}$ exists and is a smooth solution to generalized Yang-Mills $k$-flow for such $t$. However, by Proposition \ref{prop:YMkstexist}, there exists $\epsilon >0$ such that $\N_{t}$ exists over the extended temporal domain $[0, T+ \epsilon)$, which contradicts the assumption that $T$ was maximal. Thus $\sup_{M \times [0, T)} | F_{\N_t} | = \infty$, and the result follows.
\end{proof}
\end{thm}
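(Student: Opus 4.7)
The plan is to argue by contradiction: assume $\sup_{M \times [0,T)} |F_{\N_t}| \leq K < \infty$, and bootstrap this single $L^{\infty}$ curvature bound into full smoothness of the connection at time $T$, whereupon short-time existence will continue the flow past $T$ and contradict maximality of $T$. The first step is to upgrade the assumed curvature bound to bounds on all covariant derivatives of $F$. Choosing $\eta \equiv 1 \in \mathcal{B}$ (admissible since $M$ is compact and closed) and noting that $|F_{\N_t}| \leq K$ together with finite volume of $M$ automatically yields a uniform $L^2$ bound, Corollary \ref{cor:YMkFunifbd0toT} produces, for each $\ell \in \mathbb{N}$, a constant $Q_{\ell}$ with
\begin{equation*}
\sup_{M \times [0,T)} \brs{\N_t^{(\ell)} F_{\N_t}}^2 \leq Q_{\ell}.
\end{equation*}

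Next, these bounds must be converted into bounds on $|\N_t^{(\ell)} (\del \N_t / \del t)|$ for every $\ell \in \mathbb{N}$, which is precisely the hypothesis required by Proposition \ref{prop:Ncmpt}. Since the flow equation reads $\del_t \N_t = (-1)^{k+1} D^*_{\N_t} \lap_t^{(k)} F_{\N_t} + \mho_k(\N_t)$, the right-hand side is a finite sum of $P$-terms in $\N_t$-derivatives of $F_{\N_t}$ of total order at most $2k+1$. Applying $\N_t^{(\ell)}$ therefore yields a finite sum of $P$-terms of order at most $\ell + 2k + 1$, each of which is pointwise controlled by the $Q$-constants produced above (with the dependence on $K$ and on the background geometry absorbed into the constants).

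With these derivative bounds in hand, Proposition \ref{prop:Ncmpt} yields a smooth limit connection $\N_T := \lim_{t \to T} \N_t$. Using $\N_T$ as initial condition, the short-time existence result Proposition \ref{prop:YMkstexist} provides a unique solution on some interval $[T, T + \epsilon)$ which, concatenated with $\N_t$ on $[0,T)$, extends the original solution to $[0, T + \epsilon)$ and contradicts the assumed maximality of $T$. I expect no genuinely hard step to remain: the real analytic content has already been packaged into the smoothing estimates of \S \ref{ss:smoothingest} and the compactness statement of Proposition \ref{prop:Ncmpt}, so the proof reduces to a careful assembly of these tools along with verifying that the polynomial structure of $\mho_k$ transfers derivative bounds on $F$ to derivative bounds on $\del_t \N_t$.
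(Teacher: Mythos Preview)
Your proposal is correct and follows essentially the same route as the paper's proof: contradiction via Corollary~\ref{cor:YMkFunifbd0toT}, then Proposition~\ref{prop:Ncmpt}, then short-time existence (Proposition~\ref{prop:YMkstexist}). Your explicit observation that the derivative bounds on $F_{\N_t}$ must be converted into bounds on $\N_t^{(\ell)}(\del_t \N_t)$ via the polynomial structure of the flow's right-hand side is a detail the paper leaves implicit but which is indeed required to invoke Proposition~\ref{prop:Ncmpt}.
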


\subsection{Blowup analysis}\label{ss:blowupanalysis}

We now address the possibility of Yang-Mills $k$-flow singularities given no bound on the spatial supremum of curvature. To do so we require the following theorem of Uhlenbeck \cite{Uhl}.

\begin{thm}[Coulomb Gauge Theorem, Theorem 1.3, pp. 33 of \cite{Uhl}]\label{thm:Uhlenbeck}  Suppose $2p \geq n$. There are constants $\kappa_n, c_n \in \mathbb{R}_{>0}$ such that any connection $\N$ on the trivial bundle over $B_0(1) \subset \mathbb{R}^n$, the unit ball, with $|| F_{\N} ||_{L^{(n/{2})}(\mathbb{R}^n)} < \kappa_n$ is gauge equivalent to a connection $\widetilde{\N}$ with coefficient matrix $\widetilde{\gG}_t$ over $B_0(1)$ such that
\begin{enumerate}
\item $D^*_{\widetilde{\N}} \widetilde{\gG} = 0$,
\item $\| \widetilde{\gG} \|_{C^{p, 1}(\mathbb{R}^n)} \leq c_n || F_{\N} ||_{C^{p,0}(\mathbb{R}^n)} = c_n \left\| F_{\widetilde{\N}} \right\|_{C^{p,0}(\mathbb{R}^n)}$.
\end{enumerate}
\end{thm}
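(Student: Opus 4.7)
The plan is to establish the result via a continuity method, using the standard fact that the Hodge Laplacian on the ball (with appropriate boundary conditions) is an isomorphism between suitable Sobolev spaces. Without loss of generality we assume the bundle is trivialized over $B_0(1)$ so that $\N = d + \gG$ for a matrix-valued one-form $\gG$. Scale by considering the path of connections $\N^{(s)} := d + s \gG$ for $s \in [0,1]$, and let $S \subseteq [0,1]$ be the set of parameters $s$ for which $\N^{(s)}$ is gauge equivalent to a connection $\widetilde{\N}^{(s)} = d + \widetilde{\gG}^{(s)}$ satisfying $D^*_{\widetilde{\N}^{(s)}} \widetilde{\gG}^{(s)} = 0$ together with the boundary condition $\widetilde{\gG}^{(s)}(\nu) = 0$ on $\del B_0(1)$ (where $\nu$ is the outward normal) and the estimate $\brs{\brs{\widetilde{\gG}^{(s)}}}_{C^{p,1}} \leq c_n \brs{\brs{F_{\N^{(s)}}}}_{C^{p,0}}$. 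Clearly $0 \in S$, so it suffices to show that $S$ is both open and closed in $[0,1]$.

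For openness, fix $s_0 \in S$ with corresponding gauge $\vs_{s_0}$ and consider the map
\begin{equation*}
\Phi : (s, u) \mapsto D^*_{\N^{(s)}} \prs{ \exp(u)^* \gG^{(s)} },
\end{equation*}
defined on a neighborhood of $(s_0, 0)$ in $\mathbb{R} \times \{ u \in W^{p+2,2}(\Ad E) : u = 0 \text{ on } \del B_0(1)\}$, valued in $W^{p,2}$. Its linearization in $u$ at a solution is $\lap_{\widetilde{\N}^{(s_0)}}$ acting on sections vanishing on the boundary, which is an isomorphism to the appropriate codomain by standard Dirichlet theory. The inverse function theorem then produces a solution for each $s$ near $s_0$; composing with $\vs_{s_0}$ yields the required gauge, so $S$ is open.

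For closedness, suppose $s_j \to s_\infty$ with $s_j \in S$ and corresponding Coulomb gauges $\widetilde{\N}^{(s_j)}$. The a priori estimate built into the definition of $S$ gives uniform control of $\widetilde{\gG}^{(s_j)}$ in $C^{p,1}$. By Arzel\`a--Ascoli we extract a subsequential limit $\widetilde{\gG}^{(s_\infty)}$, and the Coulomb condition and boundary condition pass through the limit. The key quantitative step is upgrading this limit to satisfy the desired estimate in closed form: one tests the Coulomb equation against $\widetilde{\gG}^{(s_\infty)}$, uses the curvature identity $D\widetilde{\gG} = F_{\widetilde{\N}} - \tfrac{1}{2}[\widetilde{\gG},\widetilde{\gG}]^{\#}$, and estimates the nonlinear commutator by H\"older with exponent $n/2$; the hypothesis $\brs{\brs{F_{\N}}}_{L^{n/2}} < \kappa_n$ absorbs this term into the left-hand side of the Gaffney-type inequality $\brs{\brs{\widetilde{\gG}}}_{W^{1,2}} \leq C_n \prs{\brs{\brs{D^*\widetilde{\gG}}}_{L^2} + \brs{\brs{D\widetilde{\gG}}}_{L^2}}$, provided $\kappa_n$ is chosen small enough relative to $C_n$. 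Elliptic bootstrapping from $L^p \to W^{1,p} \to C^{p+1,\ga}$ then promotes this to the stated $C^{p,1}$ estimate.

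The main obstacle is precisely this nonlinear absorption step: the Coulomb gauge condition is a nonlinear PDE in $\widetilde{\gG}$, and without the smallness of $\brs{\brs{F_{\N}}}_{L^{n/2}}$ one cannot prevent the commutator term from overpowering the linear Gaffney estimate, which would destroy both the a priori bound used in closedness and the uniqueness of the Coulomb representative needed for openness. The choice of $L^{n/2}$ is forced by scaling, since it is the conformally invariant norm on curvature in dimension $n$, matching the scaling of the $W^{1,n}$ norm on $\widetilde{\gG}$ under the natural rescaling of the ball; this is what makes $\kappa_n$ a universal constant independent of which ball in $\mathbb{R}^n$ one works on, and what allows the argument to be applied after a blowup rescaling as in Proposition \ref{prop:YMkblowup}.
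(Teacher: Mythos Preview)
The paper does not prove this statement: Theorem~\ref{thm:Uhlenbeck} is quoted verbatim from Uhlenbeck's work \cite{Uhl} and used as a black box in the blowup analysis of \S\ref{ss:blowupanalysis}. There is no proof in the paper to compare against.

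That said, your sketch is a faithful outline of Uhlenbeck's original continuity argument, and the structure---$0\in S$, openness via the implicit function theorem on the gauge orbit, closedness via a priori control coming from the Gaffney inequality with the quadratic commutator absorbed by $L^{n/2}$-smallness of curvature---is correct. Two places deserve tightening if you intend this as more than a sketch. First, your map $\Phi$ should act on the full gauge-transformed connection form $\vs^{-1}d\vs + \vs^{-1}\gG^{(s)}\vs$ rather than on the ambiguous expression $\exp(u)^*\gG^{(s)}$; the linearization in $u$ at a Coulomb solution is then $d^*d + $ lower order, which is what gives the Dirichlet isomorphism. Second, since you build the estimate $\brs{\brs{\widetilde{\gG}^{(s)}}}_{C^{p,1}}\leq c_n\brs{\brs{F_{\N^{(s)}}}}_{C^{p,0}}$ into the definition of $S$, openness requires you to re-establish this estimate (not merely the Coulomb condition) for nearby $s$; this is where the smallness $\kappa_n$ enters a second time, and you should make that loop explicit. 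Your closing remark on why $L^{n/2}$ is the correct scale-invariant threshold is exactly right and is indeed the reason the theorem can be invoked uniformly along the blowup sequence in Proposition~\ref{prop:YMkblowup}.
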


For our setting we will need to extend this choice of gauges over a large region. This will be accomplished using the following result stated in Donaldson and Kronheimer's text \cite{DonB}.

\begin{thm}[Gauge patching theorem, Corollary 4.4.8, pp.159 of \cite{DonB}]\label{thm:gaugepatch} Suppose $\{ \N^i \}$ is a sequence of connections on $E$ over $M$ with the following property: for each $x \in M$ there is a neighborhood $U_x$ and a subsequence $\{ \N^{i_j} \}$ with corresponding sequence of gauge transformations $s_{i_j}$ defined over $M$ such that $s_{i_j}^*\N^{i_j}$ converges over $U_x$. Then there is a single subsubsequence $\{ \N^{i_{j_k}} \}$ defined over $M$ such that $s^*_{i_{j_k}} \N^{i_{j_k}}$ converges over all of $M$.
\end{thm}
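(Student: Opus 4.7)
The plan is to combine a diagonal extraction with a local-to-global patching of gauge transformations. First I would use compactness of $M$ to replace the given cover $\{U_x\}_{x \in M}$ by a finite subcover $\{U_\alpha\}_{\alpha=1}^N$, together with a shrunken cover $\{V_\alpha\}$ satisfying $\bar{V_\alpha} \subset U_\alpha$ and also covering $M$, as well as cutoff functions $\phi_\alpha$ supported in $U_\alpha$ with $\phi_\alpha \equiv 1$ on $V_\alpha$. Applying the hypothesis to each $U_\alpha$ in turn and passing successively to further subsequences, a standard diagonal argument produces a single subsequence $\{\N^{i_k}\}$ together with, for each $\alpha$, global gauge transformations $s^\alpha_k$ on $M$ such that $(s^\alpha_k)^* \N^{i_k}$ converges smoothly on $U_\alpha$ to some limit connection $A_\alpha$.

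Next I would analyze the transitions between the local gauge choices. On each nonempty overlap $U_\alpha \cap U_\beta$ define $g^{\alpha\beta}_k := (s^\alpha_k)^{-1} s^\beta_k$. Because both $(s^\alpha_k)^* \N^{i_k}$ and $(s^\beta_k)^* \N^{i_k}$ converge smoothly on the overlap and are related through the gauge transformation law by $g^{\alpha\beta}_k$, the sequence $g^{\alpha\beta}_k$ satisfies an elliptic first-order equation whose coefficients are uniformly $C^\infty$-bounded; together with pointwise boundedness coming from bundle-metric compatibility (which constrains values to a compact group), elliptic bootstrapping and Arzel\`a--Ascoli yield a further subsequence along which every $g^{\alpha\beta}_k \to g^{\alpha\beta}_\infty$ smoothly on the corresponding overlap.

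With convergent transitions available the patching proceeds inductively. Set $\sigma_k := s^1_k$ on a neighborhood of $V_1$. Assume $\sigma_k$ has been defined smoothly over $V_1 \cup \cdots \cup V_{\alpha-1}$ with $(\sigma_k)^* \N^{i_k}$ converging there. To extend across $V_\alpha$, consider $h^\alpha_k := (s^\alpha_k)^{-1} \sigma_k$ on the overlap of $U_\alpha$ with the previously-covered region; this converges smoothly to a limit $h^\alpha_\infty$ by the preceding extraction. For $k$ sufficiently large, $h^\alpha_k$ lies in a normal neighborhood of $h^\alpha_\infty$, so via the exponential map on the gauge group one can form a smooth interpolation $\tau^\alpha_k$ between $h^\alpha_k$ and $h^\alpha_\infty$ using the cutoff $\phi_\alpha$; redefine $\sigma_k$ on $U_\alpha$ to equal $s^\alpha_k \cdot \tau^\alpha_k$. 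By construction this agrees with the previous definition wherever $\phi_\alpha = 0$ and equals $s^\alpha_k \cdot h^\alpha_\infty$ on $V_\alpha$, so the pullback continues to converge smoothly on $V_\alpha$.

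The hard part will be controlling the patching region rather than the extraction: one must verify that $(\sigma_k)^* \N^{i_k}$ converges smoothly in the transition zone where $0 < \phi_\alpha < 1$, not merely on $V_\alpha$. The key technical observation is that the difference $h^\alpha_k - h^\alpha_\infty$, expressed via the exponential chart of the structure group, tends to zero in every $C^m$-norm, so the cut-off interpolation $\tau^\alpha_k$ converges smoothly to $h^\alpha_\infty$ over all of $U_\alpha$; consequently $(\sigma_k)^* \N^{i_k}$ differs from the convergent sequence $(s^\alpha_k \cdot h^\alpha_\infty)^* \N^{i_k}$ by a term vanishing in every derivative as $k \to \infty$. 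Iterating through $\alpha = 2, \dots, N$ produces a globally defined sequence $\sigma_k$ on $M$ for which $\sigma_k^* \N^{i_k}$ converges smoothly on all of $M$, yielding the desired sub-subsequence.
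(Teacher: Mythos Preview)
The paper does not give its own proof of this statement: it is quoted verbatim as Corollary~4.4.8 of Donaldson--Kronheimer and simply invoked in the blowup construction. So there is nothing in the paper to compare against directly.

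That said, your outline is essentially the standard argument behind the cited corollary: finite subcover, diagonal extraction to get local convergence in every chart along a single subsequence, control of the transition gauges $g^{\alpha\beta}_k$ via the gauge-transformation law and Arzel\`a--Ascoli, then inductive splicing using a cutoff and the Lie-group exponential map. The only point I would flag as needing care is your step where you assert that the $g^{\alpha\beta}_k$ satisfy ``an elliptic first-order equation''; in fact what you use is simply the algebraic gauge relation $(s^\beta_k)^*\nabla^{i_k} = (g^{\alpha\beta}_k)^* \bigl((s^\alpha_k)^*\nabla^{i_k}\bigr)$, which gives a uniform $C^m$ bound on $\nabla g^{\alpha\beta}_k$ in terms of the two convergent connection forms, and then bootstraps. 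No ellipticity is needed, only the first-order ODE-type identity plus compactness of the structure group for the $C^0$ bound. With that correction your argument is sound and matches the source the paper cites.
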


Now we establish some preliminary scaling laws of Yang-Mills $k$-flow, and discuss the effect that this scaling has on the generalized flow, which will be key in the proceeding blowup analysis argument.

\begin{lemma}\label{lem:YMkscal}
Suppose $\N_t \in \mathcal{A}_E \times \mathcal{I}$ is a solution to Yang-Mills $k$-flow with local coefficient matrices $\gG_t$. Define the one-paramater family $\N^{\la}_t \in \mathcal{A}_E \times \mathcal{I}$ with coefficient matrices given by
\begin{equation}\label{eq:YMkscal}
\Gamma^{\la}_t (x):= \la \Gamma_{\la^{2(k+1)} t} (\la x).
\end{equation}
Then $\N^{\la}_t$ is also a solution to Yang-Mills $k$-flow.
\begin{proof}

Using the scaling as in \eqref{eq:YMkscal} we set $F^{\la}_t : = F_{\N^{\la}_t}$ and $D^{\la}_t := D_{\N^{\la}_t}$. Then we insert $\N^{\la}_t$ into the Yang-Mills $k$-flow equation. First, we take the temporal derivative. 
\begin{equation*}
\frac{\del \N^\la_t}{\del t} = \frac{\del \Gamma^\la_t}{\del t}  = \la^{2k+3}\frac{\del \Gamma_t}{\del t} = \la^{2k+3} \frac{\del \N_t}{\del t}.
\end{equation*}
Now we compare this to the scaling of $\Grad \YM_1$. If we revisit the proof of Proposition \ref{prop:ELYMk}, rather than commute connections and perform integration by parts (in order to get clean Laplacian pairings), if we instead just directly integrate by parts, one confirms that
\begin{align*}
\Grad \YM_1 (\N) = P_{1,-\Rm}^{(2k+1)}\brk{F_{\N}} + P_{2,-\Rm}^{(2k-1)}\brk{F_{\N}},
\end{align*}
where here $P_{v, -\Rm}^{(u)}$ notation is that of Definition \ref{sss:Pdefn} with the added constraint that the quantities are written entirely in terms of derivatives of $F_{\N}$. With this in mind, as a consequence of Lemma \ref{lem:Nscal},
\begin{align*}
\Grad \YM_1 (\N^{\la}_t) &= \la^{2k+3} \Grad \YM_1 (\N_t).
\end{align*}
Thus the desired scaling law holds through the Yang-Mills $k$-flow equation. The result follows.
\end{proof}
\end{lemma}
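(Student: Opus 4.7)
My plan is to verify directly that both sides of the Yang-Mills $k$-flow equation transform by the same power of $\lambda$ under the rescaling $\Gamma_t^\lambda(x) := \lambda\, \Gamma_{\lambda^{2(k+1)}t}(\lambda x)$, namely $\lambda^{2k+3}$. The left-hand side is immediate: the outer factor $\lambda$ combined with the chain rule applied to the rescaled time variable $\lambda^{2(k+1)} t$ yields $\partial_t \Gamma_t^\lambda(x) = \lambda^{2k+3}(\partial_t \Gamma)_{\lambda^{2(k+1)} t}(\lambda x)$. The nontrivial content is then showing that $\Grad \mathcal{YM}_k(\N^\lambda_t)(x) = \lambda^{2k+3} \Grad \mathcal{YM}_k(\N_t)(\lambda x)$, so I would reduce the whole argument to a scaling count for the formula defining $\Grad \mathcal{YM}_k$.

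First I would establish the transformation laws for the geometric auxiliaries. Since $F$ is built from $\partial \Gamma$ together with the quadratic $\Gamma \wedge \Gamma$, both contributions acquire a factor $\lambda^2$ under the rescaling (one from chain rule on $\partial$, the other from the product of two outer $\lambda$'s), so curvature scales by $\lambda^2$. By a straightforward induction each additional covariant derivative contributes a further factor of $\lambda$, yielding $\N^{(\ell)} F_{\N^\lambda_t}(x) = \lambda^{\ell+2}\, \N^{(\ell)} F_{\N_{\lambda^{2(k+1)}t}}(\lambda x)$. This is the content of the auxiliary scaling lemma invoked at the end of the proof.

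Next I would revisit the Euler--Lagrange calculation of Proposition \ref{prop:ELYMk} and, instead of pairing covariant derivatives into Laplacians, repeatedly integrate by parts so that every covariant derivative in the resulting expression lands directly on $F$. This recasts $\Grad \mathcal{YM}_k(\N)$ schematically in the form $P_{1,-\Rm}^{(2k+1)}[F_{\N}] + P_{2,-\Rm}^{(2k-1)}[F_{\N}]$, where each $P_w^{(v)}[F]$ is a contraction of $w$ copies of $F$ carrying $v$ total covariant derivatives. Such a $P_w^{(v)}[F]$ scales by $\lambda^{2w+v}$, and substituting $(w,v)=(1,2k+1)$ and $(w,v)=(2,2k-1)$ both yield $\lambda^{2k+3}$, matching $\partial_t \Gamma_t^\lambda$ exactly.

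The main obstacle is verifying the reformulation in which every connection is absorbed as a derivative of $F$; this requires carefully re-examining the integration-by-parts bookkeeping in Proposition \ref{prop:ELYMk} to confirm that no undifferentiated $\Gamma$ survives in the final expression. One must also attend to the background curvature $\Rm$, which does not scale uniformly with $\lambda$. In the intended blowup application, however, the rescaled base metrics converge to the flat metric on $\mathbb{R}^n$, so $\Rm \to 0$ and every $P$-term carrying an $\Rm$ factor drops out, so the scaling relation survives on the limiting model. In short, the flow scales parabolically with weight $(\lambda x, \lambda^{2(k+1)} t)$ and spatial weight $\lambda$ on the connection, and the content of the proof is simply matching powers of $\lambda$ through the explicit formula for $\Grad \mathcal{YM}_k$.
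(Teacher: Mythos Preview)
Your proposal is correct and follows essentially the same approach as the paper: compute the temporal derivative scaling $\lambda^{2k+3}$, then revisit Proposition~\ref{prop:ELYMk} with direct integration by parts to express $\Grad\mathcal{YM}_k$ purely in terms of covariant derivatives of $F$ (the paper's $P_{v,-\Rm}^{(u)}$ notation), and finally match powers via the scaling law $\N^{(\ell)}F \mapsto \lambda^{\ell+2}\N^{(\ell)}F$ (Lemma~\ref{lem:Nscal}). Your discussion of the $\Rm$ terms and their disappearance in the blowup limit also anticipates exactly the content of the remark immediately following the lemma.
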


\begin{rmk}\label{rmk:mhoirrel}
While the proof of Lemma \ref{lem:YMkscal} only applies in the case of Yang-Mills $k$-flow, we may utilize the result on the generalized flow in the subsequent blow up analysis. To see this we must verify that the order of $\mho_k$ works favorably with respect to the scaling law (that is, if we send $\la \to 0$). Rescaling with $\Gamma^{\la}_t (x):= \la \Gamma_{\la^{2(k+1)} t} (\la x )$ we have, by appealing to Lemma \ref{lem:Nscal},
\begin{align*}
\mho_k( \N^{\la}) = \sum_{i=1}^{k} \sum_{j=2}^{i + 1}{(P_{\la})_j^{(2
i + 3 -2j)} \lb F_{\N^{\la}}\rb} = \sum_{i=1}^{k} \sum_{j=2}^{i + 1}{\la^{2i + 3} P_j^{(2
i + 3 -2j)} \lb F_{\N} \rb} .
\end{align*}
We have that $5 \leq 2i + 3 \leq 2k + 3$. As $\la \to 0$ we have that all terms are dominated except those of the form
\begin{equation*}
\sum_{j=2}^{k+1} \la^{2k+3}P_j^{(2
k + 3 -2j)} \lb F_{\N} \rb.
\end{equation*}
These are the dominant quantities of $\mho_{k}$ in the context of rescaling for small $\la$ and agree with the scaling law of Yang-Mills $k$-flow, thus ultimately preserving the behavior of the blowup limit.
\end{rmk}

We now demonstrate the construction of a generalized Yang-Mills $k$-flow blowup limit in the following proposition.

\begin{prop}\label{prop:YMkblowup}
Suppose there exists a maximal $T \in [0, \infty]$ such that $\N_t \in \mathcal{A}_E \times [0,T)$ is a solution to generalized Yang-Mills $k$-flow and $\lim_{t \to T}\brs{\brs{F_{\N_t}}}_{L^{\infty}(M)} = \infty$. Then a blowup sequence $\{ \N^i_t \}$ exists and converges pointwise to a smooth solution $\bm{\N}_t$ to generalized Yang-Mills $k$-flow with domain $\mathbb{R}^n \times \mathbb{R}_{\leq 0}$. Additionally for each $q \in \mathbb{N}$ there exists $C_q= C_q\prs{K, q, \rank E, g, \dim M}$ such that
\begin{equation*}
\sup_{(x,t) \in \mathbb{R}^n \times \mathbb{R}_{\leq 0}} \left| \bm{\N}^{(q)}_t F_{\bm{\N}_t} \right| \leq C_q,
\end{equation*}
and furthermore $\brs{F_{\bm{\N}_0}(0)} = 1$.

\begin{proof} Choose a sequence $ \tau_i  \nearrow T$ within $[0,T)$. For each $\tau_i$, there exists a point $(x_i,t_i) \in M \times [0,T)$ so that
\begin{equation*}
\brs{F_{\N_{t_i}}(x_i)} = \sup_{(x,t) \in M \times [0,\tau_i]} \brs{F_{\N_{t}}(x)}.
\end{equation*}
Choose a subsequence so that $\{ x_i \}$ converges to some $x_{\infty} \in M$. There exists some chart about the blowup center $x_{\infty}$ so that the tail of the sequence $\{ x_i \}$ is contained within the single chart mapping into $B_0(1)$. We will only consider the sequence tail, so it is therefore sufficient to utilize the coordinate chart and assume that the sequence is contained in $\mathbb{R}^n$. Thus we may identify connections with their coefficient matrices in this argument.

 Let $\{\la_i \} \subset \mathbb{R}_{> 0}$ be constants to be determined, and set 
\begin{equation*}
\Gamma^i_t(x):= \la_i^{1/(2k+2)} \Gamma_{\la_i t + t_i}(\la_i^{1/(2k+2)} x + x_i). 
\end{equation*}
Note the corresponding curvatures $F_t := F_{\N_t}$ is scaled in the following manner,
\[ F_{t}^i(x) = \la_i^{1/(k+1)} F_{\la_i t + t_i} (\la_i^{1/(2k+2)}x + x_i) .\]
By Lemma \ref{lem:YMkscal} all corresponding $\N^i_t$ are also solutions to the generalized Yang-Mills $k$-flow (though with different initial conditions and scaled lower order terms). The domain for each $\N^i_t$ is $B_{x_i}{\prs{(\la_i)^{-1/(2k+2)}}} \times [ \frac{-t_i}{\la_i}, \frac{T - t_i}{\la_i}]$. We will choose $\la_i$ to mitigate the `blowing up' of the sequence curvatures $F^i_t$ by observing the following:
\begin{align*}
\sup_{t \in \lb -\frac{t_i}{\la_i},\frac{T - t_i}{\la_i} \rb}{\brs{F^i_t(x)}} &= \left|  \la_i^{1/(k+1)} \right| \sup_{t \in \left[-\frac{t_i}{\la_i},\frac{T - t_i}{\la_i}\right]}{\left|F_{\la_i t + t_i}(\la_i^{1/(2k+2)} x + x_i)\right|} \\
&= \left|  \la_i^{1/(k+1)} \right| \sup_{t \in [0,t_i ]}{|F_t(x)|} \\
&= \left|  \la_i^{1/(k+1)} \right|| F_{t_i}(x_i)|.
\end{align*}
Setting $\la_i = |F_{t_i}(x_i)|^{-(k+1)}$ gives that 
\begin{equation}\label{eq:controlledsup}
 1 = \brs{F^i_0(0)} =  \la_i^{1/(k+1)}{\brs{F_{0+ t_i}(0 + x_i)}} = \sup_{t \in \lb -\frac{t_i}{\la_i}, 0 \rb}{|F^i_t(x)|}.
 \end{equation}
Note that we have no information on the behavior of the $F^i_t$ for $t>0$, so this bound \eqref{eq:controlledsup} induced by the choices of $\lambda_i$ is only guaranteed on $\mathbb{R}_{\leq 0}$.

Next we construct smoothing estimates for the sequence $\{ \N^i_t \}$. Let $y \in \mathbb{R}^n$, $\tau \in \mathbb{R}_{\leq 0}$ and consider sequence indices $i \in \mathbb{N}$ such that the domain of $\N^i_t$ contains $B_{y}(1) \times [\tau -1, \tau]$. Let $\eta_y \in C_c^{\infty}(M)$ be chosen so that
\begin{equation*}
\begin{cases}
0  \leq \eta_y \leq 1,&\\
\supp \eta_y &= B_{y}\left( 1 \right),\\
\left. \eta_y \right|_{B_{y}\left( \frac{1}{2} \right)} &= 1.
\end{cases}
\end{equation*}
For any $s \in \mathbb{N}$ note that $\sup_{[\tau-1,\tau]}{|\eta_y^{s}F^i_t |} \leq 1$. Since each $\eta_y^{s} F^i_t $ is smooth on $[\tau-1,\tau]$ then by Corollary \ref{cor:Nindest} for all $q \in \mathbb{N}$ one may choose $s > 2(k+q +1)$ so that there exists a $Q_q$ such that
\[ \sup_{\{ \tau \} \times B_y(\frac{1}{2}) }\brs{ \prs{\N^i_t}^{(q)} F^i_t } \leq \sup_{\{ \tau \} \times B_y(1) }\brs{ \eta^{s}_y  \prs{\N^i_t}^{(q)} F^i_t }  \leq Q_q.\]
Utilizing the same bump function and recentering it at each point, we establish uniform smoothing estimates across all of $\mathbb{R}^n$. Therefore, we obtain uniform pointwise bounds for $|(\N^i_t)^{(q)}F^i_t|$ for all $q,i \in \mathbb{N}$.

Let $\epsilon, R \in \mathbb{R}_{>0}$ and $\tau \in \mathbb{R}_{\leq 0}$. For any $m \in \mathbb{N}$ we consider the compact time interval $[\tau -m,- \frac{1}{m}]$ and all $i \in \mathbb{N}$ such that the domain of $\N_t^i$ contains $[\tau -m,- \frac{1}{m}] \times B_0({R + m + \epsilon})$. Since the $F^i_t$ are all pointwise uniformly bounded by $1$, then there exists some $\delta >0$ so that for any $y \in \mathbb{R}^n$, we have $|| F^i_t ||_{L^{n /2}{B_y({\delta})}} \leq \kappa_n$, where $\kappa_n$ is as defined in the Coulomb Gauge Theorem (Theorem \ref{thm:Uhlenbeck}). We rescale coordinates of the sequence of connections $\N^i_t$ restricted to $B_y({\delta})$ (so that $B_y({\delta}) \mapsto B_{0}(1)$), again using the scalings laws to preserve that $\N^i_t$ is a solution to Yang-Mills $k$-flow up to highest order terms, by setting
\[ \widetilde{\gG}^i_t(x) : =\frac{1}{\delta^2} \gG^i_{ t\delta^{-2} } \left(\frac{x-y}{\delta}\right). \]
We apply the Coulomb Gauge Theorem for $t= \delta^2(\tau - m)$ to obtain a sequence of  connections $\widetilde{\gY}^i_t$ which are gauge equivalent to $\widetilde{\gG}_t^i$ on $B_0(1)$ and some $c_n >0$ satisfying, for all $i \in \mathbb{N}$,
\[ \brs{\brs{ \widetilde{\gY}^i_t (\delta^2(\tau - m))}}_{C^{\rho, 1}} \leq c_n.  \]
Note that for $t = \delta^2(\tau - m)$ the curvatures corresponding to $\widetilde{\gY}^i_t$ coincide with the curvatures corresponding to $\widetilde{\gG}^i_t$. We will denote these by $\widetilde{F}_{t}^i$. As a result of Lemma \ref{lem:YMkscal} the curvatures scale with the coefficient matrices and so the derivatives of curvatures are also uniformly bounded, that is, for all $q \in \mathbb{N}$ there exists $C_q > 0$ such that
\[
\sup_{B_{0}(1) \times[\delta^2(\tau - m), \frac{-\delta^2}{m}]}{\brs{ \tN_t^{(q)} \widetilde{F}_{t}^i }} \leq C_{q}.
\]
 Consequently, as demonstrated in Proposition \ref{prop:Ncmpt}, given the short time existence of $\widetilde{\N}^i_t$ by Proposition \ref{prop:YMkstexist}, it follows that it exists for $t \in [\delta^2(\tau - m), \frac{-\delta^2}{m}]$ such that for some $C_{(\tau - m)} \in \mathbb{R}_{>0}$, 
\[ \sup_{B_{0}(1) \times [\delta^2(\tau-m), \frac{-\delta^2}{m}]}|| \widetilde{\gY}^i_t ||_{C^{\rho,1}} \leq \delta^2 C_{(\tau - m)}.  \]
We redialate and shift the coordinates to recover the domain $B_{y}(\delta)$ and thus obtain a new sequence of connections $\gY_i$ defined on $B_{y}(\delta)$ by
\[ \gY^i_t(x) := \delta^2 \widetilde{\gY}^i_{\delta^2 t}( \delta x + y),
\]
which satisfies
\[ \sup_{B_{y}(\delta) \times [\tau-m, \frac{-1}{m}]} \brs{\brs{ \gY^i_t}}_{C^{\rho, 1}} \leq C_{(\tau-m)}.  \]
Again, we emphasize that in fact each $\gY^i_t$ is a solution to generalized Yang-Mills $k$-flow by virtue of the steps used to construct it, and furthermore $\gY_t^i$ is gauge equivalent to $\gG^i_t$. Taking a countable covering of these balls for $y \in B_{0}(R+m)$, we apply the Gauge Patching Theorem (Theorem \ref{thm:gaugepatch}) and obtain sequence of connection matrices $\{ \gY^i_t \}$ defined over all of $B_{0}(R+m)$. 

Recursively consider $\rho \in \mathbb{N}$ starting with $m =1$ and choose $\ga, \ga' \in (0,1)$ with $\ga' < \ga$.  Given $\{ \gY^i_t \}$, with bounds of this sequence in $ C^{\rho, \ga}$, by the Arzela-Ascoli Theorem (Theorem \ref{thm:arzelaascoli}) there exists some subsequence $\{ \gY^{i_j}_t \}$ which converges with respect to $C^{\rho, \ga'}$ to some $(\gY_{\rho})_t^{\infty}$. Note that for any $\rho_1, \rho_2 \in \mathbb{N}$ with $\rho_1 < \rho_2$, $(\gY_{\rho_1})_t^{\infty} = (\gY_{\rho_2})_t^{\infty}$ since $ C^{\rho_2, \ga'}$ is a topological subspace of $C^{k_1, \ga'}$ so convergence coincides. Relabel the subsequence terms $\gY_t^{i_j} \mapsto \gY_t^{i}$ and repeat this refinement through all $\rho \in \mathbb{N}$. We observe that as $\rho \to \infty$, the resulting sequence, $\{\gY^{i}_t \}$ converges with respect to $C^{\infty}$ to some limit term $\gY^{\infty}_t$ on $B_{y}(R + m)$. We perform the above construction for all $m \in \mathbb{N}$ and take a diagonal subsequence of coefficient 
matrices which converge on any compact subset of $\mathbb{R}_{<0} \times \mathbb{R}^n$ to a connection coefficient matrix $\bm{\gG}_t$. The connection $\bm{\N}_t$ with $\bm{\gG}_t$ as its coefficient matrix is a solution to generalized Yang-Mills $k$-flow, in particular, given by
\begin{equation*}
\frac{\del \bm{\N}_t}{\del t} = (-1)^{k+1}D_{\bm{\N}_t}^* \bm{\lap}_{t}^{(k)} F_{\bm{\N}_t} + P_2^{2k-2} \brk{F_{\bm{\N}_t}}.
\end{equation*}
Furthermore, note that $\bm{\N}_t$ inherits all supremal bounds on the derivatives, as desired.
\end{proof}
\end{prop}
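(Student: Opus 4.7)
The plan is to carry out a standard rescaling-and-extraction argument. First I would choose a sequence of space-time points $(x_i,t_i)$ at which $\brs{F_{\N_{t_i}}(x_i)}$ realizes the spatial maximum on $M \times [0,\tau_i]$ for some $\tau_i \nearrow T$; after passing to a subsequence $x_i \to x_\infty$, and I can work in a fixed coordinate chart about $x_\infty$, identifying connections with their coefficient matrices in $\mathbb{R}^n$. Choosing the dilation $\lambda_i = \brs{F_{t_i}(x_i)}^{-(k+1)}$, which tends to $0$ by hypothesis, and setting $\Gamma^i_t(x) := \lambda_i^{1/(2k+2)}\Gamma_{\lambda_i t + t_i}(\lambda_i^{1/(2k+2)}x + x_i)$, Lemma \ref{lem:YMkscal} combined with Remark \ref{rmk:mhoirrel} guarantees each $\N^i_t$ remains a solution to the generalized flow on enlarging space-time cylinders, and the choice of $\lambda_i$ normalizes so that $\brs{F^i_0(0)} = 1 = \sup_{\mathbb{R}_{\leq 0}}\brs{F^i_t}$.

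With this uniform curvature control on $\mathbb{R}_{\leq 0}$ in hand, I would deploy the local smoothing estimate of Corollary \ref{cor:Nindest}. For any $y \in \mathbb{R}^n$ and $\tau \leq 0$, using a cutoff $\eta_y \in C^\infty_c$ supported on $B_y(1)$ with $\eta_y \equiv 1$ on $B_y(1/2)$, applied over the time interval $[\tau-1,\tau]$, Corollary \ref{cor:Nindest} produces uniform (in $i$) pointwise bounds $\sup_{\{\tau\} \times B_y(1/2)} \brs{(\N^i_t)^{(q)} F^i_t} \leq Q_q$ for every $q \in \mathbb{N}$, with $Q_q$ independent of $y$. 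Translating the cutoff, these bounds extend uniformly across all of $\mathbb{R}^n \times \mathbb{R}_{\leq 0}$.

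The main obstacle is then promoting curvature estimates to bounds on the connection matrices themselves, which requires a gauge-fixing argument since $\brs{F^i_t}$ alone does not control $\gG^i_t$. On any ball $B_y(\delta)$ small enough that $\nm{F^i_t}{L^{n/2}(B_y(\delta))} \leq \kappa_n$ (achievable uniformly since $\brs{F^i_t} \leq 1$), rescale $B_y(\delta) \to B_0(1)$ and apply Uhlenbeck's Coulomb Gauge Theorem \ref{thm:Uhlenbeck} at the initial time of a chosen compact subinterval $[\delta^2(\tau-m), -\delta^2/m]$, yielding a gauge-equivalent $\widetilde{\gY}^i$ with $\nm{\widetilde{\gY}^i}{C^{\rho,1}} \leq c_n$. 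Combining this initial gauge-fixed bound with the uniform derivative estimates on $\widetilde F^i_t$ and running the mechanism of Proposition \ref{prop:Ncmpt} (which converts uniform bounds on $\N^{(\ell)}(\del_t \N)$ into uniform $C^{\rho,\alpha}$ bounds on the matrices themselves over the time interval), one obtains uniform $C^{\rho,\alpha}$ bounds on $\widetilde{\gY}^i_t$ throughout the subinterval. Redilating back to $B_y(\delta)$ and taking a finite cover of $B_0(R+m)$, the Gauge Patching Theorem \ref{thm:gaugepatch} assembles these local gauges into a globally defined sequence $\gY^i_t$, gauge-equivalent to $\gG^i_t$, with uniform $C^{\rho,\alpha}$ bounds on $B_0(R+m) \times [\tau-m, -1/m]$.

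With these uniform bounds the conclusion follows by Arzel\`a–Ascoli in $C^{\rho,\alpha'}$ for each $\rho$, followed by a diagonal extraction across $\rho$ and across an exhaustion of $\mathbb{R}^n \times \mathbb{R}_{\leq 0}$ by the cylinders $B_0(R+m) \times [\tau-m, -1/m]$; the refined subsequence converges in $C^\infty_{\mathrm{loc}}$ to a limit connection $\bm{\N}_t$. Passing to the limit in the flow equation, Remark \ref{rmk:mhoirrel} ensures that all subdominant terms of $\mho_k$ are killed by the $\lambda_i \to 0$ scaling, leaving $\bm{\N}_t$ as a solution to the generalized Yang-Mills $k$-flow of the stated form. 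Finally, the derivative bounds $\brs{\bm{\N}^{(q)}_t F_{\bm{\N}_t}} \leq C_q$ transfer from the sequence by uniform convergence, and $\brs{F_{\bm{\N}_0}(0)} = 1$ is preserved by continuity of the limit.
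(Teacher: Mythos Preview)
Your proposal is correct and follows essentially the same approach as the paper's proof: the same rescaling with $\lambda_i = \brs{F_{t_i}(x_i)}^{-(k+1)}$, the same use of Corollary~\ref{cor:Nindest} for local smoothing, Uhlenbeck's Coulomb gauge on small balls followed by Proposition~\ref{prop:Ncmpt} to propagate the gauge bounds in time, gauge patching via Theorem~\ref{thm:gaugepatch}, and finally Arzel\`a--Ascoli with a diagonal extraction. The only cosmetic difference is that you make the role of Remark~\ref{rmk:mhoirrel} in identifying the limiting equation slightly more explicit than the paper does.
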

 
 \begin{rmk}
 Note that throughout the blowup procedure each connection satisfied a \textit{different} generalized Yang-Mills $k$-flow since the lower order terms within $\mho_k$ scale differently than the highest order term (cf. Remark \ref{rmk:mhoirrel}).
 \end{rmk}

\section{Long time existence results}\label{s:LTexist}

We now hone in our attention to specifically the Yang-Mills $k$-flow rather than its generalization. The explicit form allows for properties necessary to prove the main two parts of Theorem \ref{thm:YMksubcritdim}. We now demonstrate that for finite times the Yang-Mills $k$-flow yields control over the Yang-Mills energy of the curvature.

\begin{lemma}\label{lem:enrgydec}
Suppose that $\N_t \in \mathcal{A}_E \times \mathcal{I}$ is a solution to Yang-Mills $k$-flow. Then for all $T < \infty$ with $T \in \mathcal{I}$ we have that $\sup_{[0,T)}{||F_{\N_t}||_{L^2(M)}} < \infty$.

\begin{proof}
Let $\N_t$ be a solution to the Yang-Mills $k$-flow. Then differentiating the Yang-Mills $k$-energy with such $\N_t$ as the argument yields (referring to the proof of Proposition \ref{prop:ELYMk})
\begin{align*}
\tfrac{d}{d t} \lb  \mathcal{YM}_{k}(\N_t) \rb&= 2 \int_M{ \left\langle \tfrac{\del}{\del t} \brk{\N^{(k)}_t F_t }, \N^{(k)}_t F_t \right\rangle dV_g}\\
&= -2 \int_M{ \left\langle \Grad \mathcal{YM}_{k} (\N_t), \tfrac{\del \N_t}{\del t} \right\rangle dV_g}\\
&= -2 || \Grad \mathcal{YM}_{k}(\N_t) ||_{L^2}^2.
\end{align*}
This, unsurprisingly, indicates that the flow monotonically decreases the Yang-Mills $k$-energy. With the above computation in mind we will estimate $ || F_t ||_{L^2}^2$ on $[0,T]$. Differentiating with respect to the temporal parameter gives
\begin{align*}
\tfrac{d}{dt} \brk{\YM(\N_t)}
&= \int_{M}{ \left\langle \tfrac{\del F_t}{\del t} , F_t \right\rangle dV_g}\\
& = \int_{M}{ \left\langle D_t \dot{\gG}_t , F_t \right\rangle dV_g}\\
& = 2 \int_{M}{ \left\langle \dot{\gG}_t , D^*_tF_t \right\rangle dV_g}\\
& = -2 \int_{M}{  \left\langle \Grad \mathcal{YM}_{k} (\N_t) , D^*_t F_t \right\rangle dV_g}.
\end{align*}
Integrating both sides with respect to $t$ and manipulating with Young's Inequality followed by the weighted interpolation identity of Corollary \ref{cor:KS5.3interp}, for any $\epsilon >0$, we obtain
\begin{align*}
\YM(\N_T) - \YM(\N_0)
& = -2 \int_{0}^T{ \prs{\int_M{ \left\langle \Grad \mathcal{YM}_{k}(\N_t), D^*_tF_t   \right\rangle dV_g} }dt} \\
& \leq  C \int_{0}^T{\left( || \Grad \mathcal{YM}_{k}(\N_t) ||_{L^2}^2 + || D^*_tF_t ||_{L^2}^2 \right) dt} \\
&\leq C \int_0^T \tfrac{d}{dt}\brk{\YM_k(\N_t)} dt + C \int_0^T \brs{\brs{\N_t F_t}}_{L^2}^2 dt \\
&\leq C \prs{\YM_k(\N_0)-  \YM_k(\N_T)} +  \int_0^T \prs{ C_{\epsilon} \brs{\brs{\N_t^{(k)} F_t}}_{L^2}^2 + \epsilon \brs{\brs{F_t}}_{L^2}^2}dt \\
&\leq C T \prs{\YM_k(\N_0)} + \epsilon T \sup_{t\in[0,T]} \YM(\N_t).
\end{align*}
We thus have that
\begin{align*}
\sup_{t \in [0,T]}\prs{\mathcal{YM}\prs{\N_t}} \leq \frac{C T}{1-\epsilon} \prs{\mathcal{YM}_k(\N_0) + \YM(\N_0)}.
\end{align*}
Thus, choosing $\epsilon$ sufficiently small yields the desired result.
\end{proof}
\end{lemma}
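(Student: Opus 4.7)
The plan is to exploit the gradient-flow structure of $\mathcal{YM}_k$ to control the lower-order energy $\mathcal{YM}$. First, since $\N_t$ solves the gradient flow of $\mathcal{YM}_k$, differentiating along the flow gives the standard monotonicity identity
\begin{equation*}
\tfrac{d}{dt}\mathcal{YM}_k(\N_t) = -2\|\Grad \mathcal{YM}_k(\N_t)\|_{L^2}^2,
\end{equation*}
so in particular $\|\N_t^{(k)} F_{\N_t}\|_{L^2}^2 \leq 2\mathcal{YM}_k(\N_0)$ for all $t\in\mathcal{I}$, and the integral $\int_0^T \|\Grad \mathcal{YM}_k(\N_t)\|_{L^2}^2\,dt$ is controlled by $\mathcal{YM}_k(\N_0)$.

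Next, I will compute $\tfrac{d}{dt}\mathcal{YM}(\N_t)$. Using Corollary \ref{cor:varNellF} (which gives $\partial_t F_{\N_t} = D_t \dot{\Gamma}_t$) together with the Yang--Mills $k$-flow equation $\dot{\Gamma}_t = -\Grad \mathcal{YM}_k(\N_t)$, and then pushing $D_t$ to the other side of the pointwise pairing via the defining relation \eqref{eq:Dstardefn} for $D_t^*$, I obtain
\begin{equation*}
\tfrac{d}{dt}\mathcal{YM}(\N_t) = -2\int_M \langle \Grad \mathcal{YM}_k(\N_t),\, D_t^* F_{\N_t}\rangle\, dV_g.
\end{equation*}
Young's inequality then bounds the integrand by $C\|\Grad \mathcal{YM}_k(\N_t)\|_{L^2}^2 + C\|D_t^* F_{\N_t}\|_{L^2}^2$, and since $D_t^* F_{\N_t} = \N_t F_{\N_t}$ up to a universal constant, the second piece is just $C\|\N_t F_{\N_t}\|_{L^2}^2$.

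Now integrate from $0$ to $T$. The first piece is handled by the monotonicity step and gives a constant depending only on $\mathcal{YM}_k(\N_0)$. For the second piece, the idea is to interpolate the single-derivative norm between the conserved $k$-th derivative norm and the $L^2$ norm of $F$ itself: by Corollary \ref{cor:KS5.3interp},
\begin{equation*}
\|\N_t F_{\N_t}\|_{L^2}^2 \leq C_\epsilon \|\N_t^{(k)} F_{\N_t}\|_{L^2}^2 + \epsilon \|F_{\N_t}\|_{L^2}^2,
\end{equation*}
and the first term on the right is uniformly bounded by the monotonicity, so the second integrated contribution is at most $CT\mathcal{YM}_k(\N_0) + 2\epsilon T\sup_{[0,T]} \mathcal{YM}(\N_t)$.

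Combining the estimates gives an inequality of the form
\begin{equation*}
\mathcal{YM}(\N_T) \leq \mathcal{YM}(\N_0) + C(T,\mathcal{YM}_k(\N_0)) + \epsilon T\,\sup_{[0,T]}\mathcal{YM}(\N_t).
\end{equation*}
Taking the supremum on the left over $[0,T]$ and choosing $\epsilon$ small enough (depending on $T$) to absorb the final term, one concludes a uniform bound on $\sup_{[0,T]}\|F_{\N_t}\|_{L^2}^2$. The only mild subtlety lies in the absorption step at the end, which requires that the supremum of $\mathcal{YM}$ on $[0,T]$ is itself finite a priori; this is ensured by continuity of the smooth flow on the compact subintervals $[0,T']\subset [0,T)$ and then passing $T'\nearrow T$. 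No step is technically difficult; the essential observation is that the $\Grad \mathcal{YM}_k$ paired against the much lower-order $D_t^* F_{\N_t}$ produces enough room for an interpolation that separates the high-order derivative (which is controlled for free) from the $L^2$ norm of $F$ (which is what we want).
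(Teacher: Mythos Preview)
Your proposal is correct and follows essentially the same approach as the paper's proof: both compute the monotonicity of $\mathcal{YM}_k$, differentiate $\mathcal{YM}$ and express it as a pairing of $\Grad\mathcal{YM}_k$ against $D_t^*F_t$, apply Young's inequality, control the $\|\Grad\mathcal{YM}_k\|_{L^2}^2$ integral via the energy drop, interpolate $\|\N_t F_t\|_{L^2}^2$ between $\|\N_t^{(k)}F_t\|_{L^2}^2$ and $\|F_t\|_{L^2}^2$ using Corollary~\ref{cor:KS5.3interp}, and absorb. You are in fact slightly more careful than the paper about the a priori finiteness needed for the absorption step.
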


\begin{thm}\label{thm:YMknoBUnleq2p} Suppose $\dim M <2p$ and $\N_t \in \mathcal{A}_E \times [0,T)$ a solution to generalized Yang-Mills $k$-flow and $\sup_{[0,T)}{ \brs{\brs{F_{\N_t}}}_{L^p(M)}} < \infty$. Then 
\[
\sup_{[0,T)}{\brs{\brs{F_{\N_t}}}_{L^{\infty}(M)}}<\infty.
\]

\begin{proof}
Set $\dim M = n< 2p$. We suppose to the contrary $\lim_{t \rightarrow T}{ || F_t ||_{L^{\infty}}} = \infty $ and construct a blowup limit $\{ \N^i_t \}$ with limit $\bm{\N}_t$ as described in Proposition \ref{prop:YMkblowup}. Since $|| F^i_t(x) ||_{L^\infty}=1< \infty$, by Fatou's Lemma and Proposition \ref{prop:scalelaw}, 
\begin{align*}
{\brs{\brs{ F_{\bm{\N}_t} }}_{L^p([0,1]^{\times n})}^p} &\leq \liminf_{i \to \infty}{||F^i_{t}||_{L^p([0,1]^{\times n})}^p} \\
&\leq  \lim_{i \to \infty}{  \la_i^{\frac{2p-n}{2k+2}} || F_t ||^p_{L^p(\mathbb{R}^n)}} .
\end{align*}
Since $\lim_{i \to \infty}{\la_i} = 0$ then  whenever $2p  > n$ the right hand side of the inequality converges to zero, which is a contradiction since the blowup limit is constructed for nontrivial curvatures. The result follows.
\end{proof}
\end{thm}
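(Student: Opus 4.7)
The plan is to proceed by contradiction using the blowup limit machinery already developed in Proposition \ref{prop:YMkblowup}. Suppose $\sup_{[0,T)} \|F_{\N_t}\|_{L^\infty(M)} = \infty$. Since $T$ is maximal (or at least, the $L^\infty$ bound fails), the hypotheses of Proposition \ref{prop:YMkblowup} are satisfied: by extracting maximizing points $(x_i, t_i)$ with $\tau_i \nearrow T$ and setting $\lambda_i := |F_{\N_{t_i}}(x_i)|^{-(k+1)} \to 0$, one constructs a blowup sequence $\{\N^i_t\}$ with rescaled coefficient matrices $\gG^i_t(x) = \lambda_i^{1/(2k+2)} \gG_{\lambda_i t + t_i}(\lambda_i^{1/(2k+2)} x + x_i)$, converging smoothly on compact subsets of $\mathbb{R}^n \times \mathbb{R}_{\leq 0}$ to a nontrivial limit $\bm{\N}_t$ satisfying $|F_{\bm{\N}_0}(0)| = 1$.

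The key computation is the scaling law for the $L^p$ norm of curvature. The curvature scales as $F^i_t(x) = \lambda_i^{1/(k+1)} F_{\lambda_i t + t_i}(\lambda_i^{1/(2k+2)} x + x_i)$, so performing the substitution $y = \lambda_i^{1/(2k+2)} x + x_i$ on a fixed compact set (say $[0,1]^{\times n}$) produces a Jacobian factor of $\lambda_i^{-n/(2k+2)}$, yielding
\begin{equation*}
\|F^i_t\|_{L^p([0,1]^{\times n})}^p = \lambda_i^{p/(k+1) - n/(2k+2)} \|F_{\lambda_i t + t_i}\|_{L^p(K_i)}^p = \lambda_i^{(2p-n)/(2k+2)} \|F_{\lambda_i t + t_i}\|_{L^p(K_i)}^p,
\end{equation*}
where $K_i \subset M$ is the corresponding image region. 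Under the hypothesis $n < 2p$, the exponent $(2p-n)/(2k+2) > 0$, and since $\sup_{[0,T)} \|F_{\N_t}\|_{L^p(M)}^p$ is bounded, the right-hand side tends to $0$ as $i \to \infty$.

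Finally, since $\N^i_t \to \bm{\N}_t$ smoothly on compact subsets of $\mathbb{R}^n \times \mathbb{R}_{\leq 0}$, in particular $F^i_t \to F_{\bm{\N}_t}$ pointwise, and Fatou's lemma gives
\begin{equation*}
\|F_{\bm{\N}_t}\|_{L^p([0,1]^{\times n})}^p \leq \liminf_{i \to \infty} \|F^i_t\|_{L^p([0,1]^{\times n})}^p = 0.
\end{equation*}
Hence $F_{\bm{\N}_t} \equiv 0$ on $[0,1]^{\times n}$, directly contradicting $|F_{\bm{\N}_0}(0)| = 1$. The contradiction forces $\sup_{[0,T)} \|F_{\N_t}\|_{L^\infty(M)} < \infty$, completing the argument. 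The main technical step is really only the scaling computation — everything else is packaged in Proposition \ref{prop:YMkblowup} — and the only subtlety worth checking is that the smooth convergence on compact sets is strong enough to pass $L^p$ bounds to the limit via Fatou, which holds because pointwise convergence suffices.
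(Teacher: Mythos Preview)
Your proof is correct and follows essentially the same approach as the paper: both argue by contradiction, invoke the blowup construction of Proposition \ref{prop:YMkblowup}, compute the $L^p$ scaling exponent $\lambda_i^{(2p-n)/(2k+2)}$, and use Fatou's lemma to force $F_{\bm{\N}_t} \equiv 0$ on a unit cube, contradicting $|F_{\bm{\N}_0}(0)| = 1$. Your write-up is in fact slightly more explicit than the paper's in justifying the Jacobian computation and the passage to the limit.
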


Utilizing this we may prove the complete long time existence of the Yang-Mills $k$-flow for subcritical dimensions.

\begin{proof}[Proof of Theorem \ref{thm:YMksubcritdim} (S)]
Set $\dim M = n$. By the Sobolev embedding theorem, we solve for $p$ such that $H_{2}^{k} \subset H_0^p$, namely one satisfying the formula.
\begin{equation*}
\frac{1}{p}= \frac{1}{2} - \frac{(k- 0)}{n}.
\end{equation*}
We additionally impose that $p > \tfrac{n}{2}$ to utilize Theorem \ref{thm:YMknoBUnleq2p} and solve to obtain that $2(k+2) > n$. In this case, then we have that, using the interpolation identities of Corollary \ref{cor:KS5.3interp}, where $\mathsf{S}_{k,p}$ is the Sobolev constant and $C$ is the constant induced by interpolation of these derivatives via Corollary \ref{cor:KS5.3interp},
\begin{align}
\begin{split}\label{eq:sobolevbd}
\brs{\brs{F_{\N}}}_{L^{p}(M)} &\leq \mathsf{S}_{k,p} \prs{ \sum_{j=0}^{k}\brs{\brs{\N^{(j)} F_{\N}}}_{L^2(M)}} \\
&= C \mathsf{S}_{k,p} \prs{ \sqrt{\YM_k(\N)} + \sqrt{\YM(\N)}}. 
\end{split}
\end{align}
Referring to Lemma \ref{lem:enrgydec}, since $\YM_k(\N_t)$ is decreasing along Yang-Mills $k$-flow and we have control over $\YM(\N_t)$ for any finite time, we conclude that the flow exhibits smooth long time existence.
\end{proof}

\begin{rmk} Note that this proof does not conclude that the flow exists at $t = \infty$, so it may be the case that the Yang-Mills $k$-flow admits singularities at infinite time.
\end{rmk}

We now state a theorem which generalizes the characterization of energy concentration of Yang-Mills flow in dimension $4$ introduced by Struwe \cite{Struwe}. First we characterize bubbling of $L^p$ norms in relation to the base manifold dimension. With this proposition we then may conclude the bubbling in critical dimensions.

\begin{prop}\label{prop:bubblincritLp}
Let $p \in \mathbb{N}$ and suppose $\dim M  = 2p$ and $\N_t$ is a solution to generalized Yang-Mills $k$-flow for $t \in [0,T)$ with $T$ maximal. Then there exists some $\epsilon > 0$ such that if $x \in M$ with the property that $\limsup_{t \nearrow T}{\brs{F_{\N_t}(x)}} = \infty$, then for all $r>0$, $\lim_{t \nearrow T}{||F_{\N_t}||_{L^{p}\prs{B_{x}(r)}}} \geq \epsilon$.

\begin{proof}
Choose a blowup sequence $\{ \N^i_t \}$ as described in Proposition \ref{prop:YMkblowup} with limit $\bm{\N}_t$. Then by construction $\brs{F_{\bm{\N}_0} (0)} =1$. By the derivative bounds on $\bm{\N}_t$ of Proposition \ref{prop:YMkblowup}, since $\brs{\bm{\N}_t F_{\bm{\N}_t}}$ is bounded, combined with the smoothness of $\bm{\N}_t$ over time, one has that that for $(y,t) \in B_0\prs{\delta} \times (-\delta, 0]$ we have
\[|F_{\bm{\N}_t}(y)| \geq \tfrac{1}{2}. \]
Observing this we have
\begin{align*} 
\lim_{t \nearrow 0}{|| F_{\bm{\N}_t} ||_{L^p(B_{0}(\delta))}^p} 
&= \lim_{t \nearrow 0}{ \int_{B_{0}(\delta)}{\brs{F_{\bm{\N}_t}}^p dV_g}} \\
&\geq \frac{\Vol \left[ B_{0}(\delta) \right]}{2^p}.
\end{align*}
Conversely, using the computations in Theorem \ref{thm:YMknoBUnleq2p} yields,
\begin{align*}
\brs{ \brs{ F_{\bm{\N}_t} }}^p_{L^p(B_{0}(\delta))} &= \int_{B_{\delta}(0)}{|F_{\bm{\N}_t}|^p dV_g}\\
&= \int_{B_{\delta}(0)}{\lim_{i \to \infty}|F_{\N_t^i}|^p dV_g}\\
&= \lim_{i \to \infty}{ \la_i^{\frac{2p-n}{2k+2}} \brs{\brs{ F_{\N_t} }}_{L^p \left(B_{x}(\delta \la_i^{1/(2k+2)}) \right)}^p}\\
&= \lim_{i \to \infty}{ \brs{\brs{ F_{\N_t} }}_{L^p \left(B_{x}\prs{\delta \la_i^{1/(2k+2)}} \right)}^p}.
\end{align*}
Since $\lim_{i \to \infty}{\la_i^{1/(2k+2)}} =0$ then for any $r >0$ and $t \in (T-\delta,T)$,
\[ \frac{\Vol \left[B_{0}(\delta) \right]}{2^p} \leq \lim_{i \to \infty}{ \brs{\brs{ F_{\N_t} }}^p_{L^p \left(B_{x}(\delta \la_i^{1/(2k+2)})\right) } } \leq  \brs{\brs{ F_{\N_t} }}^p_{L^p \left(B_{x}(r) \right) }. \]
Choose $\epsilon = \frac{\Vol \left[B_{0}(\delta) \right]}{2^p}$. Taking $\lim_{t \nearrow T}$ on both sides achieves the desired result.
\end{proof}
\end{prop}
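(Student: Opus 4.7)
The plan is to apply the blowup machinery of Proposition \ref{prop:YMkblowup} at the singular point $x$, exploit the fact that the $L^p$ norm is scale-invariant in critical dimension $n = 2p$, and then transfer a pointwise lower bound on the blowup limit curvature back to a uniform $L^p$ lower bound on $F_{\N_t}$ over arbitrary small balls around $x$.

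First, since $\limsup_{t \nearrow T} |F_{\N_t}(x)| = \infty$ we can select a sequence $(x_i, t_i) \to (x, T)$ realizing the spatio-temporal supremum on $M \times [0, \tau_i]$ for some $\tau_i \nearrow T$. Running the construction in Proposition \ref{prop:YMkblowup} with these choices produces rescaling parameters $\la_i = |F_{\N_{t_i}}(x_i)|^{-(k+1)} \to 0$ and a blowup sequence $\N^i_t$ with $|F_{\N^i_0}(0)| = 1$, converging (after subsequence and diagonal argument) smoothly on compact subsets of $\mathbb{R}^n \times \mathbb{R}_{\leq 0}$ to a limit flow $\bm{\N}_t$ with uniform covariant derivative bounds on $F_{\bm{\N}_t}$.

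Next I would establish a uniform pointwise lower bound near the origin at small negative times. Because $\bm{\N}_t$ satisfies $|F_{\bm{\N}_0}(0)| = 1$ and all covariant derivatives of $F_{\bm{\N}_t}$ are uniformly bounded (hence $\partial_t F_{\bm{\N}_t}$ and $\nabla F_{\bm{\N}_t}$ are bounded via the flow equation \eqref{eq:YMkgrad0}), there exist $\delta > 0$ and $C > 0$ so that $|F_{\bm{\N}_t}(y)| \geq \tfrac{1}{2}$ for every $(y, t) \in B_0(\delta) \times (-\delta, 0]$. Integrating gives the energy concentration bound
\[
\lim_{t \nearrow 0} \|F_{\bm{\N}_t}\|_{L^p(B_0(\delta))}^p \;\geq\; \frac{\Vol[B_0(\delta)]}{2^p} \;=:\; \epsilon.
\]

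Finally I would invoke the critical-dimension scale invariance. A change of variables gives
\[
\|F^i_t\|_{L^p(B_0(\delta))}^p = \la_i^{(2p-n)/(2k+2)} \|F_{\la_i t + t_i}\|_{L^p(B_{x_i}(\delta \la_i^{1/(2k+2)}))}^p,
\]
and in critical dimension the exponent on $\la_i$ vanishes, so the $L^p$ norms match exactly. For any fixed $r > 0$, the balls $B_{x_i}(\delta \la_i^{1/(2k+2)})$ eventually sit inside $B_x(r)$ since $x_i \to x$ and $\la_i \to 0$; combining this containment with Fatou's lemma applied to $|F^i_0|^p \to |F_{\bm{\N}_0}|^p$ and the identity above yields
\[
\epsilon \;\leq\; \|F_{\bm{\N}_0}\|_{L^p(B_0(\delta))}^p \;\leq\; \liminf_{i \to \infty} \|F_{\N_{t_i}}\|_{L^p(B_x(r))}^p,
\]
from which the claim $\lim_{t \nearrow T} \|F_{\N_t}\|_{L^p(B_x(r))} \geq \epsilon^{1/p}$ follows. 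The main subtlety I anticipate is keeping careful track of which radii are being used after rescaling and ensuring the containment of the shrinking blown-up balls inside $B_x(r)$; once that bookkeeping is done, the critical-dimension invariance of the $L^p$ norm does all of the real work.
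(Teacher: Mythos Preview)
Your proposal is correct and follows essentially the same route as the paper: construct the blowup limit from Proposition~\ref{prop:YMkblowup}, use the uniform derivative bounds to secure a pointwise lower bound $|F_{\bm{\N}_t}| \geq \tfrac{1}{2}$ on a small space-time cylinder, integrate to obtain the $L^p$ quantum, and then exploit that the scaling exponent $\la_i^{(2p-n)/(2k+2)}$ vanishes in critical dimension to transfer this back to $B_x(r)$. Your handling of the ball containment $B_{x_i}(\delta\la_i^{1/(2k+2)}) \subset B_x(r)$ and the explicit invocation of Fatou's lemma are in fact slightly more careful than the paper's version; the only cosmetic discrepancy is that you end with $\epsilon^{1/p}$ rather than $\epsilon$, which is just a matter of where the $p$-th root is absorbed into the constant.
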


We use this characterization of a discrete `quantum' of energy gathering up at a singularity to demonstrate that only a finite number of points can exhibit this blowup behavior at a singular time.


\begin{proof}[Proof of Theorem \ref{thm:YMksubcritdim} (C)]
Note that the lower bound on the amount of energy at a singularity location given in Proposition \ref{prop:bubblincritLp} is $\epsilon$ is independent of the point about which the blowup procedure occurred. We can again use the Sobolev embedding theorem as performed in the proof of Theorem \ref{thm:YMksubcritdim} (S) (cf. equation \eqref{eq:sobolevbd}) in the case where $n = 2(k+2)$ to conclude the estimate of \eqref{eq:sobolevbd}. Since $\mathcal{YM}_k(\N_t)$ and $\mathcal{YM}(\N_t)$ are both controlled for finite time along the flow, it follows that we again may bound $\lim_{t \nearrow T}\brs{\brs{F_{\N_t}}}_{L^p(M)}$ from above. Thus only a finite number of points may exhibit singularities at time $T$.
\end{proof}

\subsection{Extensions}\label{ss:applications}

Here we state the proof of our second main result, Theorem \ref{thm:regularizedflow}, and reflect on possible extensions of the flow.

\subsubsection{Regularized flow}\label{sss:regularizedflow}

As stated in the introduction, we study the Yang-Mills $(\rho,k)$-energy and corresponding gradient flow (cf. Definition \ref{eq:YMrhokenergydefn}). Utilizing the work of the previous sections combined with the presence of the Yang-Mills energy, we demonstrate subcritical long time existence and convergence.

\begin{proof}[Proof of Theorem \ref{thm:regularizedflow}]
The corresponding flow of this particular functional is given by the weighted sums of the negative gradient flows of the two participating functionals.
\begin{align}
\begin{split}\label{eq:YMrhokflow}
\frac{\del \N_t}{\del t}
&= \rho \left( (-1)^{k+1} D^*_{\N_t} \lap_t^{(k)} F_{\N_t} + P_2^{(2k-1)} \lb F_{\N_t} \rb \right)- D^*_{\N_t} F_{\N_t} \\
&= - \prs{\rho \Grad \YM_k(\N_t) + \YM(\N_t)}.
\end{split}
\end{align}
As in the work of \cite{HTY}, one would hope to apply a regularization argument on the Yang-Mills $(\rho,k)$-flow by sending $\rho \searrow 0$ to identify Yang-Mills connections. The advantage to using this flow over that of the Yang-Mills $\ga$-flow is that the Yang-Mills $(\rho,k)$-flow has long time existence and convergence in dimensions less than $2(k+2)$. This follows from simply temporally rescaling the gradient flow of \eqref{eq:YMrhokflow} to shift the dependence of $\rho$ on the highest order term to the others. Since these lower order terms satisfy the requirements of possible quantities represented by $\mho_k(\N)$ (cf. \eqref{eq:Omegak}), then we can simply apply the arguments of \S \ref{s:flowsubcrit} to obtain short time existence and uniqueness, necessary smoothing estimates, and construct blowup limits as desired. However, since this is the negative gradient flow of the weighted sum of the Yang-Mills $k$-energy and the Yang-Mills energy, we have that each individual energy is bounded over time above by a scaled multiple of $\rho \YM_k(\N_0) + \YM(\N_0)$ for all time. Therefore we obtain a subsequential limit at $t = \infty$ since a singularity cannot occur, and we conclude the result.
\end{proof}

As in the case of the Yang-Mills $\ga$-flow, one could pursue further results as in the works of \cite{HTY} and \cite{HS}, such as  verifying the Yang-Mills $k$-energy satisfies the Palais-Smale condition, which will guarantee the existence of minimizers, or proving an energy identity as in \cite{HS}.

\subsubsection{Yang-Mills $1$-flow versus bi-Yang Mills}\label{sss:BYMYM1}

In particular we now turn to the study of the Yang-Mills $1$-energy, given by
\begin{equation*}
\mathcal{YM}_1(\N) := \tfrac{1}{2}\int_{M} \brs{\N F_{\N}}^2 dV_g.
\end{equation*}
The Yang-Mills $1$-flow is closely tied to the bi-Yang-Mills flow (cf. \eqref{eq:BYM}) studied in \cite{IIU}. While the bi-Yang-Mills flow is arguably more `natural' to study (with reference to the gradient flow of Yang-Mills), the Yang-Mills $1$-flow admits long time existence. Roughly speaking, this is due to the fact that the Yang-Mills $1$-energy measures `all' of $\N F_{\N}$, while the bi-Yang-Mills energy only measures a portion (since $D^*$ is a trace of $\N$).

We explicitly demonstrate the relationship between the two energies in the following lemma.

\begin{lemma}[Comparison of Yang-Mills $1$-flow to bi-Yang-Mills]
For $\N \in \mathcal{A}_E$,
\begin{align*}
\mathcal{YM}_1(\N) &= 2 \mathcal{BYM}(\N) + \tfrac{1}{2} \int_M  g^{ij}g^{k \ell} \left( - \Rc_{\ell}^p F_{p i \ga}^{\gb}+ \Rc_{i}^p F_{p \ell \ga}^{\gb} - g^{qk}\Rm_{i \ell q}^p F_{k p \ga}^{\gb} \right) F_{jk \gb}^{\ga} dV_g\\
& \hsp  + \tfrac{1}{2} \int_M  g^{ij}g^{k \ell} \prs{ \left([F , F]^{\#} \right)_{i \ell \ga}^{\gb} - \left( [F, F]^{\#} \right)_{\ell i \ga}^{\gb} } F_{jk \gb}^{\ga} dV_g.
\end{align*}

\begin{proof}
We compute, transition to local coordinates and apply the Bochner formula (Proposition \ref{prop:bochner}),
\begin{align*}
\mathcal{YM}_1(\N)
&= \tfrac{1}{2}\int_M { \brs{\N F_{\N}}^2 dV_g}\\
&= \tfrac{1}{2} \int_M \ip{ F, \lap F } dV_g \\
&= - \tfrac{1}{2} \int_M  g^{ij}g^{k \ell} \prs{\lap F}_{i \ell \ga}^{\gb} F_{jk \gb}^{\ga} dV_g \\
&= \tfrac{1}{2} \int_M  g^{ij}g^{k \ell} \left( - \Rc_{\ell}^p F_{p i \ga}^{\gb}+ \Rc_{i}^p F_{p \ell \ga}^{\gb} - g^{qk}\Rm_{i \ell q}^p F_{k p \ga}^{\gb} \right) F_{jk \gb}^{\ga} dV_g\\
& \hsp  + \tfrac{1}{2} \int_M  g^{ij}g^{k \ell} \prs{ \left([F , F]^{\#} \right)_{i \ell \ga}^{\gb} - \left( [F, F]^{\#} \right)_{\ell i \ga}^{\gb} } F_{jk \gb}^{\ga} dV_g \\
& \hsp +\tfrac{1}{2} \int_M  g^{ij}g^{k \ell} \prs{\lap_D F}_{i \ell \ga}^{\gb} F_{jk \gb}^{\ga} dV_g.
\end{align*}
Manipulate the last integral yields
\begin{align*}
- \tfrac{1}{2} \int_M  \ip{ \lap_D F, F } dV_g
&= - \tfrac{1}{2} \int_M  \ip{ \prs{ D^* D + D D^*}F, F } dV_g \\
&= - \tfrac{1}{2} \int_M  \ip{ D D^*F, F } dV_g \\
&= \int_M  \brs{D^* F }^2 dV_g \\
&= 2 \mathcal{BYM}(\N).
\end{align*}
Combining terms, the result follows.
\end{proof}
\end{lemma}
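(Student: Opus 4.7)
The strategy is a direct Weitzenb\"ock computation: integrate $\mathcal{YM}_1(\N)$ by parts to produce a rough Laplacian acting on $F_\N$, use the Bochner formula to trade the rough Laplacian for the Hodge Laplacian plus curvature corrections, then invoke the second Bianchi identity to collapse $\lap_{D_\N} F_\N$ onto $D_\N D_\N^* F_\N$ alone.

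First I would rewrite $\mathcal{YM}_1(\N) = \tfrac{1}{2}\int_M \langle \N F_\N, \N F_\N\rangle dV_g$ and integrate by parts once to move a covariant derivative off of one factor, obtaining $-\tfrac{1}{2}\int_M \langle F_\N, \lap F_\N\rangle dV_g$ (the sign dictated by the paper's conventions $\lap = -\N^*\N$ and the coordinate sign in $\langle \cdot, \cdot\rangle$). Since $M$ is closed the boundary term vanishes. Next, invoke the Bochner formula (Proposition \ref{prop:bochner}) to decompose $\lap F_\N = \lap_{D_\N} F_\N + \mathcal{R}(F_\N)$, where $\mathcal{R}$ is the Weitzenb\"ock correction built from $\Rm$, $\Rc$, and the pound bracket $[F_\N, F_\N]^\#$. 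Expanding in local coordinates, the terms of $\mathcal{R}$ should match precisely the $\Rc_\ell^p F_{pi}$, $\Rc_i^p F_{p\ell}$, $g^{qk}\Rm^p_{i\ell q}F_{kp}$, and $[F_\N,F_\N]^\#$ contributions appearing in the statement.

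The decisive step is then the second Bianchi identity $D_\N F_\N = 0$, which gives $\lap_{D_\N} F_\N = (D_\N^* D_\N + D_\N D_\N^*) F_\N = D_\N D_\N^* F_\N$. The remaining integral $\int_M \langle F_\N, D_\N D_\N^* F_\N\rangle dV_g$ is handled by the adjoint identity \eqref{eq:Dstardefn} with $p = 2$, which shows (after accounting for the $\tfrac{1}{p}$ normalization in the paper's definition of $D_\N^*$ and the extra sign in $\langle \cdot,\cdot\rangle$) that this integral equals a multiple of $\int_M |D_\N^* F_\N|^2 dV_g = 2\mathcal{BYM}(\N)$. Collecting the $\mathcal{BYM}$ contribution together with the Bochner correction yields the claimed identity.

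The main obstacle I anticipate is sign/constant bookkeeping rather than any genuine geometric content. The coordinate convention $\langle \gw, \gz\rangle = -(\prod g^{i_v j_v})\gw \gz$, the convention $\lap = -\N^*\N$, and the factor $\tfrac{1}{p}$ in the rescaled $D_\N^*$ all interact, and each Weitzenb\"ock contraction must be matched precisely against a term in the statement (with the correct sign). Once this accounting is executed carefully, the algebra and the Bianchi step are routine.
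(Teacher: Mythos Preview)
Your proposal is correct and follows essentially the same route as the paper's own proof: integrate by parts to produce $\lap F_{\N}$, apply the Bochner formula \eqref{eq:bochner2} to split off the $\Rc$, $\Rm$, and $[F,F]^{\#}$ terms from $\lap_D F_{\N}$, invoke the second Bianchi identity $D_{\N} F_{\N}=0$ to reduce $\lap_D F_{\N}$ to $D_{\N} D_{\N}^* F_{\N}$, and then use the adjoint relation to recognize $2\,\mathcal{BYM}(\N)$. Your anticipated obstacle is also the right one---the only work beyond the Weitzenb\"ock step is tracking the sign and normalization conventions of $\langle\cdot,\cdot\rangle$, $\lap$, and $D_{\N}^*$.
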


The bi-Yang-Mills flow admits an isolation phenomena displayed in \cite{IIU}, which was in turn inspired by the work of \cite{BL}.
One would hope that an analogous isolation phenomena can be demonstrated for Yang-Mills $1$-flow. However, it appears that this trade off for properties such as guaranteed long time existence discussed above by measuring the full energy calls for more thought in demonstrating (if possible) an isolation phenomena.

\subsubsection{Conformally invariant functional}

By constructing a generalization of the Yang-Mills $k$-flow our analysis was extended to a broader range of flows. One wonders if, for each family of generalized Yang-Mills $k$-flows, there is a canonical representative for each $k$, and a corresponding canonical functional. One trait which could distinguish this canonical member is conformal invariance. Indeed, in the case of Yang-Mills flow, the conformal invariance in dimension $4$ is crucial to the work of Taubes in \cite{Taubes} regarding the process of constructing Yang-Mills instantons via gluing procedures.

\section{Appendix}\label{s:appendix}

We include various theorem statements and results essential to the arguments of the prior sections. This is comprised of analytic background (\S \ref{ss:analysis}), gauge transformations (\S \ref{ss:gaugetransformations}), and connection identities (\S \ref{ss:connmanips}).

\subsection{Analytic background}\label{ss:analysis}

We introduce some analytic results utilized within the main paper.

\begin{thm}[Arzela-Ascoli Theorem]\label{thm:arzelaascoli}
Let $M = \mathbb{R}^n$. Let $\ga \in (0,1]$ and $R \in \mathbb{R}_{>0}
$. Given some $K >0$, $\ga' < \ga$ and a sequence $\{ f_i \}$ such that
\[ || f_i ||_{C^{\varrho,\ga}(B_0(R))} \leq K, \]
there exists a subsequence $\{f_{i_j} \}$ and some $f_{\infty} \in C^{\varrho, \ga'}(B_0(R))$ such that $f_{i_j} \to f_{\infty}$ with respect to the $C^{\varrho, \ga'}(B_0(R))$ norm.
\end{thm}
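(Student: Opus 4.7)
The plan is to reduce to the classical $C^0$ Arzela-Ascoli theorem via a diagonal extraction across derivatives, and then upgrade the resulting $C^\varrho$ convergence to $C^{\varrho,\alpha'}$ convergence using a standard H\"older interpolation inequality.

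First I would observe that the hypothesis $\|f_i\|_{C^{\varrho,\alpha}(B_0(R))} \leq K$ simultaneously bounds $\|\partial^\beta f_i\|_{C^0(B_0(R))}$ for every multi-index $\beta$ with $|\beta| \leq \varrho$ and bounds $[\partial^\beta f_i]_{C^{0,\alpha}(B_0(R))}$ for those with $|\beta| = \varrho$. For $|\beta| < \varrho$, equicontinuity of the family $\{\partial^\beta f_i\}$ follows immediately from the uniform $C^0$-bound on $\partial^{\beta'} f_i$ with $|\beta'| = |\beta| + 1$ via the mean value theorem; for $|\beta| = \varrho$, equicontinuity follows directly from the uniform H\"older bound. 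Thus, for each fixed $\beta$, the classical Arzela-Ascoli theorem extracts a subsequence along which $\partial^\beta f_i$ converges uniformly on $B_0(R)$.

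Next, since there are only finitely many multi-indices with $|\beta| \leq \varrho$, I would enumerate them and perform a standard diagonal extraction so that, along a single subsequence (which I relabel $\{f_i\}$), $\partial^\beta f_i$ converges uniformly on $B_0(R)$ for every such $\beta$ simultaneously. Writing the common limits as $g_\beta$, the standard theorem on uniform convergence of derivatives implies that $f_\infty := g_0 \in C^\varrho(B_0(R))$ with $\partial^\beta f_\infty = g_\beta$. Moreover, passing to the limit in the difference quotient definition of the H\"older seminorm and invoking Fatou's lemma shows $[\partial^\beta f_\infty]_{C^{0,\alpha}} \leq K$ for $|\beta| = \varrho$, so that $f_\infty \in C^{\varrho,\alpha}(B_0(R))$ as well.

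The final step is to upgrade convergence from $C^{\varrho,0}$ to $C^{\varrho,\alpha'}$ using the interpolation inequality
\[
[g]_{C^{0,\alpha'}(B_0(R))} \leq C\, \|g\|_{C^0(B_0(R))}^{1 - \alpha'/\alpha}\, [g]_{C^{0,\alpha}(B_0(R))}^{\alpha'/\alpha},
\]
valid for any $g \in C^{0,\alpha}$ and $0 < \alpha' < \alpha \leq 1$. Applying this with $g = \partial^\beta(f_i - f_\infty)$ for $|\beta| = \varrho$: the factor $[g]_{C^{0,\alpha}}$ is bounded by $2K$ uniformly in $i$, while the factor $\|g\|_{C^0}$ tends to $0$ by the uniform convergence established above. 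Hence $[\partial^\beta(f_i - f_\infty)]_{C^{0,\alpha'}} \to 0$, which combined with $\|\partial^\beta(f_i - f_\infty)\|_{C^0} \to 0$ for all $|\beta| \leq \varrho$ yields $f_i \to f_\infty$ in $C^{\varrho,\alpha'}(B_0(R))$, as desired. The whole argument is routine; the only mild subtlety is the interpolation step, which converts the free $C^0$-smallness into H\"older-norm smallness at the cost of the slightly weakened exponent $\alpha' < \alpha$.
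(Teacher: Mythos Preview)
Your proof is correct and follows the standard route: classical Arzela--Ascoli applied to each $\partial^\beta f_i$, a diagonal extraction over the finitely many multi-indices, and then the H\"older interpolation inequality to pass from $C^{\varrho,0}$ to $C^{\varrho,\alpha'}$ convergence. The paper does not actually prove this statement; it is included in the appendix (\S\ref{ss:analysis}) purely as analytic background and is invoked without proof, so there is nothing to compare against.
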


\begin{thm}[Sobolev Imbedding Theorems, pp.35 of \cite{Aubin}]\label{thm:sobolevimbedding}
Set $M = \mathbb{R}^n$. Let $j, \ell \in \mathbb{N} \cup \{ 0 \}$ with $i<j $, and $p,q \in [1,\infty)$ with $1 \leq q <p$ such that \[ \frac{1}{p} = \frac{1}{q} - \frac{(j- i)}{n}.\] Then $H^{q}_{j} \subset H^{p}_{i}$ and the identity operator is continuous, and the following holds.
\begin{enumerate}
\item (The first Sobolev Imbedding Theorem) If there exists $\varrho \in \mathbb{N} \cup \{ 0 \}$ such that $\frac{(j-\varrho)}{n} > \frac{1}{q}$ then $H_j^q \subset C^{\varrho,0}$ and the identity operator is continuous.
\item (The second Sobolev Imbedding Theorem) If there exists  $\varrho \in \mathbb{N} \cup \{ 0 \}$ such that $\frac{(j-\varrho-\ga)}{n} \leq \frac{1}{q}$, then $H_j^q \subset C^{\varrho,\ga}$.
\end{enumerate}
\end{thm}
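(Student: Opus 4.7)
The plan is to follow the classical approach to Sobolev imbeddings, establishing two fundamental inequalities --- the Gagliardo--Nirenberg--Sobolev inequality and Morrey's inequality --- and then iterating them. The proof splits naturally into an $L^p$-to-$L^{p^\ast}$ chain (for the $H_j^q \subset H_i^p$ inclusion) and a terminal jump into spaces of bounded continuous or H\"older continuous functions.

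First I would establish the base Sobolev inequality: for $u \in C_c^\infty(\mathbb{R}^n)$ and $1 \leq q < n$ with $p^\ast := nq/(n-q)$,
\[ \|u\|_{L^{p^\ast}(\mathbb{R}^n)} \leq C(n,q) \|\nabla u\|_{L^q(\mathbb{R}^n)}. \]
The case $q = 1$ follows by writing $u(x)$ as an integral of $\del_i u$ along each coordinate direction and iteratively applying H\"older's inequality (Loomis--Whitney). The case $1 < q < n$ is obtained by substituting $|u|^{\gamma}$ for $u$ with $\gamma = (n-1)q/(n-q)$ and invoking the $q=1$ estimate. Iterating on higher derivatives then produces $\|\nabla^{(i)} u\|_{L^p} \leq C \|\nabla^{(j)} u\|_{L^q}$ whenever $1/p = 1/q - (j-i)/n$; density of $C_c^\infty$ in $H_j^q$ extends the bound, and continuity of the identity embedding is immediate.

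For the terminal jump into continuous or H\"older spaces, I would prove Morrey's inequality: for $u \in C_c^1(\mathbb{R}^n)$ and $q > n$,
\[ |u(x) - u(y)| \leq C(n,q)\, |x - y|^{1 - n/q}\, \|\nabla u\|_{L^q(\mathbb{R}^n)}. \]
This is obtained by comparing pointwise values of $u$ with ball averages and applying H\"older's inequality to the representation $u(x) - u_{B_r(x)} \leq C \int_{B_r(x)} |\nabla u(y)||x-y|^{1-n} dy$. Under the hypothesis of the first imbedding, the iterated Sobolev chain reduces the problem to $\nabla^{(\varrho)} u \in H_1^p$ with $p > n$, so Morrey gives continuity and boundedness of $\nabla^{(\varrho)} u$. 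For the second imbedding, the same argument delivers $\nabla^{(\varrho)} u \in C^{0,\ga}$ with H\"older exponent $\ga = 1 - n/p$; an index chase then verifies that the assumed exponent relation between $j$, $\varrho$, $q$, and $\ga$ guarantees the correct matching.

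The main obstacle is the base Gagliardo--Nirenberg inequality for $q = 1$, which requires the Loomis--Whitney combinatorial argument and careful bookkeeping of H\"older exponents when passing to $q > 1$. Morrey's inequality is the other essential analytic ingredient, but once these two anchors are established, the remainder reduces to a chain of inclusions and bookkeeping of exponents via repeated substitution into the relation $1/p_{k+1} = 1/p_k - 1/n$. Since the result is classical and is invoked here only as black-box analytic background, I would in practice simply cite Aubin and omit the details.
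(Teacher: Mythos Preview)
Your proposal is correct, and your final remark matches exactly what the paper does: this theorem is stated in the appendix as analytic background with a citation to Aubin and no proof whatsoever. The classical Gagliardo--Nirenberg/Morrey route you sketch is the standard one, but since the paper treats the result as a black box, there is nothing to compare.
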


\begin{lemma}[Kato's inequality]\label{lem:kato} Suppose $L$ is some multiindex and $\gw \in (TM)^{\ten \brs{L}}$. Then whenever $|\gw| \neq 0$,
\begin{equation}\label{eq:kato}
| \N | \gw_L || \leq | \N \gw_L|.
\end{equation}
\end{lemma}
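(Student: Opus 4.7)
The plan is to reduce the inequality to a pointwise Cauchy--Schwarz estimate, using metric compatibility of $\N$ with $g$. First I would differentiate the identity $|\gw_L|^2 = \ip{\gw_L, \gw_L}$, where the pairing is the one naturally extended from $g$ to $(TM)^{\ten |L|}$. Since $\N$ is compatible with $g$, the product rule gives
\begin{equation*}
2 |\gw_L|\, \N_i |\gw_L| = 2 \ip{\N_i \gw_L, \gw_L},
\end{equation*}
and at any point where $|\gw_L| \neq 0$ this can be solved for the derivative of the norm:
\begin{equation*}
\N_i |\gw_L| = \frac{\ip{\N_i \gw_L, \gw_L}}{|\gw_L|}.
\end{equation*}

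Next, I would apply the fiberwise Cauchy--Schwarz inequality $|\ip{\N_i \gw_L, \gw_L}| \leq |\N_i \gw_L|\,|\gw_L|$ to the numerator for each fixed index $i$. This produces the componentwise bound $\brs{\N_i |\gw_L|} \leq |\N_i \gw_L|$. Working in a local $g$-orthonormal frame to diagonalize the contraction, one then has
\begin{equation*}
|\N |\gw_L||^2 = \sum_i \prs{\N_i |\gw_L|}^2 \leq \sum_i |\N_i \gw_L|^2 = |\N \gw_L|^2,
\end{equation*}
and extracting the square root yields \eqref{eq:kato}.

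There is no serious obstacle here: the hypothesis $|\gw_L| \neq 0$ ensures that $|\gw_L|$ is a smooth positive function in a neighborhood of the point in question, so the division in the first step is justified and all subsequent manipulations are elementary. The only essential ingredients are that metric compatibility of $\N$ lets the derivative pass inside the inner product, and that Cauchy--Schwarz bounds a weighted sum of inner products by the corresponding sum of norms.
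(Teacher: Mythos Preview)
Your proof is correct and is the standard argument for Kato's inequality. Note that the paper does not actually supply a proof of this lemma; it is simply stated in the appendix as a known analytic fact, so there is nothing to compare against.
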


\subsubsection{Convexity estimates}\label{ss:convest}

We next extend two convexity estimates (Corollary 5.3 and 5.5) of \cite{KS} to be applied to $L^q$ norms of elements of $\Lambda^{p}(\End E)$ for $\ell \in \mathbb{N}$ (rather than elements of $C^{\infty}(M)$). The resulting corollary (cf. Corollary \ref{cor:KS5.3interp}) of the first result mentioned combined with the second result (cf. Lemma \ref{lem:kuwert5.5}) are key in the smoothing estimates of \S \ref{ss:smoothingest}. For a given $\eta \in \mathcal{B}$, recall the definition of $\jmath_{\eta}^{v}$ which is a constant bounding the $L^{\infty}(M)$ norms of $\eta$ (cf. Definition \ref{defn:bumpset}). We now state an analogue of Corollary 5.3 of \cite{KS}
\begin{lemma}\label{lem:kuwert5.3} 
Let $\N \in \mathcal{A}_E$ and $\eta \in \mathcal{B}$. For $2 \leq p < \infty$, $\ell \in \mathbb{N}$, $s \geq \ell p$, there exists $C_{\epsilon} = C_{\epsilon}\prs{\dim M, \rank E,p,  q, s, \ell , g,\jmath_{\eta}^{(1)}} \in \mathbb{R}_{>0}$ such that for $\phi \in C^{\infty}(M)$ we have,
\begin{equation*}
\prs{ \int_M \brs{\N^{(\ell)} \phi}^p \eta^s dV_g}^{1/p}
\leq \epsilon \prs{ \int_M \brs{\N^{(\ell+1)} \phi}^p \eta^{s+p} dV_g}^{1/p} + C_{\epsilon} \prs{ \int_{\eta > 0} \brs{ \phi }^p \eta^{s-\ell p } dV_g  }^{1/p}.
\end{equation*}

\begin{proof}
Note that this is simply Corollary 5.3 of \cite{KS} applied to $\brs{\N^{(\ell)} \phi}$.
\end{proof}
\end{lemma}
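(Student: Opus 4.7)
The claim is a weighted Gagliardo--Nirenberg-type interpolation inequality for the scalar function $\phi$, corresponding to Corollary~5.3 of \cite{KS} applied to $|\N^{(\ell)}\phi|$. I would prove it in three stages: a three-level multiplicative interpolation obtained by integration by parts, iteration of that interpolation to reach an endpoint, and Young's inequality to extract the additive form.

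The core estimate is the three-level interpolation, valid for $1 \leq \ell$ and $s \geq \ell p$,
\begin{equation*}
\left(\int_M |\N^{(\ell)}\phi|^p \eta^s\,dV_g\right)^{2/p} \leq C \left(\int_M |\N^{(\ell-1)}\phi|^p \eta^{s-p}\,dV_g\right)^{1/p} \left(\int_M |\N^{(\ell+1)}\phi|^p \eta^{s+p}\,dV_g\right)^{1/p}.
\end{equation*}
To derive it, start from $\int |\N^{(\ell)}\phi|^p \eta^s = \int \ip{\N^{(\ell)}\phi, \N^{(\ell)}\phi}|\N^{(\ell)}\phi|^{p-2}\eta^s$ and integrate one covariant derivative by parts, producing three terms that pair $\N^{(\ell-1)}\phi$ with (i) $\N^{(\ell+1)}\phi \cdot |\N^{(\ell)}\phi|^{p-2}\eta^s$, (ii) $|\N^{(\ell)}\phi|^{p-3}\N^{(\ell+1)}\phi$-type combinations, and (iii) $|\N^{(\ell)}\phi|^{p-2}\eta^{s-1}\N\eta$. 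Bound each by H\"older's inequality with exponents $p,\,p,\,p/(p-2)$. The weights distribute exactly so the residual factor $\big(\int |\N^{(\ell)}\phi|^p \eta^s\big)^{(p-2)/p}$ absorbs into the left-hand side, yielding the multiplicative bound.

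Iterating the three-level estimate starting from level $\ell$ and stepping down $\ell \to \ell-1 \to \cdots \to 0$ produces the standard Gagliardo--Nirenberg multiplicative form
\begin{equation*}
\|\N^{(\ell)}\phi\|_{p,\eta^s} \leq C\,\|\phi\|_{p,\eta^{s-\ell p}}^{1/(\ell+1)}\,\|\N^{(\ell+1)}\phi\|_{p,\eta^{s+p}}^{\ell/(\ell+1)}.
\end{equation*}
Applying Young's inequality with conjugate exponents $\ell+1$ and $(\ell+1)/\ell$ to the product on the right converts this into the additive form in the statement, with an arbitrary $\epsilon$ and a constant $C_\epsilon$ depending only on the claimed parameters.

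The main obstacle is not conceptual but combinatorial: verifying that the weight distribution under H\"older genuinely produces weights $\eta^{s-p}$, $\eta^{s+p}$, and $\eta^s$ on the three factors with no leftover exponent, and that the iteration preserves the correct $\eta$-powers at every intermediate level. The condition $s \geq \ell p$ is precisely what guarantees all intermediate weights $\eta^{s-jp}$ ($0 \leq j \leq \ell$) stay nonnegative on $\{\eta > 0\}$ so that every integral is well defined; the single derivative of $\eta$ arising in term (iii) of the integration by parts is the source of the $\jmath_\eta^{(1)}$ dependence in $C_\epsilon$.
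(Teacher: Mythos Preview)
Your approach is correct and is in fact the standard proof of the Kuwert--Sch\"atzle interpolation inequality; the paper does not reprove it at all but simply invokes \cite{KS}, Corollary~5.3. So you are supplying the content the paper outsources to the reference.

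One small inaccuracy worth noting: the three-level multiplicative inequality you write,
\[
\|\N^{(\ell)}\phi\|_{p,\eta^s}^2 \leq C\,\|\N^{(\ell-1)}\phi\|_{p,\eta^{s-p}}\,\|\N^{(\ell+1)}\phi\|_{p,\eta^{s+p}},
\]
is slightly too clean. Term (iii) of your integration by parts (the one hitting $\N\eta$) yields a factor $\|\N^{(\ell-1)}\phi\|_{p,\eta^{s-p}}^{1/p}\|\N^{(\ell)}\phi\|_{p,\eta^s}^{(p-1)/p}$, not $(p-2)/p$ on the second exponent, so after absorption you actually get
\[
\|\N^{(\ell)}\phi\|_{p,\eta^s}^2 \leq C\Big(\|\N^{(\ell-1)}\phi\|_{p,\eta^{s-p}}\,\|\N^{(\ell+1)}\phi\|_{p,\eta^{s+p}} + \|\N^{(\ell-1)}\phi\|_{p,\eta^{s-p}}^2\Big).
\]
This extra lower-order term is harmless for the iteration and for the final additive form, so your argument goes through unchanged; it just means the intermediate Gagliardo--Nirenberg product form you state should carry an additive $\|\phi\|_{p,\eta^{s-\ell p}}$ piece as well, which is absorbed by the $C_\epsilon$ term anyway.
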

An immediate consequence of this lemma is an interpolation identity obtained via iterating the inequality is the following corollary.
\begin{cor}\label{cor:KS5.3interp} Let $\N \in \mathcal{A}_E$ and $\eta \in \mathcal{B}$. For $2 \leq p < \infty$, $\ell \in \mathbb{N}$, $s \geq \ell p $, there exists some $C_{\epsilon} = C_{\epsilon}\prs{\dim M, \rank E,p,  q, s, \ell , g, \jmath_{\eta}^{(1)}} \in \mathbb{R}_{>0}$ such that for $\phi \in C^{\infty}(M)$,
\begin{equation}\label{eq:KS5.3interp}
\brs{\brs{ \eta^{\tfrac{s}{p}} \N^{(\ell )} \phi}}_{L^p(M)}  \leq \epsilon \brs{ \brs{ \eta^{\tfrac{s + jp}{p}} \N^{(\ell +j)} \phi} }_{L^p(M)} + C_{\epsilon} \brs{ \brs{ \phi }}_{L^p(M), \eta > 0}.
\end{equation}
In particular for $p=2$ and some constant $K \geq 1$,
\begin{equation}\label{eq:KS5.3interpsp}
 K \brs{\brs{ \eta^{\tfrac{s}{2}} \N^{(\ell )} \phi}}_{L^2(M)}^2  \leq \epsilon \brs{ \brs{ \eta^{\tfrac{s + 2j}{2}} \N^{(\ell +j)} \phi} }_{L^2(M)}^2 + C_{\epsilon} K^2 \brs{ \brs{ \phi }}_{L^2(M), \eta > 0}^2.
\end{equation}
\begin{proof} This simply follows by induction. The base case is given by Lemma \ref{lem:kuwert5.3}. Now assume that for $j \in \mathbb{N}$ that \eqref{eq:KS5.3interp} holds. Without loss of generality, we consider the equality with $\epsilon$ replaced by $\sqrt{\epsilon}$. Manipulating the first term on the right we have, applying Lemma \ref{lem:kuwert5.3},
\begin{align*}
 \brs{ \brs{ \eta^{\tfrac{s + jp }{p}} \N^{(\ell +j)} \phi} }_{L^p} & \leq   \sqrt{\epsilon} \brs{ \brs{ \eta^{\tfrac{s + p(j+1)}{p}} \N^{(\ell +j + 1)} \phi} }_{L^p} + C_{ \sqrt{\epsilon} }\brs{ \brs{ \phi}}_{L^p,\eta > 0}.
\end{align*}
Inserting this into \eqref{eq:KS5.3interp} with $\epsilon$ replaced with $\sqrt{\epsilon}$, we conclude the result. Consequently we have the case for $j+1$, so inductively the result holds for all $\mathbb{N}$.

The second identity \eqref{eq:KS5.3interpsp} is essentially representing the direct application of this lemma in \S \ref{ss:smoothingest}, which is strictly in the setting where $p=2$ and the computations feature quantities with their $L^2$ norm \textit{squared}. Therefore we note one more manipulation where we square both sides of the inequality and apply H\"{o}lder's inequality:
\begin{align*}
a &\leq b + c \\
a^2 &\leq \prs{b + c}^2 = b^2 + 2bc + c^2 \\
& \leq b^2 + 2 \prs{\tfrac{b^2}{2}+  \tfrac{c^2}{2}} + c^2 \\
& \leq 2 \prs{b^2  + c^2}.
\end{align*}
Therefore if we are using the weighted versions of the inequality then they hold for the squared norms too. This is a minor manipulation but should be noted. Additionally, note that the shift of the `weight' $K$ featured in \eqref{eq:KS5.3interpsp} is merely a consequence of weighted H\"{o}lder's inequality being featured through each of these iterated computations. 
\end{proof}
\end{cor}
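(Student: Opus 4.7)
My plan is to prove the corollary by induction on $j \in \mathbb{N}$, using Lemma \ref{lem:kuwert5.3} as the one-step interpolation ingredient. The base case $j=1$ is simply a restatement of that lemma, so there is nothing to do there; the content is in bootstrapping it to arbitrary $j$.

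For the inductive step, suppose \eqref{eq:KS5.3interp} is known at level $j$, so that
\[
\brs{\brs{\eta^{s/p}\N^{(\ell)}\phi}}_{L^p(M)} \;\leq\; \epsilon \brs{\brs{\eta^{(s+jp)/p}\N^{(\ell+j)}\phi}}_{L^p(M)} + C_\epsilon \brs{\brs{\phi}}_{L^p(M),\eta>0}.
\]
I would then apply Lemma \ref{lem:kuwert5.3} directly to the first term on the right, with $\ell$ replaced by $\ell+j$ and $s$ replaced by $s+jp$, to trade one extra derivative for an extra factor of $\eta$. The only subtlety is bookkeeping for the small parameter: if one naively uses $\epsilon$ both at stage $j$ and in the new application, the leading coefficient of the $(\ell+j+1)$-th derivative term becomes $\epsilon^2$, which is fine in itself but it is cleaner to replace $\epsilon$ by $\sqrt{\epsilon}$ at each stage so that after composition the coefficient of the top-order term is exactly $\epsilon$ (as stated). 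The constant in front of $\|\phi\|_{L^p,\eta>0}$ simply absorbs the new $C_{\sqrt{\epsilon}}$ and the old $C_\epsilon$ into a new, larger $C_\epsilon$ depending on the same parameters. The hypothesis $s \geq \ell p$ is preserved at every stage because the weight shifts by $+p$ exactly when the derivative order shifts by $+1$, so Lemma \ref{lem:kuwert5.3} continues to apply.

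For the squared, weighted version \eqref{eq:KS5.3interpsp} specific to $p=2$, my plan is to square the inequality $a \leq b+c$ and use the elementary $(b+c)^2 \leq 2b^2 + 2c^2$ (or equivalently Young's inequality with equal weights) to pass from the $L^p$-form to the $L^2$-squared form; the factors of $2$ are then reabsorbed into $\epsilon$ and $C_\epsilon$. The factor $K \geq 1$ on the left and $K^2$ on the right is a cosmetic manipulation: divide through by $K$, rename the small parameter, and use $K \geq 1$ so that the interpolated constant only grows. The main ``obstacle'' is really just keeping the parameters $\epsilon$, $C_\epsilon$, $s$, $\ell$, $j$ consistent through the iteration; there is no deep analytic content beyond Lemma \ref{lem:kuwert5.3} itself.
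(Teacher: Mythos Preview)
Your proposal is correct and follows essentially the same approach as the paper: induction on $j$ with Lemma \ref{lem:kuwert5.3} as the base case and one-step ingredient, the $\sqrt{\epsilon}$ bookkeeping to keep the top-order coefficient equal to $\epsilon$, and the elementary $(b+c)^2 \leq 2(b^2+c^2)$ to pass to the squared $L^2$ version with the $K$ weight absorbed. Your remark that the hypothesis $s \geq \ell p$ is preserved under the shift $s \mapsto s+jp$, $\ell \mapsto \ell+j$ is a useful sanity check that the paper leaves implicit.
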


\begin{lemma}[Analogue of Corollary 5.5 of \cite{KS}]\label{lem:kuwert5.5}
Let $\N \in \mathcal{A}_E$ and $\eta \in \mathcal{B}$. Let $r,w,s \in \mathbb{N}$ and $\phi \in \Lambda^{p}(\End E)$ with $s \geq 2w$ and $0 \leq i_1 , \dots , i_r \leq w$, so that $\sum_{j=1}^r{i_j} = 2w$. Then there exists some $Q_{(w,r)} := Q \prs{ \dim M, \rank E, p, s, g,\jmath_{\eta}^{(1)}, w , r} \in \mathbb{R}_{>0}$ such that

\begin{equation*}
\int_{M}{\eta^s \left( \N^{(i_1)} \phi \ast ... \ast \N^{(i_r)} \phi  \right) dV_g} \leq Q_{(w,r)} || \phi ||_{L^{\infty}}^{r-2} \left( || \eta^{s/2} \N^{(w)} \phi ||_{L^2(M)}^2 + || \phi ||_{L^2(M), \eta>0}^2 \right).
\end{equation*}
\end{lemma}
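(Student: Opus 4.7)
The plan is to adapt the Kuwert-Sch\"atzle approach of their Corollary 5.5, reducing the multilinear integrand to controllable single-factor integrals via H\"older's inequality and then bounding each factor through a Gagliardo-Nirenberg type interpolation built from Corollary \ref{cor:KS5.3interp}. First I would split the index set into $I_0 = \{j : i_j = 0\}$ and $I^+ = \{j : i_j \geq 1\}$. For $j \in I_0$ the factor $|\N^{(i_j)}\phi| = |\phi|$ is bounded pointwise by $\|\phi\|_{L^{\infty}}$, contributing $\|\phi\|_{L^{\infty}}^{|I_0|}$. For the remaining factors, I would distribute the bump weight as $\eta^s = \prod_{j \in I^+} \eta^{s i_j/(2w)}$ (valid since $\sum_{j \in I^+} i_j = 2w$) and apply multilinear H\"older with dual exponents $2w/i_j$. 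This yields
\[
\int_M \eta^s \prod_{j=1}^r |\N^{(i_j)}\phi|\, dV_g \leq \|\phi\|_{L^{\infty}}^{|I_0|} \prod_{j \in I^+} \left( \int_M \eta^s |\N^{(i_j)}\phi|^{2w/i_j}\, dV_g \right)^{i_j/(2w)}.
\]

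Next I would establish, for each $0 < i \leq w$ and $s \geq 2w$, a weighted interpolation of the form
\[
\left( \int_M \eta^s |\N^{(i)}\phi|^{2w/i}\, dV_g \right)^{i/(2w)} \leq C \,\|\eta^{s/2}\N^{(w)}\phi\|_{L^2(M)}^{i/w} \,\|\phi\|_{L^{\infty}(M)}^{1-i/w} + (\text{lower order in } \|\phi\|_{L^2,\eta>0}),
\]
derived by iterating the convexity estimate of Corollary \ref{cor:KS5.3interp} together with H\"older manipulations, using $\|\phi\|_{L^{\infty}}$ to convert the $L^{2w/i}$ norm to $L^2$ form. Inserting this bound into the previous display and multiplying over $j \in I^+$, the identity $\sum_{j \in I^+} i_j/w = 2$ collapses the product of $\|\eta^{s/2}\N^{(w)}\phi\|_{L^2}^{i_j/w}$ factors into $\|\eta^{s/2}\N^{(w)}\phi\|_{L^2}^{2}$, while the product of $\|\phi\|_{L^{\infty}}^{1-i_j/w}$ factors contributes $\|\phi\|_{L^{\infty}}^{|I^+|-2}$. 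Combining with the $\|\phi\|_{L^{\infty}}^{|I_0|}$ prefactor produces the required exponent $r-2$, and remaining cross terms are absorbed via Young's inequality.

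The main technical obstacle will be the weighted Gagliardo-Nirenberg step. In the unweighted Euclidean setting this is classical, but the presence of the cutoff $\eta$ requires precise bookkeeping so that the derivative norm on the right hand side is exactly $\|\eta^{s/2}\N^{(w)}\phi\|_{L^2}$ rather than a shifted weight; this is where the hypothesis $s \geq 2w$ enters, as it is precisely the condition that makes each intermediate application of Corollary \ref{cor:KS5.3interp} legal. The case $r = 2$ serves as a useful sanity check: since $i_1 + i_2 = 2w$ with $i_1, i_2 \leq w$, Cauchy-Schwarz gives
\[
\int_M \eta^s |\N^{(i_1)}\phi|\, |\N^{(i_2)}\phi|\, dV_g \leq \|\eta^{s/2}\N^{(i_1)}\phi\|_{L^2}\, \|\eta^{s/2}\N^{(i_2)}\phi\|_{L^2},
\]
and each factor is controlled directly by a single application of Corollary \ref{cor:KS5.3interp}, recovering the claim with no $\|\phi\|_{L^{\infty}}$ factor (as expected, since $r - 2 = 0$).
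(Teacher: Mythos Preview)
Your outline is the standard Kuwert--Sch\"atzle argument, which is exactly what the paper defers to (the lemma is stated without proof, citing Corollary~5.5 of \cite{KS}). The structure---pull out $\|\phi\|_{L^\infty}$ for zero-order factors, H\"older with exponents $2w/i_j$, then multiplicative interpolation collapsing the product via $\sum i_j/w = 2$---is correct and matches the source.

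One imprecision worth fixing: the weighted Gagliardo--Nirenberg step
\[
\bigl\| \eta^{s i/(2w)} \N^{(i)}\phi \bigr\|_{L^{2w/i}} \leq C \,\|\eta^{s/2}\N^{(w)}\phi\|_{L^2}^{i/w}\,\|\phi\|_{L^\infty}^{1-i/w} + C\,\|\phi\|_{L^2,\eta>0}^{i/w}\,\|\phi\|_{L^\infty}^{1-i/w}
\]
is \emph{not} a consequence of Corollary~\ref{cor:KS5.3interp} as you suggest. That corollary is an additive, same-exponent estimate ($L^p$ of $\N^{(\ell)}\phi$ controlled by $L^p$ of $\N^{(\ell+j)}\phi$ and $L^p$ of $\phi$); it never changes the Lebesgue exponent, so ``using $\|\phi\|_{L^\infty}$ to convert $L^{2w/i}$ to $L^2$'' cannot be extracted from it by iteration alone. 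What you actually need is the bundle analogue of Kuwert--Sch\"atzle's Corollary~5.2 (built on their Lemma~5.1), whose proof is a separate integration-by-parts plus three-term H\"older argument. This is straightforward to adapt to $\Lambda^p(\End E)$-valued $\phi$, but it is an independent ingredient, not a corollary of the tool you cite.
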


\subsection{Connection identities}\label{ss:connmanips} 

We next provide some key identities regarding manipulations applied to the connections throughout various identities. We first state standard elementary manipulations and key formulas like the Bochner formula (cf. Proposition \ref{prop:bochner}) in the preliminary identities section (\S \ref{sss:prelimmanipid}), then state some basic scaling laws of connections and curvatures (\S \ref{sss:scalinglaw}) and then introduce manipulations to address the higher order differential operators which appear within the flow.

\subsubsection{Preliminary identities}\label{sss:prelimmanipid}
We first begin with the most basic manipulations.

\begin{prop}[Bochner formula]\label{prop:bochner}
Let $\N \in \mathcal{A}_E$ and $\gw \in \Lambda^p(\End E)$. Then the following equality holds.
\begin{equation}\label{eq:bochner}
\lap_D\omega=  -\lap \omega + \Rm \ast \omega + F \ast \gw.
\end{equation}
In particular, for $p=1$,
\begin{equation}\label{eq:bochner1}
(\lap_D \omega)_{i \ga}^{\gb} = - \N^k (\N_k \gw_{i \ga}^{\gb}) + \Rc_{i}^{p}\gw_{p \ga}^{\gb} +g^{jk} \left( F_{ij \ga}^{\gd} \gw_{k \gd}^{\gb} - F_{ij \gd}^{\gb} \gw_{k \ga}^{\gd} \right),
\end{equation}
given invariantly by $\lap_D\omega= -\lap \omega + \Rc(\gw^{\sharp}, \cdot ) + [F, \gw]^{\#}$. For $p=2$,
\begin{align}\begin{split}\label{eq:bochner2}
 (\lap_D \omega)_{i \ell \ga}^{\gb} &= -\N^k \N_k \gw_{i \ell \ga}^{\gb} +  \left( - \Rc_{\ell}^p \omega_{p i \ga}^{\gb}+ \Rc_{i}^p \omega_{p \ell \ga}^{\gb} - g^{qk}\Rm_{i \ell q}^p \omega_{k p \ga}^{\gb} \right) + \left([\gw , F]^{\#} \right)_{i \ell \ga}^{\gb} - \left( [\gw, F]^{\#} \right)_{\ell i \ga}^{\gb}.
\end{split}
\end{align}
\end{prop}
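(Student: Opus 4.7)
The plan is to derive the Bochner formula by direct computation in local coordinates, expanding $\lap_D\gw = (D^*D + DD^*)\gw$ and reorganizing via the Ricci identity. First I would write $D$ and $D^*$ in coordinates using the definitions in the paper: $D\gw$ is the alternation $\Alt(\N\gw)$, and $D^*\gw$ is the metric contraction $g^{ji_1}\N_j\gw_{i_1\dots i_p\ga}^{\gb}$ coming from \eqref{eq:Dstardefn}. Substituting these into $D^*D\gw + DD^*\gw$ produces a sum of terms of the form $g^{ij}\N_i\N_j\gw$ together with terms of the form $g^{ij}\N_i\N_{k}\gw$ with crossed indices.

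Next I would collect the pure double-covariant-derivative terms. The terms where the two $\N$'s act on the same index slot (in the order $\N^*\N$) combine to yield precisely $-\lap\gw$. The remaining contributions are of the form $g^{ij}[\N_i,\N_j]$ applied to $\gw$ with various indices. At this step I would invoke the Ricci commutation identity: for $\gw \in \Lambda^p(\End E)$, the commutator $[\N_i,\N_j]\gw$ acts on each tangent index of $\gw$ via the Riemann tensor $\Rm$ of $(M,g)$, and on the endomorphism fiber via the bundle curvature $F_\N$, producing brackets of the form $[F,\gw]^{\#}$ via the pound operation \eqref{eq:pndbrackdefn}. After tracing with $g^{ij}$, the manifold-side commutators contract $\Rm$ into the Ricci tensor (possibly with a full $\Rm$ remaining when two tangent indices of $\gw$ interact), while the bundle-side commutators produce the $[F,\gw]^{\#}$ terms.

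To extract the explicit formulas \eqref{eq:bochner1} and \eqref{eq:bochner2}, I would specialize. For $p=1$, there is only one tangent index of $\gw$ to carry a curvature, so one Ricci term $\Rc_i^p \gw_{p\ga}^{\gb}$ emerges, together with the single pound bracket $[F,\gw]^{\#}_{i\ga}^{\gb}$; collecting signs gives \eqref{eq:bochner1}. For $p=2$, each of the two tangent slots of $\gw$ produces a Ricci term (with opposite signs coming from the skew-symmetry in $D$), and the interaction of the two tangent slots yields an additional $g^{qk}\Rm_{i\ell q}^p\gw_{kp\ga}^{\gb}$ term; the bundle curvature contributes two pound-bracket terms $[\gw,F]^{\#}$ with the two orderings, reflecting the two positions $F$ can appear with respect to the indices of $\gw$. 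This yields \eqref{eq:bochner2}.

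The main obstacle is purely bookkeeping: one must be careful with the signs produced by the alternation in $D$, the ordering conventions in $D^*$, and the placement of indices when converting traces of $\Rm$ into Ricci contractions. Since both sides of \eqref{eq:bochner} are written only up to the schematic notation $\Rm\ast\gw$ and $F\ast\gw$, the general case follows once the commutation identities are applied, while the two special cases require carrying through the index arithmetic explicitly. No deep ingredient beyond the standard Ricci identity is needed, and the result is the classical Weitzenböck decomposition in this bundle-valued setting.
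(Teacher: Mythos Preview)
Your proposal is correct and matches the paper's approach: the paper does not write out a detailed proof but simply remarks that the Bochner formula ``can be seen as a consequence of'' the commutation identity Lemma~\ref{lem:Ncomut}, which is precisely the Ricci identity you invoke as the key step. Your plan to expand $D^*D + DD^*$ in coordinates, isolate $-\lap\gw$, and read off the $\Rm$ and $F$ terms from the commutators is the standard derivation the paper has in mind.
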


The Bochner formula can be seen as a consequence of the following technical lemma which demonstrates the terms that appear when commuting connections. We state this result in terms of explicit coordinates and demonstrate the proof for future use.

\begin{lemma}\label{lem:Ncomut}
For $K = (k_s)_{s=1}^{|K|}$ be a multiindex and $\gw \in S\prs{(T^*M)^{\ten \brs{K}} \ten \End E}$,
\begin{equation}\label{eq:Ncomut}
[\N_i, \N_j] \gw_{K \ga}^\gb=  \Rm_{ij k_{\ell}}^p(\gw_{K(\ell,p) \ga}^{\gb}) - F_{ij \ga}^{\gd} \gw_{K \gd}^{\gb} + F_{ij \gd}^{\gb} \gw_{K \ga}^{\gd}.
\end{equation}
Where `$K(\ell,p)$' means replacing $k_{\ell}$ with $p$ in the multiindex $K$.
\begin{proof}
Use of normal coordinates in the computation of the commutator of connections yields
\begin{align*}
[\N_i, \N_j] \gw_{K \ga}^{\gb}
&= \N_i(\N_j \gw_{K \ga}^{\gb} ) - \N_j(\N_i \gw_{K \ga}^{\gb} )\\
& = \del_i \left(\del_j \gw_{K \ga}^{\gb} - G_{j k_{\ell}}^{p}\gw_{K(\ell, p)\ga}^{\gb} - \gG_{j \ga}^{\gd}\gw_{K \delta}^{\gb} +\gG_{j \gd}^{\gb} \gw_{K \ga}^{\gd}\right) \\
& \hsp - \del_j \left(\del_i \gw_{K \ga}^{\gb} - G_{i k_{\ell}}^{p}\gw_{K(\ell, p)\ga}^{\gb} - \gG_{i\ga}^{\gd} \gw_{K \delta}^{\gb} +\gG_{i \gd}^{\gb} \gw_{K \ga}^{\gd}\right)\\
& = \left( \del_j G_{i k_{\ell}}^p - \del_i G_{j k_{\ell}}^p \right) \gw_{K(\ell,p) \ga}^{\gb} + \left( \del_j \gG_{i \ga}^{\gd}  - \del_i \gG_{j \ga}^{\gd} \right)\gw_{K \gd}^{\gb} \\
&\hsp + \left( - \del_j \gG_{i\gd}^{\gb } + \del_i \gG_{j \gd}^{\gb }\right) \gw_{K \ga}^{\gd}\\
&= \Rm_{ij k_{\ell}}^p(\gw_{K(\ell,p) \ga}^{\gb}) - F_{ij \ga}^{\gd} \gw_{K \gd}^{\gb} + F_{ij \gd}^{\gb}\gw_{K \ga}^{\gd}.
\end{align*}
The result follows.
\end{proof}
\end{lemma}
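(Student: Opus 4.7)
The plan is to reduce the identity to a pointwise statement at an arbitrary point $x_0 \in M$ and exploit normal coordinates. Choose a chart around $x_0$ in which the Christoffel symbols $G_{ij}^k$ of the Levi-Civita connection vanish at $x_0$, and simultaneously trivialize the bundle $E$ near $x_0$ so that the connection coefficients $\gG_{i \ga}^{\gb}$ of $\N$ also vanish at $x_0$ (this is the standard synchronous-frame choice). In such coordinates the quantities $\N_i \gw$ reduce at $x_0$ to $\del_i \gw$ plus first-order correction terms involving single Christoffel symbols, while second covariant derivatives pick up $\del_i$ of the connection coefficients — which is exactly where $\Rm$ and $F$ enter.

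My next step is to record how the commutator $[\N_i, \N_j]$ acts index-by-index via the Leibniz rule. Because $\N$ is extended to tensor products in the standard way, it is enough to establish the formula on each of three model objects: (i) a pure $T^*M$-valued section $\eta_k$, where the commutator produces the Riemann curvature term $\Rm_{ij k}{}^p \eta_p$; (ii) a section of $E$, where one gets $-F_{ij \ga}^{\gd} s_{\gd}^{\ga}$-style action; and (iii) a section of $E^*$, where the sign flips to $+F_{ij \gd}^{\gb}$. Each of these base-case identities follows by expanding $\N_i \N_j$ minus $\N_j \N_i$ in normal coordinates and observing that all quadratic-in-$\gG$ and quadratic-in-$G$ terms vanish at $x_0$, leaving only the antisymmetrized derivatives $\del_i G_{j k}^p - \del_j G_{i k}^p$ and $\del_i \gG_{j \ga}^{\gb} - \del_j \gG_{i \ga}^{\gb}$, which by definition are $\Rm_{ij k}{}^p$ and (minus) $F_{ij \ga}^{\gb}$ respectively.

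Once the three base cases are in hand, I apply Leibniz across the tensor product $(T^*M)^{\otimes |K|} \otimes \End E$: the commutator acts as a derivation and therefore produces one Riemann contribution for each of the $|K|$ covariant indices $k_\ell$ and two $F$-contributions for the $\End E$ slot (one for the lower index $\ga$, one for the upper index $\gb$, with opposite signs since $\End E = E \otimes E^*$). Summing and using the shorthand $K(\ell, p)$ to denote substitution at the $\ell$-th slot yields precisely the claimed identity \eqref{eq:Ncomut}. Finally, since the formula is tensorial in $\gw$ and both sides transform identically under change of chart/frame, the pointwise identity at $x_0$ extends to the claimed global coordinate formula.

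The main obstacle is purely bookkeeping: one must verify that in the normal-coordinate computation every cross term either vanishes at $x_0$ or reassembles into one of the three curvature templates, and that the sign conventions for $F$ in $\Lambda^2(\End E)$ (and the convention $(F_{\N})_{ij \ga}^{\gb} = \del_i \gG_{j \ga}^{\gb} - \del_j \gG_{i \ga}^{\gb} + \cdots$ from the introduction) produce the asymmetric $-F \cdots + F \cdots$ pair in the final formula. No analytic difficulty is expected beyond this careful indexing.
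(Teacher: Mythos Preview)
Your proposal is correct and follows essentially the same approach as the paper: both work in normal coordinates at a point so that the connection coefficients $G$ and $\gG$ vanish there, leaving only their first derivatives, which assemble into $\Rm$ and $F$. The only organizational difference is that the paper expands $\N_i\N_j\gw - \N_j\N_i\gw$ on the full tensor in one pass, whereas you first establish the three base cases (a $T^*M$ slot, an $E$ slot, an $E^*$ slot) and then invoke the derivation property of the commutator; this is a standard and equivalent repackaging of the same computation.
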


\subsubsection{Scaling laws}\label{sss:scalinglaw}

We first introduce key scaling properties of connections and corresponding quantities. This determines the critical dimension of the Yang-Mills $k$-flow, and is applied primarily in the blowup analysis (\S \ref{ss:blowupanalysis}) and flow long time existence results (\S \ref{s:LTexist}).

We first show how iterations of a scaled connection act on a similarly scaled $1$-form.

\begin{lemma}\label{lem:Nscal} Suppose $\N$ is a connection and let $x \in M$ such that in a coordinate chart containing $x$ the coefficient matrix of $\N$ is $\gG$. Let $\gw \in S(E)$ and set 
\[  \gG^{\la}:= \la \gG(\la x) \text{ and } \gw_{\la}(x) := \gw(\la x) .\]
Let $\N^{\la}$ denote the connection with coefficient matrix $\gG^{\la}$.
Then for all $\ell \in \mathbb{N}$,
\[
\N^{(\ell)}_{\la} \gw_{\la} =\la^{\ell} \N^{(\ell)} \gw. 
\]

\begin{proof}
We observe that in the case $\ell = 1$,
\begin{align*}
(\N^{\la})_i (\gw_{\la})^{\ga} &= \del_i (\gw_{\la})^{\ga} + (\gG^{\la})_{i \gd}^{\ga} (\gw_{\la})^{\gd}\\
&= \la \del_i \gw^{\ga} + \la \gG_{i \gd}^{\ga} \gw^{\gd}.
\end{align*}
Iterating this operation of $\N$ yields the desired result.
\end{proof}
\end{lemma}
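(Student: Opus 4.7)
The plan is to prove the identity by straightforward induction on $\ell$, with the base case doing essentially all the conceptual work and the inductive step amounting to iteration of the same chain-rule observation.

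For the base case $\ell = 1$, I would work in the given coordinate chart at a point $x$ and apply the chain rule to $(\gw_\la)^\ga(x) = \gw^\ga(\la x)$ to obtain $\del_i(\gw_\la)^\ga(x) = \la(\del_i\gw^\ga)(\la x)$. The definition of $\gG^\la$ gives $(\gG^\la)_{i\gd}^\ga(x) = \la\,\gG_{i\gd}^\ga(\la x)$. Combining these,
\[
(\N^\la)_i(\gw_\la)^\ga(x) = \la(\del_i\gw^\ga)(\la x) + \la\,\gG_{i\gd}^\ga(\la x)\,\gw^\gd(\la x) = \la(\N_i\gw^\ga)(\la x),
\]
which is precisely $\la(\N\gw)_\la$ evaluated at $x$. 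So the base case holds.

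For the inductive step I would assume $\N^{(\ell)}_\la\gw_\la = \la^\ell(\N^{(\ell)}\gw)_\la$ and apply $\N^\la$ once more, treating $\N^{(\ell)}\gw$ as a section of the appropriate tensor-product bundle. The exact same chain-rule-plus-definition computation as in the base case yields one additional factor of $\la$ each time $\N^\la$ differentiates a function of $\la x$, and the scaled coefficient matrix $\gG^\la$ already carries the correct factor of $\la$ in each of its entries. This closes the induction and gives $\N^{(\ell+1)}_\la\gw_\la = \la^{\ell+1}(\N^{(\ell+1)}\gw)_\la$ as desired.

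The only potentially subtle point is the coupling of $\N$ to the Levi-Civita connection when the object being differentiated is tensor-valued. In the context where this lemma is used, namely the blowup analysis of \S\ref{ss:blowupanalysis}, the computation takes place in $\mathbb{R}^n$ with the flat metric, so the Christoffel symbols vanish and no correction term is needed. More generally, the lemma is a homogeneity statement under the rescaling $x\mapsto \la x$, and rescaling the metric by $g \mapsto \la^2 g(\la\,\cdot)$ makes the Levi-Civita connection coefficients transform identically to $\gG$, so the induction goes through verbatim. Hence I do not anticipate any real obstacle; the content of the lemma is simply that each covariant derivative, in the rescaled setting, contributes exactly one power of $\la$.
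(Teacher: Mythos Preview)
Your proposal is correct and follows essentially the same approach as the paper's proof: both establish the base case $\ell=1$ by direct computation using the chain rule and the definition of $\gG^\la$, then conclude by iteration. Your treatment is in fact slightly more careful than the paper's, since you make the inductive step explicit and flag the coupling with the Levi-Civita connection for tensor-valued arguments, whereas the paper simply writes ``Iterating this operation of $\N$ yields the desired result.''
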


\begin{rmk} Since $F_{\N} = D_{\N} \circ D_{\N}$, then it follows that for $\N^{\la}$ as defined above over the appropriate vector bundles,
\begin{equation*}
F_{\N^{\la}} = \la^2 F_{\N}.
\end{equation*} 
\end{rmk}

We now demonstrate how the above scaling effects the $L^p$ norm of the curvature. This is key in determining the critical dimension of our Yang-Mills $k$-energies as well as performing our blowup analyses. In preparation for these discussions, we scale the curvature more generally in the following argument.

\begin{prop}\label{prop:scalelaw} For $\la \in \mathbb{R}$, set
$F^{\la}(x) := \la^{q} F (\la^{r} x)$.
Then it follows that
\[
|| F^{\la} ||_{L^p(B_0(1))}^p = \la^{qp - nr} || F ||_{L^p(B_0(\la^r))}^p.
\]

\begin{proof}
Expressing the $L^p$ norm of $F_{\la}$ in terms of $F$, with the condensed notation $dx^{1 \dots n} = dx^1 \w \cdots \w dx^n$ yields
\begin{align*}
|| F^{\la} ||_{L^p}^p &= \int_{B_0(1)}{|F^{\la}|^p dx^{1 \cdots n}} \\
& = \int_{B_0(1)}{\la^{qp} |F( \la^{r}x)|^p dx^{1 \cdots n}}.
\end{align*}
We change variables by choosing $y^i = \la^{r} x^i$, giving that $dy^{1 \cdots n} = \la^{nr} dx^{1 \cdots n}$. Applying this to the above equality gives
\begin{align*}
|| F^{\la} ||_{L^p}^p &=  \la^{qp} \int_{B_0(\la^r)}{ | F(\la^r) |^p \frac{1}{\la^{nr}}dy^{1 \cdots n} } \\
&= \la^{qp - nr} || F ||_{L^p(B_0(\la^r))}^p.
\end{align*}
The result follows.
\end{proof}
\end{prop}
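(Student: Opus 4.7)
The proposition is a direct change of variables computation, so the plan is essentially to unwind the definitions and apply the substitution $y = \lambda^r x$. The main work is bookkeeping the scaling factors correctly.

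First I would write out the $L^p$ norm of $F^{\lambda}$ explicitly using the standard Lebesgue measure on the unit ball:
\begin{equation*}
\| F^{\lambda} \|_{L^p(B_0(1))}^p = \int_{B_0(1)} |F^{\lambda}(x)|^p \, dx^1 \wedge \cdots \wedge dx^n,
\end{equation*}
and then substitute the definition $F^{\lambda}(x) = \lambda^q F(\lambda^r x)$, pulling the constant $\lambda^{qp}$ outside the integral. This leaves an integral of $|F(\lambda^r x)|^p$ over $B_0(1)$.

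Next I would perform the change of variables $y^i = \lambda^r x^i$. The pullback of the volume form transforms as $dx^1 \wedge \cdots \wedge dx^n = \lambda^{-nr}\, dy^1 \wedge \cdots \wedge dy^n$, and the domain $B_0(1)$ in the $x$-coordinates maps to $B_0(\lambda^r)$ in the $y$-coordinates. Combining these gives the claimed identity with the combined factor $\lambda^{qp - nr}$ and the integrand $|F(y)|^p$ over $B_0(\lambda^r)$.

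There is no real obstacle here; the only thing to watch is the sign convention on $r$ (the substitution is valid for any nonzero $\lambda$, and for $\lambda < 1$ the radius $\lambda^r$ shrinks or grows depending on the sign of $r$, but the computation is unaffected). Since the identity is purely pointwise and the Jacobian is a scalar, no care with the tensorial nature of $F$ is needed beyond noting that $|F(\lambda^r x)|$ is just the pointwise norm evaluated at the scaled point. The resulting equality matches exactly the statement, completing the proof.
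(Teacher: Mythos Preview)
Your proposal is correct and follows exactly the same approach as the paper: write out the $L^p$ norm, substitute the definition of $F^{\lambda}$, and perform the change of variables $y=\lambda^r x$, tracking the Jacobian factor $\lambda^{-nr}$ and the domain change to $B_0(\lambda^r)$. There is nothing to add.
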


\subsubsection{Higher order identities}\label{sss:comutintid}
The following identities are key in manipulating higher order terms which appear the generalized Yang-Mills $k$-flow and are primarily results of recursive integration by parts and commuting of connections.

This next technical lemma prepares an expression to draw out iterations of the Laplacian after integrating by parts, which is performed in the following result (cf. Lemma \ref{lem:shiftkNs}).

\begin{lemma}\label{lem:kNstolaps}
Suppose $\N \in \mathcal{A}_E$ and $\xi$ in the domain of $\N$. Let $I= (i_v)_{v=1}^{\brs{I}}$ and $J = (j_v)_{v=1}^{\brs{J}}$ be two multiindices with $\brs{I}=\brs{J} = k$. Let $S$ and $Q$ be two multiindices consisting of entries corresponding to both bundle and base manifold. The following identity holds.
\begin{equation}\label{eq:kNstolaps}
\left( \N_{i_{k}} \cdots \N_{i_{1}} \N_{j_1} \cdots \N_{j_k} \xi_Q^S \right) = \left( \N_{i_{k}} \N_{j_k} \N_{i_{k-1}}\cdots \N_{i_{1}} \N_{j_1} \xi_Q^S \right) + \sum_{\ell = 1}^{2k-2} \sum_{w = 0}^{\ell} \left(\N^{(w)}(\Rm + F_{\N}) \ast \N^{(k-2-w)} \xi_Q^S \right).
\end{equation}

\begin{proof}
For notational simplicity, given a multiindex $I = (i_v)_{v=1}^{|I|}$, set $\N_{i_1 \cdots i_{|I|}}:= \N_{i_1}\cdots \N_{i_{|I|}}$. Iterating Lemma \ref{lem:Ncomut} as covariant derivatives are interchanged one obtains
\begin{align*}
\left( \N_{i_{\ell}} \cdots \N_{i_{1}} \N_{j_1} \cdots \N_{j_\ell} \xi_Q^S \right)
&=  \left( \N_{i_{\ell}} \N_{j_\ell} \N_{i_{\ell-1} \cdots i_{1}} \N_{j_1 \cdots j_{\ell-1}} \xi_Q^S \right)\\
& \hsp  + \sum_{v = 1}^{\ell-1}\left( \N_{i_\ell \cdots i_{v - 1} } [\N_{i_{v}}, \N_{j_{\ell}}] \N_{i_{v + 1} \cdots i_\ell} \N_{j_1 \cdots j_{\ell-1}} \right) \xi_Q^S \\
& \hsp  + \sum_{v = 1}^{\ell-1}\left(  \N_{i_1 \cdots i_\ell} \N_{j_1 \cdots j_{v -1}} [\N_{j_{v}}, \N_{j_\ell}] \N_{j_{v + 1} \cdots j_{\ell-1}} \right) \xi_Q^S \\
&= \left( \N_{i_{\ell}} \N_{j_{\ell}} \N_{i_{\ell-1} \cdots i_{1}} \N_{j_1 \cdots j_{\ell-1}} \xi_Q^S \right)\\
& \hsp + \sum_{v= 1}^{2\ell-2} \left( \N^{(v)} \left( (\Rm + F) \ast \N^{(2\ell-2-\ell)} \xi \right) \right)_{i_1 \cdots i_\ell j_1 \cdots j_\ell Q}^S  \\
&= \left( \N_{i_{\ell}} \N_{j_\ell} \N_{i_{\ell-1}} \cdots \N_{i_{1}} \N_{j_1} \xi_Q^S \right)\\
& \hsp + \sum_{v = 1}^{2\ell-2} \sum_{w = 0}^{v} \left(\N^{(w)}(\Rm + F) \ast \N^{(2\ell-2-w)} \xi \right)_{i_1 \cdots i_\ell j_1 \cdots j_\ell Q}^S.
\end{align*}
The result follows.
\end{proof}
\end{lemma}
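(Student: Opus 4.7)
The plan is to slide the innermost derivative $\N_{j_k}$ leftward through the operator string $\N_{i_{k-1}}\cdots\N_{i_1}\N_{j_1}\cdots\N_{j_{k-1}}$, one position at a time, until it sits just to the right of $\N_{i_k}$. Each elementary step rewrites $\N_\alpha \N_{j_k}$ as $\N_{j_k}\N_\alpha + [\N_\alpha,\N_{j_k}]$, so that after $2k-2$ such swaps the ``principal'' branch (in which we discard every commutator contribution) produces exactly the leading term on the right-hand side of \eqref{eq:kNstolaps}, while the ``error'' branches, arising from the commutators, must be collected into the stated double sum.

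To handle the errors, I would apply Lemma \ref{lem:Ncomut} to each commutator $[\N_\alpha,\N_{j_k}]$ acting on its inner tensor argument; this replaces the commutator by a sum of terms schematically of the form $(\Rm + F_{\N})\ast(\text{inner tensor})$, with $\Rm$ accounting for the base-manifold curvature contribution and $F_{\N}$ for the bundle curvature contribution. The outer covariant derivatives still sitting in front of the commutator then distribute by Leibniz over both the $(\Rm + F_{\N})$ factor and the inner tensor; after absorbing all explicit contractions and constants into the universal bilinear $\ast$-notation, each such distribution yields a contribution of schematic shape $\N^{(w)}(\Rm + F_{\N})\ast\N^{(\cdot)}\xi$. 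Indexing by the swap number $\ell \in \{1,\dots,2k-2\}$ at which the commutator is taken, and by the split $w\in\{0,\dots,\ell\}$ of how many outer derivatives land on the curvature factor versus on $\xi$, then produces precisely the double sum displayed in \eqref{eq:kNstolaps}.

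No analytic input is required beyond Lemma \ref{lem:Ncomut}; the only real obstacle is combinatorial bookkeeping. In particular, one must verify that the ranges of $\ell$ and $w$ in the stated sum cover exactly the error terms produced during the $2k-2$ swaps — no fewer and no more — and that the total differential order on $(\Rm + F_{\N})$ and on $\xi$ combined is consistent across every term, so that each error term genuinely fits the schematic template used in the statement. These are routine index manipulations, and either a straightforward induction on $k$ or direct iteration of the swap procedure organizes them cleanly.
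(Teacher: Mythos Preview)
Your proposal is correct and follows essentially the same approach as the paper: slide $\N_{j_k}$ leftward through $2k-2$ positions using Lemma~\ref{lem:Ncomut} at each swap, then distribute the outer derivatives over the resulting $(\Rm+F_{\N})\ast(\cdot)$ factors via Leibniz to obtain the schematic double sum. The paper organizes the commutators by grouping the $i$-index swaps and $j$-index swaps separately before collapsing into $\ast$-notation, but the content is identical.
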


\begin{lemma}\label{lem:shiftkNs}
Let $\N \in \mathcal{A}_E$, and $\zeta, \xi$ in the domain of $\N$. Then for $\ell \in \mathbb{N}$,
\begin{align}
\begin{split}\label{eq:shiftkNs}
\int_M{\left\langle \N^{(\ell)} \zeta , \N^{(\ell)} \xi \right\rangle dV_g}
&=\int_M{\langle \zeta,(-1)^k \lap^{(\ell)} \xi \rangle dV_g} + \left\langle \zeta, \sum_{v = 1}^{2\ell-2} \sum_{w = 0}^{v} \left(\N^{(w)} \lb \Rm + F\rb \ast \N^{(2\ell-2-w)} \xi  \right) \right\rangle.
\end{split}
\end{align}

\begin{proof}
Let $P$, $Q$, $S$ represent multiindices consisting of base manifold and bundle indices (roman and greek letters respectively), and let $\mathfrak{g}$ denote indices of quantities in the domain of $\N$ which are combinations of products of the bundle metric $h$ and the manifold metric $g$. Integrating by parts yields
\begin{align*}
\int_M{\left\langle \N^{(\ell)} \zeta , \N^{(\ell)} \xi \right\rangle dV_g}
&= \int_M{ \left(\prod_{v =0}^{\ell} g^{i_{v} j_{v}} \right) \mathfrak{g}^{PQ} \mathfrak{g}_{QS} \left(\N_{i_{1} \cdots i_{\ell}} \zeta_{P}^{R} \right) \left( \N_{j_1 \cdots j_{\ell}} \xi_{Q}^{S} \right) dV_g}\\
&=(-1)^{\ell}\int_M{ \left(\prod_{v=0}^{\ell} g^{i_{v} j_{v}} \right) \mathfrak{g}^{PQ} \mathfrak{g}_{RS}  \zeta_{P}^{R} \left( \N_{i_{\ell} \cdots i_{1}} \N_{j_1 \cdots j_{\ell}} \xi_{Q}^{S} \right) dV_g}.
\end{align*}
Using Lemma \ref{lem:kNstolaps} to manipulate the quantity yields
\begin{align*}
\int_M{\langle \N^{(\ell)} \zeta , \N^{(\ell)} \xi \rangle dV_g} &=\int_M{\langle \zeta,(-1)^{\ell} \lap^{(\ell)} \xi \rangle dV_g} + \left\langle \zeta, \sum_{v = 1}^{2\ell-2} \sum_{w = 0}^{v} \left(\N^{(w)} \lb \Rm + F \rb \ast \N^{(2 \ell-2-w)} \xi  \right) \right\rangle.
\end{align*}
The result follows.
\end{proof}
\end{lemma}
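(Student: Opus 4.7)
The plan is to reduce the identity directly to Lemma \ref{lem:kNstolaps} via $\ell$-fold integration by parts. First, I would work in coordinates: for multi-indices $I = (i_1,\ldots,i_\ell)$ and $J = (j_1,\ldots,j_\ell)$, the left-hand integrand is, up to bundle metric contractions,
\[
\left(\prod_{v=1}^{\ell} g^{i_v j_v}\right) (\N_{i_1}\cdots\N_{i_\ell}\zeta) \cdot (\N_{j_1}\cdots\N_{j_\ell}\xi).
\]
Since $M$ is closed and $\N$ is metric compatible, integrating by parts $\ell$ times (peeling off $\N_{i_1}, \N_{i_2}, \ldots, \N_{i_\ell}$ in that order from the $\zeta$-factor) redeposits them on the $\xi$-factor in reversed order, producing an overall sign $(-1)^\ell$. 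This gives the intermediate expression
\[
(-1)^\ell \int_M \left(\prod_{v=1}^{\ell} g^{i_v j_v}\right) \langle \zeta,\ \N_{i_\ell}\cdots\N_{i_1}\N_{j_1}\cdots\N_{j_\ell}\xi\rangle\, dV_g.
\]

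Next, I would invoke Lemma \ref{lem:kNstolaps}, iterated on the nested inner factor, to rearrange the operator $\N_{i_\ell}\cdots\N_{i_1}\N_{j_1}\cdots\N_{j_\ell}\xi$ into the fully interleaved form $\N_{i_\ell}\N_{j_\ell}\N_{i_{\ell-1}}\N_{j_{\ell-1}}\cdots\N_{i_1}\N_{j_1}\xi$ plus a collection of curvature-coupled remainders of shape $\N^{(w)}(\Rm + F)\ast\N^{(2\ell - 2 - w)}\xi$ with $0 \leq w \leq v$ and $1 \leq v \leq 2\ell - 2$. Contracted against $\prod_{v=1}^\ell g^{i_v j_v}$, each adjacent pair $g^{i_v j_v}\N_{i_v}\N_{j_v}$ collapses to a single rough Laplacian $\lap$, so the leading contribution becomes $\lap^{(\ell)}\xi$. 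Combined with the sign $(-1)^\ell$ from integration by parts, this yields the required principal term $\langle\zeta,(-1)^\ell\lap^{(\ell)}\xi\rangle$ (the $(-1)^k$ in \eqref{eq:shiftkNs} being a typographical slip for $(-1)^\ell$).

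For the remainder, the contraction of each error term from the iterated Lemma \ref{lem:kNstolaps} against the metric factors only modifies the universal bilinear $\ast$-action, so the full collection is absorbed into a fresh instance of $\N^{(w)}(\Rm + F)\ast\N^{(2\ell - 2 - w)}\xi$ indexed by $0 \leq w \leq v$, $1 \leq v \leq 2\ell - 2$. Pairing this collection against $\zeta$ under the $L^2$ inner product produces precisely the remainder displayed in \eqref{eq:shiftkNs}.

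The only real obstacle is bookkeeping: one must track signs through repeated integration by parts and through the iterated commutator expansion, and verify that the metric contractions genuinely assemble into $\lap^{(\ell)}$ at each recursive stage without interfering with the shape of the remainder. No fresh analytic input is required—the identity is essentially an algebraic consequence of Lemma \ref{lem:kNstolaps} combined with the divergence theorem on a closed manifold.
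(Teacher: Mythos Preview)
Your proposal is correct and follows essentially the same route as the paper: $\ell$-fold integration by parts to produce $(-1)^\ell\,\zeta\cdot(\N_{i_\ell}\cdots\N_{i_1}\N_{j_1}\cdots\N_{j_\ell}\xi)$, then Lemma~\ref{lem:kNstolaps} to reorder into $\lap^{(\ell)}\xi$ plus the curvature remainder. Your observation that the $(-1)^k$ in the statement should read $(-1)^\ell$ is also correct.
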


The next two lemmas are formal manipulations of commuting connections and laplacians in order to keep track of the lower order terms which appear when these are performed.

\begin{lemma}[Commuting multiple laplacians with the connection]\label{lem:lapkNcomut}
For $\N \in \mathcal{A}_E$, $\ell \in \mathbb{N}$ and $\gw \in \Lambda^p(\End E)$, 
\begin{equation}\label{eq:lapkNcomuteq}
\N \lap^{(\ell)} \gw = \lap^{(\ell)} \N \gw + \sum_{j=0}^{2\ell-1}{\N^{(j)}\brk{\Rm + F} \ast \N^{(2\ell-1-j)}\gw}.
\end{equation}

\begin{proof}
The proof follows by induction over $\ell \in \mathbb{N}$ satisfying \eqref{eq:lapkNcomuteq}. For the following computations we will excise the indices from $\gw$ for computational ease. For the base case, computing $\N \lap$ using Lemma \ref{lem:Ncomut} gives
\begin{align*}
\N_i \lap \gw &= g^{jk} \N_i \N_j \N_k \gw\\
&= g^{jk} \left( \N_j \N_i \N_k \gw + [\N_i, \N_j] \N_k \gw \right)\\
&= g^{jk} \left( \N_j  \N_k \N_i \gw + \N_j\left( [\N_i, \N_k] \gw \right)+ [\N_i, \N_j] \N_k \gw \right)\\
& = \lap \N_i \gw + \left( \N\brk{ (\Rm + F) \ast \gw } \right)_{i} + \left( (\Rm + F) \ast \N \gw \right)_i \\
&= \lap \N_i \gw + \left( \sum_{j=0}^1{\N^{(j)}\brk{\Rm + F} \ast \N^{(1-j)} \gw} \right)_i.
\end{align*}
The base case follows. Now let $\ell \in \mathbb{N}$ and suppose \eqref{eq:lapkNcomuteq} is satisfied by $\ell-1$. Applying this to the $\ell$ case yields
\begin{align}\label{eq:lapkNcomuteq1}
\begin{split}
\N \lap^{(\ell)} \gw &= \N \lap^{(\ell-1)} \left( \lap \gw \right) \\
&=  \lap^{(\ell-1)} \N \lap \gw + \sum_{j=0}^{2\ell-3}{\N^{(j)}\brk{\Rm + F} \ast \N^{(2\ell-3-j)} \lap \gw}.
\end{split}
\end{align}
Expanding and manipulating the left term gives
\begin{align*}
 \lap^{(\ell-1)} \N \lap \gw 
&=  \lap^{(\ell-1)} \lb \lap \N \gw + \sum_{j=0}^1{\N^{(j)}\brk{\Rm + F} \ast \N^{(1-j)} \gw} \rb \\
&=   \lap^{(\ell)} \N \gw + \lap^{(\ell-1)} \lb \sum_{j=0}^1{\N^{(j)}\brk{\Rm + F} \ast \N^{(1-j)} \gw} \rb \\
&=  \lap^{(\ell)} \N \gw +
\left( \sum_{j=0}^1{ \N^{(2\ell-2)}\left(\N^{(j)}\brk{\Rm + F} \ast \N^{(1-j)} \gw \right)} \right) \\
&=  \lap^{(\ell)} \N \gw +
\left( \sum_{j=0}^1{ \sum_{\tiny{(p+q = 2\ell-2)}}{\left(\N^{(p+j)}\brk{\Rm + F} \ast \N^{(1-j+q)} \gw \right)}} \right).
\end{align*}
Updating \eqref{eq:lapkNcomuteq1} we obtain
\[ \N \lap^{(\ell)} \gw  =  \lap^{(\ell)} \N \gw + \sum_{j=0}^{2\ell-1}{\N^{(j)}\lb \Rm + F \rb \ast \N^{(2\ell-1-j)}\gw}.\]
The result follows.
\end{proof}
\end{lemma}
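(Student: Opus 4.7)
The plan is to proceed by induction on $\ell \in \mathbb{N}$. The base case $\ell = 1$ amounts to commuting $\N_i$ with a single rough Laplacian $\lap = g^{jk} \N_j \N_k$. Using Lemma \ref{lem:Ncomut} twice to move $\N_i$ past $\N_j$ and then past $\N_k$, each commutator produces a term of the schematic form $(\Rm + F) \ast \gw$ or $\N[(\Rm + F)] \ast \gw$, which, after contracting with $g^{jk}$ and applying a Leibniz expansion, can be packaged as $\sum_{j=0}^{1} \N^{(j)}[\Rm + F] \ast \N^{(1-j)} \gw$. This handles the base case.

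For the inductive step I would assume the identity holds for $\ell - 1$ and write $\lap^{(\ell)} = \lap^{(\ell-1)} \circ \lap$. First applying the inductive hypothesis to $\N \lap^{(\ell-1)}(\lap \gw)$ yields
\[
\N \lap^{(\ell)} \gw = \lap^{(\ell-1)} \N (\lap \gw) + \sum_{j=0}^{2\ell-3} \N^{(j)}[\Rm + F] \ast \N^{(2\ell-3-j)}(\lap \gw).
\]
Next, the base case applied to $\N(\lap \gw)$ converts the first summand into $\lap^{(\ell)} \N \gw$ plus an error term, which I then push through $\lap^{(\ell-1)}$ using the Leibniz/product rule: each derivative can either hit the $(\Rm + F)$ factor or the $\gw$ factor, and this redistribution, after collecting indices, produces exactly a sum of terms of the form $\N^{(p)}[\Rm + F] \ast \N^{(2\ell-1-p)} \gw$. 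The second summand above is already of $P$-type and, after distributing the implicit Laplacian on $\gw$, combines with the others into the desired single sum.

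The main obstacle is the combinatorial bookkeeping: verifying that after all the Leibniz expansions and contractions with the metric, the total derivative count in each produced term is exactly $2\ell - 1$ and that no index range outside $\{0, \dots, 2\ell-1\}$ appears. The cleanest way to handle this is to absorb everything into the universal bilinear $\ast$-notation and simply track the sum of derivative orders on $(\Rm + F)$ and on $\gw$, noting that the $P$-notation is closed under applying $\N$ via Leibniz. Once this accounting is done, the precise coefficients are irrelevant for the statement, and the conclusion follows.
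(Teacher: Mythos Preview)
Your proposal is correct and follows essentially the same approach as the paper: induction on $\ell$, with the base case handled by two applications of Lemma~\ref{lem:Ncomut} to commute $\N_i$ through $g^{jk}\N_j\N_k$, and the inductive step obtained by writing $\lap^{(\ell)} = \lap^{(\ell-1)}\circ\lap$, applying the inductive hypothesis with $\lap\gw$ in place of $\gw$, then using the base case on $\N\lap\gw$ and distributing $\lap^{(\ell-1)}$ over the resulting error via Leibniz. The paper likewise absorbs all the commutator terms into the $\ast$-notation and tracks only the total derivative count, so your bookkeeping remark matches the level of detail given there.
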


\begin{cor}\label{cor:Nllapkcomut}
Let $\N \in \mathcal{A}_E$, take $v, w \in \mathbb{N}$ and 
$\gw \in \Lambda^p(\End E)$. Then
\begin{align*}
\N^{(v)} \lap^{(w)} \gw  &= \lap^{(w)} \N^{(v)} \gw + \sum_{b=0}^{v - 1} \sum_{j=0}^{2w-1}{ \left(\N^{(b+j)}\brk{\Rm + F} \ast \N^{(\ell - b +2w -2 -j)} \gw \right)}.
\end{align*}
\end{cor}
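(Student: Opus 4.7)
The plan is to prove this by induction on $v \in \mathbb{N}$, using Lemma \ref{lem:lapkNcomut} as both the base case and the principal tool for the inductive step. (I note what appears to be a small typo in the statement: the exponent $\ell - b + 2w - 2 - j$ on the last $\N$ should read $v - b + 2w - 2 - j$, so that every $\ast$-term has total derivative order $v + 2w - 1$, matching the base case.)

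For the base case $v = 1$, the identity is exactly the content of Lemma \ref{lem:lapkNcomut}, whose single sum over $j \in \{0, \ldots, 2w-1\}$ corresponds to the $b = 0$ slice of the double sum in the corollary. Assuming the identity for $v$, I would write
\[
\N^{(v+1)} \lap^{(w)} \gw = \N \left( \N^{(v)} \lap^{(w)} \gw \right)
\]
and substitute the inductive hypothesis. This splits the right-hand side into two pieces: the leading term $\N \lap^{(w)} \N^{(v)} \gw$, and an application of $\N$ to the double sum $\sum_{b,j} \N^{(b+j)}[\Rm + F] \ast \N^{(v - b + 2w - 2 - j)} \gw$.

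For the leading term, I would invoke Lemma \ref{lem:lapkNcomut} once more, with $\gw$ replaced by $\N^{(v)}\gw$, to obtain $\lap^{(w)} \N^{(v+1)} \gw$ plus correction terms of the form $\N^{(j)}[\Rm + F] \ast \N^{(2w - 1 - j + v)} \gw$; these are precisely the $b = v$ slice of the desired double sum. For the second piece, the Leibniz rule distributes $\N$ across the $\ast$-product, producing two families of $\ast$-terms in which one factor has its derivative order bumped by $1$; the total derivative order remains $v + 2w - 1$, so each such term fits into one of the existing $b$-slices $b \in \{0, \ldots, v-1\}$ (possibly re-indexed). Collecting everything yields the double sum running from $b = 0$ to $b = v$, which is the statement for $v + 1$.

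The step requiring the most care will simply be the bookkeeping: verifying that after the Leibniz expansion, every $\ast$-term that appears can be assigned to an index pair $(b, j)$ in the target double sum without exceeding the stated range. Since the $P$-notation (equivalently, the $\ast$-notation) absorbs any bilinear algebraic action and depends only on the total orders of differentiation applied to $\Rm + F$ and to $\gw$, this reduces to the trivial observation that shifting one derivative from the $\gw$-factor onto the $[\Rm + F]$-factor preserves the total order $v + 2w - 1$. No new geometric input is needed beyond Lemma \ref{lem:lapkNcomut} and the Leibniz rule.
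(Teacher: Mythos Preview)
Your proposal is correct and is precisely the argument the paper has in mind: the corollary is stated without proof immediately after Lemma~\ref{lem:lapkNcomut}, so the intended route is exactly the induction on $v$ that you outline, with the lemma supplying both the base case and the commutation step. Your observation that the $\ell$ in the exponent should read $v$ is also correct.
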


\subsection{Gauge transformations}\label{ss:gaugetransformations}

The main complications and interesting properties of the Yang-Mills flow stem from the interactions with the gauge group with the connections. We provide multiple identities which characterize these interactions and their consequences.

\begin{defn}[Gauge transformation]\label{defn:gaugetfm} A gauge transformation $\vs$ is a section of $\Aut E$. The group of gauge transformations is called the \textit{gauge group} of the metric bundle $E$, denoted by $\mathcal{G}_E$. The action of a gauge transformation $\varsigma$ on a connection is denoted by $\varsigma^* \N$ and given by
\begin{align*}
\varsigma^*
&: \mathcal{A}_{E} \rightarrow \mathcal{A}_E\\
& : \N \mapsto \varsigma^{-1} \circ \N \circ \varsigma,
\end{align*}
that is, for some $\mu \in S(E)$ we have $\varsigma^* \N = \varsigma^{-1} \N (\varsigma \mu)$. Furthermore, for $\phi \in S(\End E)$, define the action of $\varsigma$ on $\phi$ by
\[
(\varsigma^* \phi)^{\gb}_{\ga} := (\varsigma^{-1})^{\gb}_{\gz} \phi^{\gz}_{\tau} \varsigma^{\tau}_{\ga}.
\]
That is, a gauge transformation simply conjugates an endomorphism. Similarly for any $\gw \in \Lambda^{p}(\End E)$ for $p \in \mathbb{N}$ set
\[ \varsigma^*(\gw_{i \ga}^{\gb}) := (\varsigma^{-1})^{\gb}_{\gz} \gw^{i \gz}_{\tau} \varsigma^{\tau}_{\ga}.  \]
\end{defn}

The following results demonstrate the action of gauges on various objects besides those mentioned above. To obtain these, we first perform some explicit coordinate computations regarding these actions. The following lemma demonstrates what the connection coefficient matrix transforms into under the action of gauge transformation.

\begin{lemma}\label{lem:coordsN} Let $\vs \in S(\Aut E)$ and $\N \in \mathcal{A}_E$, and set $\nu = \nu^{\gb} \mu_{\gb} \in S(E)$. The coordinate expression of $\varsigma^*\N$ is
\begin{equation*}
\left( \varsigma^*\N \nu \right)_{i}^{\gb}  = \del_i \nu^{\gb} + ({\Gamma}_{\varsigma^*\N})_{i \gt}^{\gb} \nu^{\gt},
\end{equation*}
where
\begin{equation*}
(\sNG)_{i \theta}^{\gb} := (\varsigma^{-1})_{\gd}^{\gb}(\del_i \varsigma_{\gt}^{\gd}) + (\varsigma^{-1})_{\gd}^{\gb} \gG_{i \gamma}^{\gd} (\varsigma_{\gt}^{\gamma}).
\end{equation*}

\begin{rmk}
We emphasize that ${\Gamma}_{\varsigma^*\N}$ is \emph{not} equivalent to ${\varsigma^*\gG} = \vs^{-1} \gG \vs$.
\end{rmk}

\begin{proof}
Simply computing yields
\begin{align*}
\left( \varsigma^*\N \right)_{i} \nu^{\gb} &:= (\varsigma^{-1})_{\gd}^{\gb} \left( \N_i (\varsigma_{\gt}^{\gd} \nu^{\gt}) \right)\\
&= (\varsigma^{-1})_{\gd}^{\gb} \lb \del_i \varsigma_{\gt}^{\gd} \nu^{\gt}+ \varsigma_{\gt}^{\gd}(\del_i \nu^{\gt}) + \gG_{i \gamma}^{\gd}(\varsigma^{\gamma} \nu^{\gt}) \rb \\
&= \del_i \nu^{\beta} + (\varsigma^{-1})_{\gd}^{\gb}(\del_i \varsigma_{\gt}^{\gd}) \nu^{\gt} + (\varsigma^{-1})_{\gd}^{\gb} \gG_{i \gamma}^{\gd} (\varsigma_{\gt}^{\gamma} \nu^{\theta}).
\end{align*}
The result follows.
\end{proof}
\end{lemma}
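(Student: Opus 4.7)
The plan is to verify the claimed formula by a direct coordinate computation, starting from the definition $\varsigma^*\N = \varsigma^{-1} \circ \N \circ \varsigma$. First I would interpret the composition carefully: given a section $\nu = \nu^{\theta} \mu_{\theta}$, the intermediate section $\varsigma \nu$ has components $\varsigma_{\theta}^{\delta} \nu^{\theta}$ in the local basis $\{\mu_{\delta}\}$, so $\N_i(\varsigma\nu)$ can be expanded using the product rule together with the defining relation $\N_i \mu_{\delta} = \Gamma_{i\delta}^{\gamma}\mu_{\gamma}$.

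Next I would apply $\varsigma^{-1}$ componentwise to the resulting expression, which amounts to contracting with $(\varsigma^{-1})^{\beta}_{\delta}$. The terms that appear group naturally into three pieces: a pure derivative term $(\varsigma^{-1})^{\beta}_{\delta}\varsigma_{\theta}^{\delta}\,\partial_i\nu^{\theta}$, which collapses to $\partial_i\nu^{\beta}$ via $(\varsigma^{-1})^{\beta}_{\delta}\varsigma_{\theta}^{\delta} = \delta^{\beta}_{\theta}$; a term arising from the derivative falling on the gauge factor, giving $(\varsigma^{-1})_{\delta}^{\beta}(\partial_i \varsigma_{\theta}^{\delta})\nu^{\theta}$; and a term inherited from the original Christoffel symbols, giving $(\varsigma^{-1})_{\delta}^{\beta}\Gamma_{i\gamma}^{\delta}\varsigma_{\theta}^{\gamma}\nu^{\theta}$. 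Reading off the coefficient of $\nu^{\theta}$ in the non-derivative part produces precisely $(\sNG)_{i\theta}^{\beta}$ as claimed.

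There is no genuine obstacle here; the statement is essentially bookkeeping. The only point worth flagging, and the reason the remark is included, is the distinction between the matrix conjugation $\varsigma^*\Gamma = \varsigma^{-1}\Gamma\varsigma$ and the connection coefficient $\Gamma_{\varsigma^*\N}$ of the pulled-back connection: the computation makes transparent that these differ by the inhomogeneous term $(\varsigma^{-1})_{\delta}^{\beta}(\partial_i\varsigma_{\theta}^{\delta})$, which is exactly the Maurer--Cartan-type correction forced by the Leibniz rule acting on $\varsigma$. With the formula established in this form, it can then be freely invoked in the subsequent gauge transformation identities used in the short-time existence proof.
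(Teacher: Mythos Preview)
Your proposal is correct and follows essentially the same route as the paper: a direct expansion of $(\varsigma^{-1})_{\delta}^{\beta}\,\nabla_i(\varsigma_{\theta}^{\delta}\nu^{\theta})$ via the Leibniz rule and the defining relation for $\Gamma$, followed by the collapse $(\varsigma^{-1})_{\delta}^{\beta}\varsigma_{\theta}^{\delta}=\delta^{\beta}_{\theta}$ to isolate $\partial_i\nu^{\beta}$ and read off $(\Gamma_{\varsigma^*\nabla})_{i\theta}^{\beta}$. Your observation that the inhomogeneous term $(\varsigma^{-1})_{\delta}^{\beta}(\partial_i\varsigma_{\theta}^{\delta})$ is precisely what distinguishes $\Gamma_{\varsigma^*\nabla}$ from the conjugation $\varsigma^{-1}\Gamma\varsigma$ is exactly the content of the paper's accompanying remark.
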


We next compute the curvature of a gauge transformed connection in coordinates, and demonstrate that the gauge transformation acts on the curvature via conjugation. Note that this agrees with the declaration of the action of the gauge transformation on a $1$-form.

\begin{lemma}\label{lem:Fs} Suppose that $\N \in \mathcal{A}_E$ and $\varsigma \in S \prs{\Aut E}$. Then
\begin{equation*}
(F_{\vs^* \N})_{ij \ga}^{\gb} = (\vs^{-1})^{\gb}_{\gd} (F_{\N})^{\gd}_{ij \gt} \vs_{\ga}^{\gt}.
\end{equation*}

\begin{proof}
Observe that, by carefully matching terms yields
\begin{align*}
(F_{\vs^* \N})_{ij \ga}^{\gb} 
&= \del_i (\sNG)_{j \ga}^{\gb} -  \del_j (\sNG)_{i \ga}^{\gb} + (\sNG)_{i \gd}^{\gb}(\sNG)_{j \ga}^{\gd} - (\sNG)_{j \gd}^{\gb}(\sNG)_{i \ga}^{\gd}\\
&=  \del_i \left((\vs^{-1})^{\beta}_{\gd}(\del_j \vs_{\ga}^{\gd}) + (\vs^{-1})^{\gb}_{\gd} \gG_{j \gamma}^{\gd} \vs_{\ga}^{\gamma} \right)
- \del_j \left((\vs^{-1})^{\beta}_{\gd}(\del_i \vs_{\ga}^{\gd}) + (\vs^{-1})^{\gb}_{\gd} \gG_{i \gamma}^{\gd} \vs_{\ga}^{\gamma} \right)\\
&\hsp  + \left((\vs^{-1})^{\beta}_{\gt}(\del_i \vs_{\gd}^{\gt}) + (\vs^{-1})^{\gb}_{\gt} \gG_{i \gamma}^{\gt} \vs_{\gd}^{\gamma} \right)\left((\vs^{-1})^{\gd}_{\gz}(\del_j \vs_{\ga}^{\gz}) + (\vs^{-1})^{\gd}_{\gz} \gG_{j \eta}^{\gz} \vs_{\ga}^{\eta} \right)\\
& \hsp  - \left((\vs^{-1})^{\beta}_{\gt}(\del_j \vs_{\gd}^{\gt}) + (\vs^{-1})^{\gb}_{\gt} \gG_{j \gamma}^{\gt} \vs_{\gd}^{\gamma} \right)\left((\vs^{-1})^{\gd}_{\gz}(\del_i \vs_{\ga}^{\gz}) + (\vs^{-1})^{\gd}_{\gz} \gG_{i \eta}^{\gz} \vs_{\ga}^{\eta} \right) \\
&= ( \vs^{-1})^{\gb}_{\gt}\left( \del_i \gG_{j \mu}^{\gt} -  \del_j \gG_{i \mu}^{\gt} + \gG_{i \gd}^{\gb}\gG_{j \mu}^{\gd} - \gG_{j \gd}^{\gt} \gG_{i \mu}^{\gd} \right) \vs_{\ga}^{\mu}\\
&= (\vs^{-1})^{\gb}_{\gt} (F_{\N})_{ij \gd}^{\gt} \vs_{\ga}^{\gd}.
\end{align*}
The result follows.
\end{proof}
\end{lemma}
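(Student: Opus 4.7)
The proof is a direct coordinate calculation: substitute the formula for the gauge-transformed connection coefficients from Lemma \ref{lem:coordsN} into the standard coordinate expression for curvature and collect terms. I would proceed as follows.

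First, I would write out $(F_{\vs^*\N})_{ij\ga}^{\gb}$ using the same coordinate formula that defines $(F_{\N})_{ij\ga}^{\gb}$ (i.e.\ $\del_i \Gamma_j - \del_j \Gamma_i + [\Gamma_i,\Gamma_j]$), but with each occurrence of $\gG$ replaced by $\sNG$ as given explicitly by Lemma \ref{lem:coordsN}. This produces four groups of terms: (i) two $\del\del\vs$ terms, (ii) cross terms of the form $\del\vs^{-1}\cdot\del\vs$ and $\del\vs^{-1}\cdot\gG\cdot\vs + \vs^{-1}\cdot\del\gG\cdot\vs + \vs^{-1}\cdot\gG\cdot\del\vs$ coming from expanding $\del_i(\sNG)_{j\ga}^{\gb}$, and similarly with $i\leftrightarrow j$, and (iii) the four‐fold products arising from $(\sNG)_i(\sNG)_j$ and $(\sNG)_j(\sNG)_i$.

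Second, I would exploit two elementary cancellations. The second-derivative terms $(\vs^{-1})_{\gd}^{\gb}\del_i\del_j\vs_{\ga}^{\gd}$ are symmetric in $(i,j)$ and hence cancel against their $(j,i)$ counterparts. For the remaining first-derivative terms on $\vs^{-1}$, I would use the identity $\del_i(\vs^{-1})_{\gd}^{\gb} = -(\vs^{-1})_{\gt}^{\gb}(\del_i\vs_{\gz}^{\gt})(\vs^{-1})_{\gd}^{\gz}$, obtained by differentiating $\vs^{-1}\vs = \Id$. Substituting this converts every stray $\del\vs^{-1}$ into a product $\vs^{-1}\cdot\del\vs\cdot\vs^{-1}$, after which those terms exactly match (with opposite sign) the corresponding pieces generated by expanding the quadratic factors $(\sNG)_i(\sNG)_j - (\sNG)_j(\sNG)_i$.

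Third, what survives after all cancellations is precisely
\[
(\vs^{-1})_{\gt}^{\gb}\bigl(\del_i\gG_{j\mu}^{\gt} - \del_j\gG_{i\mu}^{\gt} + \gG_{i\gd}^{\gt}\gG_{j\mu}^{\gd} - \gG_{j\gd}^{\gt}\gG_{i\mu}^{\gd}\bigr)\vs_{\ga}^{\mu} = (\vs^{-1})_{\gt}^{\gb}(F_{\N})_{ij\mu}^{\gt}\vs_{\ga}^{\mu},
\]
which is the claimed identity. The only real ``obstacle'' here is bookkeeping; there is no conceptual difficulty, but one must be meticulous with the eight or so index slots to verify the cancellations group as expected. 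An equivalent coordinate-free route would be to observe that $F_{\N}(\mu) = D\circ D(\mu)$, $\vs^*\N = \vs^{-1}\N\vs$, and hence $F_{\vs^*\N}(\mu) = \vs^{-1} D\vs \vs^{-1} D\vs\,\mu = \vs^{-1} F_{\N}(\vs\mu)$, but since the paper prefers coordinate calculations I would present the direct substitution argument above.
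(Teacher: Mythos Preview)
Your proposal is correct and follows essentially the same approach as the paper: substitute the formula for $\sNG$ from Lemma~\ref{lem:coordsN} into the coordinate expression for curvature and cancel terms. In fact you spell out the cancellation mechanism (symmetry of $\del_i\del_j\vs$ and the identity $\del_i(\vs^{-1}) = -\vs^{-1}(\del_i\vs)\vs^{-1}$) more explicitly than the paper does, which simply writes ``by carefully matching terms.''
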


The following lemma demonstrates the action of a gauge on a commutation bracket.

\begin{lemma}\label{gaugebeamid}
For $\gw,\psi \in \Lambda^p(E)$ and $\vs \in S \prs{ \Aut E }$,
\begin{equation}
\vs^*[\gw,\psi]^{\#} = [ \vs^* \gw , \vs^* \psi ]^{\#}.
\end{equation}

\begin{proof}
Let $K$ and $L$ be multiindices of length $p$ and $q$ respectively, with  $K= (k_i)_{i=1}^{\brs{K}}$ and $L = (l_i)_{i=1}^{\brs{L}}$. Then computing yields
\begin{align*}
\vs^*\left([\gw,\psi] \right)_{KL \ga}^{\gb} &= \vs^*( \gw_{K \gd}^{\gb} \psi^{\gd}_{L \ga} -  \psi_{L \gd}^{\gb} \gw_{K \ga}^{\gd}) \\
&=\left( (\vs^{-1})^{\gb}_{\gz} \gw_{K \gd}^{\gz} \psi^{\gd}_{L \rho} \vs_{\ga}^{\rho}  - (\vs^{-1})^{\gb}_{\gz}\psi_{L \gd}^{\gz} \gw_{K \rho}^{\gd} \vs_{\ga}^{\rho} \right) \\
&= \left( (\vs^{-1})^{\gb}_{\gz} \gw_{K \gd}^{\gz} \vs^{\gd}_{\gt} (\vs^{-1})_{\tau}^{\gt}  \psi^{\tau}_{L \rho} \vs_{\ga}^{\rho}  - (\vs^{-1})^{\gb}_{\gz} \psi_{L \gd}^{\gz} \vs^{\gd}_{\gt} (\vs^{-1})_{\tau}^{\gt} \gw_{K \rho}^{\tau} \vs_{\ga}^{\rho} \right) \\
&=( (\vs^*\gw)_{K \gd}^{\gb} (\vs^* \psi)_{L \ga}^{\gd} - (\vs^* \psi)_{L \gd}^{\gb} (\vs^* \gw)_{K \ga}^{\gd} ) \\
&= \left([\vs^*\gw,\vs^*\psi]\right)_{KL \ga}^{\gb}.
\end{align*}
The result follows.
\end{proof}

\begin{rmk}
Note that since contraction occurs across base indices and the gauge transformation acts on the bundle, a consequence of this computation is that the gauge transformation also respects the pound bracket (defined in \eqref{eq:pndbrackdefn}), that is,
\begin{equation*}
\vs^*\prs{ \brk{\gw,\psi}^{\#}} = \brk{\vs^*\gw,\vs^*\psi}^{\#}. 
\end{equation*}
\end{rmk}
\end{lemma}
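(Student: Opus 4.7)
The plan is a direct coordinate computation, splitting into the two summands of the pound bracket and showing the gauge action commutes through each. Let $K = (k_i)_{i=1}^{|K|}$ and $L = (l_i)_{i=1}^{|L|}$ be multiindices of the required lengths. I will unpack the pound bracket by its definition \eqref{eq:pndbrackdefn} as $[\gw,\psi]^{\#} = \gw \# \psi - \psi \# \gw$, combine this with the coordinate formula $(\gw \# \psi)_{KL \ga}^{\gb} = g^{jk}\gw_{j K \gd}^{\gb}\psi_{k L \ga}^{\gd}$, and use the definition of $\vs^*$ on a form in $\Lambda^{p}(\End E)$ from Definition \ref{defn:gaugetfm} to conjugate the bundle indices.

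On the left-hand side, $\vs^*([\gw,\psi]^{\#})_{KL\ga}^{\gb}$ simply becomes $(\vs^{-1})^{\gb}_{\gz}\, g^{jk}\bigl(\gw_{jK\gd}^{\gz}\psi_{kL\tau}^{\gd} - \psi_{jK\gd}^{\gz}\gw_{kL\tau}^{\gd}\bigr)\,\vs^{\tau}_{\ga}$ by pure definitional unpacking. On the right-hand side, expanding $[\vs^*\gw,\vs^*\psi]^{\#}$ in coordinates produces each term of the form $g^{jk}\bigl[(\vs^{-1})^{\gb}_{\gz}\gw_{jK\mu}^{\gz}\vs^{\mu}_{\gd}\bigr]\bigl[(\vs^{-1})^{\gd}_{\gu}\psi_{kL\nu}^{\gu}\vs^{\nu}_{\ga}\bigr]$. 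The key step is that the interior factors $\vs^{\mu}_{\gd}(\vs^{-1})^{\gd}_{\gu} = \delta^{\mu}_{\gu}$ collapse by the defining relation for the inverse gauge transformation. After this cancellation, each summand matches the corresponding summand on the left-hand side.

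The conceptual content, which I would highlight in one sentence of commentary, is that the pound operation $\#$ contracts base-manifold (Latin) indices against $g^{jk}$ while the gauge transformation $\vs$ acts only on the fiber (Greek) indices via conjugation; consequently the two operations commute up to the algebraic identity $\vs \vs^{-1} = \Id$ inserted between the two forms. This is why the result generalizes with no extra hypothesis from the ordinary commutator bracket version to the pound-bracket version.

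There is no real obstacle here; the argument is a one-line algebraic manipulation once the definitions are written out carefully. The only bookkeeping concern is ensuring that the contracted Latin indices on $g^{jk}$ and the conjugated Greek indices are tracked separately so that the insertion $\delta^{\mu}_{\gu} = \vs^{\mu}_{\gd}(\vs^{-1})^{\gd}_{\gu}$ lands in the correct slot for each of the two terms in the pound bracket.
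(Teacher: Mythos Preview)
Your proposal is correct and follows essentially the same approach as the paper: both arguments unpack the bracket in coordinates, apply the conjugation action of $\vs^*$ on the bundle indices, and insert $\vs\vs^{-1} = \Id$ between the two factors to recover the gauge-transformed forms. The only cosmetic difference is that the paper first proves the identity for the ordinary bracket $[\gw,\psi]$ and then remarks that the pound version follows because the extra $g^{jk}$ contraction touches only base indices, whereas you carry the $g^{jk}$ factor through the computation from the start; this is a matter of presentation, not substance.
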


The next lemma demonstrates the action of gauge on a connection applied to endomorphism, and how the action distributes between the two objects.

\begin{lemma}\label{lem:gaugeactid1}
Let $\N \in \mathcal{A}_E$, $\phi \in S(\End E)$ and $\vs \in S \prs{ \Aut E }$. Then
\begin{equation*}
\vs^*(\N \phi ) = (\vs^*\N) ( \vs^* \phi ).
\end{equation*}

\begin{proof}
Expanding $\vs^* (\N \phi)$ yields
\begin{align*}
\vs^*( (\N)_i \phi_{\ga}^{\gb}) 
&= \left[ (\vs^{-1})_{\gt}^{\gb} (\del_i \phi_{\gamma}^{\gt}) \vs_{\ga}^{\gamma} \right]_{T_1} - \left[ (\vs^{-1})_{\gt}^{\gb} \phi_{\gd}^{\gt} \gG_{i \gamma}^{\gd} \vs_{\ga}^{\gamma} \right]_{T_2} + \left[ (\vs^{-1})_{\gt}^{\gb}\gG_{i \gd}^{\gt} \phi_{\gamma}^{\gd} \vs_{\ga}^{\gamma} \right]_{T_3}\\
&= T_1 + T_2 + T_3.
\end{align*}
Now observe that
\begin{align*}
(\vs^*\N)(\vs^{-1} \phi \vs) 
&= \del_i(\vs^{-1} \phi_{\gamma}^{\gt} \vs_{\ga}^{\gamma}) + ( (\vs^{-1} )_{\gt}^{\gb} (\del_i \vs_{\gd}^{\gt}) + (\vs^{-1})_{\gt}^{\gb} \gG_{i \rho}^{\gt} \vs_{\gd}^{\rho})(\vs^{-1} \phi \vs)_{\ga}^{\gd} \\
& \hsp - (\vs^{-1} \phi \vs)_{\gd}^{\gb} ( (\vs^{-1})_{\rho}^{\gd} \del_i \vs_{\ga}^{\rho} + (\vs^{-1})_{\rho}^{\gd} \gG_{i \gt}^{\rho} \vs^{\gt}_{\ga})\\
&= (\del_{i}\vs^{-1})_{\gt}^{\gb} \phi_{\gamma}^{\gt} \vs_{\ga}^{\gamma} + (\vs^{-1})_{\gt}^{\gb}(\del_i \phi_{\gamma} ^{\gt}) \vs_{\ga}^{\gamma} + (\vs^{-1})_{\gt}^{\gb} \phi_{\gamma}^{\gt} (\del_i \vs_{\ga}^{\gamma})\\
& \hsp +  (\vs^{-1})_{\gt}^{\gb} (\del_i \vs_{\gd}^{\gt}) (\vs^{-1})^{\gd}_{\rho} \phi^{\rho}_{\gz} s_{\ga}^{\gz} + (\vs^{-1})_{\gt}^{\gb} \gG_{i \rho}^{\gt} \phi_{\gz}^{\rho} \vs_{\ga}^{\gz} \\
& \hsp - (\vs^{-1})^{\gb}_{\gamma} \phi^{\gamma}_{\gd} \del_i \vs^{\gd}_{\ga} - (\vs^{-1})^{\gb}_{\gamma} \phi^{\gamma}_{\gd} \gG_{i \gt}^{\gd} \vs^{\gt}_{\ga}.\\
&=-\left[ (\vs^{-1})_{\gt}^{\gb} (\del_i \vs^{\gt}_{\tau})  (\vs^{-1})^{\tau}_{\gz} \phi_{\gamma}^{\gz} \vs_{\ga}^{\gamma} \right]_{T_4} + \left[ (\vs^{-1})_{\gt}^{\gb}(\del_i \phi_{\gamma} ^{\gt}) \vs_{\ga}^{\gamma} \right]_{T_1} \\
& \hsp+ \left[ (\vs^{-1})_{\gt}^{\gb} \phi_{\gamma}^{\gt} (\del_i \vs_{\ga}^{\gamma}) \right]_{T_5} +  \left[ (\vs^{-1})_{\gt}^{\gb} (\del_i \vs_{\gd}^{\gt}) (\vs^{-1})^{\gd}_{\rho} \phi^{\rho}_{\gz} \vs_{\ga}^{\gz} \right]_{T_4} \\
& \hsp+ \left[ (\vs^{-1})_{\gt}^{\gb} \gG_{i \rho}^{\gt} \phi_{\gz}^{\rho} \vs_{\ga}^{\gz} \right]_{T_3} - \left[ (\vs^{-1})^{\gb}_{\gamma} \phi^{\gamma}_{\gd} \del_i \vs^{\gd}_{\ga} \right]_{T_5} - \left[ (\vs^{-1})^{\gb}_{\gamma} \phi^{\gamma}_{\gd} \gG_{i \gt}^{\gd} \vs^{\gt}_{\ga} \right]_{T_2} \\
& = T_1 + T_2 + T_3.
\end{align*}
The equality holds and the result follows.
\end{proof}
\end{lemma}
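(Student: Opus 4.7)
\textbf{Proof plan for Lemma \ref{lem:gaugeactid1}.}

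The plan is to interpret both sides as operators on sections of $E$ and to use the defining Leibniz rule for the covariant derivative on endomorphisms, together with the two defining properties $(\vs^*\N)\mu = \vs^{-1}\N(\vs\mu)$ (the pullback connection) and $\vs^*\phi = \vs^{-1}\phi\vs$ (conjugation on endomorphisms). For any $\phi \in S(\End E)$ the derivative $\N\phi \in \Lambda^1(\End E)$ acts on $\mu \in S(E)$ by $(\N\phi)\mu = \N(\phi\mu) - \phi(\N\mu)$, so pulling back yields
\[
\bigl(\vs^*(\N\phi)\bigr)\mu \;=\; \vs^{-1}\bigl((\N\phi)(\vs\mu)\bigr) \;=\; \vs^{-1}\N(\phi\vs\mu) - \vs^{-1}\phi\,\N(\vs\mu).
\]
On the other hand, applying the same Leibniz rule with $\vs^*\N$ in place of $\N$ and $\vs^*\phi$ in place of $\phi$,
\[
\bigl((\vs^*\N)(\vs^*\phi)\bigr)\mu \;=\; (\vs^*\N)\bigl((\vs^*\phi)\mu\bigr) - (\vs^*\phi)\bigl((\vs^*\N)\mu\bigr) \;=\; \vs^{-1}\N(\vs\vs^{-1}\phi\vs\mu) - \vs^{-1}\phi\vs\,\vs^{-1}\N(\vs\mu),
\]
and the two $\vs\vs^{-1}$ cancellations reduce the right-hand side to exactly the same expression. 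Since $\mu$ was arbitrary, the identity follows.

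If the reader prefers a coordinate verification (and to be consistent with the style of the preceding lemmas), the plan is instead to expand both sides in the local frame. On the left, use $(\N_i\phi)^{\gb}_{\ga} = \del_i \phi^{\gb}_{\ga} + \Gamma^{\gb}_{i\gd}\phi^{\gd}_{\ga} - \Gamma^{\gd}_{i\ga}\phi^{\gb}_{\gd}$ and conjugate by $\vs$ term by term. On the right, use Lemma \ref{lem:coordsN} to replace $\Gamma_{\vs^*\N}$ by $\vs^{-1}\del\vs + \vs^{-1}\Gamma\vs$ in the analogous coordinate formula for $(\vs^*\N)(\vs^*\phi)$, where $\vs^*\phi = \vs^{-1}\phi\vs$. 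The terms grouped by their $\Gamma$-content match the $T_1,T_2,T_3$ of the lemma statement.

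The main obstacle is purely bookkeeping: in the coordinate expansion on the right there are four additional terms of the form $(\vs^{-1})(\del\vs)\vs^{-1}\phi\vs$ and $(\vs^{-1})\phi(\del\vs)$ coming from the Leibniz rule applied to $\vs^{-1}\phi\vs$ and from the inhomogeneous piece of $\Gamma_{\vs^*\N}$, and one must verify that these extraneous contributions pair off and vanish via the identity $\del_i(\vs^{-1})_{\gt}^{\gb} = -(\vs^{-1})_{\gz}^{\gb}(\del_i\vs_{\tau}^{\gz})(\vs^{-1})_{\gt}^{\tau}$. That cancellation is exactly what the abstract naturality argument above automates, so my preferred write-up would lead with the abstract proof and leave the coordinate verification as a remark confirming compatibility with Lemma \ref{lem:coordsN}.
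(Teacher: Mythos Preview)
Your proposal is correct. The abstract argument via the Leibniz rule is valid: once one identifies $\N\phi$ by $(\N\phi)\mu = \N(\phi\mu) - \phi(\N\mu)$ and uses the definitions $(\vs^*\N)\mu = \vs^{-1}\N(\vs\mu)$ and $\vs^*\phi = \vs^{-1}\phi\vs$, both sides reduce to $\vs^{-1}\N(\phi\vs\mu) - \vs^{-1}\phi\,\N(\vs\mu)$ exactly as you wrote.

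This is a genuinely different route from the paper's proof, which is a direct coordinate computation: the paper expands $\vs^*(\N_i\phi)$ into three terms $T_1,T_2,T_3$, then separately expands $(\vs^*\N)_i(\vs^{-1}\phi\vs)$ using the coordinate form of $\Gamma_{\vs^*\N}$ from Lemma~\ref{lem:coordsN}, producing seven terms, and matches them by hand --- the four extra terms cancel in pairs via the identity $\del_i(\vs^{-1}) = -\vs^{-1}(\del_i\vs)\vs^{-1}$, which is precisely the bookkeeping obstacle you anticipated. Your abstract argument automates this cancellation by working at the level of operators on $S(E)$, so it is shorter and makes the naturality transparent; the paper's coordinate approach has the advantage of being self-contained within the index conventions already set up in the appendix and of confirming explicitly that the coordinate formula for $\Gamma_{\vs^*\N}$ is consistent with the endomorphism action. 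Your suggestion to lead with the abstract proof and leave the coordinate check as a remark is reasonable, though in the context of this appendix (which is uniformly coordinate-heavy and feeds directly into the manipulations of \S\ref{ss:shorttimeexist}) the paper's choice is also defensible.
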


A key property of gauges with respect to the Yang-Mills $k$-energy, which determines the nonellipticity of the flow (cf. Proposition \ref{prop:Phikwkellptc}) is demonstrated in the following lemma. Namely, that the Yang-Mills $k$-energy is invariant under gauge transformation. 

\begin{cor}\label{lem:YMkgaugeinv} For $\vs \in S \prs{ \Aut{E} }$ and $\N \in \mathcal{A}_E$,
\[
\mathcal{YM}_{k}(\N) = \mathcal{YM}_{k}(\varsigma^* \N).
\]
\begin{proof}
This is a consequence of  Lemma \ref{lem:Fs} and the definition of the action of gauge on a connection and on a $2$-form (cf. Definition \ref{defn:gaugetfm})

\end{proof}
\end{cor}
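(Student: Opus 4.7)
The plan is to reduce the statement to a pointwise identity $\langle \N^{(k)} F_\N, \N^{(k)} F_\N\rangle = \langle (\vs^*\N)^{(k)} F_{\vs^*\N}, (\vs^*\N)^{(k)} F_{\vs^*\N}\rangle$ and then integrate. Two ingredients drive the argument: (i) covariant differentiation intertwines with the gauge action, and (ii) the inner product on $\Lambda^p(\Ad E)$ is a trace-type pairing that is invariant under conjugation.

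First, I would establish the compatibility identity
\begin{equation*}
(\vs^*\N)^{(k)}\bigl[\vs^*\gw\bigr] = \vs^*\bigl[\N^{(k)}\gw\bigr]
\end{equation*}
for any $\gw$ in a tensor product involving $\End E$ and $T^*M$. The base case $k=1$ is exactly Lemma \ref{lem:gaugeactid1} (extended in the obvious way to forms, since the gauge transformation acts only on the bundle factor and the tensor-product coupling with the Levi-Civita connection is unaffected by $\vs$). Induction on $k$ then gives the formula: applying $\vs^*\N$ once more to $(\vs^*\N)^{(k)}[\vs^*\gw]$ produces $\vs^*\N\bigl[\vs^*(\N^{(k)}\gw)\bigr]$, which by the base case equals $\vs^*[\N^{(k+1)}\gw]$. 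Combining this with Lemma \ref{lem:Fs}, which says $F_{\vs^*\N} = \vs^* F_\N$, yields
\begin{equation*}
(\vs^*\N)^{(k)} F_{\vs^*\N} = \vs^*\bigl[\N^{(k)}F_{\N}\bigr].
\end{equation*}

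Second, I would verify that the pointwise inner product on $\Lambda^p(\Ad E)$ is invariant under the endomorphism conjugation $\gw \mapsto \vs^*\gw = \vs^{-1}\gw\vs$. Unwinding the coordinate definition $\langle \gw,\gw\rangle = -\bigl(\prod g^{i_v j_v}\bigr)\gw_{I\ga}^{\gb}\gw_{J\gb}^{\ga}$, the bundle contraction is a trace $\tr(\gw_I \gw_J)$ over $\End E$; since conjugation by $\vs$ preserves traces of products ($\tr(\vs^{-1}A\vs\,\vs^{-1}B\vs) = \tr(AB)$), we obtain $|\vs^*\gw|^2 = |\gw|^2$ pointwise on $M$.

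Combining the two ingredients,
\begin{equation*}
\bigl|(\vs^*\N)^{(k)} F_{\vs^*\N}\bigr|^2 = \bigl|\vs^*[\N^{(k)}F_\N]\bigr|^2 = \bigl|\N^{(k)}F_\N\bigr|^2,
\end{equation*}
and integrating against $dV_g$ gives $\mathcal{YM}_k(\vs^*\N) = \mathcal{YM}_k(\N)$. The only nontrivial step is the induction verifying the intertwining identity, but this is a direct extension of Lemma \ref{lem:gaugeactid1}; the trace-invariance is immediate from the coordinate form of $\langle\cdot,\cdot\rangle$, so I do not expect any genuine obstacle.
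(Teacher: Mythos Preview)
Your proposal is correct and follows essentially the same approach as the paper's proof, which is a one-line citation of Lemma \ref{lem:Fs} and Definition \ref{defn:gaugetfm}. You have simply unpacked the details the paper leaves implicit: the intertwining identity $(\vs^*\N)^{(k)}[\vs^*\gw] = \vs^*[\N^{(k)}\gw]$ (which you correctly derive from Lemma \ref{lem:gaugeactid1} by induction) and the conjugation-invariance of the trace pairing, both of which are needed to pass from $F_{\vs^*\N} = \vs^*F_\N$ to equality of the $k$-energies.
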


\begin{lemma}\label{lem:slapswap}
Let $L := (i_1 , j_1 ,\cdots i_k, j_k)$, $\vs \in S(\Aut E)$, and $\zeta$ some element of a tensor product of $T^*M$ and $E$ and their corresponding duals. Then
\begin{equation}
\lap^{(k)} \left[\vs_{\ga}^{\gb} \zeta_R^Q \right] = \lap^{(k)}[\vs]_{\ga}^{\gb} \zeta^{Q}_R + \left( \prod_{v = 0}^{k} g^{i_{v} j_{v}}\right) \prs{ \sum_{r = 1}^{k-1}\sum_{\mathsf{P} \in \mathcal{P}_r(L)}{\left( \N_{\mathsf{P}} \vs_{\ga}^{\gb} \right)\left( \N_{\mathsf{P}^c}\zeta_{R}^{Q}\right) }} + \vs_{\ga}^{\gb} \lap^{(k)} \left[ \zeta \right]_R^Q,
\end{equation}
where the quantity $\mathcal{P}_r(L)$ is defined in Definition \ref{defn:partitionstring}

\begin{proof}
This is simply an application of the Leibniz rule and being aware of the distribution of connection pairings (coming from each Laplacian).
\end{proof}
\end{lemma}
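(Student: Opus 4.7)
The plan is to proceed by unrolling $\lap^{(k)}$ as a $2k$-fold iterated covariant derivative contracted with the inverse metric tensors, and then apply the Leibniz rule to the product $\vs_{\ga}^{\gb} \zeta_R^Q$ at each differentiation step. Explicitly, I will write
\begin{equation*}
\lap^{(k)}\lb \vs_{\ga}^{\gb} \zeta_R^Q \rb = \left(\prod_{v=0}^{k} g^{i_v j_v}\right) \N_{i_1}\N_{j_1} \cdots \N_{i_k}\N_{j_k}\lb \vs_{\ga}^{\gb} \zeta_R^Q \rb,
\end{equation*}
so that, after repeated applications of the Leibniz rule, each term in the resulting sum distributes some subset of the $2k$ covariant derivatives (indexed by multiindices drawn from $L = (i_1,j_1,\ldots,i_k,j_k)$) onto $\vs$ and the complementary subset onto $\zeta$. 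This is precisely the combinatorial content captured by the partition strings $\mathcal{P}_r(L)$ from Definition \ref{defn:partitionstring}.

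First I would establish the base case $k=1$ by direct computation: $\lap(\vs \zeta) = g^{ij}\N_i \N_j (\vs\zeta) = (\lap \vs)\zeta + \vs(\lap \zeta) + g^{ij}\left( (\N_i \vs)(\N_j \zeta) + (\N_j \vs)(\N_i \zeta)\right)$, where the mixed terms correspond to the $r=1$ summand. Then I would proceed by induction, assuming the identity for $k$ and applying a single extra Laplacian $\lap = g^{i_{k+1} j_{k+1}}\N_{i_{k+1}}\N_{j_{k+1}}$ to both sides. The three principal terms on the right-hand side of the inductive hypothesis each split under the Leibniz rule: the purely-$\vs$ term $(\lap^{(k)} \vs)\zeta$ produces $\lap^{(k+1)}\vs \cdot \zeta$ plus mixed terms in which one or two of the new derivatives land on $\zeta$; similarly for the purely-$\zeta$ term; and the mixed terms $(\N_{\mathsf{P}} \vs)(\N_{\mathsf{P}^c} \zeta)$ each generate four contributions corresponding to which of $\N_{i_{k+1}}, \N_{j_{k+1}}$ land on which factor.

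The main bookkeeping step is to verify that the resulting collection of mixed terms reassembles into the sum over $\bigcup_{r=1}^{k} \mathcal{P}_r\lb (i_1,j_1,\ldots,i_{k+1},j_{k+1}) \rb$ with complementary strings acting on $\zeta$. This is a straightforward combinatorial identity: every partition of the $2(k+1)$ indices into a nonempty-proper subset of derivatives hitting $\vs$ and its complement hitting $\zeta$ is obtained exactly once either from a Leibniz split of a term already of mixed type at level $k$ or from a Leibniz split of one of the two pure terms at level $k$. I will confirm this by checking that the indices $(i_{k+1}, j_{k+1})$ appended to any $\mathsf{P} \in \mathcal{P}_r(L_k)$ produce strings of every length in $\{r, r+1, r+2\}$ within $\mathcal{P}_*\lb L_{k+1} \rb$, and cross-referencing against the Leibniz expansion.

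The main obstacle is strictly notational: the indices commute freely since this is all formal algebra on products of tensors, but tracking which copies of $\N_{i_v}, \N_{j_v}$ land on which factor requires careful index management. No analytic content is needed; in particular, no commutators of covariant derivatives appear on the principal diagonal of the identity (they would generate curvature terms, but those are absorbed into the pattern via the symmetric action of $g^{i_v j_v}$ on the symmetric pair of indices and are irrelevant for the Leibniz-only decomposition stated). Once the combinatorial matching is checked, the identity follows immediately.
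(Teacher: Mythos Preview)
Your approach is correct and is essentially the same as the paper's: the paper's proof consists of the single sentence ``This is simply an application of the Leibniz rule and being aware of the distribution of connection pairings (coming from each Laplacian),'' and your proposal is precisely a fleshed-out version of that, carried out by induction on $k$. Your remark about commutators is unnecessary and slightly muddled (no commutation of covariant derivatives is needed at all, since the Leibniz expansion keeps the $2k$ derivatives in their original order and the partition strings $\mathcal{P}_r(L)$ are by definition order-preserving substrings of $L$), but this does not affect the argument.
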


In the following lemma we investigate the action of this particular connection with a one-parameter family of gauge transformation and is essential in the following result.

\begin{lemma}\label{lem:coordsDs*Gamma} Let $\N \in \mathcal{A}_E$ and $\vs_t \in S\prs{\Aut E} \times \mathcal{I}$. Then 
\begin{equation}\label{eq:coordsDs*Gammaeq}
\left(\vs_t^* \N \brk{ \vs^{-1}_t \dot{\vs}_t} \right)_{\ga}^{\gb} = (\vs^{-1}_t)_{\gd}^{\gb}(\del_i \dot{\vs}_t)_{\ga}^{\gd} + (\vs^{-1}_t)_{\gt}^{\gb}(\Gamma)_{i \gamma}^{\gt}(\dot{\vs}_t)_{\ga}^{\gamma} - ( \Gamma_{\vs^*_t \N} )_{i \ga}^{\gd} \left(\vs^{-1}_t \dot{\vs}_t \right)_{\gd}^{\gb}.
\end{equation}

\begin{proof}
Note that despite the assumed time dependence of $\vs_t$ the notational dependency will be omitted. Simply computing yields
\begin{align*}
(\vs^*\N)(\vs^{-1} \dot{\vs}) &= \del_i \left( (\vs^{-1})_{\gd}^{\gb}(\dot{\vs})_{\ga}^{\gd} \right)+ \left(\sNG \right)_{i \gd}^{\gb} \left(\vs^{-1} \dot{\vs} \right)^{\gd}_{\ga} - (\sNG )_{i \ga}^{\gd} \left(\vs^{-1} \dot{\vs} \right)_{\gd}^{\gb} \\
&= -(\vs^{-1})_{\rho}^{\gb}(\del_i \vs)^{\rho}_{\gt}(\vs^{-1})_{\gd}^{\gt}(\dot{\vs})_{\ga}^{\gd} + (\vs^{-1})_{\gd}^{\gb}(\del_i \dot{\vs})_{\ga}^{\gd} + (\sNG)_{i \gd}^{\gb} (\vs^{-1} \dot{\vs})_{\ga}^{\gd} \\ & \hsp - (\sNG)_{i \ga}^{\gd} \left(\vs^{-1} \dot{\vs} \right)_{\gd}^{\gb}.\\
&= -(\vs^{-1})_{\rho}^{\gb}(\del_i \vs)^{\rho}_{\gt}(\vs^{-1})_{\gd}^{\gt}(\dot{\vs})_{\ga}^{\gd} + (\vs^{-1})_{\gd}^{\gb}(\del_i \dot{\vs})_{\ga}^{\gd} + (\vs^{-1})_{\rho}^{\gb}(\del_i \vs)^{\rho}_{\gt}(\vs^{-1})_{\gd}^{\gt}(\dot{\vs})_{\ga}^{\gd}\\ & \hsp + (\vs^{-1})_{\gt}^{\gb}(\Gamma)_{i \gamma}^{\gt}(\dot{\vs})_{\ga}^{\gamma} - (\vs^* \Gamma )_{i \ga}^{\gd} \left(\vs^{-1} \dot{\vs} \right)_{\gd}^{\gb}.\\
&= (\vs^{-1})_{\gd}^{\gb}(\del_i \dot{\vs})_{\ga}^{\gd} + (\vs^{-1})_{\gt}^{\gb} \Gamma_{i \gamma}^{\gt}(\dot{\vs})_{\ga}^{\gamma} - (\sNG)_{i \ga}^{\gd} \left(\vs^{-1} \dot{\vs} \right)_{\gd}^{\gb}.
\end{align*}
The result follows.
\end{proof}
\end{lemma}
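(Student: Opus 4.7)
The approach is a direct coordinate computation: expand the left-hand side using the definition of the transformed connection acting on an endomorphism section, then apply the Leibniz rule and use the identity $\del_i (\vs_t^{-1}) = -\vs_t^{-1}(\del_i\vs_t)\vs_t^{-1}$ to reveal the cancellations that produce the right-hand side.

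First, I would treat $\vs_t^{-1} \dot{\vs}_t$ as a section of $\End E$ and apply $\vs_t^* \N$ to it via the standard endomorphism extension of the connection coefficient matrix $\Gamma_{\vs_t^*\N}$:
\begin{equation*}
\left(\vs_t^*\N\brk{\vs_t^{-1}\dot{\vs}_t}\right)_{i\,\ga}^{\gb} = \del_i\!\left[(\vs_t^{-1})^{\gb}_{\gd}(\dot{\vs}_t)^{\gd}_{\ga}\right] + (\Gamma_{\vs_t^*\N})^{\gb}_{i\gd}(\vs_t^{-1}\dot{\vs}_t)^{\gd}_{\ga} - (\Gamma_{\vs_t^*\N})^{\gd}_{i\ga}(\vs_t^{-1}\dot{\vs}_t)^{\gb}_{\gd}.
\end{equation*}
Next, I would expand $\del_i[(\vs_t^{-1})^{\gb}_{\gd}(\dot{\vs}_t)^{\gd}_{\ga}]$ by the Leibniz rule into a piece $(\vs_t^{-1})^{\gb}_{\gd}(\del_i\dot{\vs}_t)^{\gd}_{\ga}$ (which is already the first term on the right-hand side of the target identity) plus a piece involving $(\del_i \vs_t^{-1})^{\gb}_{\gd}(\dot{\vs}_t)^{\gd}_{\ga}$. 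Differentiating $\vs_t \vs_t^{-1} = \Id$ in coordinates rewrites this latter piece as $-(\vs_t^{-1})^{\gb}_{\rho}(\del_i \vs_t)^{\rho}_{\tau}(\vs_t^{-1})^{\tau}_{\gd}(\dot{\vs}_t)^{\gd}_{\ga}$.

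Then I would substitute the coordinate formula for $\Gamma_{\vs_t^*\N}$ from Lemma \ref{lem:coordsN} into the second term above, namely $(\Gamma_{\vs_t^*\N})^{\gb}_{i\gd} = (\vs_t^{-1})^{\gb}_{\rho}(\del_i\vs_t)^{\rho}_{\gd} + (\vs_t^{-1})^{\gb}_{\rho}\Gamma^{\rho}_{i\gamma}(\vs_t)^{\gamma}_{\gd}$. After contraction with $(\vs_t^{-1}\dot{\vs}_t)^{\gd}_{\ga}$, the first summand will be exactly the negative of the Leibniz-generated $\del_i\vs_t^{-1}$ term identified above, yielding a clean cancellation. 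The second summand collapses to $(\vs_t^{-1})^{\gb}_{\gt}\Gamma^{\gt}_{i\gamma}(\dot{\vs}_t)^{\gamma}_{\ga}$ thanks to the internal $\vs_t^{-1}\vs_t = \Id$ pairing, which is precisely the second term on the right-hand side of \eqref{eq:coordsDs*Gammaeq}. The final $-(\Gamma_{\vs_t^*\N})^{\gd}_{i\ga}(\vs_t^{-1}\dot{\vs}_t)^{\gb}_{\gd}$ term survives verbatim and matches the third term.

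The only real obstacle is keeping the index bookkeeping straight; no deeper geometric input is needed beyond $\del_i(\vs_t\vs_t^{-1}) = 0$ and the definition of $\Gamma_{\vs_t^*\N}$. The cancellation of the $\del_i\vs_t$ contributions is the one step worth double-checking, since it is the point where the identity would fail if a sign or contraction were mishandled.
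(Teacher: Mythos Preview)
Your proposal is correct and follows essentially the same approach as the paper: write the action of $\vs_t^*\N$ on the endomorphism $\vs_t^{-1}\dot{\vs}_t$ using the connection coefficients $\Gamma_{\vs_t^*\N}$, expand the partial derivative via Leibniz (using $\del_i(\vs_t^{-1}) = -\vs_t^{-1}(\del_i\vs_t)\vs_t^{-1}$), then substitute the coordinate expression for $\Gamma_{\vs_t^*\N}$ from Lemma~\ref{lem:coordsN} and observe the cancellation of the $\del_i\vs_t$ contributions. The paper's proof is organized identically, so there is nothing further to add.
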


In the following lemma both the connection and the gauge transformation are one-parameter families, though the gauge transformation does not necessarily determine how the family of connections varies, as was the case in the prior lemma.

\begin{lemma}
Let $\vs_t \in S \prs{\Aut E} \times \mathcal{I}$ and $\N_t \in \mathcal{A}_E \times \mathcal{I}$. The linearized gauge action of $\vs_t$ on $\N_t$ is given in coordinates by
\begin{align}
\begin{split}\label{eq:vargauge}
\left( \tfrac{\del}{\del t} \left[ \vs_t^* \N_t \right] \right)_{k\ga}^{\gb}  &= (\vs_t^*\N_t)(\vs^{-1}_t \dot{\vs}_t)_{k \ga}^{\gb} + (\vs^{-1}_t)_{\gd}^{\gb} \prs{\dot{\Gamma_t}}_{k \gt}^{\gd} (\vs_t)_{\ga}^{\gt}.
\end{split}
\end{align}

\begin{proof}
As in the previous lemma, note that despite the assumed time dependence of $\vs_t$ and $\N_t$ the notational dependency will be omitted. Differentiating $\vs^* \gG$ with respect to $t$ gives
\begin{align*}
\prs{\tfrac{\del}{\del t}\brk{\vs^* \N}}_{k \ga}^{\gb} 
&= \del_t ( (\vs^{-1})_{\gd}^{\gb} \Gamma_{k \gt}^{\gd} \vs_{\ga}^{\gt}) +  \del_t( (\vs^{-1})_{\gd}^{\gb} \del_i s_{\ga}^{\gd})\\
&=  \del_t(\vs^{-1})_{\gd}^{\gb} \Gamma_{k \gt}^{\gd} \vs_{\ga}^{\gt} + (\vs^{-1})_{\gd}^{\gb} ( \dot{\Gamma}_{k \gt}^{\gd}) \vs_{\ga}^{\gt}  + (\vs^{-1})_{\gd}^{\gb} \Gamma_{k \gt}^{\gd} (\dot{\vs}_{\ga}^{\gt} )\\
& \hsp + \del_t(\vs^{-1})_{\gd}^{\gb} (\del_i \vs_{\ga}^{\gd}) +  (\vs^{-1})_{\gd}^{\gb} (\del_i \dot{\vs}_{\ga}^{\gd}) \\
&= -\left( (\vs^{-1})_{\gamma}^{\gb} ( \dot{\vs}_{\gz}^{\gamma}) (\vs^{-1})_{\gd}^{\gz} \right) \Gamma_{k \gt}^{\gd} \vs_{\ga}^{\gt} + (\vs^{-1})_{\gd}^{\gb}  \dot{\Gamma}_{k \gt}^{\gd} \vs_{\ga}^{\gt}  + (\vs^{-1})_{\gd}^{\gb}  \Gamma_{k \gt}^{\gd} \dot{\vs}_{\ga}^{\gt}\\
& \hsp  -\left( (\vs^{-1})_{\gamma}^{\gb} (\dot{\vs}_{\gz}^{\gamma}) (\vs^{-1})_{\gd}^{\gz} \right) (\del_i \vs_{\ga}^{\gd}) +  (\vs^{-1})_{\gd}^{\gb} (\del_i \dot{\vs}_{\ga}^{\gd}).
\end{align*}
Applying the computations of Lemma \ref{lem:coordsDs*Gamma} and then Lemma \ref{lem:coordsN},
\begin{align*}  \left( \tfrac{\del}{\del t} \left[ \vs^* \N \right] \right)_{k\ga}^{\gb} 
&= -\left( (\vs^{-1})_{\gamma}^{\gb} ( \dot{\vs}_{\gz}^{\gamma}) (\vs^{-1})_{\gd}^{\gz} \right) \Gamma_{k \gt}^{\gd} \vs_{\ga}^{\gt} + (\vs^{-1})_{\gd}^{\gb}  \dot{\Gamma}_{k \gt}^{\gd} \vs_{\ga}^{\gt}  + (\vs^{-1})_{\gd}^{\gb}  \Gamma_{k \gt}^{\gd} \dot{\vs}_{\ga}^{\gt}\\
& \hsp  -\left( (\vs^{-1})_{\gamma}^{\gb} (\dot{\vs}_{\gz}^{\gamma}) (\vs^{-1})_{\gd}^{\gz} \right) (\del_i \vs_{\ga}^{\gd}) +  (\vs^{-1})_{\gd}^{\gb} (\del_i \dot{\vs}_{\ga}^{\gd}) \\
&= \left[ (\vs^{-1})_{\gd}^{\gb} (\del_k \dot{\vs}_{\ga}^{\gd}) + (\vs^{-1})_{\gd}^{\gb}  \Gamma_{k \gt}^{\gd} \dot{\vs}_{\ga}^{\gt} - (\vs^* \gG)_{k\ga}^{\gd} (\vs^{-1} \dot{\vs})_{\gd}^{\gb} \right] \\
& \hsp + \left[  -\left( (\vs^{-1})_{\gamma}^{\gb} ( \dot{\vs}_{\gz}^{\gamma}) (\vs^{-1})_{\gd}^{\gz} \right) \Gamma_{k \gt}^{\gd} \vs_{\ga}^{\gt} -\left( (\vs^{-1})_{\gamma}^{\gb} (\dot{\vs}_{\gz}^{\gamma}) (\vs^{-1})_{\gd}^{\gz} \right) (\del_k \vs_{\ga}^{\gd}) \right] \\
& \hsp + (\vs^* \gG)_{k\ga}^{\gd} (\vs^{-1} \dot{\vs})_{\gd}^{\gb} + (\vs^{-1})_{\gd}^{\gb} (\dot{\Gamma}_{k \gt}^{\gd}) \vs_{\ga}^{\gt}\\
&= (\vs^*\N)(\vs^{-1} \dot{\vs})_{k \ga}^{\gb} + (\vs^{-1})_{\gd}^{\gb} (\dot{\Gamma}_{k \gt}^{\gd}) \vs_{\ga}^{\gt}.
\end{align*}
The result follows.
\end{proof}
\end{lemma}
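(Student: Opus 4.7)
The plan is to start directly from the coordinate expression of $\vs_t^* \N_t$ provided by Lemma \ref{lem:coordsN}, differentiate termwise in $t$ via the product rule, and then recognize the resulting combination as the sum of $(\vs_t^*\N_t)(\vs_t^{-1}\dot{\vs}_t)$ and the conjugated variation of $\Gamma_t$. Concretely, from Lemma \ref{lem:coordsN} we have
\begin{equation*}
(\Gamma_{\vs^*\N})_{k\ga}^{\gb} = (\vs^{-1})_{\gd}^{\gb}(\del_k \vs_{\ga}^{\gd}) + (\vs^{-1})_{\gd}^{\gb}\,\Gamma_{k\gamma}^{\gd}\,\vs_{\ga}^{\gamma},
\end{equation*}
which is the object to differentiate. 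The primary tool is the standard identity $\del_t(\vs_t^{-1}) = -\vs_t^{-1}\dot{\vs}_t\,\vs_t^{-1}$ used on every occurrence of $\vs_t^{-1}$.

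First I would apply $\del_t$ to each of the two summands above, producing six terms: the three from differentiating $(\vs^{-1})_{\gd}^{\gb}\Gamma_{k\gamma}^{\gd}\vs_{\ga}^{\gamma}$ and the three from differentiating the Maurer--Cartan type piece $(\vs^{-1})_{\gd}^{\gb}(\del_k \vs_{\ga}^{\gd})$, in each case substituting $\del_t(\vs^{-1}) = -\vs^{-1}\dot{\vs}\vs^{-1}$. Second, I would set aside the single term $(\vs^{-1})_{\gd}^{\gb}\dot{\Gamma}_{k\gt}^{\gd}\vs_{\ga}^{\gt}$, which is exactly the second summand of the claimed identity; the remaining five terms must then be shown to assemble into $(\vs^*\N)(\vs^{-1}\dot{\vs})$. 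Third, appealing to Lemma \ref{lem:coordsDs*Gamma}, I know precisely what that covariant derivative looks like in coordinates:
\begin{equation*}
(\vs^*\N \lb \vs^{-1}\dot{\vs} \rb)_{k\ga}^{\gb} = (\vs^{-1})_{\gd}^{\gb}(\del_k \dot{\vs}_{\ga}^{\gd}) + (\vs^{-1})_{\gt}^{\gb}\Gamma_{k\gamma}^{\gt}\dot{\vs}_{\ga}^{\gamma} - (\Gamma_{\vs^*\N})_{k\ga}^{\gd}(\vs^{-1}\dot{\vs})_{\gd}^{\gb},
\end{equation*}
and I would use the expansion of $(\Gamma_{\vs^*\N})_{k\ga}^{\gd}$ from Lemma \ref{lem:coordsN} to rewrite the right side as a sum of five explicit terms. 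Fourth, I would simply match these five terms against the five remaining terms from the second step; the match is direct because the $-\vs^{-1}\dot{\vs}\vs^{-1}$ substitution produces precisely the $\vs^{-1}\dot{\vs}$ factors appearing in the third summand of Lemma \ref{lem:coordsDs*Gamma}.

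The calculation is purely algebraic and formally routine, but the main obstacle is the bookkeeping: with six terms on one side and five terms (after expansion) on the other, and two distinct $\vs^{-1}$ factors being differentiated, it is easy to drop an index or miss a cancellation. I would therefore organize the proof by labeling each term and pairing them explicitly, as is done successfully in Lemma \ref{lem:gaugeactid1}, rather than attempting a coordinate-free derivation, since the coordinate approach is the most transparent way to verify that no hidden curvature or torsion contribution has been overlooked.
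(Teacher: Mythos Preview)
Your proposal is correct and follows essentially the same route as the paper's proof: both start from the coordinate expression of $\Gamma_{\vs^*\N}$ given by Lemma~\ref{lem:coordsN}, differentiate in $t$ using $\del_t(\vs^{-1}) = -\vs^{-1}\dot{\vs}\,\vs^{-1}$, isolate the $\vs^{-1}\dot{\Gamma}\,\vs$ term, and then invoke Lemma~\ref{lem:coordsDs*Gamma} together with the expansion of $\Gamma_{\vs^*\N}$ to identify the remainder as $(\vs^*\N)(\vs^{-1}\dot{\vs})$. The only discrepancy is a harmless bookkeeping slip in your term count: the Maurer--Cartan piece $(\vs^{-1})_{\gd}^{\gb}(\del_k \vs_{\ga}^{\gd})$ has two time-dependent factors, not three, so the product rule yields five terms total (hence four to match after setting aside $\vs^{-1}\dot{\Gamma}\,\vs$), not six and five; this would sort itself out the moment you actually write the terms down.
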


\bibliographystyle{hamsplain}

\end{document}